\documentclass[12pt,oneside]{amsart}

\usepackage{amssymb,amsxtra,mathtools}

\usepackage[
  paper=a4paper,
  headsep=20pt,
  footskip=18pt,
  text={160mm,245mm},
  centering,
  includehead
]{geometry}

\usepackage{newtxtext,newtxmath}

\usepackage[bookmarks,bookmarksdepth=3,pdfencoding=auto,psdextra]{hyperref}
\usepackage{enumitem}
\setlist[enumerate,1]{label=(\arabic*),font=\upshape}
\usepackage[capitalize,noabbrev]{cleveref}

\crefname{enumi}{}{}
\crefname{equation}{}{}
\usepackage{thmtools}
\usepackage{graphicx,xcolor}
\usepackage{tikz}
\usetikzlibrary{cd,calc,arrows,decorations.pathreplacing}
\tikzcdset{arrow style=math font}
\usetikzlibrary{arrows,spath3,intersections,nfold}
\makeatletter
\tikzset{
  offset now/.code=
    \pgfgetpath\tikz@temp\pgfsetpath\pgfutil@empty\pgfoffsetpath\tikz@temp{#1}
}
\makeatother

\usepackage[mode=buildnew]{standalone}
\usepackage{comment}

\iftrue
  \makeatletter
  \def\@settitle{%
    \begin{flushleft}%
      \LARGE\bfseries
      \strut\@title\strut
    \end{flushleft}%
  }
  \def\@setauthors{%
    \begingroup
    \def\thanks{\protect\thanks@warning}%
    \trivlist
    \raggedright
    \large \@topsep30\p@\relax
    \advance\@topsep by -\baselineskip
    \item\relax
    \author@andify\authors
    \def\\{\protect\linebreak}%
    \authors
    \ifx\@empty\contribs
    \else
    ,\penalty-3 \space \@setcontribs
    \@closetoccontribs
    \fi
    \normalfont
    \endtrivlist
    \endgroup
  }
  \def\@setaddresses{\par
    \nobreak \begingroup
    \small\raggedright
    \def\author##1{\nobreak\addvspace\smallskipamount}%
    \def\\{\unskip, \ignorespaces}%
    \interlinepenalty\@M
    \def\address##1##2{\begingroup
      \par\addvspace\bigskipamount\noindent
      \@ifnotempty{##1}{(\ignorespaces##1\unskip) }%
      {\ignorespaces##2}\par\endgroup}%
    \def\curraddr##1##2{\begingroup
      \@ifnotempty{##2}{\nobreak\noindent\curraddrname
        \@ifnotempty{##1}{, \ignorespaces##1\unskip}\/:\space
        ##2\par}\endgroup}%
    \def\email##1##2{\begingroup
      \@ifnotempty{##2}{\nobreak\noindent E-mail address%
        \@ifnotempty{##1}{, \ignorespaces##1\unskip}\/:\space
        \ttfamily##2\par}\endgroup}%
    \def\urladdr##1##2{\begingroup
      \def~{\char`\~}%
      \@ifnotempty{##2}{\nobreak\noindent\urladdrname
        \@ifnotempty{##1}{, \ignorespaces##1\unskip}\/:\space
        \ttfamily##2\par}\endgroup}%
    \addresses
    \endgroup
    \global\let\addresses=\@empty
  }
  \def\@setabstracta{%
    \ifvoid\abstractbox
    \else
    \skip@30pt \advance\skip@-\lastskip
    \advance\skip@-\baselineskip \vskip\skip@
    \box\abstractbox
    \prevdepth\z@ 
    \vskip-20pt
    \fi
  }
  \renewenvironment{abstract}{%
    \ifx\maketitle\relax
    \ClassWarning{\@classname}{Abstract should precede
      \protect\maketitle\space in AMS document classes; reported}%
    \fi
    \global\setbox\abstractbox=\vtop \bgroup
    \normalfont\small
    \list{}{\labelwidth\z@
      \leftmargin0pc \rightmargin\leftmargin
      \listparindent\normalparindent \itemindent\z@
      \parsep\z@ \@plus\p@
      
    }%
  \item[\hskip\labelsep\bfseries\abstractname.]%
  }{%
    \endlist\egroup
    \ifx\@setabstract\relax \@setabstracta \fi
  }

  \def\ps@headings{\ps@empty
    \def\@evenhead{%
      \setTrue{runhead}%
      \normalfont\scriptsize
      \rlap{\thepage}\hfill
      \def\thanks{\protect\thanks@warning}%
      \leftmark{}{}}%
    \def\@oddhead{%
      \setTrue{runhead}%
      \normalfont\scriptsize
      \def\thanks{\protect\thanks@warning}%
      \rightmark{}{}\hfill \llap{\thepage}}%
    \let\@mkboth\markboth
  }\ps@headings

  \expandafter\def\csname section\endcsname{\@startsection{section}{1}%
    \z@{-1.4\linespacing\@plus-.5\linespacing}{.8\linespacing\@plus.3\linespacing}%
    {\normalfont\bfseries\Large}}
  \expandafter\def\csname subsection\endcsname{\@startsection{subsection}{2}%
    \z@{-.8\linespacing\@plus-.3\linespacing}{.5\linespacing\@plus.3\linespacing}%
    {\normalfont\bfseries\large}}
  \expandafter\def\csname subsubsection\endcsname{\@startsection{subsubsection}{3}%
    \z@{-.7\linespacing\@plus-.2\linespacing}{.3\linespacing\@plus.2\linespacing}%
    {\normalfont\bfseries\normalsize}}
  \expandafter\def\csname paragraph\endcsname{\@startsection{paragraph}{4}%
    \z@{.7\linespacing\@plus.2\linespacing}{-1.5ex}%
    {\normalfont\itshape}}
  \def\@secnumfont{\bfseries}

  \renewcommand\contentsnamefont{\bfseries}
  \def\@starttoc#1#2{\begingroup
    \setTrue{#1}%
    \par\removelastskip\vskip\z@skip
    \@startsection{}\@M\z@{\linespacing\@plus\linespacing}%
    {.5\linespacing}{
      \contentsnamefont}{#2}%
    \ifx\contentsname#2%
    \else \addcontentsline{toc}{section}{#2}\fi
    \makeatletter
    \@input{\jobname.#1}%
    \if@filesw
    \@xp\newwrite\csname tf@#1\endcsname
    \immediate\@xp\openout\csname tf@#1\endcsname \jobname.#1\relax
    \fi
    \global\@nobreakfalse \endgroup
    \addvspace{32\p@\@plus14\p@}%
    \let\tableofcontents\rela\x
  }
  \def\contentsname{Contents}
  \def\l@section{\@tocline{2}{.5ex}{0mm}{5pc}{}}
  \def\l@subsection{\@tocline{2}{0pt}{2em}{5pc}{}}
  \makeatother

\fi

\def\to{\mathchoice{\longrightarrow}{\rightarrow}{\rightarrow}{\rightarrow}}
\makeatletter
\newcommand{\shortxra}[2][]{\ext@arrow 0359\rightarrowfill@{#1}{#2}}
\def\longrightarrowfill@{\arrowfill@\relbar\relbar\longrightarrow}
\newcommand{\longxra}[2][]{\ext@arrow 0359\longrightarrowfill@{#1}{#2}}
\renewcommand{\xrightarrow}[2][]{\mathchoice{\longxra[#1]{#2}}%
  {\shortxra[#1]{#2}}{\shortxra[#1]{#2}}{\shortxra[#1]{#2}}}

\makeatother

\makeatletter
\def\addtagsub#1{\let\oldtf=\tagform@\def\tagform@##1{\oldtf{##1}\hbox{$_{#1}$}}}
\makeatother

\makeatletter
\def\Nopagebreak{\@nobreaktrue\nopagebreak}
\makeatother

\foreach \n in 
{Z,Q,R,C}
{\expandafter\xdef\csname\n\endcsname{\noexpand\mathbb{\n}}}

\foreach \n in 
{A,B,F,S}
{\expandafter\xdef\csname b\n\endcsname{\noexpand\mathbb{\n}}}

\foreach \n in 
{A,...,Z}
{\expandafter\xdef\csname c\n\endcsname{\noexpand\mathcal{\n}}}

\foreach \n in 
{dim,rk,rank,sign,sgn,Ker,Coker,Im,Hom,End,GL,SO,
 Tor,Ext,Gal,Irr,dis,Wh,Hopf,tr,Tr,vol,colim,grk,
 lk,st,Mod,Aut,SL,Dax,dax,FQ,Homeo,Diff,Emb,ks,
 CAT,TOP,DIFF,PL,O}
{\expandafter\xdef\csname\n\endcsname{\noexpand\operatorname{\n}}}

\foreach \n/\s in 
{inte/int}
{\expandafter\xdef\csname\n\endcsname{\noexpand\operatorname{\s}}}

\foreach \n in 
{id,pr,PI,Pr}
{\expandafter\xdef\csname\n\endcsname{\noexpand\mathrm{\n}}}

\foreach \n/\s in 
{csum/\#,bcsum/\natural/}
{\expandafter\xdef\csname\n\endcsname{\noexpand\mathbin{\s}}}

\def\RP{\R P}
\def\CP{\C P}

\def\rel{\ifmmode\;{\mathgroup\symoperators rel}\;\else rel\ \fi}
\def\sm{\smallsetminus}

\def\tildetimes{\mathbin{\tilde\times}}
\def\Homeo{\operatorname{Homeo}}
\def\Homeoh{\operatorname{Homeo^{\mathnormal{h}}}}

\def\Homeopi{\operatorname{Homeo^{\mathnormal{pi}}}}
\def\Diff{\operatorname{Diff}}
\def\Diffh{\operatorname{Diff^{\mathnormal{h}}}}

\def\Diffpi{\operatorname{Diff^{\mathnormal{pi}}}}
\def\DeltaD{\Delta_{D}}
\def\DeltaDh{\Delta_D^{\mathnormal{h}}}

\def\tDeltaD{\widetilde\Delta_D}

\def\DeltaGR{\Delta_{G,R}}
\def\tDeltaGR{\widetilde\Delta_{G,R}}

\declaretheoremstyle[
  name=\ignorespaces,numbered=no,notefont=\bfseries,notebraces={}{},bodyfont=\itshape
]{theorem-entitled}
\declaretheoremstyle[
  name=\ignorespaces,numbered=no,notefont=\bfseries,notebraces={}{},bodyfont=\normalfont
]{definition-entitled}

\declaretheorem[name=Theorem,refname={Theorem,Theorems}]{theoremalpha}
\declaretheorem[name=Corollary,sibling=theoremalpha]{corollaryalpha}

\declaretheorem[numberwithin=section]{theorem}
\foreach \n in {
  question,corollary,proposition,lemma,conjecture,observation
}{\declaretheorem[sibling=theorem]{\n}}
\foreach \n in {
  definition,example,remark,notation,case
}{\declaretheorem[style=definition,sibling=theorem]{\n}}
\foreach \n in {
  assertion,claim
}{\declaretheorem[numbered=no]{\n}}
\declaretheorem[numbered=no,name=Question]{question*}

\declaretheorem[style=theorem-entitled]{theorem-named}
\foreach \n in {
  definition-named,conjecture-named,case-named
}{\declaretheorem[style=definition-entitled]{\n}}

\numberwithin{equation}{section}

\def\labelignored#1{\ignorespaces}

\begin{document}

\title
{Topological mapping class groups of non-simply connected 4-manifolds}

\author{Jae Choon Cha}
\address{
  Center for Research in Topology and Department of Mathematics\\%
  POSTECH\\%
  Pohang Gyeongbuk 37673\\%
  Republic of Korea
}
\email{jccha@postech.ac.kr}


\begin{abstract}
We construct topological isotopy invariants of homeomorphisms of non-simply connected 4-manifolds using the topological Dax invariant of disks and the Freedman--Quinn invariant of spheres.
Our invariants are effective for various non-smoothable 4-manifolds without assuming that the fundamental groups are good.
They detect infinite-rank free abelian direct factors of the pseudoisotopy-trivial and homotopy-trivial mapping class groups, and give lower bounds on the Pr\"ufer rank of the Torelli groups modulo homotopy-trivial mapping classes, including examples of infinite Pr\"ufer rank.
Our method is also useful for smooth 4-manifolds.
We construct stably smoothable homeomorphisms of the Enriques surface and many Dolgachev surfaces that act trivially on homology but are not homotopic to the identity.
\end{abstract}

\maketitle

\section{Introduction}
\label{section:introduction}

Mapping class groups of 4-manifolds are the subject of active research in low-dimensional topology.
We study topological mapping class groups of non-simply connected 4-manifolds, with an emphasis on the subgroups of pseudoisotopy-trivial and homotopy-trivial mapping classes and Torelli subgroups.
For compact simply connected 4-manifolds, isotopy, pseudoisotopy, and homotopy rel boundary coincide on self-homeomorphisms~\cite{Perron:1986-1,Quinn:1986-1,Orson-Powell:2022-1,Gabai-Gay-Hartman-Krushkal-Powell:2023-1}, and this underlies elegant algebraic classifications of the mapping class group~\cite{Freedman:1982-1,Perron:1986-1,Quinn:1986-1,Orson-Powell:2022-1}.
In the non-simply connected case, by contrast, understanding the structure of the mapping class group of a 4-manifold remains a major challenge.

In this paper, we develop the topological Dax invariant of disks introduced in~\cite{Cha-Kim:2023-1} into a tool for detecting structures in mapping class groups.
We construct common free abelian retracts and direct factors of the pseudoisotopy-trivial and homotopy-trivial mapping class groups of many 4-manifolds, with quantitative rank estimates and infinite-rank examples.
We also detect nontrivial homotopy classes of homologically trivial homeomorphisms, including examples in which the Torelli group modulo homotopy-trivial mapping classes has infinite Pr\"ufer rank.
The proofs combine homomorphisms defined using the Dax invariant and its reduction to the Freedman--Quinn invariant with geometric realization of their values under a geometric dual sphere hypothesis.

There has been considerable recent activity and substantial progress in the study of topological mapping class groups of non-simply connected 4-manifolds.
These advances include infinite-generation results for the mapping class group of $S^1\times D^3$ and the homotopy-trivial mapping class groups of closed hyperbolic 4-manifolds~\cite{Budney-Gabai:2025-1}, and infinite-rank results for the pseudoisotopy-trivial mapping class groups of $Y\times I$, where $Y$ is a compact connected oriented 3-manifold with infinite fundamental group~\cite{Lin-Xie-Zhang:2026-1}.
Diffeomorphisms homotopic but not topologically pseudoisotopic to the identity are given in~\cite{Krannich-Kupers:2024-1}, and non-smoothable homotopy-trivial homeomorphisms were detected by the Casson--Sullivan invariant in~\cite{Galvin:2024-1}.
These results treat only smooth underlying 4-manifolds.
Topological analogues of the Hatcher--Wagoner pseudoisotopy invariants for arbitrary compact topological 4-manifolds were developed in~\cite{Galvin-Nonino:2025-1}.
Their full realization theorem assumes that the fundamental group is good in the sense of Freedman--Quinn~\cite{Freedman-Quinn:1990-1}.

We also remark that, in the smooth category, the Dax and Freedman--Quinn invariants have already been used to study mapping class groups.
See \cite[Corollary~1.3]{Schneiderman-Teichner:2022-1}, \cite[Theorem~2.3]{Kosanovic-Teichner:2024-2} and~\cite[Theorem~1.3]{Lin-Wu-Xie-Zhang:2026-1}.

Under the geometric dual hypothesis, our method combines detection and realization without assuming either a global smooth structure or a good fundamental group.
In particular, we obtain common $\Z^\infty$ direct factors in the pseudoisotopy-trivial and homotopy-trivial mapping class groups of non-smoothable 4-manifolds with free fundamental groups of arbitrary rank and with either value of the Kirby--Siebenmann invariant.
The results on the Torelli groups also apply to Enriques surfaces and many Dolgachev surfaces, yielding stably smoothable homeomorphisms which induce the identity on integral homology but are not homotopic to the identity.

\subsection{Main results}

Throughout this paper, we assume that 4-manifolds are oriented and connected.
A 4-manifold means a topological 4-manifold unless stated to be smooth.
Embeddings, submanifolds and their isotopies are locally flat.

We use the following notation.
Let $M$ be a 4-manifold.
Denote by $\Homeo(M)$ the group of orientation-preserving homeomorphisms $f\colon M\to M$ fixing $\partial M$ pointwise, equipped with the compact-open topology.
We assume all isotopies of such homeomorphisms are rel~$\partial M$.
The group $\pi_0\Homeo(M)$ of isotopy classes is the \emph{mapping class group} of~$M$.
Define the groups of homology-trivial, homotopy-trivial and pseudoisotopy-trivial homeomorphisms by
\[
  \begin{aligned}
    I(M) &= \{f\in\Homeo(M)\mid f_*=\id\text{ on }H_*(M;\Z)\},
    \\
    \Homeoh(M) &= \{f\in\Homeo(M)\mid f \simeq \id_M \rel \partial M\},
    \\
    \Homeopi(M) &= \{f\in\Homeo(M)\mid f\text{ is pseudoisotopic to the identity}\}.
  \end{aligned}
\]
We write $f\simeq g$ when $f$ and $g$ are properly homotopic.
When $M$ is compact, homotopies are automatically proper.
Also, $f,g\in\Homeo(M)$ are \emph{pseudoisotopic} if there is a homeomorphism $F\colon M\times I\to M\times I$ such that $F(x,0)=(f(x),0)$, $F(x,1)=(g(x),1)$ and $F=\id$ on $\partial M\times I$.

The \emph{Torelli group} of $M$ is $\cI(M)=\pi_0 I(M)$.
The homotopy-trivial and pseudoisotopy-trivial mapping class groups fit into the inclusions
\[
  \{1\}\subset\pi_0\Homeopi(M)\subset\pi_0\Homeoh(M)\subset\cI(M)\subset\pi_0\Homeo(M).
\]
When $M$ is smooth, we define the analogues $\pi_0\Diff(M)$, $\cI^s(M)=\pi_0 I^s(M)$, $\pi_0\Diffh(M)$ and $\pi_0\Diffpi(M)$ by replacing homeomorphisms by diffeomorphisms.

\subsubsection{The invariant}
\label{subsubsection:intro-invariant}

Let $M$ be a 4-manifold with nonempty boundary.
For neat disks $D_0,D_1\subset M$ which are homotopic rel boundary, the topological Dax invariant $\Dax(D_0,D_1)$ is constructed in~\cite{Cha-Kim:2023-1}.
In \cref{subsection:review-topological-dax}, we review its definition and the properties we use.
To obtain an invariant of homeomorphisms, fix a neat disk $D\subset M$.
For $f\in\Homeo(M)$ such that $D$ is homotopic to $f(D)$ rel~$\partial D$, define
\[
  \DeltaD(f) = \Dax(D,f(D)).
\]
One may view $\DeltaD(f)$ as measuring the displacement of $D$ under $f$ up to topological isotopy.

The invariant takes values in the abelian group
\[
  D_M = \Z[\pi_1(M)\sm 1]^\sigma/d(\pi_3(M)).
\]
Here $\Z[\pi_1(M)\sm 1]$ is the free abelian group on the nontrivial elements of~$\pi_1(M)$, the superscript $\sigma$ denotes the subgroup fixed by the involution induced by $g\mapsto g^{-1}$, and $d$ is the Dax homomorphism.
We recall the definitions in~\cref{subsection:review-topological-dax}.
A subgroup $\Sigma_M\subset D_M$, generated by the classes represented by $g+g^{-1}$ for $g\in\pi_1(M)\sm\{1\}$, also plays a key role in our results.
The quotient $D_M/\Sigma_M$ is a $\Z_2$-vector space, which can be nontrivial only when $\pi_1(M)$ has elements of order~2.
For more details, see~\cref{equation:D_M,equation:Sigma_M,remark:D_M/Sigma_M-is-Z_2-vector-space}.

The following theorem describes how $\DeltaD$ interacts with composition and homotopy.

\begin{theoremalpha}
  \label{theorem:intro-DeltaDh-homomorphism}
  The invariant $\DeltaD$ induces a homomorphism
  \[
    \DeltaDh\colon \pi_0 \Homeoh(M) \to \Sigma_M.
  \]
\end{theoremalpha}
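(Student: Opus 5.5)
The plan has four steps: (i) $\DeltaD$ is defined on $\Homeoh(M)$; (ii) it is a homomorphism on the group level; (iii) it is an isotopy invariant; (iv) its values lie in $\Sigma_M$. Throughout I would use the properties of the topological Dax invariant recalled in \cref{subsection:review-topological-dax}. These are: additivity $\Dax(D_0,D_2)=\Dax(D_0,D_1)+\Dax(D_1,D_2)$ for disks homotopic rel boundary; invariance under isotopy rel boundary of either disk; and naturality $\Dax(h(D_0),h(D_1))=h_*\Dax(D_0,D_1)$ for $h\in\Homeo(M)$, where $h_*$ is the induced action on $D_M$.

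\textbf{Definedness and additivity.} If $f\simeq\id_M\rel\partial M$, the homotopy restricted to $D$ shows that $f(D)$ is homotopic to $D$ rel $\partial D$. Hence $\DeltaD(f)$ is defined on all of $\Homeoh(M)$. For $f,g\in\Homeoh(M)$, additivity gives
\[
  \DeltaD(fg)=\Dax(D,f(D))+\Dax(f(D),fg(D)).
\]
By naturality the second term equals $f_*\Dax(D,g(D))=f_*\DeltaD(g)$. Since $f\simeq\id\rel\partial M$ by a homotopy fixing the basepoint on $\partial M$, $f_*$ is the identity on $\pi_1(M)$ and $\pi_3(M)$, so it acts trivially on $D_M$. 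Thus $\DeltaD(fg)=\DeltaD(f)+\DeltaD(g)$. If $f$ is isotopic to $f'$ rel $\partial M$, then $f(D)$ and $f'(D)$ are isotopic, and isotopy invariance gives $\DeltaD(f)=\DeltaD(f')$. Hence $\DeltaD$ descends to a homomorphism on $\pi_0\Homeoh(M)$ with values in $D_M$.

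\textbf{Values in $\Sigma_M$.} This is the main obstacle. I would work with the quotient $D_M/\Sigma_M$. This is the $\Z_2$-vector space generated by the classes $[g]$ with $g^2=1$, $g\neq 1$, since the other symmetric basis elements are of the form $g+g^{-1}$. I would show that the image of $\DeltaD(f)$ vanishes there, and try two approaches.

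\emph{First approach.} Interpret the $\Z_2$-reduction as a self-intersection-type (Freedman--Quinn) count of order-two double points. This uses the relation between $\Dax$ and the Freedman--Quinn invariant that the paper mentions. A homotopy rel boundary from $D$ to $f(D)$ is an annulus-parametrized track. If it comes from an ambient homotopy $H_t$ of $\id$ to $f$ through maps rather than embeddings, I would use $H$ to compare it with the constant track. I would then argue that the $\Z_2$ order-two contributions pair off, with each order-two double point created appearing together with its image, so they cancel mod $2$.

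\emph{Second approach.} Use a symmetry trick. Apply the additivity formula to $\Dax(D,f(D))$ and to the reversed pair, and use the skew relation $\Dax(D_1,D_0)=-\Dax(D_0,D_1)$. Then show that the homotopy-triviality of $f$ forces $\DeltaD(f)$ to equal its image under a map that is the identity on the $g+g^{-1}$ part and acts by $-1$ on the order-two part. This would give $2x=x$ modulo $\Sigma_M$ in a $\Z_2$-vector space, hence $x=0$.

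I expect to need the explicit definition via the generic track of the homotopy, rather than only the formal properties, to make this step rigorous.
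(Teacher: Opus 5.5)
Your arguments for definedness and isotopy invariance are fine. There are two gaps, however, and the second one is the heart of the theorem.

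\textbf{Naturality is not a recalled property.} The formula $\Dax(h(D_0),h(D_1))=h_*\Dax(D_0,D_1)$ is not formal. The invariant $\Dax=\Dax_{M,\Gamma}$ depends on the almost smooth structure $\Gamma$ used to smooth the disks after stabilization. Transporting everything by $h$ only gives
\[
  \Dax_{M,\Gamma}(h(D_0),h(D_1))=h_*\Dax_{M,h^*\Gamma}(D_0,D_1),
\]
and $h^*\Gamma$ need not be equivalent to~$\Gamma$. The paper proves the identity you need, for $\pi_1$-trivial $h$, as a separate lemma (\cref{lemma:top-dax-pi1-trivial-homeomorphism}). It isotopes $h$ to be the identity near the singular point $p_0$, stabilizes there, and smooths both $D_i$ and $h(D_i')$ by the stable smoothing theorem. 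It then shows (\cref{lemma:dax-topological-computation}) that $\dax$ can be computed from a track that is only a topological immersion with transverse double points. Hence the double points of the transported homotopy correspond to those of the original one, with the same signs and group elements.

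\textbf{Neither approach gives $\DeltaD(f)\in\Sigma_M$.} Your symmetry approach is vacuous. Multiplication by $-1$ is the identity on the $\Z_2$-vector space $D_M/\Sigma_M$; indeed $2g=g+g^{-1}\in\Sigma_M$ for $g$ of order two. So any relation $x=\phi(x)$, with $\phi$ acting by $-1$ on the order-two part, is the tautology $x=x$ there. The step ``$2x=x$'' does not follow, and no such involution is available anyway.

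In your first approach, reducing to the Freedman--Quinn invariant is the right framework. But nothing makes the order-two double points of the track of $H_t|_D$ cancel in pairs, since no natural involution pairs a double point with an ``image.'' The missing idea is to use the ambient homotopy through its \emph{preimage} rather than its image:
\begin{enumerate}
  \item Take a homotopy $H\colon M\times I\to M$ rel $\partial M$ from $\id_M$ to $f^{-1}$ and make it transverse to $D$. Let $W$ be the component of $H^{-1}(D)$ containing $S=D\times 0\cup\partial D\times I\cup f(D)\times 1$. This $W$ is an \emph{embedded} 3-manifold in $M\times I$.
  \item The map $\pi_1(W)\to\pi_1(M\times I)$ is trivial, because $H(W)\subset D$ and $H$ induces an isomorphism on $\pi_1$.
  \item Push $W$ into $M\times I^2$ and surger it ambiently along a Lickorish--Wallace link. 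The link circles bound disjoint embedded disks in the 6-manifold, and the surgery framings extend over these disks because $\pi_1 V^{\TOP}_{4,2}=0$. The result is an embedded 3-ball $C$ with $\partial C=S$.
  \item The ball $C$ is homotopic rel boundary to the track of a homotopy $D\simeq f(D)$. Hence $\FQ_M(D,f(D))=\mu(C)=0$.
\end{enumerate}
Since the reduction of $\Dax$ modulo $\Sigma_M$ is $\FQ$, this gives $\DeltaD(f)\in\Sigma_M$ (\cref{lemma:fq-homotopy-trivial-homeomorphism}).
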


The image restriction to $\Sigma_M$ is a consequence of the vanishing of the Freedman--Quinn invariant for $(D,f(D))$ when $f\simeq \id_M \rel \partial M$.
We establish this vanishing in \cref{lemma:fq-homotopy-trivial-homeomorphism} and give a direct proof of the naturality needed for additivity in \cref{lemma:top-dax-pi1-trivial-homeomorphism}.

\subsubsection{Realization}
\label{subsubsection:intro-realization}

For a neat disk $D$ embedded in a 4-manifold $M$, a 2-sphere $G$ immersed in $\partial M$ is called a \emph{geometric dual in $\partial M$} if $G\cap\partial D$ is exactly one point and the intersection is transverse.
The 2-sphere $G$ may be chosen to be embedded in $\partial M$ by the loop theorem~\cite[Lemma~2.7]{Cha-Kim:2023-1}.

A 4-manifold is \emph{stably smoothable} if it admits a smooth structure after connected sum with finitely many copies of $S^2\times S^2$.
A homeomorphism of a stably smoothable 4-manifold is \emph{stably smoothable} if, after stabilization by the identity on sufficiently many $S^2\times S^2$ summands, it is topologically isotopic rel boundary to a diffeomorphism.
Our precise conventions are given at the beginning of \cref{section:realization}.

\begin{theoremalpha}
  \label{theorem:intro-realization-Sigma_M}
  If $M$ is a 4-manifold and $D$ is a neat disk in $M$ that has a geometric dual in~$\partial M$, then there exists an abelian subgroup $H\subset \pi_0 \Homeopi(M) \subset \pi_0\Homeoh(M)$ which maps onto $\Sigma_M$ under~$\DeltaDh$.
  In addition, if $M$ is smooth, then all elements of $H$ may be represented by diffeomorphisms which commute up to smooth isotopy;
  if $M$ is stably smoothable, then all elements of $H$ may be represented by stably smoothable homeomorphisms.
\end{theoremalpha}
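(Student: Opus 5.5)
The idea is to realize each generator $g+g^{-1}$ of $\Sigma_M$ by an explicit homeomorphism supported near $D\cup G$ and a collar of $\partial M$. Then we check that these homeomorphisms are pseudoisotopic to the identity and commute.

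The construction for a single $g\in\pi_1(M)\sm\{1\}$ is a "barbell"-type or boundary-twist diffeomorphism, as in Budney--Gabai and Schneiderman--Teichner. Choose an embedded loop $\gamma$ representing $g$ based near $\partial D$. Using the geometric dual $G\subset\partial M$ (embedded, by \cite[Lemma~2.7]{Cha-Kim:2023-1}), push a parallel copy of $G$ into the collar $\partial M\times I$ and tube it along $\gamma$. This gives an embedded sphere $S_g$ with a framed neighborhood, together with a second sphere $G'$ (a collar copy of $G$) that meets $S_g$ in a controlled way. On a neighborhood of the resulting configuration (a barbell $S^2\times D^2\natural S^2\times D^2$, or a plumbing) we take the standard barbell diffeomorphism $f_g$, extended by the identity.

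All constructions take place inside a neighborhood of $\partial M\cup\gamma$, which can be made a smooth codimension-zero submanifold $N$ by smoothing an open collar and a neighborhood of a 1-complex. Hence $f_g$ is a diffeomorphism whenever $M$ is smooth. For stably smoothable $M$, the same local support can be made to miss the non-smoothable part, so $f_g$ is stably smoothable. Since all $f_g$ are supported in small neighborhoods, and barbell maps on disjoint or nested parallel copies commute up to smooth isotopy, the group $H$ generated by the $f_g$ is abelian.

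Next we compute $\DeltaD(f_g)$. The image $f_g(D)$ is obtained from $D$ by ambiently tubing into $S_g$ or its dual along $\gamma$ and then back. A self-intersection/Dax-style computation near the dual, using the formula in \cref{subsection:review-topological-dax} for disks differing by a finger/Whitney move along a loop, should give $\Dax(D,f_g(D))=\pm(g+g^{-1})$. The symmetric pair arises because the created double point is counted with both group elements. Additivity is supplied by \cref{theorem:intro-DeltaDh-homomorphism}, so $\DeltaDh(H)\supseteq\Sigma_M$. Equality holds because the image already lies in $\Sigma_M$. Adjusting the sign (by using $f_g^{-1}$) is harmless.

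The main obstacle is pseudoisotopy-triviality of $f_g$ in the topological category without good fundamental group. The plan is to exhibit the pseudoisotopy explicitly. Barbell maps are known to become isotopic to the identity after crossing with $I$, since in $M\times I$ the spheres $S_g$ can be unlinked by pushing one in the $I$-direction, and the dual sphere in $\partial M$ supplies the necessary embedded Whitney moves. Doing this with the geometric dual $G$ keeps every step embedded, so it uses no disk embedding theorem. This argument needs careful bookkeeping to ensure that the pseudoisotopy is the identity on $\partial M\times I$, since $G$ lies in $\partial M$ and must be used only via collar pushoffs.
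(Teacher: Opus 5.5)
Your outline has the right shape: barbell maps of Budney--Gabai realizing $\pm(g+g^{-1})$, disjoint supports for commutativity, and generating $H$ directly by these maps is fine. But the one choice that makes it work is missing, and without it neither the value of $\DeltaD$ nor the pseudoisotopy is justified.

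What you describe is not a barbell. The cuffs must be disjoint embedded spheres joined by an arc, whereas you ``tube a push-off of $G$ along $\gamma$'' and add a collar copy $G'$ that meets it. You never say which sphere $D$ ends up tubed into, and that is exactly what determines $\Dax(D,f_g(D))$.

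The construction that works is as follows. Take one cuff $S_1$ to be a push-off of $G$ meeting $D$ once. Take the other cuff to be $\partial D^3$ for a small $3$-ball $D^3$ disjoint from $D\cup S_1$. Let the bar run from $S_1$ into $\inte D^3$, cross $D^3$ transversely, and return to $\partial D^3$ along a loop representing $g$. Since $S_1\cap D$ is one point, $\phi_B(D)$ is $D$ tubed into $\partial D^3$ along the bar (extended by an arc in $S_1$). Because the tube passes through the ball bounded by the sphere it is attached to, this is Gabai's self-referential disk. For it, \cite[Theorem~4.9]{Gabai:2021-1} gives $\Dax(D,\phi_B(D))=\pm(g+g^{-1})$, with the sign changing with the orientation of $D^3$.

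Your heuristic of a double point ``counted with both group elements'' is no substitute. If $D$ is tubed into a sphere that is essential in $\pi_2(M)$, the new disk is not even homotopic to $D$ rel~$\partial$. A copy of $G$ is such a sphere, since it meets $D$ algebraically once. If instead it is tubed into a sphere bounding a ball the bar does not pierce, the new disk is isotopic to~$D$.

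The pseudoisotopy step rests on a false mechanism. Barbell maps are not pseudoisotopic to the identity in general. A barbell map tubes any surface meeting one cuff once into the other cuff, so it can act nontrivially on $H_2$. The input actually needed is \cite[Proposition~2.6]{Budney-Gabai:2025-1}: a barbell map is smoothly pseudoisotopic to the identity when one cuff bounds an embedded $3$-ball. That is again the cuff $\partial D^3$ above, so the same missing choice supplies both the Dax value and the pseudoisotopy. The geometric dual plays no role in this step.

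Nor is the topological category an obstacle here. Everything is done in the smooth manifold $(M\sm p_0)_\Gamma$ and extended by the identity near $p_0$, so no disk embedding theorem or goodness assumption enters.

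Where the dual is needed in the topological case is the Dax computation. $D$ is only topologically embedded and does not lie in your smoothed neighborhood of $\partial M\cup\gamma$. So before invoking Gabai's smooth theorem, one isotopes $D$ to a smooth disk in $(M\sm p_0)_\Gamma$ by the light bulb smoothing theorem \cite[Theorem~C]{Cha-Kim:2023-1}, which uses the dual in $\partial M$.

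The smooth structure must also be the fixed $\Gamma$, namely the restriction of $\Theta$ in the stably smoothable case, not an independent smoothing. This is needed for the value to be $\Delta_{D,\Gamma}$, and for $f\csum\id$ to be a diffeomorphism of $(M\csum k(S^2\times S^2))_\Theta$.
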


The realizing homeomorphisms are constructed explicitly using barbell maps of Budney and Gabai~\cite{Budney-Gabai:2019-1,Budney-Gabai:2025-1}.
The commutativity in~\cref{theorem:intro-realization-Sigma_M} is essential in realizing free abelian retracts in the results below.

\subsubsection{Structures in the mapping class groups}
\label{subsubsection:intro-structures}

We construct concrete free abelian subgroups of mapping class groups by estimating the size of the target group~$\Sigma_M$.
Our estimates are expressed in terms of the inversion orbits of the fundamental group, Wall's reduced self-intersection invariant, and the third homology of the universal cover.
The inversion orbits give generators and the latter two bound the loss of rank caused by the Dax relations.
Write
\[
  \mu_2\colon\pi_2(M)\to\Z[\pi_1(M)]/\langle 1,g-g^{-1}\rangle
\]
for Wall's reduced self-intersection invariant~\cite[Chapter~5]{Wall-Ranicki:1999-1}, \cite[Section~1.7]{Freedman-Quinn:1990-1}, and let $\rk\mu_2$ be the rank of the subgroup generated by its image.
Let $\widetilde M$ be the universal cover of $M$ and $\beta_3(\widetilde M)$ its third Betti number.
Let $T_M$ be the set of nontrivial elements of order~2 in $\pi_1(M)$, and let $\iota_M$ be the number of non-identity inversion orbits in~$\pi_1(M)$.
Thus $\iota_M=\frac12(|\pi_1(M)|+|T_M|-1)$ when $\pi_1(M)$ is finite, and $\iota_M=|\pi_1(M)|$ otherwise.
Let $\Z^r$ denote the free abelian group of rank~$r$, including when $r$ is infinite.

For subgroups $A\subset\Lambda\subset\Gamma$, we call $A$ a \emph{common retract} of $\Lambda$ and $\Gamma$ if it is a retract of $\Gamma$, and hence also of~$\Lambda$.
We define a \emph{common direct factor} similarly.

\begin{theoremalpha}
  \label{theorem:intro-retract-direct-factor}
  Let $M$ be a 4-manifold containing a neat disk with a geometric dual in~$\partial M$.
  Suppose $\rk\mu_2 < \infty$ and $\beta_3(\widetilde M) < \infty$.
  Let $r=\iota_M$ if $\iota_M$ is infinite;
  otherwise, let
  \[
    r = \iota_M-\bigl(\rk\mu_2+\beta_3(\widetilde M)\bigr)
  \]
  and suppose $r>0$.
  Then $\pi_0\Homeopi(M)$ and $\pi_0\Homeoh(M)$ have a common free abelian retract $A\cong \Z^r$ of rank~$r$.
  In addition, if $\pi_1(\partial M)\to\pi_1(M)$ is surjective, then $A$ is a common direct factor of $\pi_0\Homeopi(M)$ and $\pi_0\Homeoh(M)$.
\end{theoremalpha}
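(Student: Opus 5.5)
We combine \cref{theorem:intro-DeltaDh-homomorphism} and \cref{theorem:intro-realization-Sigma_M} with a rank estimate for $\Sigma_M$. Fix the neat disk $D$ with a geometric dual in $\partial M$. We have the homomorphism $\DeltaDh\colon\pi_0\Homeoh(M)\to\Sigma_M$ and an abelian subgroup $H\subset\pi_0\Homeopi(M)$ with $\DeltaDh(H)=\Sigma_M$.

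The first step is algebraic: show that $\Sigma_M$ contains a free abelian subgroup, indeed a free quotient, of rank~$r$. The group $\Z[\pi_1(M)\sm 1]^\sigma$ is free abelian, with basis indexed by the non-identity inversion orbits: $g+g^{-1}$ for $g\ne g^{-1}$ and $g$ for $g\in T_M$. The subgroup $\Sigma'$ generated by the $g+g^{-1}$ is therefore free of rank $\iota_M$, with $2g$ used for $g\in T_M$. The group $\Sigma_M$ is the image of $\Sigma'$ in $D_M$, so it equals $\Sigma'/(\Sigma'\cap d(\pi_3 M))$.

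Next, bound the rank of $d(\pi_3(M))$. Here I would use the structure of the Dax homomorphism, presumably reviewed in \cref{subsection:review-topological-dax}. Via the Hurewicz map $\pi_3(M)\to H_3(\widetilde M)$ and the $\Gamma$-functor term $\Gamma(\pi_2 M)$, $d$ factors through an exact sequence $\Gamma(\pi_2 M)\to\pi_3 M\to H_3(\widetilde M)$. On the image of $\Gamma(\pi_2M)$, $d$ is computed by self-intersection data and has rank at most $\rk\mu_2$. The remaining contribution has rank at most $\beta_3(\widetilde M)$. Hence $\rk\bigl(\Sigma'\cap d(\pi_3M)\bigr)\le\rk\mu_2+\beta_3(\widetilde M)$. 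I expect this step to be the main obstacle: one must verify precisely that $d$ on $\Gamma(\pi_2)$ is governed by $\mu_2$ and lands in the $\sigma$-invariant part, with rank controlled by $\rk\mu_2$.

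Given this bound, $\Sigma_M/\text{torsion}$ is a torsion-free abelian group of rank at least $r$ in the finite case. When $\iota_M$ is infinite, the rank is infinite, since we quotient a free group of infinite rank by a subgroup of finite rank. We then choose a surjection $\Sigma_M\to\Z^r$. For instance, pick $r$ basis elements of $\Sigma'$ spanning a direct summand that meets the saturation of $\Sigma'\cap d(\pi_3M)$ trivially, and project $\Sigma'$ onto it along a complement containing that saturation. Concretely, pass to the free group $\Sigma'/\mathrm{sat}$, which has rank $\ge r$ because $\mathrm{sat}$ has finite rank. A subgroup of a free abelian group is free, so the image is free and we can project onto $\Z^r$.

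Finally, build the retract. Let $\phi=p\circ\DeltaDh\colon\pi_0\Homeoh(M)\to\Z^r$, which is surjective when restricted to $H$. Choose $h_1,\dots\in H$ mapping to a basis of $\Z^r$. Since $H$ is abelian, they generate a subgroup $A\subset H\subset\pi_0\Homeopi(M)$ on which $\phi$ is an isomorphism, so $A\cong\Z^r$ is free. Then $s\circ\phi$ is a retraction of $\pi_0\Homeoh(M)$ onto $A$, where $s$ is the inverse of $\phi|_A$, and it restricts to a retraction of $\pi_0\Homeopi(M)$. For the direct factor statement, a retract $A$ is a direct factor exactly when $\ker\phi$ is normal with $A$ central, so it suffices to show $A$ is central in $\pi_0\Homeoh(M)$. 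When $\pi_1(\partial M)\to\pi_1(M)$ is surjective, the barbell maps realizing $A$ are supported near the boundary, and arbitrary $f$ can be isotoped to commute with them. Alternatively, conjugation by $f$ acts on $\Sigma_M$ via $f_*$ on $\pi_1 M$, which is the identity because $f$ fixes $\partial M$ and the boundary surjects. Together with $\DeltaDh$ being a conjugation-invariant homomorphism, this gives $\pi_0\Homeoh(M)\cong A\times\ker\phi$.
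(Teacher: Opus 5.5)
Your proposal is correct and is essentially the paper's proof. The step you flag as the main obstacle is exactly the Whitehead/Kosanovi\'c--Teichner diagram the paper cites (\cite[Theorem~3.16]{Kosanovic-Teichner:2024-2}): on $\Gamma(\pi_2(M))$ the map $d$ is $\Gamma(\mu_2)$ followed by $g\mapsto g+g^{-1}$, which bounds $\rk d(\pi_3(M))$ by $\rk\mu_2+\beta_3(\widetilde M)$. From there the argument matches the paper: \cref{theorem:intro-realization-Sigma_M} supplies $H$, lifting a basis of $\Z^r$ gives the retract, and centrality comes from barbell maps placed in a collar of $\partial M$.

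Your direct count $\Sigma_M\cong\Sigma'/(\Sigma'\cap d(\pi_3(M)))$ with $\Sigma'\cong\Z^{\iota_M}$ neatly replaces the paper's snake-lemma route through $D_M$. In the infinite-rank case, though, freeness of $\Sigma'/\mathrm{sat}$ needs a justification: $\Sigma'\cap d(\pi_3(M))$ is finitely generated, so it lies in a summand spanned by finitely many basis elements. Your first ``for instance'' recipe, using the standard basis, can fail.

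Discard the ``alternative'' centrality argument. Any homomorphism to an abelian group is conjugation-invariant, so it only gives $faf^{-1}a^{-1}\in\ker\phi$, not that $A$ commutes with $\ker\phi$. For example, the Klein bottle group $\langle a,t\mid tat^{-1}=a^{-1}\rangle$ retracts onto $\langle t\rangle$, which is not a direct factor.
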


The following consequences illustrate the rank estimate.
In both statements, we continue to assume that $M$ contains a neat disk with a geometric dual in~$\partial M$.

\begin{enumerate}
  \item[(a)]
  If $M$ is non-simply connected, $\mu_2=0$ and $H_3(\widetilde M;\Q)=0$, then there is a common infinite cyclic retract of $\pi_0 \Homeopi(M)$ and $\pi_0 \Homeoh(M)$.
  \item[(b)]
  If $\pi_1(M)$ is infinite, $\rk\mu_2<\infty$ and $\beta_3(\widetilde M)<\infty$, then there is a common $\Z^\infty$ retract of $\pi_0 \Homeopi(M)$ and $\pi_0 \Homeoh(M)$.
  In particular, neither is finitely generated.
\end{enumerate}

When $M$ is smooth, these retracts admit realizations by diffeomorphisms.
The precise smooth and stably smoothable refinements are given in the addendum to~\cref{theorem:intro-retract-direct-factor} in~\cref{section:detecting-structures}.

The geometric dual hypothesis is a nontrivial restriction, but it holds for broad families of examples.
For instance, \cref{theorem:intro-retract-direct-factor} applies to $M=X\bcsum(S^2\times D^2)$ whenever $X$ is a non-contractible aspherical 4-manifold with nonempty boundary; see~\cref{lemma:aspherical-bcsum-S^2-times-D^2,subsubsection:examples-from-aspherical}.
For other types of examples, see also~\cref{example:nonaspherical-with-2-torsion-in-pi_1,,example:nonaspherical-one-D2-times-RP2-summand,,example:M_k(L)-nontrivial-ks,,example:nonsmoothable-with-2-torsion-in-pi_1,,example:nonsmoothable-with-ks-zero}.

\subsubsection{Non-smoothable 4-manifolds and freely slice links}
\label{subsubsection:intro-freely-slice}

The following result gives non-smoothable examples with free fundamental groups of arbitrary rank.
Let $\ks(M)\in\Z_2$ denote the Kirby--Siebenmann invariant of a 4-manifold~$M$.

\begin{theoremalpha}
  \label{theorem:intro-Z^infty-factor-nonsmoothable}
  For every integer $n\ge 1$ and for every $\epsilon\in\Z_2$, there exist infinitely many pairwise non-homeomorphic non-smoothable compact 4-manifolds $M$ with $\pi_1(M)$ free of rank $n$ and $\ks(M)=\epsilon$, such that $\pi_0 \Homeopi(M)$ and $\pi_0\Homeoh(M)$ have a common $\Z^\infty$ direct factor.
\end{theoremalpha}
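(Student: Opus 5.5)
The plan is to apply \cref{theorem:intro-retract-direct-factor} in its infinite-$\iota_M$ form (consequence (b)) to a suitable family of manifolds. Free groups are infinite, so $\iota_M$ is infinite. It then suffices to build compact non-smoothable $M$ with $\pi_1(M)\cong F_n$, $\ks(M)=\epsilon$, satisfying:
\begin{enumerate}
  \item $\rk\mu_2<\infty$ and $\beta_3(\widetilde M)<\infty$;
  \item $M$ contains a neat disk with a geometric dual in $\partial M$;
  \item $\pi_1(\partial M)\to\pi_1(M)$ is surjective, which upgrades the retract to a direct factor.
\end{enumerate}

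The construction I would use starts from a freely slice (e.g.\ boundary, or good) $n$-component link $L\subset S^3$ with nontrivial invariants. I would take the topological exterior $W_L$ of freely slice disks in $D^4$, obtained via Freedman's theorem for free groups, so that $\pi_1(W_L)\cong F_n$ generated by meridians. The required properties then follow in order.
\begin{itemize}
  \item \emph{Surjectivity on $\pi_1$.} The boundary is $0$-surgery-type on $L$ glued to the disk exteriors, and meridians lie in $\partial W_L$, giving (3).
  \item \emph{Geometric dual.} I would form $M=W_L\bcsum(S^2\times D^2)$, or a twisted version with a $*\CP^2$-type summand to adjust $\ks$. Then $D=\mathrm{pt}\times D^2$ in the $S^2\times D^2$ summand has geometric dual $S^2\times\mathrm{pt}\subset\partial M$, giving (2). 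Alternatively, I would use the paper's $M_k(L)$ construction, which is cited in the examples with nontrivial $\ks$.
  \item \emph{Homological finiteness.} $\widetilde M$ is obtained from the universal cover of a 2-complex-like manifold. I would compute $\pi_2(M)$ as a finitely generated $\Z[F_n]$-module and bound $\rk\mu_2$ by the number of generators, giving finite rank. For $\beta_3(\widetilde M)$, I would use that $M$ is homotopy equivalent rel structure to a finite 3-complex with $H_3$ controlled via duality $H_3(\widetilde M)\cong H^1_c(\widetilde M,\partial\widetilde M)$. For free $\pi_1$ this should be finite-dimensional, giving (1).
\end{itemize}

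Next come the requirements on $\ks$ and non-smoothability. To achieve $\ks=1$, I would include a Freedman star-partner summand, e.g.\ connected sum with $*\CP^2$ or use of the $E_8$-manifold. Non-smoothability then follows for $\ks=1$ directly. For $\ks=0$, I would argue via Donaldson's theorem on a definite non-diagonal intersection form (e.g.\ $E_8\oplus E_8$ summand) after capping off, or via the link $L$ not being smoothly slice in the appropriate sense, as in the paper's later examples.

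Finally, infinitely many pairwise non-homeomorphic examples are obtained by varying $L$ over links distinguished by a homeomorphism invariant of $\partial M$ (e.g.\ Alexander modules, or Milnor/Blanchfield-type invariants of the boundary 3-manifold), or by varying the intersection form rank. Since the boundary is determined by $L$, distinct boundaries give distinct $M$.

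The main obstacle I expect is the simultaneous control of non-smoothability with $\ks=0$ and the finiteness of $\beta_3(\widetilde M)$. The former needs a gauge-theoretic obstruction compatible with a free fundamental group and nonempty boundary. I would first try capping the boundary off by a smooth piece and applying Donaldson's theorem to the closed-up definite manifold.
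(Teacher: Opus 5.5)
Your overall frame, applying \cref{theorem:intro-retract-direct-factor} to $X(L)\bcsum(S^2\times D^2)$ with the dual from the $S^2\times D^2$ summand and meridians giving surjectivity of $\pi_1(\partial M)\to\pi_1(M)$, is the paper's. There are two genuine gaps.

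\textbf{The $\ks$ adjustment destroys the invariant, and your $\rk\mu_2$ bound is wrong.} A closed summand such as $*\CP^2$ or an $E_8$-manifold, or an ``$E_8\oplus E_8$ summand'' for Donaldson, contributes spherical classes $x,y$ with $\lambda(x,y)=1$. Since $\mu_2$ is quadratic,
\[
  \mu_2(x+gy)-\mu_2(x)-\mu_2(gy)=[\lambda(x,gy)]=[g^{\pm1}]
\]
for every $g$. So the image of $\mu_2$ generates all of $\Z[\pi_1(M)]/\langle 1,g-g^{-1}\rangle$, which has infinite rank. By \cref{equation:whitehead-dax-wall}, $d(\pi_3(M))$ then contains every $g+g^{-1}$, so $\Sigma_M=0$ and $\DeltaDh$ vanishes identically. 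The real tension is therefore with $\rk\mu_2$, not with $\beta_3(\widetilde M)$, which such summands leave unchanged. The same example shows that finite generation of $\pi_2(M)$ over $\Z[\pi_1(M)]$ gives no bound on $\rk\mu_2$.

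What is needed is $\mu_2=0$ and $H_3(\widetilde M)=0$. The paper gets these from $X(L)\simeq\bigvee^n S^1$ (Freedman--Quinn 11.6C). Then $\widetilde M\simeq\bigvee^\infty S^2$, with $\pi_2$ generated by translates of disjoint embedded framed spheres (\cref{lemma:aspherical-bcsum-S^2-times-D^2}). Hence $\ks$ must be changed without changing the homotopy type. The paper takes $W_1\bcsum M_k(L)$ with $W_1$ contractible and $\ks(W_1)=1$ (\cref{example:M_k(L)-nontrivial-ks}). This manifold is non-smoothable because $\ks=1$, and $W_1\bcsum X(L)$ is still aspherical.

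\textbf{Non-smoothability for $\ks=0$ is left open, and the link route lacks its ingredients.} First, you cannot take an arbitrary boundary or good boundary link. Free groups are not known to be good, and free sliceness of all good boundary links is equivalent to the open surgery conjecture. The paper instead uses $\Wh^+(L_n(J))$, which is freely slice by Freedman--Teichner since $L_n(J)$ is homotopically trivial$^+$; for $n=1$ it uses $\Wh^+(J)$. It shows $\Wh^+(L_n(J))$ is not smoothly slice in any rational homology ball when $\tau(J)>0$ (\cref{proposition:Wh(L_n(J))-non-smoothly-slice}). The proof reduces by covering link calculus to the knot $\Wh^+(P^{n-1}(J))$ and then applies the $\tau$ inequalities of Ozsv\'ath--Szab\'o and Hedden.

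Second, you need the handle argument of \cref{theorem:examples-from-freely-slice-links}. Given a smoothing of $M_k(L)$, attach 3-handles along $S^2\times\mathrm{pt}$ in the $S^2\times S^1$ boundary summands, then 2-handles along the meridians. The result is a smooth homotopy 4-ball in which the cocores smoothly slice $L$, a contradiction.

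Your gauge-theoretic idea can be salvaged by putting the obstruction in a contractible piece, such as $W_0=W_1\bcsum W_1$ from \cref{example:nonsmoothable-with-ks-zero}, and running the same handle argument. Finally, pairwise non-homeomorphism is easiest by varying $k$, since $H_1(\partial M)\cong\Z^{n+k}$; varying $L$ would need an argument you have not given.
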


\begin{figure}[!b]
  \centering
  \includestandalone{fig-gbl-example}
  \caption{
    A family of $n$-component freely slice links ($n\ge 2$) that are not smoothly slice in any smooth homotopy 4-ball.
    Each component is the positive Whitehead double of an unknotted circle.
    The box {\setlength{\fboxrule}{1pt}\fbox{\footnotesize$\#^{\smash{k}} T$}} indicates that the $k$-fold connected sum of the right-handed trefoil $T$ is tied in, without introducing any twists among the strands involved, where $k\ge 1$ is arbitrary.}
  \label{figure:freely-slice-example}
\end{figure}

For the $\ks(M)=0$ case of~\cref{theorem:intro-Z^infty-factor-nonsmoothable}, a family of examples arises from the link concordance approach to the topological surgery conjecture, a central open problem in 4-dimensional topology~\cite[p.~243]{Freedman-Quinn:1990-1}.
A link $L\subset S^3$ is \emph{freely slice} if it bounds disjoint disks in $D^4$ whose exterior has free fundamental group generated by the meridians.
The topological surgery conjecture for arbitrary fundamental groups is equivalent to the assertion that all links in a certain class, called \emph{good boundary links}, are freely slice~\cite{Freedman-Quinn:1990-1,Freedman-Teichner:1995-2}.
This motivated the construction of large classes of freely slice links in~\cite{Freedman:1985-1,Freedman:1988-2,Freedman:1993-1,Freedman-Teichner:1995-2,Cha-Kim-Powell:2020-1}.

For our purpose, it is useful that many known freely slice links are not smoothly slice in any smooth homotopy 4-ball.
We show that, if $X$ is a free slice disk exterior for such a link, then $M=X\bcsum (S^2\times D^2)$ is non-smoothable, has $\ks(M)=0$ and has the mapping class group property in~\cref{theorem:intro-Z^infty-factor-nonsmoothable}.
An explicit family of such links is shown in~\cref{figure:freely-slice-example}.
See \cref{section:non-smoothable-4-manifolds} for details and further discussion.

The $\epsilon=1$ case and further non-smoothable examples with either Kirby--Siebenmann invariant are treated in~\cref{subsection:non-smoothable-given-homotopy-type}.

\subsubsection{Applications to Torelli groups}

For simply connected 4-manifolds $M$, the topological Torelli group $\cI(M)$ is well understood.
If $M$ is closed, $\cI(M)$ is trivial by~\cite[Theorem~1.1]{Quinn:1986-1}; see also~\cite{Gabai-Gay-Hartman-Krushkal-Powell:2023-1}.
For compact $M$ with nonempty boundary, a full computation of $\cI(M)$ is given by~\cite[Corollary~D]{Orson-Powell:2022-1}.
We focus on the non-simply connected case.
Since $\pi_0\Homeoh(M)\subset\cI(M)$, \cref{theorem:intro-retract-direct-factor} already detects free abelian subgroups in~$\cI(M)$.
To investigate homologically trivial homeomorphisms which are not homotopy-trivial, we study the quotient $\cI(M)/\pi_0\Homeoh(M)$, the group of proper homotopy classes rel~$\partial M$ of homologically trivial homeomorphisms.

The invariant $\DeltaD$ is also a homomorphism on the larger subgroup of homeomorphisms inducing the identity on $\pi_1(M)$ and preserving the relative homotopy class of~$D$.
Its reduction modulo $\Sigma_M$ is the Freedman--Quinn invariant of disks~\cite{Kosanovic-Teichner:2024-2}, and it vanishes on~$\Homeoh(M)$.
In the presence of a boundary dual, arc pushing maps realize every value of $D_M$ by Torelli homeomorphisms;
see~\cref{theorem:realization-DeltaD}.
This gives abelian quotients of subgroups of $\cI(M)/\pi_0\Homeoh(M)$.

To state a consequence of this, we use the notion of the Pr\"ufer rank, which is a dimension-type measure of the complexity of the group structure.
For a group $\Gamma$, the \emph{Pr\"ufer rank} $\rk_{\Pr}(\Gamma)$ is defined to be the supremum of the minimal numbers of generators of its finitely generated subgroups.
For example, the free group on two generators contains a free group of arbitrarily large finite rank, and thus has infinite Pr\"ufer rank.

\begin{theoremalpha}
  \label{theorem:intro-torelli-prufer-rank}
  Let $M$ be a 4-manifold containing a neat disk with a geometric dual in~$\partial M$.
  Suppose \(H_3(\widetilde M)\otimes\Z_2\) is finitely generated.
  Let $s=|T_M|$ if $T_M$ is infinite;
  otherwise, let $s = |T_M| - \dim_{\Z_2}(H_3(\widetilde M)\otimes\Z_2)$.
  Then
  \[
    \rk_{\Pr}(\cI(M)) \ge \rk_{\Pr}(\cI(M)/\pi_0\Homeoh(M)) \ge s.
  \]
  In addition, if $M$ is smooth, then
  \[
    \begin{aligned}
      \rk_{\Pr}(\cI^s(M)) & \ge \rk_{\Pr}\bigl(\cI^s(M)/\pi_0\Diffh(M)\bigr) \ge s.
    \end{aligned}
  \]
\end{theoremalpha}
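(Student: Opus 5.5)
The first inequality is formal, because Prüfer rank does not increase under quotients. If $\Gamma\to Q$ is surjective, then every finitely generated subgroup of $Q$ is the image of a finitely generated subgroup of $\Gamma$, and its minimal number of generators is at most that of the preimage subgroup. So it suffices to show $\rk_{\Pr}(\cI(M)/\pi_0\Homeoh(M))\ge s$. For this I will exhibit, for every finite $k\le s$, a finitely generated subgroup of the quotient that surjects onto $\Z_2^k$. Such a subgroup needs at least $k$ generators.

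\emph{Step 1: a homomorphism to $D_M/\Sigma_M$.} Let $\cI_D(M)\subset\cI(M)$ be the subgroup of classes of homologically trivial homeomorphisms $f$ with $f_*=\id$ on $\pi_1(M)$ and $f(D)\simeq D$ rel $\partial D$. As stated above, $\DeltaD$ is a homomorphism on $\cI_D(M)$ with values in $D_M$. Its reduction modulo $\Sigma_M$ is the Freedman--Quinn invariant of disks, and this reduction vanishes on $\Homeoh(M)$. Also, $\pi_0\Homeoh(M)\subset\cI_D(M)$, since a homotopy rel boundary to the identity fixes $\pi_1$ and the homotopy class of $D$. Hence we get a homomorphism
\[
  \overline{\Delta}\colon \cI_D(M)/\pi_0\Homeoh(M)\to D_M/\Sigma_M .
\]
This is a subgroup of $\cI(M)/\pi_0\Homeoh(M)$. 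By \cref{theorem:realization-DeltaD}, every value of $D_M$ is realized by Torelli homeomorphisms, namely arc pushing maps, and these lie in $\cI_D(M)$. So $\overline{\Delta}$ is surjective.

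\emph{Step 2: estimating $\dim_{\Z_2} D_M/\Sigma_M$.} This is the main obstacle. First, $\Z[\pi_1(M)\sm 1]^\sigma$ is free abelian on the elements $g+g^{-1}$ for $g\notin T_M$ (one per orbit) and the elements $t$ for $t\in T_M$. Modulo the span of the $g+g^{-1}$ (which include $2t$), the quotient is $\Z_2[T_M]$, of dimension $|T_M|$.

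Passing to $D_M$ we also divide by $d(\pi_3(M))$, so
\[
  D_M/\Sigma_M\cong \Z_2[T_M]/\overline{d(\pi_3(M))},
\]
and I need to bound the image of $\pi_3(M)$ in $\Z_2[T_M]$. I plan to show this image is a $\Z_2$-vector space of dimension at most $\dim_{\Z_2}(H_3(\widetilde M)\otimes\Z_2)$ by proving that the composite
\[
  \pi_3(M)\xrightarrow{d}\Z[\pi_1(M)\sm 1]^\sigma\to\Z_2[T_M]
\]
factors through the Hurewicz map $\pi_3(M)=\pi_3(\widetilde M)\to H_3(\widetilde M)\otimes\Z_2$. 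The idea is that, after the $\Sigma_M$-reduction, the $t$-coefficient of $d(\alpha)$ counts mod $2$ the double points of a generic track of a homotopy of $D$. This count should be a $\Z_2$-homological intersection quantity, namely pairing the class of $\alpha$ with a class determined by $D$ and $t$, and so it should depend only on the $\Z_2$-homology class of $\alpha$ in $\widetilde M$. If a full factorization is awkward, the same bound would follow from showing that the kernel of the Hurewicz map maps into $\Sigma_M$; I would verify this by using the analogous rank estimate already underlying \cref{theorem:intro-retract-direct-factor}, reduced mod 2. Granting the factorization, $\dim_{\Z_2}(D_M/\Sigma_M)\ge |T_M|-\dim_{\Z_2}(H_3(\widetilde M)\otimes\Z_2)$, and this is infinite if $T_M$ is infinite, since the subtracted term is finite by hypothesis.

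\emph{Step 3: conclusion, and the smooth case.} Given $k\le s$, choose $k$ independent vectors in $D_M/\Sigma_M$ and lift them via $\overline{\Delta}$ to elements of the quotient group. The subgroup these lifts generate surjects onto $\Z_2^k$, so it needs at least $k$ generators. Hence $\rk_{\Pr}(\cI(M)/\pi_0\Homeoh(M))\ge s$. For smooth $M$, the arc pushing maps are diffeomorphisms, so the same argument applies with $\cI^s(M)$ and $\pi_0\Diffh(M)$. The map $\cI^s(M)\to\cI(M)$ sends $\pi_0\Diffh(M)$ into $\pi_0\Homeoh(M)$, so $\overline{\Delta}$ pulls back to the smooth quotient and is still surjective.
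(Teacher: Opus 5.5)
Your proposal is correct and follows essentially the same route as the paper: the homomorphism $\tDeltaD$ on $H_D(M)$ modulo homotopy (\cref{corollary:homomorphism-modulo-homotopy}), surjectivity onto $D_M/\Sigma_M$ via arc pushing maps (\cref{theorem:realization-DeltaD}), the bound $\dim_{\Z_2} D_M/\Sigma_M \ge s$, and monotonicity of Pr\"ufer rank under quotients. For Step~2 the paper simply cites \cite[Lemma~4.2]{Schneiderman-Teichner:2022-1} for the factorization of $\mu = q\circ d$ through the Hurewicz map, and your fallback is an equivalent justification: $d$ sends the kernel of $\pi_3(M)\to H_3(\widetilde M)$, which is the image of $\Gamma(\pi_2(M))$, into the span of the $g+g^{-1}$ by the left square of \cref{equation:whitehead-dax-wall}, so the intersection-pairing heuristic is unnecessary.
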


Indeed, when $s>0$, the proof constructs a subgroup $H\subset\cI(M)/\pi_0\Homeoh(M)$ admitting an epimorphism onto $(\Z_2)^s$, the $\Z_2$-vector space of dimension~$s$.
When $M$ is smooth, $H$ can be chosen inside the image of $\cI^s(M)/\pi_0\Diffh(M)$, so the lower bound holds for the Pr\"ufer rank of this image as well.
See~\cref{theorem:subgroup-in-torelli,corollary:torelli-modulo-homotopy-prufer-rank} for these refinements and the stably smoothable case.
Applications to smooth and non-smoothable manifolds are given in~\cref{example:nonaspherical-with-2-torsion-in-pi_1,,example:nonaspherical-one-D2-times-RP2-summand,,example:nonaspherical-X-D2-times-RP2-summands,,example:nonsmoothable-with-2-torsion-in-pi_1,,example:nonsmoothable-with-ks-zero}.

\paragraph{Torelli groups of closed 4-manifolds}

The same approach applies to closed 4-manifolds containing an embedded sphere with a framed geometric dual, using the Freedman--Quinn invariant of spheres in place of the disk invariant.
In this case, a geometric dual in the interior is not necessarily preserved by a homeomorphism, causing additional restrictions.
However, we can still obtain an analogue of~\cref{theorem:intro-torelli-prufer-rank}, which is given in~\cref{corollary:torelli-prufer-rank-closed} (see also \cref{corollary:subgroup-in-torelli-closed}), with applications in~\cref{subsection:applications-closed}.

In particular, our results apply to compact complex surfaces, as described below.

\subsubsection{Applications to complex surfaces}
\label{subsubsection:intro-complex-surface}

Smooth Torelli groups are known to have rich structures in many 4-manifold examples.
For instance, see~\cite{Ruberman:1998-1,Ruberman:1999-1,Baraglia:2026-1,Baraglia-Tomlin:2025-1}.
In contrast, the $K3$ surface and the Enriques surface $E$ are expected to exhibit Torelli rigidity:
it is conjectured that the smooth Torelli groups $\cI^s(K3)$ and $\cI^s(E)$ are trivial~\cite{Looijenga:2021-1,Raman:2025-1}.
Raman proved that $\cI^s(K3)$ is trivial if and only if $\cI^s(E)$ is trivial, using his Birman--Hilden type theorem relating the mapping class group of $E$ to the symmetric mapping class group of its K3 cover~\cite{Raman:2025-1}.
The difficulty of these conjectures also motivates the study of the corresponding topological Torelli groups, as discussed in~\cite[Section~9]{Raman:2025-1}.

In the topological category, $\cI(K3)=\pi_0\Homeoh(K3)=\{1\}$ since $K3$ is simply connected~\cite{Quinn:1986-1,Gabai-Gay-Hartman-Krushkal-Powell:2023-1}.
We show, however, that $\cI(E)/\pi_0\Homeoh(E)$ has a nontrivial element represented by a stably smoothable homeomorphism.
In fact, our method applies to the following class of compact complex surfaces.

\begin{theoremalpha}
  \label{theorem:intro-torelli-complex-surfaces}
  Let $M$ be a compact complex surface with a nontrivial finite cyclic fundamental group of even order and with Euler characteristic not equal to~$4$.
  Then $M$ admits a stably smoothable homeomorphism $f$ which represents a nontrivial element of $\cI(M)/\pi_0\Homeoh(M)$, i.e., $f_* = \id$ on $H_*(M)$ and $f$ is not homotopic to~$\id_M$.
\end{theoremalpha}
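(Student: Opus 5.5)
The plan is to derive \cref{theorem:intro-torelli-complex-surfaces} from the closed-manifold analogue of \cref{theorem:intro-torelli-prufer-rank}, namely \cref{corollary:torelli-prufer-rank-closed} (together with its stably smoothable refinement in \cref{corollary:subgroup-in-torelli-closed}), which uses the Freedman--Quinn invariant of spheres. With $\pi_1(M)\cong\Z_{2m}$ we have $T_M=\{g^m\}$, so $|T_M|=1$. We therefore need $s\ge 1$, and this requires two hypotheses to be checked. First, $M$ must contain an embedded sphere with a framed geometric dual, preferably positioned so that the construction is compatible with stabilization. Second, the correction term coming from $H_3(\widetilde M)\otimes\Z_2$ must vanish, or more precisely the contribution of $\pi_3$ to the Freedman--Quinn relations must not kill the class of $g^m$.

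The first step is to identify $M$ up to homeomorphism. I would use the classification of compact complex surfaces with finite cyclic fundamental group of even order. Such a surface is either Kähler with $b_1=0$, or has odd $b_1$; in the latter case $\pi_1$ would be infinite for class VII or elliptic non-Kähler surfaces. So $b_1=0$ and $\chi(M)=2+b_2$. The exclusion $\chi\ne4$ is meant to rule out the case $b_2=2$, where the intersection form is that of $S^2\times S^2$ or has too small a rank to contain a hyperbolic summand carrying a dual pair. For $b_2\ge3$, the Hambleton--Kreck classification of closed 4-manifolds with finite cyclic $\pi_1$ shows that $M$ is homeomorphic to $N\csum(S^2\times S^2)$ or $N\csum(S^2\tildetimes S^2)$ for some $N$ with the same $\pi_1$. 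Consequently $M$ contains an embedded sphere $S$ with a framed geometric dual. I would first aim for an $S^2\times S^2$ summand and treat the odd case via $\CP^2\csum\overline{\CP^2}$ if necessary. Here $S=S^2\times\mathrm{pt}$ and $\pi_1(M\sm S)\cong\pi_1(M)$, as the corollary requires.

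The second step is the $H_3$ term. Since $\widetilde M$ is a closed simply connected 4-manifold, $H_3(\widetilde M)=0$, so the naive count gives $s=|T_M|=1$. The main obstacle I expect is that the closed-case corollary carries additional restrictions, because homeomorphisms need not preserve the dual sphere and the relevant quotient involves $\pi_3(M)$ or $\mu_3$ relations on the class of $g^m$. I would verify directly that the Freedman--Quinn value $g^m$ survives in $\Z_2[T_M]$ modulo the image of $\pi_3(M)$. I expect this to follow from the corollary's hypothesis expressed in terms of $H_3(\widetilde M)\otimes\Z_2=0$. If a more delicate condition on $\pi_2$ is needed, I would compute it using the $w_2$-type (spin versus non-spin) behaviour of the universal cover, and this is where $\chi\ne 4$ may be used again.

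Finally, the corollary supplies a subgroup of $\cI(M)/\pi_0\Homeoh(M)$ that surjects onto $\Z_2$. Any element mapping to the nontrivial element is a homologically trivial homeomorphism $f$ that is not homotopic to $\id_M$. By the stably smoothable refinement, which uses the fact that $M$ is smooth and hence stably smoothable, together with barbell or arc-pushing realizations, $f$ can be taken to be stably smoothable.
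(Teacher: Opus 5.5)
Your outline follows the paper's proof. You have $|T_M|=1$ and $H_3(\widetilde M)\cong H^1(\widetilde M)=0$, so $\mu(\pi_3(M))=0$ by \cref{lemma:dimension-of-D_M/Sigma_M} and $s=1$; no further condition on $\pi_2$ or $w_2$ is needed here. The stably smoothable clause of \cref{corollary:subgroup-in-torelli-closed} applies because $M$ is smooth while the dual spheres are only topologically embedded.

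\textbf{The gap.} It lies in the one step with real content: producing a sphere $G$ with trivial normal bundle and a geometrically dual sphere $R$. You derive this from ``the Hambleton--Kreck classification of closed 4-manifolds with finite cyclic $\pi_1$'' once $b_2\ge 3$. Rank is not the relevant criterion, and in that generality the claim is false. For example, $(S^2\tildetimes\RP^2)\csum 3\CP^2$ has $\pi_1\cong\Z_2$ and $b_2=3$, but its intersection form is definite. It therefore contains no such pair: $[G]\cdot[R]=\pm1$ forces $[G]\neq 0$ in $H_2(M;\Q)$, while $[G]\cdot[G]=0$. You also never treat $b_2\le 1$, which the hypothesis $e(M)\neq 4$ does not exclude. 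Two things must come from the complex structure: indefiniteness of the form and an actual topological connected-sum splitting. You use the complex structure for nothing beyond $b_1=0$.

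\textbf{How the paper supplies it.} The paper uses Hambleton and Kreck's results specific to algebraic surfaces, split according to the $w_2$-type of $\widetilde M$.
\begin{itemize}
\item If $w_2(\widetilde M)\neq 0$, then $M\cong\Sigma\csum r\CP^2\csum s\overline{\CP}{}^2$ with $\Sigma$ a rational homology sphere. The form is indefinite, so $r,s\ge 1$, and $e(M)=r+s+2\neq 4$ gives $r+s\ge 3$. An $S^2\times S^2$ summand then comes from $2\CP^2\csum\overline{\CP}{}^2\cong\CP^2\csum(S^2\times S^2)$ or its mirror.
\item If $w_2(\widetilde M)=0$, then $e(M)>2$ and $M$ is $\Sigma\csum r(\pm K)\csum s(S^2\times S^2)$ or $(\pm D_{k,k})\csum r(\pm K)\csum s(S^2\times S^2)$. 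The summand comes from $K\cong 2(-|E_8|)\csum 3(S^2\times S^2)$ and $D_{k,k}\cong\Sigma'\csum(-|E_8|)\csum(S^2\times S^2)$.
\end{itemize}
This also disposes of small $b_2$. It shows that $e(M)\neq 4$ is there to exclude $\Sigma\csum\CP^2\csum\overline{\CP}{}^2$, not for the rank reason you give.

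Your fallback to a twisted summand $\CP^2\csum\overline{\CP}{}^2$ is legitimate. \Cref{corollary:subgroup-in-torelli-closed} only requires $G$ (a fiber) to be framed and allows $R$ (a section) to have odd self-intersection. It helps, however, only once the splitting itself is correctly justified.
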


The proof combines the sphere version of our invariant with the topological splitting results of Hambleton and Kreck~\cite{Hambleton-Kreck:1988-1}.

Let $D_{p,q}$ denote a Dolgachev surface of type $(p,q)$, whose elliptic fibration has two multiple fibers of multiplicities $p$ and~$q$.

\begin{corollaryalpha}
  \label{corollary:intro-torelli-dolgachev-enriques}
  If $M$ is a Dolgachev surface $D_{p,q}$ with $p$, $q$ even, or if $M$ is an Enriques surface, then there exists a stably smoothable homeomorphism $f$ on $M$ which represents a nontrivial element in $\cI(M)/\pi_0\Homeoh(M)$.
\end{corollaryalpha}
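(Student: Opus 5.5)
The plan is to deduce the corollary directly from \cref{theorem:intro-torelli-complex-surfaces}. For this we need to check its two hypotheses for each surface in the list. First, the fundamental group must be nontrivial, finite and cyclic of even order. Second, the Euler characteristic must differ from~$4$.

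For the Enriques surface~$E$, both hypotheses are standard. We have $\pi_1(E)\cong\Z_2$, which is cyclic of even order. Its invariants are $b_1=0$, $b_2=10$ and $b_3=0$, so $\chi(E)=12$, since $E$ has holomorphic Euler characteristic~$1$ and $c_1^2=0$ (Noether's formula).

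For a Dolgachev surface $D_{p,q}$, I would use two standard facts.

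\begin{enumerate}
  \item Deleting the two multiple fibres and applying van Kampen to the elliptic fibration over $\CP^1$ with multiple fibres of multiplicities $p,q$ gives $\pi_1(D_{p,q})\cong\Z_{\gcd(p,q)}$. This is the orbifold fundamental group of the base, since the generic fibre class dies because of the nodal fibres. For $p,q$ both even, $\gcd(p,q)$ is even and at least~$2$, so the group is a nontrivial finite cyclic group of even order.
  \item $D_{p,q}$ is obtained from the rational elliptic surface $\CP^2\#9\overline{\CP}{}^2$ by logarithmic transforms. These do not change the Euler characteristic, so $\chi(D_{p,q})=12\neq4$. Alternatively, Noether's formula with $\chi(\cO)=1$ and $c_1^2=0$ gives the same value.
\end{enumerate}

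Both families are compact complex surfaces, so \cref{theorem:intro-torelli-complex-surfaces} applies. It yields a stably smoothable homeomorphism $f$ with $f_*=\id$ on $H_*(M)$ and $f\not\simeq\id_M$, that is, a nontrivial element of $\cI(M)/\pi_0\Homeoh(M)$.

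The only step with real content is the fundamental group computation in the Dolgachev case. The key point there is to cite or briefly justify that the fibre class is killed. The parity hypothesis on $p,q$ is used exactly to make $\gcd(p,q)$ even. No other obstacle is expected, since all the geometric work is contained in \cref{theorem:intro-torelli-complex-surfaces}.
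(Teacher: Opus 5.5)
Your proposal is correct and follows the paper's proof exactly: apply \cref{theorem:intro-torelli-complex-surfaces} after checking that $\pi_1(E)=\Z_2$, $\pi_1(D_{p,q})=\Z_{\gcd(p,q)}$ with $\gcd(p,q)$ even, and $e(M)=12\neq 4$ in both cases. The justifications you add for the fundamental group (orbifold fundamental group of the base) and for the Euler characteristic (logarithmic transforms or Noether's formula) are standard, and the paper simply asserts these values.
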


We remark that Galvin's work~\cite[Theorem~1.9]{Galvin:2024-1} gives a nontrivial element of the subgroup $\pi_0\Homeoh(E)\subset\cI(E)$ detected by the Casson--Sullivan invariant.
In contrast, the homeomorphisms in~\cref{theorem:intro-torelli-complex-surfaces,corollary:intro-torelli-dolgachev-enriques} represent nontrivial classes in the quotient by $\pi_0\Homeoh(M)$ and have vanishing Casson--Sullivan invariant.

Our method for Torelli groups is also effective for non-smoothable closed 4-manifolds.
For instance, the star-partner $*E$ of an Enriques surface $E$ is non-smoothable, has the homotopy type of $E$, and satisfies $\ks(*E)=1$.
We show that $\cI(*E)/\pi_0\Homeoh(*E)$ is nontrivial.
See~\cref{theorem:torelli-star-enriques}.

From the above results, a natural question arises:
\emph{is the stably smoothable homeomorphism $f$ in~\cref{theorem:intro-torelli-complex-surfaces,corollary:intro-torelli-dolgachev-enriques} homotopic to a diffeomorphism?}
An affirmative answer for Enriques surfaces would imply that both Enriques and K3 surfaces have nontrivial \emph{smooth} Torelli groups, disproving the above Torelli-triviality conjectures.

\subsection{Organization of the paper}

In~\cref{section:topological-dax,section:invariant-of-homeo}, we review the topological Dax invariant and establish its properties for homeomorphisms, proving~\cref{theorem:intro-DeltaDh-homomorphism}.
In~\cref{section:realization,section:detecting-structures}, we construct the realizing homeomorphisms and establish the results on retracts, direct factors and Torelli groups, proving \cref{theorem:intro-realization-Sigma_M,theorem:intro-retract-direct-factor,theorem:intro-torelli-prufer-rank}.
\Cref{section:non-smoothable-4-manifolds,section:more-examples} contain the smooth and non-smoothable applications, including the proof of~\cref{theorem:intro-Z^infty-factor-nonsmoothable}.
In~\cref{section:closed-4-manifolds}, we develop the sphere version and its applications to closed 4-manifolds, proving~\cref{theorem:intro-torelli-complex-surfaces,corollary:intro-torelli-dolgachev-enriques}.

\subsubsection*{Acknowledgements}

This work was supported by the National Research Foundation grant RS-2019-NR039996.
The author used AI tools to improve the language and readability of the manuscript.
The author takes full responsibility for its content.

\section{Topological Dax invariant of disks}
\label{section:topological-dax}

In this section, we briefly review the topological Dax invariant, set up the relevant notation, and summarize the properties that we use.
We also discuss the relationship between the Dax invariant and concordance via the Freedman--Quinn invariant.
The key notation and properties are given in \cref{equation:group-ring-and-symmetric-subgroup,equation:dax-self-intersection,equation:dax-homomorphism,equation:D_M,equation:Sigma_M,definition:topological-dax,equation:self-intersection-mu,proposition:top-dax-properties,remark:D_M/Sigma_M-is-Z_2-vector-space}.

Throughout this section, let $M$ be a 4-manifold, and fix an almost smooth structure $\Gamma$ on~$M$.
This means that $\Gamma$ is a smooth structure on the complement of an interior point $p_0$ in~$M$.
Every 4-manifold $M$ admits an almost smooth structure~\cite[Section~8.2]{Freedman-Quinn:1990-1}.
We assume that the boundary of a neat disk in $M$ is smooth in the smooth 3-manifold $\partial M = \partial(M\sm p_0)_\Gamma$.

We write $\Z[G]$ for the integral group ring of a group $G$, $1\in G$ for the identity, and let
\begin{equation}
  \begin{aligned}
    \Z[G \sm 1] &= \textstyle \bigl\{ \sum n_g g \in \Z[G] \mid n_1 = 0 \bigr\},
    \\
    \Z[G \sm 1]^\sigma &= \textstyle \bigl\{\sum n_g g \in \Z[G\sm 1] \mid n_g = n_{g^{-1}} \text{ for all } g\ne 1 \bigr\}.
  \end{aligned}
  \label{equation:group-ring-and-symmetric-subgroup}
\end{equation}

\subsection{A quick review of the topological Dax invariant}
\label{subsection:review-topological-dax}

\subsubsection{Dax self-intersection for a smooth 4-manifold}
\label{subsubsection:smooth-dax-self-intersection}

We begin with the Dax self-intersection invariant of a homotopy $H\colon I^3\to X$ in a smooth 4-manifold $X$, following~\cite{Dax:1972-1,Gabai:2021-1,Kosanovic-Teichner:2024-2}.
Fix $z_0$,~$z_1\in \partial X$, $z_0\ne z_1$.
We use $z_0$ as the basepoint.
Suppose that $H$ has the following property on the boundary:
$H$ is smooth on $\partial I^3$, $H(i\times I^2)=\{z_i\}$ for $i=0,1$, and for each $(t,u)\in \partial I^2$, the map $I\to X$ defined by $s\mapsto H(s,t,u)$ is a smooth embedding.
Associated with $H$, define $\overline H\colon I^3 \to X \times I^2$ by $\overline H(s,t,u)=(H(s,t,u),t,u)$.
Note that $\overline H$ smoothly embeds~$\partial I^3$.
By~\cite[Chapters III and~VI]{Dax:1972-1}, $H$ is homotopic rel $\partial I^3$, via an arbitrarily small homotopy, to a smooth map $F\colon I^3\to X$ such that $\overline F\colon I^3 \to X \times I^2$ is a smooth generic immersion, i.e., a smooth immersion which is an embedding except at finitely many transverse double points.
Note that, for each double point $p=\overline F(x_0) = \overline F(x_1)$, the preimages are of the form $x_0=(s_0,t,u)$ and $x_1=(s_1,t,u)$ and lie in the interior of~$I^3$.
The key is that $x_0$ and $x_1$ are canonically ordered by the condition $s_0 < s_1$.
This ordering determines the intersection sign $\epsilon(p) \in \{1,-1\}$ and the double point loop label $g(p) \in \pi_1(X)$ without the usual ambiguity.
Choose a neighborhood $U_i\subset I^3$ of $x_i$.
The product of the local orientations of $\overline F(U_0)$ and $\overline F(U_1)$ gives a local orientation of $X\times I^2$ at~$p$, and $\epsilon(p)=1$ if it agrees with the preferred orientation of $X\times I^2$;
$\epsilon(p)=-1$ otherwise.
The element $g(p)\in \pi_1(X \times I^2)=\pi_1(X)$ is the class of the loop $\overline F|_{[0,s_0]\times t\times u} \cdot (\overline F|_{[0,s_1]\times t\times u})^{-1}$.
Define the \emph{Dax self-intersection} of $H$ by
\begin{equation}
  \dax(H) = \sum_{g(p)\ne 1} \epsilon(p)\cdot g(p) \in \Z[\pi_1(X) \sm 1].
  \label{equation:dax-self-intersection}
\end{equation}
It is known that $\dax(H)$ is well-defined, independent of the choice of the perturbation~$F$.
Also, $\dax(H)$ is invariant under homotopy of $H$ rel~$\partial$.

In what follows, we will consider $\dax(H)$ defined in the smooth 4-manifold $X=(M\sm p_0)_\Gamma$.
(Recall that $M$ is a given topological 4-manifold and $\Gamma$ is a smooth structure on $M\sm p_0$.)
Note that $\pi_1(M)=\pi_1(X)$ and thus $\dax(H)$ is an element in $\Z[\pi_1(M) \sm 1]$.

\subsubsection{Dax homomorphism of \texorpdfstring{$\pi_3(M)$}{\pi₃(M)} for a topological 4-manifold}

For a smooth 4-manifold $X$, a homomorphism of $\pi_3(X)$ is defined using \cref{equation:dax-self-intersection} in \cite{Gabai:2021-1,Kosanovic-Teichner:2024-2}, and it is generalized to the case of topological 4-manifolds (which may be non-smoothable) in~\cite{Cha-Kim:2023-1}.
We recall the definition below.
Fix a smooth embedding $\zeta\colon I \to \partial M = \partial(M\sm p_0)_\Gamma$, and identify $\pi_3(M)$ with the collection of homotopy classes of maps $H\colon I^3 \to M$ rel $\partial I^3$ such that $H(s,t,u)=\zeta(s)$ for each $(s,t,u)\in I\times \partial I^2$ and $H(i,s,t)=\zeta(i)$ for $(i,s,t)\in \{0,1\} \times I^2$.
For such a map $H$, one can perturb $H$ by homotopy rel $\partial I^3$ so that $H$ maps $I^3$ into the smooth 4-manifold $(M\sm p_0)_\Gamma$, using topological transversality.
Then $\dax(H) \in \Z[\pi_1(M)\sm 1]$ in \cref{equation:dax-self-intersection} is defined in the smooth 4-manifold $(M\sm p_0)_\Gamma$.
The basepoint is $z_0=\zeta(0)$ as before.
By~\cite[Corollary~4.13]{Kosanovic-Teichner:2024-2}, we have $\dax(H) \in \Z[\pi_1(M)\sm 1]^\sigma \subset \Z[\pi_1(M)\sm 1]$.
By~\cite[Lemma~4.12]{Cha-Kim:2023-1}, the assignment $[H] \mapsto \dax(H)$ is a well-defined group homomorphism
\begin{equation}
  d=d_\zeta\colon \pi_3(M) \to \Z[\pi_1(M)\sm 1]^\sigma.
  \label{equation:dax-homomorphism}
\end{equation}
We call $d$ the \emph{Dax homomorphism}.
We remark that the composition of $\pi_3((M\sm p_0)_\Gamma) \to \pi_3(M)$ and $d$ in \cref{equation:dax-homomorphism} is the Dax homomorphism of the smooth manifold $(M\sm p_0)_\Gamma$ defined in~\cite{Gabai:2021-1,Kosanovic-Teichner:2024-2}.

\subsubsection{Topological Dax invariant of disks}
\label{subsubsection:topological-dax-definition}

Let $D_0$ and $D_1$ be neat disks in the given topological 4-manifold~$M$, with common boundary $\partial D_0 = \partial D_1$.
Suppose $D_0$ and $D_1$ are homotopic rel~$\partial$.
Fix a smooth quotient map $I^2\to D^2$ that collapses each of $0\times I$ and $1\times I$ and maps $I\times 1$ to the upper semicircle of~$D^2$ such that the map $I\to D^2$ given by $s\mapsto [s,t]$ is a smooth embedding for all~$t$.
Compose the given embedding $D_i\colon D^2 \to M$ with the quotient map $I^2 \to D^2$, to view the disk $D_i$ as a map $D_i \colon I^2 \to M$.
Let $\zeta_+\colon I \to M$ be the upper semicircle of~$\partial D_0=\partial D_1$ and let $d=d_{\zeta_+}$ be the associated Dax homomorphism in~\cref{equation:dax-homomorphism}.
Choose a homotopy $H\colon I^2\times I \to M$ rel $\partial $ from $D_0$ to~$D_1$.

Fix a small smooth 4-ball $B$ in $(M\sm p_0)_\Gamma$, which will be used later to form a connected sum.
By topological isotopy of $D_i$, we may assume that each $D_i$ is topologically embedded in the smooth 4-manifold $(M\sm \{p_0\})_\Gamma$ and disjoint from~$B$.
By~\cite[Theorem~D]{Cha-Kim:2023-1}, there is a topological isotopy (rel $\partial$) of $D_i$ to a smooth disk $D_i'$ in the smooth 4-manifold
\[
  M' \coloneqq (M\sm p_0)_\Gamma \csum k (S^2\times S^2)
\]
for some $k\ge 0$, where the connected sum is formed at~$B \subset (M\sm p_0)_\Gamma$.
Denote this isotopy by $h_i\colon D_i \approx D_i'$.
We can also assume that the homotopy $H$ avoids $p_0$ and~$B$.
Concatenate the inverse isotopy $h_0^{-1}\colon D_0' \approx D_0$, the homotopy $H\colon D_0\simeq D_1$ and the isotopy $h_1\colon D_1 \approx D_1'$, to obtain a homotopy $H'\colon D_0' \simeq D_1'$ in the smooth manifold~$M'$.
Since $D_0'$ and $D_1'$ are smooth disks, $H'$ satisfies the boundary property required to define the Dax self-intersection $\dax(H') \in \Z[\pi_1(M')\sm 1] = \Z[\pi_1(M)\sm 1]$ in the smooth manifold~$M'$.
By~\cite[Corollary~4.13]{Kosanovic-Teichner:2024-2}, $\dax(H')$ lies in $\Z[\pi_1(M)\sm 1]^\sigma$.

Let $D_M$ be the cokernel of the Dax homomorphism~$d$:
\begin{equation}
  D_M \coloneqq \Z[\pi_1(M)\sm 1]^\sigma / d(\pi_3(M)).
  \label{equation:D_M}
\end{equation}

\begin{definition}[\cite{Cha-Kim:2023-1}]
  \label{definition:topological-dax}
  The \emph{topological Dax invariant} for $(D_0,D_1)$ in $M$ is defined to be the class of $\dax(H')$ modulo $d(\pi_3(M))$:
  \[
    \Dax_{M,\Gamma}(D_0,D_1) \coloneqq \dax(H') \in D_M.
  \]
\end{definition}

By~\cite[Theorem~4.7]{Cha-Kim:2023-1}, $\Dax_{M,\Gamma}(D_0,D_1)$ is well-defined, independent of the choice of the smoothings $D_0'$, $D_1'$ in $M'$ and the choice of the homotopy $H$ in~$M$.

We often write $\Dax(D_0,D_1)$ for $\Dax_{M,\Gamma}(D_0,D_1)$ when the choices of $M$ and $\Gamma$ are understood from the context.

\subsubsection{Properties of the topological Dax invariant}
\label{subsubsection:topological-dax-properties}

We summarize basic properties of $\Dax(D_0,D_1)$ in the following proposition.

\begin{proposition}
  \label{proposition:top-dax-properties}
  \leavevmode\Nopagebreak
  \begin{enumerate}
    \item\label{item:top-dax-isotopy-obstruction} If $D_0$ and $D_1$ are isotopic rel $\partial$, then $\Dax(D_0,D_1)=0$.
    \item\label{item:top-dax-additivity} $\Dax(D_0,D_1) + \Dax(D_1,D_2) = \Dax(D_0,D_2)$ and $\Dax(D_1,D_0)=-\Dax(D_0,D_1)$.
    \item\label{item:top-dax-isotopy-invariance} If $D_i'$ is isotopic to $D_i$ rel $\partial$ ($i=0,1$), then $\Dax(D_0,D_1)=\Dax(D_0',D_1')$.
    \item\label{item:top-dax-equals-smooth-dax} If $M$ is a smooth 4-manifold equipped with a smooth structure $\Gamma$, then for smooth neat disks in $M$, the topological Dax invariant $\Dax_{M,\Gamma}$ is equal to the smooth Dax invariant in \cite{Gabai:2021-1,Kosanovic-Teichner:2024-2}.
  \end{enumerate}
\end{proposition}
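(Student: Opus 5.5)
Each of the four items follows from the definition of $\Dax_{M,\Gamma}$ through the smoothed homotopy $H'$ in $M'$, using the well-definedness result \cite[Theorem~4.7]{Cha-Kim:2023-1} and standard properties of the smooth Dax self-intersection $\dax$. The order below proves additivity first, since the isotopy statements reduce to it.

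For \cref{item:top-dax-additivity}, we choose smoothings $D_i'$ of $D_0,D_1,D_2$ in a common $M'=(M\sm p_0)_\Gamma\csum k(S^2\times S^2)$. This is possible because we can take $k$ to be the maximum of the numbers needed and add further $S^2\times S^2$ summands inside $B$, away from the disks. We then choose homotopies $H_{01}\colon D_0\simeq D_1$ and $H_{12}\colon D_1\simeq D_2$ and concatenate them in the $u$-direction; this is legitimate since \cite[Theorem~4.7]{Cha-Kim:2023-1} lets us use any homotopy. After conjugating by the isotopies $h_i$, the concatenation $H'_{02}$ is homotopic rel $\partial$ to the concatenation of $H'_{01}$ and $H'_{12}$: the middle piece $h_1\circ h_1^{-1}$ cancels up to homotopy rel $\partial$. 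Double points of $\overline F$ for a concatenated generic perturbation occur in separate $u$-slabs. Therefore $\dax$ is additive under stacking, and $\dax$ is invariant under homotopy rel $\partial$. This gives $\Dax(D_0,D_2)=\Dax(D_0,D_1)+\Dax(D_1,D_2)$.

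Still within \cref{item:top-dax-additivity}, the constant homotopy from $D_0$ to itself, after smoothing, is the concatenation of $h_0^{-1}$ and $h_0$. This is homotopic rel $\partial$ to the constant homotopy of the smooth disk $D_0'$. That constant homotopy has $\overline F$ an embedding, so $\Dax(D_0,D_0)=0$. The inverse relation $\Dax(D_1,D_0)=-\Dax(D_0,D_1)$ then follows from additivity.

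For \cref{item:top-dax-isotopy-obstruction}, an isotopy rel $\partial$ from $D_0$ to $D_1$ is in particular a homotopy, so we may use it as $H$. The concatenation $h_0^{-1}\cdot H\cdot h_1$ is then a topological isotopy between the smooth disks $D_0'$ and $D_1'$ in $M'$. This is the main obstacle: a topological isotopy need not be smoothable, so we cannot simply read off $\dax=0$. We plan to bypass the direct computation by instead taking $D_1'$ to be $D_0'$ itself with smoothing isotopy $h_1=h_0\circ H^{-1}$. This is allowed because $h_1$ is a topological isotopy from $D_1$ to a smooth disk, which is exactly what \cite[Theorem~4.7]{Cha-Kim:2023-1} permits. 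The resulting $H'$ is homotopic rel $\partial$ to a constant homotopy, so $\dax(H')=0$. Hence $\Dax(D_0,D_1)=\Dax(D_0,D_0)=0$.

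\cref{item:top-dax-isotopy-invariance} then follows from \cref{item:top-dax-isotopy-obstruction} and \cref{item:top-dax-additivity}:
\[
  \Dax(D_0',D_1')=\Dax(D_0',D_0)+\Dax(D_0,D_1)+\Dax(D_1,D_1')=\Dax(D_0,D_1).
\]

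For \cref{item:top-dax-equals-smooth-dax}, if $M$ is smooth with structure $\Gamma$ and the $D_i$ are smooth, we take $k=0$ and constant isotopies $h_i$, using $p_0$ and $B$ away from the disks and from a chosen homotopy, which we may take smooth. Then $H'=H$, and $\dax(H)$ modulo $d(\pi_3(M))$ is by definition the smooth Dax invariant of \cite{Gabai:2021-1,Kosanovic-Teichner:2024-2}. Here we also use that $d$ restricted to $\pi_3$ of the punctured manifold agrees with the smooth Dax homomorphism, as remarked after \cref{equation:dax-homomorphism}, and that $\pi_3(M\sm p_0)\to\pi_3(M)$ does not change the image of $d$.
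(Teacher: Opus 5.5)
Your argument is correct, but it takes a different route from the paper. The paper gives no argument of its own: it cites \cite[Theorems~4.14 and~4.15]{Cha-Kim:2023-1} for (1)--(3) and \cite[Theorem~4.9(3)]{Cha-Kim:2023-1} for (4).

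You import only the well-definedness theorem \cite[Theorem~4.7]{Cha-Kim:2023-1}. From it you derive (1)--(3) formally, using homotopy invariance of $\dax$ rel $\partial I^3$, additivity under stacking in the homotopy parameter $u$, and vanishing on constant homotopies. You get (4) by taking $k=0$ and trivial smoothing isotopies. This shows that (1)--(3) contain nothing beyond well-definedness. The citation route is shorter and leaves the technicalities in \cite{Cha-Kim:2023-1}.

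Two points in your (1) should be made explicit.
\begin{itemize}
\item The trick uses independence of the value from the smoothing isotopy $h_1$ itself (and from $k$), not merely from the smooth disk $D_1'$. This is what ``well-defined'' must mean, since $h_i$ enters the definition of $H'$, but it is the whole content of the step.
\item For $h_1=H^{-1}*h_0$ to be an isotopy in $M'$, the given isotopy $D_0\approx D_1$ must avoid $p_0$ and $B$ at all times. Unlike a homotopy, an isotopy cannot simply be perturbed off a point, so this needs a short isotopy-extension and general-position argument.
\end{itemize}

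Finally, the obstacle you flagged is not a real one. If $H'$ is a topological isotopy rel $\partial$ between smooth disks, its honest track $\overline{H'}$ is a topological embedding with no double points. So $\dax(H')=0$ by \cref{lemma:dax-topological-computation}, without smoothing the isotopy. The same observation shows directly that the value is independent of $h_i$.
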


\begin{proof}
  For \cref{item:top-dax-isotopy-obstruction}, \cref{item:top-dax-additivity} and \cref{item:top-dax-isotopy-invariance}, see~\cite[Theorems 4.14 and 4.15]{Cha-Kim:2023-1}.
  For \cref{item:top-dax-equals-smooth-dax}, see~\cite[Theorem~4.9(3)]{Cha-Kim:2023-1}.
\end{proof}

\subsection{Concordance of disks and the Freedman--Quinn invariant}
\label{subsection:disk-concordance-and-fq}

Let $D_0$ and $D_1$ be neat disks in the given 4-manifold $M$ with common boundary $\partial D_0=\partial D_1$.
View $D_i$ as an embedding $D_i\colon D^2 \to M$.
Recall that $D_0$ and $D_1$ are \emph{concordant} if there is a neat embedding $C\colon D^2\times I \to M\times I$ such that $C(x,i) = (D_i(x),i) \in M\times i$ for $(x,i)\in D^2\times \{0,1\}$ and $C(x,t) = (D_0(x),t)$ for $(x,t)\in \partial D^2 \times I$.
Note that concordance implies homotopy rel $\partial$, and thus $\Dax(D_0,D_1)$ is defined whenever $D_0$ and $D_1$ are concordant.

Let $\Sigma_M$ be the subgroup of $D_M$ generated by the cosets of $g+g^{-1}$ where $g\in \pi_1(M) \sm \{1\}$:
\begin{equation}
  \Sigma_M = \langle g+g^{-1} \mid g\in \pi_1(M),\; g\ne 1 \rangle \subset D_M = \Z[\pi_1(M)\sm 1]^\sigma / d(\pi_3(M)).
  \label{equation:Sigma_M}
\end{equation}

\begin{proposition}
  \label{proposition:dax-concordant-disks}
  If two neat disks $D_0$ and $D_1$ in $M$ are concordant, then $\Dax(D_0,D_1)$ lies in the subgroup~$\Sigma_M$.
\end{proposition}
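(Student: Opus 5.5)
The plan is to compute $\Dax(D_0,D_1)$ from the concordance $C$ by using a specific homotopy built out of $C$, and to show that every double point with a nontrivial group element comes in a pair contributing $\pm(g+g^{-1})$. This is the same mechanism by which the Freedman--Quinn invariant (with values in $\Z_2[T_M]$, or more precisely $D_M/\Sigma_M$) obstructs concordance-to-isotopy. Equivalently, the reduction of $\Dax(D_0,D_1)$ modulo $\Sigma_M$ equals the Freedman--Quinn invariant of the concordance, and that invariant vanishes.

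The steps are as follows.

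\emph{Step 1: smoothing.} Using the almost smooth structure, stabilize by $k(S^2\times S^2)$ as in the definition. By \cite[Theorem~D]{Cha-Kim:2023-1}, applied to $D_i$ and, relative version, to the concordance $C$ in $M\times I$ (stabilized by $\#k(S^2\times S^2)\times I$), I would reduce to the case where $M'$ is smooth, the $D_i'$ are smooth, and $C$ is a smooth concordance. The invariant is unchanged by \cref{proposition:top-dax-properties}\cref{item:top-dax-isotopy-invariance} and \cref{item:top-dax-equals-smooth-dax}.

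\emph{Step 2: homotopy from the concordance.} Project $C$ to $M'$ after a generic perturbation. This gives a track $H=\pr\circ C\colon D^2\times I\to M'$, which is a homotopy rel $\partial$ from $D_0'$ to $D_1'$. Parametrize so that $\overline H(s,t,u)=(H(s,t,u),t,u)$, where $u$ is the concordance coordinate. Since $C$ is an embedding into $M'\times I$, double points of $\overline H$ in $M'\times I^2$ arise only from pairs $x_0=(s_0,t,u)$, $x_1=(s_1,t,u)$ at the same $u$ with $H(x_0)=H(x_1)$. These are exactly the self-intersections of the immersed 3-manifold $H|_{u}$ slices that are seen in $M'\times I$ as the double points of the generic map $D^2\times I\to M'$. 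The key point is that the whole 3-parameter picture is obtained from a map which is an embedding after adding the single extra coordinate $u$.

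\emph{Step 3: pairing.} I would rather follow Kosanović--Teichner, and identify $\dax(H)$ mod $\Sigma$ with the Freedman--Quinn obstruction $\fq$ for the concordance. For a homotopy that lifts to an embedding in $M'\times I$, the double point set of $\overline H$ is governed by the self-intersection of the generic map $H\colon D^3\to M'$. That self-intersection is a 2-dimensional double surface, and the Dax points are where its sheets are swapped relative to the $s$-direction. Each component, a circle or arc of double points in $(t,u)$-space, with group element $g$, meets the ordering locus an even number of times with labels alternating $g$ and $g^{-1}$. The signs agree so that they contribute $\pm(g+g^{-1})$, except on components whose two sheets are interchanged, which requires $g=g^{-1}$. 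Those contribute $g=\tfrac12(g+g^{-1})$ and form the Freedman--Quinn term. The vanishing of that term for an honest concordance, where the double surface is empty, is what is needed. Concretely, I would argue that since $C$ embeds, the only double points of $\overline H$ come in $\pm$ pairs from swapping $s$-order, hence lie in $\Sigma_M$.

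The main obstacle is Step 3: the bookkeeping of orientations and group elements under the ordering swap, and in particular ruling out unpaired $2$-torsion contributions. If the direct argument fails, the fallback is to cite \cite{Kosanovic-Teichner:2024-2}, which shows that $\Dax \bmod \Sigma$ equals $\fq$ and that $\fq$ is a concordance invariant vanishing on $(D,D)$. One would then transport that statement to the topological setting via Step 1.
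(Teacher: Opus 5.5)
Your fallback has the same skeleton as the paper's proof: the reduction of $\Dax$ modulo $\Sigma_M$ is $\FQ_M$, and $\FQ_M$ vanishes on concordant disks. The argument you actually give, however, has two genuine gaps.

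First, Steps~2--3 conflate the homotopy parameter with the $I$-coordinate of the concordance. Write $C(x,u)=(H(x,u),c(x,u))\in M\times I$ with $H=\pr\circ C$. The Dax track $\overline H(s,t,u)=(H(s,t,u),t,u)$ uses the \emph{domain} parameter $u$. It is obtained from the embedding $C$ by adding a coordinate only if $c(x,u)=u$, i.e.\ only if $C$ is level-preserving. That case is an isotopy, where there is nothing to prove. For a general concordance, $(x,u)\mapsto(H(x,u),u)$ is not an embedding. The double-point set of (a generic perturbation of) $H$ is also not empty: it must be nonempty whenever $\Dax(D_0,D_1)\neq0$, which does occur for concordant disks. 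So the asserted pairing of Dax double points into $\pm(g+g^{-1})$ is unsupported, and ruling out unpaired contributions from $T_M$ is precisely the content of the proposition.

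Second, Step~1 is not available. \cite[Theorem~D]{Cha-Kim:2023-1} stably smooths topological \emph{surfaces in 4-manifolds}; it does not smooth a 3-dimensional topological concordance in the 5-manifold $M\times I$. So you cannot transport the smooth Kosanovi\'c--Teichner vanishing statement to a topological concordance this way.

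The missing idea, which makes smoothing $C$ unnecessary, is to work with the Freedman--Quinn track $\widehat H(x,u)=(H(x,u),u,\tfrac12)\in M\times I^2$. The straight-line homotopy from $u$ to $c(x,u)$ in the first $I$-factor is rel $\partial(D^2\times I)$, because $c(x,i)=i$ and $c(x,u)=u$ for $x\in\partial D^2$. It ends at the embedding $(x,u)\mapsto(C(x,u),\tfrac12)$. The self-intersection $\mu$ is defined for topological generic immersions and is invariant under homotopy rel boundary \cite[Appendix~A]{Cha-Kim:2023-1}. Hence $\FQ_M(D_0,D_1)=\mu(\widehat H)=0$, with no smoothing of $C$.

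For the reduction step, only the disks are smoothed. Work in $M'=(M\sm p_0)_\Gamma\csum k(S^2\times S^2)$, take $H'=h_0^{-1}Hh_1$, and let $q\colon\Z[\pi_1(M)\sm 1]^\sigma\to\Z_2[T_M]$ be the quotient map. Then \cite[Lemma~4.10]{Kosanovic-Teichner:2024-2} gives $q(\dax(H'))=\mu_{M'}(\widehat{H'})$. The isotopy segments $h_i$ contribute no double points, so $\mu_{M'}(\widehat{H'})=\mu_M(\widehat H)$. Finally, $q\circ d=\mu$ on $\pi_3(M)$ shows that the induced map $D_M\to\Z_2[T_M]/\mu(\pi_3(M))$ has kernel exactly $\Sigma_M$. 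Together these give $\Dax(D_0,D_1)\in\Sigma_M$.
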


\Cref{proposition:dax-concordant-disks} holds in both smooth and topological categories.
\Cref{proposition:concordance-disk-pseudoisotopy} tells us that the converse of \cref{proposition:dax-concordant-disks} is true if $D_i$ admits a geometric dual in~$\partial M$.

\begin{proof}
  \Cref{proposition:dax-concordant-disks} is an immediate consequence of the following statements:
  \begin{enumerate}[label=(\roman{*})]
    \item\label{item:dax-reduction-fq}
    The reduction of the value of the Dax invariant modulo $\Sigma_M$ is equal to the Freedman--Quinn invariant for disks.
    \item\label{item:fq-concordance-disk}
    The Freedman--Quinn invariant vanishes for concordant disks.
  \end{enumerate}
  \let\qed\relax
\end{proof}

In what follows, we discuss the Freedman--Quinn invariant for disks and how one verifies \cref{item:dax-reduction-fq,item:fq-concordance-disk}.

In the smooth category, the disk version of the Freedman--Quinn invariant is defined by Kosanovi\'c and Teichner~\cite[Definition~4.12]{Kosanovic-Teichner:2024-2}, adapting the approach of Schneiderman and Teichner~\cite{Schneiderman-Teichner:2022-1} for spheres.
They proved \cref{item:dax-reduction-fq} in~\cite[Corollary~4.13]{Kosanovic-Teichner:2024-2}, and \cref{item:fq-concordance-disk} follows directly from their definition, analogously to the sphere case explicitly treated in~\cite{Schneiderman-Teichner:2022-1}.

It is known that the sphere version of the Freedman--Quinn invariant can also be defined in the topological category.
For instance, in \cite[Theorem~6.1 and Appendix~A]{Cha-Kim:2023-1}, the smooth category approach of~\cite{Schneiderman-Teichner:2022-1} is adapted using the Kirby--Siebenmann smoothing theory and topological immersion theory.
A similar argument applies to topological disks, and the topological versions of \cref{item:dax-reduction-fq,item:fq-concordance-disk} can be derived from the smooth analogues.
As we are not aware of an explicit treatment of the topological disk case in the literature, we include a concise argument below for completeness.

\subsubsection{Freedman--Quinn invariant for topological disks}
\label{subsubsection:freedman-quinn-top-disks}

Let $D_0$ and $D_1$ be neat disks in an oriented topological 4-manifold~$M$ with common boundary $\partial D_0=\partial D_1$.
Suppose $D_0$ and $D_1$ are homotopic rel $\partial$ in~$M$.
Choose a homotopy $H\colon D^2 \times I \to M$ rel $\partial$ between $D_0$ and $D_1$ and define $\widehat H \colon D^2 \times I \to M\times I^2$ by $\widehat H(x,t) = (H(x,t),t,\frac12)$.
Using arguments in \cite[Theorem~6.1 and Appendix~A]{Cha-Kim:2023-1}, one can perturb $\widehat H$, by an arbitrarily small homotopy rel $\partial$, to a topological generic immersion satisfying the local properties of a smooth generic immersion.
By counting double points weighted by associated intersection signs and $\pi_1$ elements, the \emph{self-intersection} $\mu(\widehat H)$ is defined as an element in $\Z[\pi_1(M)]/\langle 1,\; g+g^{-1} \mid g\in \pi_1(M)\rangle$.
By arguments in \cite[Lemma~4.11]{Kosanovic-Teichner:2024-2}, $\mu(\widehat H)$ is fixed under the involution $g\mapsto g^{-1}$.
Thus, the self-intersection is an element
\begin{equation}
  \mu(\widehat H) \in \Z[\pi_1(M)\sm 1]^\sigma/\langle g+g^{-1} \mid g\in \pi_1(M)\sm 1 \rangle \cong \Z_2[T_M].
  \label{equation:self-intersection-mu}
\end{equation}
Here, recall that $T_M$ is the set of order 2 elements in~$\pi_1(M)$.
The isomorphism in \cref{equation:self-intersection-mu} is induced by the inclusion $T_M \subset \pi_1(M) \sm \{1\}$.
Compared to the Dax self-intersection $\dax(H)$ in \cref{subsection:review-topological-dax}, $\mu(\widehat H)$ is well-defined only modulo the additional ambiguity of $g+g^{-1}$, since the two preimage points of a double point are not canonically ordered.

A \emph{self-intersection homomorphism}
\begin{equation}
  \mu\colon \pi_3(M) \to \Z_2[T_M]
  \label{equation:homomorphism-mu}
\end{equation}
is defined similarly, by perturbing a representative $S^3 \to M\subset M\times I^2$ to a topological generic immersion and counting self-intersections as in~\cite[Appendix~A]{Cha-Kim:2023-1}.
It turns out that the value of $\mu(\widehat H)$ in \cref{equation:self-intersection-mu} modulo the image $\mu(\pi_3(M))$ is determined by $(D_0,D_1)$, independent of the choices of the homotopy and the perturbation.
The \emph{topological Freedman--Quinn invariant} for $(D_0,D_1)$ is defined by
\[
  \FQ_M(D_0,D_1) = \mu(\widehat H)\in \Z_2[T_M]/\mu(\pi_3(M)).
\]

We remark that if $M$, $D_0$ and $D_1$ are smooth, then the homomorphism $\mu$ and the invariant $\FQ_M(D_0,D_1)$ agree with the smooth versions defined in~\cite{Schneiderman-Teichner:2022-1,Kosanovic-Teichner:2024-2}.
We also remark that the definition of the topological Freedman--Quinn invariant does not require a choice of an almost smooth structure, while the topological Dax invariant does.

\begin{proof}[Proof of \cref{proposition:dax-concordant-disks}, continued]
  We have already observed that it suffices to prove \cref{item:dax-reduction-fq,item:fq-concordance-disk} in the topological category.
  We restate \cref{item:dax-reduction-fq} in a more precise form below.
  Let $q\colon \Z[\pi_1(M) \sm 1]^\sigma \to \Z_2[T_M]$ be the quotient map that the isomorphism in \cref{equation:self-intersection-mu} gives rise to.
  Then the statement \cref{item:dax-reduction-fq} means that $q$ induces a quotient map
  \[
    \overline q \colon D_M = \Z[\pi_1(M) \sm 1]^\sigma / d(\pi_3(M)) \to \Z_2[T_M]/\mu(\pi_3(M))
  \]
  with $\Ker \overline q = \Sigma_M$ and
  \begin{equation}
    \overline q(\Dax_M(D_0,D_1)) = \FQ_M(D_0,D_1).
    \label{equation:dax-reduces-to-fq}
  \end{equation}
  Denote temporarily by $d_{(M\sm p_0)_\Gamma}$ and $\mu_{M\sm p_0}$ the homomorphisms on $\pi_3(M\sm p_0)$ defined as in \cref{equation:dax-homomorphism,equation:homomorphism-mu}, respectively.
  By applying \cite[Lemma~4.10]{Kosanovic-Teichner:2024-2} to the smooth manifold $(M\sm p_0)_\Gamma$, we obtain $q\circ d_{(M\sm p_0)_\Gamma} = \mu_{M\sm p_0}$.
  By definition, $d_{(M\sm p_0)_\Gamma}$ and $\mu_{M\sm p_0}$ induce the homomorphisms $d$ and $\mu$ on the quotient $\pi_3(M)$ of $\pi_3(M\sm p_0)$.
  Thus, $q\circ d_M = \mu_M$ on~$\pi_3(M)$.
  From this, it follows that $q$ induces $\overline q$ with $\Ker \overline q = \Sigma_M$, as claimed.

  To verify \cref{equation:dax-reduces-to-fq}, we proceed as follows.
  As in the setting of \cref{definition:topological-dax}, choose a homotopy $H\colon D_0\simeq D_1$ rel $\partial$ in $M\sm p_0$ and a topological isotopy $h_i\colon D_i \approx D_i'$ ($i=0,1$) rel $\partial$ to a smooth disk $D_i'$ in a smooth stabilization $M' = (M\sm p_0)_\Gamma \csum k(S^2\times S^2)$.
  Let $H' = h_0^{-1}H h_1 \colon D_0' \simeq D_1'$ be the concatenated homotopy.
  Then $\Dax_M(D_0,D_1) = \dax_{M'}(H')$ by definition.
  In the smooth manifold $M'$, \cite[Lemma~4.10]{Kosanovic-Teichner:2024-2} yields $q(\dax_{M'}(H')) = \mu_{M'}(\widehat H')$
  where $\mu_{M'}$ is the self-intersection in \cref{equation:self-intersection-mu} for~$M'$.
  Since $h_0^{-1}$ and $h_1$ are isotopies, they make no contribution to~$\mu_{M'}$, and thus $\mu_{M'}(\widehat H') = \mu_M(\widehat H)$.
  By definition, $\FQ_M(D_0,D_1) = \mu_M(\widehat H)$.
  Combining these, we obtain \cref{equation:dax-reduces-to-fq}.

  The statement \cref{item:fq-concordance-disk} in the proof of \cref{proposition:dax-concordant-disks} is an immediate consequence of this definition of~$\FQ_M$.
  In fact, for a concordance $C\colon D^2\times I \hookrightarrow M\times I$ between $D_0$ and $D_1$, if we define a homotopy $H$ to be the composition of $C$ and the projection onto~$M$, then the associated map $\widehat H$ is homotopic to the embedding $D^2\times I \to M\times I^2$, $(x,t) \mapsto (C(x,t),\frac12)$, whose self-intersection $\mu$ automatically vanishes.

  This completes the proof of \cref{proposition:dax-concordant-disks}.
\end{proof}

\begin{remark}
  \label{remark:D_M/Sigma_M-is-Z_2-vector-space}
  As we described in the above proof, \cite[Lemma~4.10]{Kosanovic-Teichner:2024-2} yields an isomorphism
  \[
    D_M / \Sigma_M \cong \Z_2[T_M]/\mu(\pi_3(M)).
  \]
  In particular, $D_M/\Sigma_M$ is a $\Z_2$-vector space.
\end{remark}

\section{Topological isotopy invariants of homeomorphisms}
\label{section:invariant-of-homeo}

Throughout this section, $M$ denotes a 4-manifold with nonempty boundary.
Fix an almost smooth structure $\Gamma$ on~$M$, i.e., $\Gamma$ is a smooth structure on the complement $M\sm p_0$ of an interior point~$p_0$.

Fix a neat disk $D$ topologically embedded in~$M$.
Define
\begin{equation}
  H_D(M) = \{ f \in \Homeo(M) \mid f_* = \id \text{~on~}\pi_1(M) \text{~and~} f(D) \simeq D \rel \partial \}.
  \label{equation:H_D(M)}
\end{equation}
It is readily seen that $\Homeoh(M) \subset H_D(M)\subset \Homeo(M)$ and $\pi_0\Homeoh(M) \subset \pi_0 H_D(M) \subset \pi_0 \Homeo(M)$ as subgroups.

\begin{definition}
  \label{definition:DeltaD}
  For $f\in H_D(M)$, define
  \[
    \DeltaD(f) = \Delta_{D,\Gamma}(f) = \Dax_{M,\Gamma}(D, f(D)) \in D_M.
  \]
\end{definition}

We write $\DeltaD(f)$ for $\Delta_{D,\Gamma}(f)$, omitting $\Gamma$ from the notation, when the choice of $\Gamma$ has been fixed in the context.
All results on $\Delta_{D,\Gamma}$ in this paper hold for any choice of~$\Gamma$.

In what follows, an isotopy rel $\partial$ means an isotopy that fixes the boundary pointwise.

By the isotopy invariance of the Dax invariant (\cref{proposition:top-dax-properties}~\cref{item:top-dax-isotopy-invariance}), it readily follows that $\DeltaD(f)$ is invariant under isotopy of $f$ rel~$\partial$.
That is, $\DeltaD$ induces a well-defined map $\DeltaD \colon \pi_0 H_D(M) \to D_M$.

\subsection{The values of \texorpdfstring{$\DeltaD$}{\Delta\_D} for homeomorphisms homotopic to the identity}
\label{subsection:DeltaD-on-homeoh}

Note that for \emph{every} neat disk $D\subset M$, we have $\Homeoh(M) \subset H_D(M) \subset \Homeo(M)$, and thus $\DeltaD(f)$ is always defined for $f\in \Homeoh(M)$.
The goal of this subsection is to show the following result, which gives a restriction on the values of $\DeltaD$ on $\Homeoh(M)$.

\begin{proposition}
  \label{proposition:DeltaDh-on-homeoh}
  If a homeomorphism $f$ of a 4-manifold $M$ is properly homotopic to $\id_M$ rel~$\partial M$, then $\DeltaD(f) \in \Sigma_M$.
\end{proposition}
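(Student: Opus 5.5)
By the reduction in the proof of \cref{proposition:dax-concordant-disks}, $\Sigma_M$ is exactly the kernel of $\overline q\colon D_M\to \Z_2[T_M]/\mu(\pi_3(M))$, and $\overline q(\DeltaD(f))=\FQ_M(D,f(D))$ by \cref{equation:dax-reduces-to-fq}. So the plan is to prove that $\FQ_M(D,f(D))=0$ whenever $f\simeq\id_M$ rel~$\partial M$. This is the vanishing statement announced in the introduction as \cref{lemma:fq-homotopy-trivial-homeomorphism}.

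The idea is to build a specific homotopy from $D$ to $f(D)$ out of the given homotopy of $f$, and compute its self-intersection. Let $F\colon M\times I\to M$ be a proper homotopy rel $\partial M$ from $\id_M$ to $f$. Then $H(x,t)=F(D(x),t)$ is a homotopy rel $\partial$ from $D$ to $f(D)$. Define $\Phi\colon M\times I\to M\times I^2$ by $\Phi(y,t)=(F(y,t),t,\tfrac12)$; then $\widehat H=\Phi\circ(D\times\id_I)$. The point is that $\widehat H$ factors through the embedding $D\times\id_I$ of $D^2\times I$ into the 5-manifold $M\times I$, followed by the level-preserving map $\Phi$. Each slice $F_t$ need not be injective, so $\Phi$ is not an embedding, and I would not try to make it one. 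Instead, I would compare with the track of the isotopy-like data. For $t\in\{0,1\}$ the map $F_t$ is a homeomorphism, so $\Phi$ embeds $M\times\{0,1\}$.

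The main step is to show that $\mu(\widehat H)$ lies in $\mu(\pi_3(M))$. I would argue as follows. Perturb $\Phi$ rel $M\times\partial I\cup\partial M\times I$ to a generic map of the 5-manifold $M\times I$ into the 6-manifold $M\times I^2$. Its singular set is then 4-dimensional, which is too big to use directly. It is better to work with the homotopy class. The map $(M\times I,\partial)\to (M\times I^2)$ given by $\Phi$ is homotopic rel boundary, through level-preserving maps, to the map $(y,t)\mapsto (f_t'(y),t,\tfrac12)$ for any other homotopy $f'$ from $\id$ to $f$. Two such homotopies differ by an element of $\pi_1$ of the space of self-maps based at the identity. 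Restricted to $D$, this difference changes $\widehat H$ by a sphere, and more precisely by gluing in an element of $\pi_3(M)$ along a 3-dimensional class. Hence the class of $\mu(\widehat H)$ modulo $\mu(\pi_3(M))$ is independent of the choice of $F$. That much is already built into $\FQ$.

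The remaining and hardest step is to find \emph{one} homotopy with vanishing self-intersection. I would try a doubling trick. Using the $\pi_1$-labels and ordering, the double points of $\widehat H=\Phi\circ(D\times\id)$ come from pairs of points $(x,t)\ne(x',t)$ with $F_t(D(x))=F_t(D(x'))$ after perturbation in the extra $I$-coordinate. Equivalently, $\mu(\widehat H)$ equals the $\mu$-count of the map $D^2\times I\to M\times I^2$, and this can be computed in the pullback under $F$. Because $F$ is a homotopy equivalence at each level and induces the identity on $\pi_1$, its group-ring labels are preserved, so the count equals the self-intersection of the embedding $D\times\id_I$ pushed by a homotopy of ambient self-equivalences. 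The self-intersection of an embedding is zero, and pushing forward by a map homotopic to the identity changes the intersection data only by classes realized by maps of spheres, namely $\mu(\pi_3(M))$. Then $\FQ_M(D,f(D))=0$, so $\overline q(\DeltaD(f))=0$ and $\DeltaD(f)\in\Sigma_M$.

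The justification of this last change-of-intersection claim, in the topological category with generic immersions as in \cite[Appendix~A]{Cha-Kim:2023-1}, is where I expect the real work.
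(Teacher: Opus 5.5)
Your opening reduction, that $\Sigma_M=\Ker\overline q$ and $\overline q(\DeltaD(f))=\FQ_M(D,f(D))$ by \cref{equation:dax-reduces-to-fq}, is exactly the paper's. Everything after it, however, leaves the actual content unproved.

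Since $\FQ_M(D,f(D))$ does not depend on the chosen homotopy, showing $\mu(\widehat H)\in\mu(\pi_3(M))$ for $\widehat H=\Phi\circ(D\times\id_I)$ is not a step toward \cref{lemma:fq-homotopy-trivial-homeomorphism}; it \emph{is} that lemma. Your paragraph on independence of $F$ only re-proves well-definedness of $\FQ$.

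The decisive claim, that ``pushing forward by a map homotopic to the identity changes the intersection data only by $\mu(\pi_3(M))$,'' is asserted without any mechanism, and the evident mechanism is circular. The maps $\Phi$ and $(y,t)\mapsto(y,t,\frac12)$ already differ on $M\times 1$, where they restrict to $f$ and $\id$. So no homotopy between them is rel $D^2\times\partial I$, which is what invariance of $\mu$ requires. The natural homotopy $\Phi_s(y,t)=(F(y,st),t,\frac12)$ only relates $\mu(\widehat H)$ to the face $t=1$. On that face you see the track of $s\mapsto F_s(D)$, which is the homotopy $D\simeq f(D)$ you started with, so the bookkeeping returns nothing new. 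Nothing about $F_t$ being a homotopy equivalence inducing the identity on $\pi_1$ controls the double points of the \emph{image} of $D^2\times I$. As you observed, perturbing $\Phi$ itself gives a $4$-dimensional double point set, so no counting argument is available either.

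The missing idea is to use the \emph{preimage} of $D$ rather than its image. The paper's argument runs as follows.
\begin{enumerate}
  \item Take a proper homotopy $H\colon M\times I\to M$ from $\id_M$ to $f^{-1}$ rel a collar, and make it transverse to the normal bundle of $D$.
  \item Let $W$ be the component of $H^{-1}(D)$ containing $S=(D\times 0)\cup(\partial D\times I)\cup(f(D)\times 1)$. Since $H(W)\subset D$ is contractible and $H_*$ is an isomorphism on $\pi_1$, the map $\pi_1(W)\to\pi_1(M\times I)$ is trivial.
  \item Hence a Lickorish--Wallace surgery link in $W$ bounds disjoint embedded disks in the $6$-manifold $M\times I^2$ with interiors missing $W$. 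The surgery framings extend over these disks because $\pi_1V^{\TOP}_{4,2}=0$.
  \item Ambient surgery then turns $W$ into an \emph{embedded} $3$-ball $C\subset M\times I^2$ with $\partial C=S\times\frac12$.
  \item Projecting $C$ to $M$ gives a homotopy $K\colon D\simeq f(D)$ rel $\partial$. Its track is homotopic rel $\partial$ to the embedding of $C$ by a straight-line homotopy in the $I^2$ coordinates, so $\FQ_M(D,f(D))=\mu(\widehat K)=\mu(C)=0$.
\end{enumerate}
The vanishing comes from this embedded $3$-ball, which is precisely the embedding your last step needs but never produces, not from any property of pushing forward by $F$.
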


\begin{proof}
  By \cref{lemma:fq-homotopy-trivial-homeomorphism} below, $\FQ(D,f(D))=0$.
  Since $\FQ$ is the reduction of $\Dax$ modulo $\Sigma_M$, it follows that $\DeltaD(f) = \Dax(D,f(D)) \in \Sigma_M$.
\end{proof}

\begin{lemma}
  \label{lemma:fq-homotopy-trivial-homeomorphism}
  If $f$ is a homeomorphism of a 4-manifold $M$ properly homotopic to $\id_M$ rel~$\partial M$, then $\FQ_M(D,f(D))=0$ for any neat disk $D\subset M$.
\end{lemma}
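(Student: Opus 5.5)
Let $F\colon M\times I\to M$ be a homotopy rel $\partial M$ from $\id_M$ to $f$; properness is irrelevant here since $D$ is compact. The plan is to use $F$ to produce a specific homotopy $H$ from $D$ to $f(D)$. We then compute $\mu(\widehat H)$ and show it vanishes modulo $\mu(\pi_3(M))$. The natural choice is $H(x,t)=F(D(x),t)$, which is a homotopy rel $\partial$ because $F$ fixes $\partial M$ pointwise. By independence of $\FQ_M$ from the choice of homotopy (built into its definition), it suffices to compute $\mu$ for this particular $H$, or for any $H$ homotopic to it rel $\partial$, up to the image of $\pi_3(M)$.

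The key idea is to exploit the fact that $H$ factors through the ``graph'' of the homotopy $F$. Consider the map $\Phi\colon M\times I\to M\times I^2$, $\Phi(y,t)=(F(y,t),t,\tfrac12)$. On $D\times I$ this restricts to $\widehat H$. We compare $\widehat H$ with the map $\widehat H'(x,t)=(F(D(x),t),t,\lambda(x,t))$, where we want to choose the third coordinate $\lambda$ to separate sheets. A cleaner route is to perturb $\widehat H$ by a homotopy rel $\partial$ to a map of the form $(x,t)\mapsto (G(x,t),t,u(x,t))$. Here $G$ is close to $H$, and we try to arrange that double points occur in cancelling pairs. The most direct version is to observe that the level-preserving map $(x,t)\mapsto (H(x,t),t)\in M\times I$ already shows self-intersections of $\widehat H$ correspond to pairs $(x,t),(x',t)$ with $F(D(x),t)=F(D(x'),t)$. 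We then deform the whole family $F_t$ so that $F_t$ restricted to $D$ is a generic immersion-like family. Next we push $\widehat H$ off into the extra $I$ direction using the fact that for $t=0$ the disk is embedded.

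The approach I would actually commit to is a reduction to the case of spheres or to known results. Double $D$ along a boundary dual is not available in general, so instead I would use the following trick. The maps $D$ and $f\circ D$ are related by the ambient homotopy $F$, so $\widehat H$ is homotopic rel $\partial$ to the composition of the embedding $D\times I\hookrightarrow M\times I$, $(x,t)\mapsto(D(x),t)$, with the level-preserving map $\Psi\colon M\times I\to M\times I\times I$, $\Psi(y,t)=(F(y,t),t,\tfrac12)$. Now $(y,t)\mapsto (F(y,t),t)$ is a map $M\times I\to M\times I$ which is a homeomorphism on both ends and on $\partial M\times I$. I would homotope it rel these ends, in $M\times I^2$ using the extra coordinate, to the embedding $(y,t)\mapsto (y,t,\tfrac12)$ followed by a correction. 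Specifically, I would use the straight-line-type homotopy in the $I^2$ factor: $(y,t)\mapsto (F(y,st),t,\tfrac12+\text{bump})$. This is meant to exhibit $\widehat H$ as homotopic rel $\partial$ to a map whose self-intersections all come from a map of a closed $3$-manifold, namely the boundary of the swept region. Those self-intersections then lie in $\mu(\pi_3(M))$.

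The main obstacle is this last step: making rigorous that the discrepancy between $\widehat H$ and an embedded concordance-like map is measured by an element of $\pi_3(M)$. I would first try to show that $\FQ_M(D,f(D))$ equals $\FQ_M(D,D)=0$ plus the $\mu$-value of the $3$-sphere obtained by gluing $\widehat H$ to the trivial homotopy of $D$ along their common boundary. To do this, I would compare $H$ with the composite homotopy $D\simeq D$ given by running $F|_D$ forward and then the trivial path. Independence of the homotopy modulo $\mu(\pi_3(M))$ would then finish the argument. The point to check carefully is that $H$ itself contributes only through its homotopy class rel $\partial$. In particular, one must verify that the level-preserving structure from $F$ (with $F_0=\id$ an embedding) yields no uncancelled double points beyond a $\pi_3$ term; this likely requires a genericity argument in $M\times I^2$ via topological transversality as in \cite[Appendix~A]{Cha-Kim:2023-1}.
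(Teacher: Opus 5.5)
Your proposal has a genuine gap: it never produces anything that forces $\mu$ to vanish.

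Pushing $D$ forward along $F$ gives the track $\widehat{F|_D}$. Its double points come from pairs $(x,t)\ne(x',t)$ with $F_t(D(x))=F_t(D(x'))$, and none of your steps controls them.
\begin{itemize}
\item The deformation $(y,t)\mapsto(F(y,st),t,\tfrac12+\text{bump})$ is not rel $\partial(D^2\times I)$. On $D^2\times\{1\}$ it moves the lid from $D$ to $f(D)$ through the maps $F_s\circ D$, which are not embeddings. It therefore says nothing about $\mu$, which is only invariant under homotopies rel boundary.
\item $\widehat{F|_D}$ and the constant homotopy of $D$ do not share their boundary: one has lid $f(D)\times 1$, the other $D\times 1$. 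Gluing them does not produce a $3$-sphere, so the proposed formula $\FQ_M(D,f(D))=\FQ_M(D,D)+\mu(\text{sphere})$ has no meaning.
\item Likewise, running $F|_D$ forward gives $D\simeq f(D)$, not $D\simeq D$.
\end{itemize}
Independence of $\mu(\widehat H)$ modulo $\mu(\pi_3(M))$ only compares two homotopies between the \emph{same} pair of disks. What you actually need is a single map $D^2\times I\to M\times I^2$ with the prescribed boundary values that is an embedding. You concede in your last paragraph that you do not have one.

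The missing idea is to pull back rather than push forward.
\begin{itemize}
\item Take a proper homotopy $H\colon M\times I\to M$ from $\id_M$ to $f^{-1}$, fixed near $\partial M$, and make it transverse to a normal bundle of $D$.
\item Then $H^{-1}(D)$ is an embedded compact $3$-manifold in $M\times I$. Compactness is exactly where properness enters, so it is not irrelevant.
\item Let $W$ be the component containing $S=(D\times 0)\cup(\partial D\times I)\cup(f(D)\times 1)$. The map $\pi_1(W)\to\pi_1(M\times I)$ is trivial, because $H(W)\subset D$ and $H$ induces an isomorphism on $\pi_1$.
\item Hence a Lickorish--Wallace surgery link in $W$ bounds disjoint embedded disks in $M\times I^2$. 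The framings extend over these disks since $\pi_1 V^{\TOP}_{4,2}=0$.
\item Ambient surgery then turns $W$ into an embedded $3$-ball $C$ with $\partial C=S\times\tfrac12$.
\item A parametrization $c\colon D^2\times I\to C$ is straight-line homotopic rel boundary to the track $\widehat K$ of its projection $K\colon D\simeq f(D)$.
\end{itemize}
Therefore $\FQ_M(D,f(D))=\mu(\widehat K)=\mu(c)=0$, since $c$ is an embedding. No $\pi_3(M)$ correction term ever appears.
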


\begin{proof}
  Fix a collar $N(\partial M)\cong \partial M\times [0,\epsilon)$ of $\partial M$.
  By isotopy, we may assume that $f=\id$ on $N(\partial M)$ and $D\cap N(\partial M)$ is the product $\partial D \times [0,\epsilon)$.
  Choose a proper homotopy $H\colon M\times I\to M$ from $\id_M$ to $f^{-1}$ rel~$N(\partial M)$ such that $H(\inte(M)\times I) \subset \inte(M)$, parametrized so that it is stationary near $M\times\partial I$.
  There is a topological normal $\R^2$-bundle of $D$ in $M$ \cite[\S9.3]{Freedman-Quinn:1990-1}.
  By topological transversality, a compactly supported homotopy makes $H$ transverse to the normal bundle of $D$, with $N(\partial M)\times I$ and a neighborhood of $M\times \partial I$ fixed \cite[Section~9.6]{Freedman-Quinn:1990-1}.
  The resulting $H$ is proper.
  Then $H^{-1}(D)$ is a compact, orientable, neat $3$-manifold in $M\times I$ with trivial normal bundle, bounded by $S=(D\times\{0\})\cup(\partial D\times I)\cup(f(D)\times\{1\})$.
  Let $W$ be the component of $H^{-1}(D)$ containing~$S$.
  Note that the inclusion induces the trivial map $\pi_1(W)\to\pi_1(M\times I)$, since $H(W)$ lies in the contractible subspace $D\subset M$ and $H_*\colon \pi_1(M\times I) \to \pi_1(M)$ is an isomorphism.
  View $W\subset M\times I = M\times I\times \frac12 \subset M\times I^2$.

  There is a framed link $L\subset\inte W$ such that surgery on $W$ along $L$ gives a 3-ball, by Lickorish--Wallace \cite{Lickorish:1962-1}.
  Each component of $L$ is null-homotopic in $M\times I^2$, since $\pi_1(W)\to\pi_1(M\times I)$ is trivial.
  Thus, by topological transversality, there exist disjoint embedded disks $\Delta_i$ in the interior of $M\times I^2$ such that $\bigsqcup \partial\Delta_i = L\subset W$, $\inte\Delta_i \cap W =\emptyset$ and the product $\partial\Delta_i\times [\frac12,\frac12+\epsilon] \subset (M\times I)\times I$ is a collar of~$\partial\Delta_i$.

  We claim that we can perform surgery on $W$ ambiently in $M\times I^2$ along the disks~$\Delta_i$.
  Details are as follows.
  The disk $\Delta_i\subset M\times I^2$ has a normal $\R^4$-bundle $\nu{\Delta_i}$ \cite[\S 9.4]{Freedman-Quinn:1990-1}.
  The bundle $\nu{\Delta_i}$ is trivial since $\Delta_i$ is contractible.
  By the arrangement of the collar of $\partial \Delta_i\subset \Delta_i$, the normal bundle of $\partial\Delta_i$ in $W$ is a subbundle of $\nu{\Delta_i}|_{\partial\Delta_i}$.
  Thus, the surgery framing on $L$ determines a section, on $\partial\Delta_i$, of the associated trivial bundle with fiber $V^{\TOP}_{4,2}=\TOP(4)/\TOP(4,2)$.
  Here $\TOP(4)$ is the group of homeomorphisms $\R^4\to \R^4$ fixing the origin, and $\TOP(4,2) = \{h\in \TOP(4)\mid h=\id$ on $\R^2\times 0 \subset \R^4\}$.
  It is known that the natural maps $\O(2)=\O(4,2) \to \TOP(4,2)$ and $\O(4)\to \TOP(4)$ are 2-connected;
  see~\cite[proof of Theorem~2.1]{Salvatore-Turchin:2026-1}, \cite[Corollary 2.2.3]{Quinn:1982-1} and~\cite[Theorem~8.3B]{Freedman-Quinn:1990-1}.
  It follows that $\pi_1 V^{\TOP}_{4,2} \cong \pi_1 V^{\O}_{4,2}$, where $V^{\O}_{4,2} = \O(4)/\O(2)$ is the smooth Stiefel space.
  Since $V^{\O}_{4,2}$ is simply connected, we have $\pi_1 V^{\TOP}_{4,2}=0$.
  Thus, the section on $\partial\Delta_i$ extends over~$\Delta_i$, and there is a thickening $\Delta_i\times D^2\hookrightarrow M\times I^2$ which allows us to perform the surgery on $W$ ambiently in $M\times I^2$.

  The result of ambient surgery is a neat 3-ball $C$ embedded in $M\times I^2$ with $\partial C=S\times \frac12$.
  By the Alexander trick, the given parametrization $\partial(D^2\times I)\cong S$ extends to a homeomorphism $c\colon D^2\times I\to C$.
  Write $c(x,t)=(K(x,t),u(x,t),v(x,t))\in M\times I^2$.
  Note that $u(x,t)=t$ and $v(x,t)=\frac12$ on $\partial(D^2\times I)$, and $K\colon D^2\times I \to M$ is a homotopy $D\simeq f(D)$ rel $\partial$.
  The track $\widehat K\colon D^2\times I\to M\times I^2$ of $K$ defined by $\widehat K(x,t)=(K(x,t),t,\frac12)$ is homotopic to $c$, rel $\partial(D^2\times I)$, via a straight-line homotopy between $(t,\frac12)$ and $(u,v)$.
  By the definition of $\FQ_M$ and by the homotopy invariance of the self-intersection~$\mu$, as in \cref{subsubsection:freedman-quinn-top-disks} and \cite[Appendix~A]{Cha-Kim:2023-1}, we have $\FQ_M(D,f(D)) = \mu(\widehat K) = \mu(c)$.
  Since $c$ is an embedding, $\mu(c)=0$.
\end{proof}

\subsection{Additivity of \texorpdfstring{$\DeltaD$}{\Delta\_D} under composition}
\label{subsection:DeltaD-addivity}

\begin{proposition}
  \label{proposition:DeltaD-homomorphism}
  The map $\DeltaD \colon \pi_0 H_D(M) \to D_M$ is a homomorphism.
\end{proposition}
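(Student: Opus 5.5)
We have to show $\DeltaD(f\circ g)=\DeltaD(f)+\DeltaD(g)$ for $f,g\in H_D(M)$. Additivity of the Dax invariant (\cref{proposition:top-dax-properties}~\cref{item:top-dax-additivity}) gives
\[
  \DeltaD(f\circ g)=\Dax(D,fg(D))=\Dax(D,f(D))+\Dax(f(D),f(g(D))).
\]
Note that $fg(D)\simeq f(D)\simeq D$ rel $\partial$, since $f$ fixes $\partial M$. So the whole statement reduces to a naturality claim: for $f\in H_D(M)$ and neat disks $D_0,D_1$ homotopic rel $\partial$,
\[
  \Dax(f(D_0),f(D_1))=\Dax(D_0,D_1).
\]
We will apply this with $D_0=D$ and $D_1=g(D)$. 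The hypothesis $f_*=\id$ on $\pi_1(M)$ is exactly what should make the coefficients in $D_M$ unchanged.

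To prove the naturality claim, we will first push it into the smooth world. Fix a homotopy $H\colon D_0\simeq D_1$ rel $\partial$. Then $f\circ H$ is a homotopy $f(D_0)\simeq f(D_1)$ rel $\partial$. The main difficulty is that $f$ is only a homeomorphism: it need not respect the almost smooth structure $\Gamma$, so we cannot directly compare the perturbed generic immersions used in \cref{definition:topological-dax}. We would try the following.

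First, compare $\Gamma$ with the pulled-back structure $f^*\Gamma$ on $M\sm f^{-1}(p_0)$. Transporting everything by $f$ shows tautologically that
\[
  \Dax_{M,f^*\Gamma}(D_0,D_1)=f_*\Dax_{M,\Gamma}(D_0,D_1)=\Dax_{M,\Gamma}(D_0,D_1)\cdot,
\]
where the pushforward $f_*$ acts on $\Z[\pi_1(M)\sm 1]^\sigma$ and on $d(\pi_3(M))$. It acts as the identity, because $f_*=\id$ on $\pi_1(M)$. Here we must be careful with the basepoint $\zeta_+(0)\in\partial M$, but $f$ fixes $\partial M$ pointwise, so the loop labels are literally preserved.

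Second, it remains to show that $\Dax_{M,\Gamma}$ is independent of the almost smooth structure, which is the genuine content and the main obstacle. The paper asserts that all results hold for any $\Gamma$, but not that the invariant is independent of it.

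Our first attempt would avoid structure-independence altogether. We would express $\Dax(D_0,D_1)$ purely via the geometric self-intersection count of a topological generic immersion $\overline H\colon I^3\to M\times I^2$ with canonically ordered double points. This is analogous to the topological $\FQ$ construction of \cref{subsubsection:freedman-quinn-top-disks}, using topological immersion theory as in \cite[Appendix~A]{Cha-Kim:2023-1}. Such a count is manifestly natural under homeomorphisms fixing $\partial M$. We would then check that, after smoothing in $M'=(M\sm p_0)_\Gamma\csum k(S^2\times S^2)$, it agrees with $\dax(H')$, by the same argument used to prove \cref{equation:dax-reduces-to-fq}: the isotopies $h_i$ contribute no double points.

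If that agreement proves delicate, a fallback handles the second step directly. Two almost smooth structures on $M$ become concordant, hence isotopic, after stabilization and moving the puncture, by Kirby--Siebenmann theory in dimension 4. One would then check that the smooth Dax invariant is unchanged under diffeomorphisms isotopic to the identity, and under the stabilizations already built into \cref{definition:topological-dax}.

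Finally, since $\DeltaD$ is well-defined on $\pi_0 H_D(M)$ and additive, it is a homomorphism.
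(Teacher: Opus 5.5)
Your reduction is correct and matches the paper's. Additivity of $\Dax$ reduces the proposition to the naturality statement $\Dax(f(D_0),f(D_1))=\Dax(D_0,D_1)$ for $f$ inducing the identity on $\pi_1(M)$, which is \cref{lemma:top-dax-pi1-trivial-homeomorphism}. Your displayed transport identity is misstated: the tautology is $\Dax_{M,\Gamma}(f(D_0),f(D_1))=f_*\Dax_{M,f^*\Gamma}(D_0,D_1)$, so what remains is $\Dax_{M,f^*\Gamma}(D_0,D_1)=\Dax_{M,\Gamma}(D_0,D_1)$.

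The gap is that neither of your routes proves this remaining step. Route (b) rests on a false claim: two almost smooth structures need not become isotopic after stabilization. If $f^*\Gamma$ were always stably isotopic to $\Gamma$, then every homeomorphism of a stably smoothable $M$ would be stably smoothable. That fails in general; it is exactly what Galvin's Casson--Sullivan invariant in $H^3(M,\partial M;\Z_2)$ obstructs. The paper's remark after \cref{lemma:top-dax-pi1-trivial-homeomorphism} flags this point. The lemma is needed for all $f$, not only stably smoothable ones.

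Route (a) has the right idea, since a double-point count on an honest track is visibly natural under $f\times\id_{I^2}$. But it skips the step that carries the weight.

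\begin{itemize}
\item The topological immersion theory of \cite[Appendix~A]{Cha-Kim:2023-1} perturbs to generic immersions with unordered double points. That only yields the $\FQ$-level invariant modulo $g+g^{-1}$. It provides neither perturbation to generic \emph{honest} tracks nor independence of the ordered count, and these are exactly what separate $D_M$ from $D_M/\Sigma_M$.
\item The comparison with $\dax(H')$ is not the argument for \cref{equation:dax-reduces-to-fq}. That the isotopies $h_i$ add no double points is the easy part. What must be shown is this: in a smooth $X$, for a possibly non-smooth homotopy whose honest track has finitely many topologically transverse double points, the smooth $\dax$ (defined via a Dax perturbation) equals the weighted count of those double points.
\end{itemize}

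This comparison is \cref{lemma:dax-topological-computation}. Its proof notes that double points of a small smooth perturbation fall into two kinds. Those near the diagonal of $I^3\times I^3$ have trivial label. Those near a pair $(x_p,y_p)$ have signed count $\epsilon(p)$, by homotopy invariance of a local homological intersection number of the two sheets in a $6$-ball around $p$.

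With this lemma you need no general topological theory:
\begin{itemize}
\item Isotope $f$ to be the identity near $p_0$, and stabilize there so that $f$ extends.
\item Take a smooth generic $G$ between smoothings $D_i'$, and smooth $f(D_i')$ by isotopies $k_i$.
\item Apply the lemma to $P=k_0^{-1}(fG)k_1$. Its honest track is $(f\times\id_{I^2})\circ\overline G$ plus embedded isotopy pieces. So its double points correspond to those of $\overline G$ with the same signs and labels, and $\dax(P)=\dax(G)$.
\end{itemize}
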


\begin{proof}
  For $f$, $g\in H_D(M)$, we have
  \[
    \begin{aligned}
      \Dax(D,fg(D)) &= \Dax(D,f(D)) + \Dax(f(D),fg(D))
      && \text{by \cref{proposition:top-dax-properties}~\cref{item:top-dax-additivity}}
      \\
      &= \Dax(D,f(D)) + \Dax(D,g(D))
      && \text{by \cref{lemma:top-dax-pi1-trivial-homeomorphism} below}
    \end{aligned}
  \]
  where $\Dax$ designates $\Dax_{M,\Gamma}$.
  From this, it follows that
  \[
    \DeltaD(f \circ g) = \DeltaD(f) + \DeltaD(g).
    \qedhere
  \]
\end{proof}

\begin{lemma}
  \label{lemma:top-dax-pi1-trivial-homeomorphism}
  Let $D_0$, $D_1\subset M$ be topological neat disks with common boundary, which are homotopic rel boundary.
  Then, for $f\in\Homeo(M)$ inducing $f_*=\id$ on~$\pi_1(M)$, we have
  \[
    \Dax_{M,\Gamma}(f(D_0),f(D_1)) = \Dax_{M,\Gamma}(D_0,D_1).
  \]
\end{lemma}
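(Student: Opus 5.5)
We compare the two invariants by transporting all the defining data along $f$. The issue is that $f$ is only a homeomorphism, so it need not respect the almost smooth structure $\Gamma$ or the stabilizations used to smooth the disks. The plan is to show that $\Dax_{M,\Gamma}(f(D_0),f(D_1)) = \Dax_{M,f_*\Gamma}(f(D_0),f(D_1))$, where $f_*\Gamma$ is the pushed-forward almost smooth structure on $M\sm f(p_0)$. We will then show that the latter equals $\Dax_{M,\Gamma}(D_0,D_1)$ up to the automorphism of $D_M$ induced by $f_*$ on $\pi_1(M)$. That automorphism is the identity by hypothesis.

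The second step is the formal one. Choose a homotopy $H\colon D_0\simeq D_1$ rel $\partial$ and isotopies $h_i\colon D_i\approx D_i'$ to smooth disks in $M'=(M\sm p_0)_\Gamma\csum k(S^2\times S^2)$. Applying $f$ gives a diffeomorphism $(M\sm p_0)_\Gamma\to(M\sm f(p_0))_{f_*\Gamma}$, which extends over the stabilizations to a diffeomorphism $f'$ of the stabilized manifolds. Then $f\circ H$ together with $f'\circ h_i$ are valid defining data for $\Dax_{M,f_*\Gamma}(f(D_0),f(D_1))$. Under $f'$, generic immersions of $I^3$ go to generic immersions, and double points and their canonical $s$-orderings correspond. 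Signs are preserved since $f$ is orientation preserving, and group elements are transformed by $f_*$. Hence $\dax(f'\circ H')=f_*\dax(H')$. Two points need attention here. First, $f$ fixes $\partial M$, so the basepoint arc $\zeta_+$ is unchanged. Second, the quotient by $d(\pi_3(M))$ is carried to itself, because $d_{f_*\Gamma}\circ f_* = f_*\circ d_\Gamma$ by the same argument. With $f_*=\id$ on $\pi_1(M)$, this gives the identity of the second step.

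The main obstacle is the first step: independence of $\Dax_{M,\Gamma}$ from the choice of almost smooth structure, at least between $\Gamma$ and $f_*\Gamma$. I would try to cite or reprove from \cite{Cha-Kim:2023-1} that the topological Dax invariant is independent of $\Gamma$, rather than prove a general statement from scratch. Failing that, I would argue directly as follows.

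First, move $f(p_0)$ back to $p_0$ by an isotopy supported in a small ball in the interior. This changes $f$ only by isotopy, and by \cref{proposition:top-dax-properties}~\cref{item:top-dax-isotopy-invariance} it does not change the left-hand side. So we may assume $f(p_0)=p_0$.

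Now $\Gamma$ and $f_*\Gamma$ are two smooth structures on the same open manifold $M\sm p_0$. By Kirby--Siebenmann theory, after stabilizing by copies of $S^2\times S^2$ they become concordant, hence isotopic. More precisely, there is a homeomorphism of the stabilized manifold, topologically isotopic to the identity, carrying one structure to the other. The obstruction would lie in $H^3(M\sm p_0;\Z_2)$; stabilization does not obviously kill it, but any concordance of structures gives isotopy, so I would aim to show it is killed by stabilization.

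Given such an isotopy of stabilized manifolds, the smoothed disks and the homotopy $H'$ computed in one structure are carried by a topological isotopy to data in the other. The Dax count is invariant under this, since it can be computed after a small perturbation to a smooth generic position. This is the same mechanism used in \cite[Theorem~4.7]{Cha-Kim:2023-1} to show independence of the smoothings $D_i'$. I expect the careful verification of this independence, including the treatment of the possibly nonzero Kirby--Siebenmann type obstruction, to be the step requiring the most care.
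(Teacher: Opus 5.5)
\textbf{There is a genuine gap in your first step.} Your second step is fine: transporting all the defining data by $f$, extended over the stabilization, is formal and gives $\Dax_{M,f_*\Gamma}(f(D_0),f(D_1))=f_*\Dax_{M,\Gamma}(D_0,D_1)$. But the lemma then rests entirely on your first step, which is unproved.

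\cite{Cha-Kim:2023-1} does not show that $\Dax_{M,\Gamma}$ is independent of $\Gamma$. Its Theorem~4.7 gives independence of the smoothings $D_i'$ and of the homotopy, but only for a \emph{fixed} $\Gamma$. Independence of $\Gamma$ comes from later work \cite{Lin-Xie-Zhang:2026-2,Cha:2026-2}. With that input your outline becomes the alternative proof mentioned in the remark after the lemma, but it is not available from the source you cite.

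Your fallback cannot work either. Suppose $\Gamma$ and $f_*\Gamma$ became isotopic after stabilization. That is, suppose there were a homeomorphism $\phi$ of the stabilized manifold, isotopic to the identity rel boundary, with $\phi_*\Gamma=f_*\Gamma$. Then $\phi^{-1}\circ f$ would be a diffeomorphism of $(M\sm p_0)_\Gamma\csum k(S^2\times S^2)$ topologically isotopic to $f$. So the class in $H^3(\,\cdot\,;\Z_2)$ you hope to kill is exactly the obstruction to stable smoothability of $f$, essentially Galvin's Casson--Sullivan invariant. Adding $S^2\times S^2$ summands does not change it. It can be nonzero even for homeomorphisms homotopic to the identity \cite{Galvin:2024-1}, and these certainly satisfy $f_*=\id$ on $\pi_1(M)$. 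So no such $\phi$ need exist, and the remark after the lemma says as much: $f^*\Gamma$ need not be equivalent to $\Gamma$.

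\textbf{The missing idea is to avoid comparing smooth structures altogether.} Isotope $f$ rel $\partial M$ so that $f=\id$ on a ball $U\ni p_0$ disjoint from the homotopy $H$. Stabilize inside $U$, and extend $f$ by the identity to a homeomorphism of the \emph{single} smooth manifold $X=(M\sm p_0)_\Gamma\csum k(S^2\times S^2)$.

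Take a smooth generic homotopy $G\colon D_0'\simeq D_1'$ computing $\Dax(D_0,D_1)$. Use \cite[Theorem~D]{Cha-Kim:2023-1} to isotope each $f(D_i')$ topologically, via $k_i$, to a smooth disk $D_i''$ in the same $X$. Then $P=k_0^{-1}(fG)k_1$ computes $\Dax(f(D_0),f(D_1))$ with respect to $\Gamma$. The track of $P$ is not smooth, because $f$ is only a homeomorphism.

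The new ingredient is \cref{lemma:dax-topological-computation}. It says $\dax$ can be computed from a track that is merely a topological embedding away from finitely many transverse double points. This holds because a local homological intersection number at each double point is unchanged by the small perturbation to a smooth generic immersion.

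The track of $P$ is such a map. The isotopy segments contribute no double points. The double points of $\overline{fG}$ correspond to those of $\overline G$, with the same $s$-ordering, the same sign (since $f$ preserves orientation) and the same label (since $f_*=\id$ on $\pi_1(M)$). Hence $\dax(P)=\dax(G)$.

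Your last paragraph does invoke this perturbation mechanism, but only after a structure comparison that is unavailable. The point is that the mechanism makes that comparison unnecessary.
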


\begin{remark}
  In the above proof of \cref{proposition:DeltaD-homomorphism}, the use of \cref{lemma:top-dax-pi1-trivial-homeomorphism} is essential.
  From the definition, it can be seen that $\Dax_{M,\Gamma}(f(D_0),f(D_1)) = f_* \Dax_{M,f^*\Gamma}(D_0,D_1)$ holds, where $f^*\Gamma$ is the pullback of the almost smooth structure $\Gamma$ under~$f$.
  This gives $\Dax_{M,\Gamma}(f(D),fg(D)) = \Dax_{M,f^*\Gamma}(D,g(D))$.
  However, the almost smooth structure $f^*\Gamma$ is not necessarily equivalent to $\Gamma$ in general.
\end{remark}

\begin{remark}
  After the results of this paper were proved and announced, Lin, Xie and Zhang proved the topological invariance of the smooth Dax isomorphism \cite{Lin-Xie-Zhang:2026-2}.
  Also, a homological definition of the Dax invariant was developed in~\cite{Cha:2026-2}.
  In the setting of \cref{lemma:top-dax-pi1-trivial-homeomorphism}, their methods can be used to prove that $\Dax_{M,f^*\Gamma}(D_0,D_1) = \Dax_{M,\Gamma}(D_0,D_1)$, yielding an alternative proof of the lemma.
\end{remark}

To prove \cref{lemma:top-dax-pi1-trivial-homeomorphism}, we use the following.

\begin{lemma}[Topological computation of Dax self-intersection]
  \label{lemma:dax-topological-computation}
  Let $X$ be a smooth oriented 4-manifold and let $H\colon I^3\to X$ be a homotopy satisfying the boundary condition required to define $\dax(H)$ in~\cref{equation:dax-self-intersection}.
  We allow $H$ to be non-smooth.
  Suppose that the honest track $\overline H\colon I^3\to X\times I^2$ is a topological embedding except at finitely many transverse double points.
  Then
  \[
    \dax(H)=\sum_{g(p)\ne 1}\epsilon(p)g(p) \in \Z[\pi_1(X)\sm 1]
  \]
  where $p$ varies over double points of $\overline H$ and $\epsilon(p)$ and $g(p)$ are defined as in~\cref{equation:dax-self-intersection}.
\end{lemma}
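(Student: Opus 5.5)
To prove the lemma, I would approximate $H$ by a smooth map $F$ rel $\partial I^3$ through a homotopy so small that $\overline F$ is a smooth generic immersion. I would then show that the signed, $\pi_1$-labelled count of double points of $\overline F$ equals that of $\overline H$. By definition, $\dax(H)=\dax(F)$ is computed from $\overline F$, so this gives the claimed formula.

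First I would localize the problem. Let $p_1,\dots,p_m$ be the double points of $\overline H$, with preimage pairs $x_j^0,x_j^1$ in $\inte I^3$. Each pair has the form $x_j^i=(s_j^i,t_j,u_j)$ because $\overline H$ records $(t,u)$. Choose small disjoint closed 3-balls $U_j^i$ around these preimages. Put
\[
  K = I^3\sm \textstyle\bigcup_{i,j}\inte U_j^i .
\]
Then $\overline H|_K$ is a topological embedding. Moreover $\overline H(U_j^0)$ and $\overline H(U_j^1)$ meet only in $p_j$, transversely; here transverse means locally modelled on two coordinate $\R^3$'s in $\R^5$. By compactness, there is $\delta>0$ such that any map $G$ that is $\delta$-close to $\overline H$ has no points $y\in K$, $y'\in I^3$ with $G(y)=G(y')$ and $y,y'$ far apart. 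Points that are close together in $I^3$ are handled separately below.

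Next I would perform the approximation in two stages.
\begin{enumerate}
  \item Near each $p_j$, I would use the transverse local model to change $H$ by a small homotopy, supported in $U_j^0\cup U_j^1$, so that $\overline H$ restricted to these balls becomes a pair of smooth embedded 3-disks meeting transversely in one point. The sign and the group element are unchanged by this step. The sign is determined by the local orientations. The group element is determined by the arcs $s\mapsto\overline H(s,t_j,u_j)$ for $s\in[0,s_j^i]$, and a small homotopy changes these arcs only by a small homotopy.
  \item Away from the balls, I would use the Dax/Whitney approximation theorem of \cite{Dax:1972-1}, cited in \cref{subsubsection:smooth-dax-self-intersection}, relative to $\partial I^3$ and to the balls where the map is already smooth. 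This yields a smooth $F$ such that $\overline F$ is a generic immersion and is arbitrarily $C^0$-close to $\overline H$.
\end{enumerate}

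The main obstacle is that a $C^0$-small smoothing of a topological embedding $\overline H|_K$ can still create new double points of $\overline F$ between nearby points of $K$. Such double points would be invisible to the $C^0$ estimate. I would handle them in two ways.

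\emph{Labels of new double points.} A new double point $\overline F(y)=\overline F(y')$ with $y,y'$ close has $y=(s,t,u)$ and $y'=(s',t,u)$. Its loop $\overline F|_{[s,s']\times t\times u}$ lies in a small ball, so its label $g$ is $1$. These double points are therefore excluded from the sum. To justify that such a loop is small I need the double points to lie near the diagonal, so I would carry out the approximation in charts small enough relative to $\delta$.

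\emph{Far-apart new double points.} These cannot exist, by the choice of $\delta$ and the separation of the pairs $U_j^0$, $U_j^1$ on which $\overline F$ is kept fixed.

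Consequently the double points of $\overline F$ with nontrivial label are exactly the $p_j$ with $g(p_j)\ne1$, with the same signs and labels. This gives $\dax(H)=\dax(F)=\sum_{g(p)\ne1}\epsilon(p)g(p)$, as claimed.
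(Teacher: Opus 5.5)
\textbf{There is a genuine gap, in your Stage~1.}

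Your outer structure matches the paper's. You take a small smooth-generic perturbation $F$, use a neighborhood of the diagonal in $I^3\times I^3$ in which double points have trivial label, and use compactness to rule out far-apart double points away from the original ones. That part is fine.

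In Stage~1 you assert that a small homotopy of $H$, supported in $U_j^0\cup U_j^1$, turns the two sheets into smooth embedded 3-disks meeting transversely in exactly one point. Your justification is ``the transverse local model,'' but that model does not give this.
\begin{enumerate}
  \item The model is only a topological chart of $X\times I^2$ near $p_j$. (This is a 6-manifold, so the model is two coordinate $\R^3$'s in $\R^6$, not $\R^5$.) The chart is compatible neither with the smooth structure nor with the product structure of $X\times I^2$.
  \item Hence it gives no way to produce a smooth sheet of honest-track form $(H'(s,t,u),t,u)$. Doing so means smoothing a locally flat codimension-3 embedding in a 2-parameter family while keeping the track form. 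That is a nontrivial smoothing-theory statement which you do not prove.
  \item Even then, a $C^0$-small smoothing of the two sheets need not meet in a single point.
  \item Granting Stage~1 anyway, ``the sign is determined by the local orientations'' does not show that the new smooth double point has sign $\epsilon(p_j)$. That needs an argument that the signed count is invariant under the modification.
\end{enumerate}

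\textbf{The fix.} You only need the algebraic count, not a single geometric intersection. Drop Stage~1 and perturb $H$ globally, by a small homotopy $H_r$ rel $\partial I^3$, to a smooth $F$ with $\overline F$ generic. Then every double point of $\overline F$ has its preimage pair either near the diagonal or in $\inte(B_p\times C_p)$, where $B_p$ and $C_p$ are small balls around the two preimages of $p$.

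The double points of the second kind all carry label $g(p)$, since the homotopy is small. Their signed count is $\epsilon(p)$ by a homological intersection number, defined as follows. For maps $\alpha,\beta\colon D^3\to\R^6$ with $\alpha(\partial D^3)\cap\beta(D^3)=\emptyset=\alpha(D^3)\cap\beta(\partial D^3)$, define $I(\alpha,\beta)$ through
\[
(\alpha\times\beta)_*[D^3\times D^3,\partial]\in H_6\bigl(\R^6\times\R^6,(\R^6\times\R^6)\sm\Delta_{\R^6}\bigr)\cong\Z .
\]
This number has two properties:
\begin{itemize}
  \item It equals the sum of signs whenever the intersections are finitely many and topologically transverse. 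This covers both the original topological double point and the smooth ones.
  \item It is invariant under homotopies that preserve the boundary condition. The condition holds for $\alpha_r=\overline{H_r}|_{B_p}$ and $\beta_r=\overline{H_r}|_{C_p}$ when the homotopy is small.
\end{itemize}
Therefore $\sum_{q\in S_p}\epsilon(q)=I(\alpha_1,\beta_1)=I(\alpha_0,\beta_0)=\epsilon(p)$, where $S_p$ is the set of double points of $\overline F$ with preimage pair in $\inte(B_p\times C_p)$. This is exactly the paper's proof, and it avoids any local smoothing of the sheets.
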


\begin{proof}
  For brevity, let $\pr_1\colon I^3 \to I$ be the projection $\pr_1(s,t,u)=s$.
  For each double point $p=\overline H(x_p)=\overline H(y_p)$ with $\pr_1(x_p) < \pr_1(y_p)$, choose an open 6-ball neighborhood $U_p\subset X\times I^2$ of $p$, and choose closed smooth 3-ball neighborhoods $B_p$,~$C_p\subset\inte I^3$ of $x_p$,~$y_p$, respectively, such that all of the $B_p$ and $C_p$ are pairwise disjoint, $\pr_1(x) < \pr_1(y)$ whenever $x\in B_p$ and $y\in C_p$, $\overline H|_{B_p}$, $\overline H|_{C_p}$ are embeddings into $U_p$, and $\overline H(B_p) \cap \overline H(C_p) = \{p\}$.
  Orient $B_p$, $C_p$ and $U_p$ by the orientations of $I^3$ and~$X\times I^2$.
  Choose a sufficiently small neighborhood $N$ of the diagonal $\Delta_{I^3} \subset I^3\times I^3$ which is disjoint from all $B_p\times C_p$ and $C_p\times B_p$.

  Using Dax's smooth perturbation \cite[Chapters III and~VI]{Dax:1972-1}, choose a small homotopy $\{H_r\}_{0\le r\le 1}$ rel $\partial I^3$ from $H_0=H$ to a smooth map $H_1=F$ such that the honest track $\overline F$ is a smooth generic immersion.
  For a double point $q$ of $\overline F$, write $q=\overline F(z_q)=\overline F(w_q)$ with $\pr_1(z_q)<\pr_1(w_q)$.
  By choosing $F$ sufficiently close to $H$, we may assume that $(z_q,w_q)$ lies either in $N$ or in $\inte(B_p\times C_p)$ for some~$p$.

  Suppose that $(z_q,w_q)\in N$.
  Write $z_q=(s_0,t,u)$ and $w_q=(s_1,t,u)$ where $s_0 < s_1$.
  Choosing $N$ sufficiently small, we may assume that the loop $F|_{[s_0,s_1]\times(t,u)}$ is contained in a contractible coordinate neighborhood in~$X$.
  Then, $g(q)=1$ whenever $(z_q,w_q)\in N$.

  Let $S_p$ be the set of double points $q$ of $\overline F$ such that $(z_q,w_q) \in \inte(B_p\times C_p)$.
  For all $q\in S_p$, we have $g(q)=g(p)$, since the homotopy is sufficiently small.
  Furthermore, the following holds.

  \begin{claim}
    For each $p$, $\sum_{q\in S_p}\epsilon(q)=\epsilon(p)$.
  \end{claim}

  Assuming the claim, we have
  \[
    \dax(H) = \dax(F) = \sum_{g(q)\ne 1} \epsilon(q)g(q)
    = \sum_{g(p)\ne 1}\;\sum_{q\in S_p}\epsilon(q)g(q)
    = \sum_{g(p)\ne 1}\epsilon(p)g(p).
  \]
  This proves the desired conclusion.

  To prove the claim, we use a homological intersection number defined below.
  For maps $\alpha$,~$\beta\colon D^3\to \R^6$ satisfying
  \begin{equation}
    \alpha(\partial D^3)\cap\beta(D^3)=\emptyset,\quad \alpha(D^3)\cap\beta(\partial D^3)=\emptyset,
    \label{equation:boundary-condition-for-intersection}
  \end{equation}
  the product map
  \[
    \alpha\times\beta \colon (D^3\times D^3,\partial(D^3\times D^3)) \to (\R^6\times \R^6,(\R^6\times \R^6)\sm \Delta_{\R^6})
  \]
  is defined, where $\Delta_{\R^6} \subset \R^6\times \R^6$ is the diagonal.
  The map $(\R^6,\R^6\sm 0) \to (\R^6\times \R^6, (\R^6\times \R^6) \sm \Delta_{\R^6})$ given by $u\mapsto (u,0)$ sends the standard local orientation of $\R^6$ to a generator $\zeta$ of $H_6(\R^6\times \R^6, (\R^6\times \R^6) \sm \Delta_{\R^6}) \cong \Z$.
  Define $I(\alpha,\beta)\in \Z$ by
  \[
    (\alpha\times\beta)_*[D^3\times D^3,\partial(D^3\times D^3)] = -I(\alpha,\beta)\cdot \zeta.
  \]
  The sign convention is chosen so that $I(\id_{D^3\times0},\id_{0\times D^3})=1$.
  The value of $I(\alpha,\beta)$ is unchanged under homotopies through maps satisfying the boundary condition~\cref{equation:boundary-condition-for-intersection}.
  If $\alpha$ and $\beta$ are topologically transverse with finitely many intersections, then $I(\alpha,\beta)$ is the sum of their ordered intersection signs.

  In our case, recall that $\{H_r\colon I^3 \to X\}$ is a homotopy between $H_0=H$ and $H_1=F$.
  Fix orientation-preserving identifications of $B_p$ and $C_p$ with $D^3$ and $U_p$ with~$\R^6$.
  Let $\alpha_r=\overline{H_r}|_{B_p}$ and $\beta_r=\overline{H_r}|_{C_p}$. Since $\alpha_0$ and $\beta_0$ satisfy~\cref{equation:boundary-condition-for-intersection}, we may assume that $\alpha_r$ and $\beta_r$ send $B_p$ and $C_p$ to $U_p$ and satisfy~\cref{equation:boundary-condition-for-intersection} for all~$r$, by choosing a sufficiently small homotopy~$\{H_r\}$.
  Thus, $I(\alpha_r,\beta_r)$ is defined.
  Furthermore, by the transverse intersection interpretation and homotopy invariance,
  \[
    \sum_{q\in S_p}\epsilon(q)
    = I(\alpha_1,\beta_1)
    = I(\alpha_0,\beta_0)
    = \epsilon(p).
  \]
  This proves the claim.
\end{proof}

\begin{proof}[Proof of~\cref{lemma:top-dax-pi1-trivial-homeomorphism}]
  Let $p_0$ be the singular point of~$\Gamma$.
  By isotopy invariance, we may assume that $D_0,D_1$ avoid~$p_0$.
  Choose a homotopy $H\colon D_0 \simeq D_1 \rel \partial$ in $M\sm p_0$.
  We may assume that $f=\id$ on an open 4-ball neighborhood $U$ of~$p_0$ such that $U\cap H(I^3) = \emptyset$, by isotopy of $f$ rel~$\partial M$.
  Note that $fH\colon f(D_0) \simeq f(D_1)$ is a homotopy rel $\partial$ in $M\sm U$.
  Let $X=(M\sm p_0)_\Gamma\csum k(S^2\times S^2)$, where the connected sum is formed at a smooth 4-ball contained in $U\sm p_0$.
  Since $f=\id$ on $U$, $f$ extends by $\id_{k(S^2\times S^2)}$ to a homeomorphism of $X$, which we denote by~$f$ for brevity.
  This extension induces the identity on $\pi_1(X)=\pi_1(M)$.
  Using \cite[Theorem~D]{Cha-Kim:2023-1}, choose $k$ such that there are topological isotopies $h_i\colon D_i\approx D_i'$ and $k_i\colon f(D_i')\approx D_i''$ rel~$\partial$ in $X$, where $D_i'$, $D_i''\subset X$ are smooth neat disks.

  Choose a smooth homotopy $G\colon D_0'\simeq D_1'$ rel $\partial$ such that $G$ is homotopic to the concatenation $h_0^{-1} H h_1$ rel~$\partial I^3$ and the honest track $\overline G$ is a generic immersion, using Dax's perturbation.
  By definition, $\Dax(D_0,D_1)=\dax(G)$.

  In what follows, when a concatenation is formed, make the homotopies involved stationary near endpoints.
  Let $P = k_0^{-1} (f G) k_1\colon D_0'' \simeq D_1''$, a homotopy rel~$\partial$.
  Then $P$ is homotopic, rel $\partial I^3$, to the homotopy $Q=(k_0^{-1}(fh_0^{-1}))(fH)((fh_1)k_1)$.
  Note that $(fh_i)k_i\colon f(D_i) \approx D_i''$ is a topological isotopy to the smooth disk~$D_i''$ in~$X$.
  Thus, by the definition of the topological Dax invariant and by the homotopy invariance of $\dax$, we have $\Dax(f(D_0),f(D_1))=\dax(Q)=\dax(P)$.

  Thus, it suffices to show that $\dax(P)=\dax(G)$.
  Since $k_0$ and $k_1$ are isotopies, $\overline P$ is a topological immersion whose self-intersections are finitely many topologically transverse double points in bijective correspondence with those of~$\overline{G}$.
  Since $f$ preserves the $\pi_1$ elements and orientation, corresponding double points $q$ of $\overline P$ and $p$ of~$\overline G$ satisfy $g(q) = g(p)$ and $\epsilon(q) = \epsilon(p)$.
  By \cref{lemma:dax-topological-computation}, it follows that $\dax(P) = \dax(G)$.
\end{proof}

\begin{proof}[Proof of \cref{theorem:intro-DeltaDh-homomorphism}]
  By \cref{proposition:DeltaD-homomorphism}, $\DeltaD$ is additive on $\pi_0 \Homeoh(M) \subset \pi_0 H_D(M)$ under composition.
  By \cref{proposition:DeltaDh-on-homeoh}, $\DeltaD(\pi_0 \Homeoh(M)) \subset \Sigma_M$.
  Thus $\DeltaD$ induces a homomorphism $\pi_0\Homeoh(M) \to \Sigma_M$.
\end{proof}

\begin{corollary}
  \label{corollary:homomorphism-modulo-homotopy}
  The homomorphism $\DeltaD\colon \pi_0 H_D(M) \to D_M$ induces a homomorphism
  \[
    \tDeltaD\colon \pi_0 H_D(M) / \pi_0 \Homeoh(M) = H_D(M) / ({\simeq}\rel \partial M) \to D_M/\Sigma_M.
  \]
\end{corollary}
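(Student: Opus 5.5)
The plan is to compose the homomorphism $\DeltaD\colon \pi_0 H_D(M)\to D_M$ of \cref{proposition:DeltaD-homomorphism} with the quotient map $D_M\to D_M/\Sigma_M$. This gives a homomorphism $\pi_0 H_D(M)\to D_M/\Sigma_M$, and it then suffices to check that its kernel contains $\pi_0\Homeoh(M)$. That containment is exactly \cref{proposition:DeltaDh-on-homeoh}: for $f\simeq \id_M$ rel $\partial M$ we have $\DeltaD(f)\in\Sigma_M$.

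There is one group-theoretic point to address. For the quotient $\pi_0 H_D(M)/\pi_0\Homeoh(M)$ to be a group, $\pi_0\Homeoh(M)$ must be normal in $\pi_0 H_D(M)$. This holds because $\Homeoh(M)$ is normal in $\Homeo(M)$: if $f\simeq\id$ rel $\partial$, then $gfg^{-1}\simeq gg^{-1}=\id$ rel $\partial$. Even without normality, we could use the fact that the target $D_M/\Sigma_M$ is abelian, so the composite factors through the coset space.

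Next I would justify the identification of the quotient with $H_D(M)/({\simeq}\rel\partial M)$. For $f,g\in H_D(M)$, the classes $[f]$ and $[g]$ agree modulo $\pi_0\Homeoh(M)$ exactly when $g^{-1}f\simeq\id$ rel $\partial$. Composing with $g$, this is equivalent to $f\simeq g$ rel $\partial$. Isotopic maps are homotopic, so the identification is well defined. Concretely, $\tDeltaD$ sends the homotopy class of $f$ to $\DeltaD(f)\bmod\Sigma_M$, and this value depends only on the homotopy class: if $f\simeq g$, then $\DeltaD(f)-\DeltaD(g)=\DeltaD(g^{-1}f)\in\Sigma_M$ by additivity.

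I do not expect a real obstacle. The only substantive inputs are additivity (\cref{lemma:top-dax-pi1-trivial-homeomorphism}) and the Freedman--Quinn vanishing (\cref{lemma:fq-homotopy-trivial-homeomorphism}), both already established.
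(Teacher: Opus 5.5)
Your proposal is correct and matches the paper's argument, which also combines the homomorphism property of $\DeltaD$ (\cref{proposition:DeltaD-homomorphism}) with the containment $\DeltaD(\pi_0\Homeoh(M))\subset\Sigma_M$ (\cref{proposition:DeltaDh-on-homeoh}, packaged in \cref{theorem:intro-DeltaDh-homomorphism}) and concludes at once. Your remarks on the normality of $\pi_0\Homeoh(M)$ and on identifying the quotient with homotopy classes rel $\partial M$ are accurate and spell out details the paper leaves implicit.
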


\begin{proof}
  By \cref{theorem:intro-DeltaDh-homomorphism}, $\DeltaD$ sends the subgroup $\pi_0 \Homeoh(M) \subset \pi_0 H_D(M)$ to~$\Sigma_M$.
  The conclusion readily follows.
\end{proof}

\section{Realization of the values of the invariant~\texorpdfstring{$\DeltaD$}{\Delta\_D}}
\label{section:realization}

In this section, we study the values of $\DeltaD$ realized by homeomorphisms, in the presence of a geometric dual.
When the underlying 4-manifold is smooth or stably smoothable, we realize these values by diffeomorphisms or stably smoothable homeomorphisms, respectively.
We begin with our conventions on stable smoothability.

\paragraph{Stably smoothable 4-manifolds and almost smooth structures}

We say that a 4-manifold $M$ is \emph{stably smoothable} if $M\csum k(S^2\times S^2)$ admits a smooth structure $\Theta$ for some $k \ge 0$.
We call $\Theta$ a \emph{stable smooth structure} for~$M$.
By~\cite[10.2B]{Freedman-Quinn:1990-1}, $M$ is stably smoothable if and only if $\ks(M)=0$.
In this paper, a stably smoothable 4-manifold $M$ is always equipped with a fixed stable smooth structure~$\Theta$.

We say that a homeomorphism $f\in\Homeo(M)$ on a stably smoothable 4-manifold $M$ is \emph{stably smoothable} if the following holds:
after isotoping $f$ so that $f = \id$ near the point at which the connected sum is formed, there exists $r\ge k$ such that the homeomorphism $f\csum \id_{r(S^2\times S^2)}$ on the smooth 4-manifold $M \csum r(S^2\times S^2) = (M \csum k(S^2\times S^2))_\Theta \csum (r-k)(S^2\times S^2)$ is topologically isotopic rel $\partial$ to a diffeomorphism.
When $f$ is stably smoothable, we also say that the mapping class $[f] \in \pi_0\Homeo(M)$ is stably smoothable.
When it is necessary to specify the choice of $\Theta$ explicitly, we say that a homeomorphism $f$ (or its mapping class $[f]$) is \emph{stably smoothable with respect to~$\Theta$}.

By convention, for a stably smoothable 4-manifold $M$ equipped with a stable smooth structure $\Theta$ on $M\csum k(S^2\times S^2)$, we fix an almost smooth structure $\Gamma$ on $M$ as follows.
View $M\sm p_0$ as an open subset of $M\csum k(S^2\times S^2)$, where $p_0$ is an interior point of~$M$ at which the connected sum is performed.
Use the restriction $\Gamma=\Theta|_{M \sm p_0}$ as an almost smooth structure on $M$, in particular for the purpose of defining the invariants $\Dax=\Dax_{M,\Gamma}$ and $\Delta_D = \Delta_{D,\Gamma}$.

As a special case, for a smooth 4-manifold $M$, we use the given smooth structure as a stable smooth structure, and use its restriction to $M\sm p_0$ as an almost smooth structure.

\subsection{Pseudoisotopy and realization of values in~\texorpdfstring{$\Sigma_M$}{\Sigma\_M}}
\label{subsection:pseudoisotopy-and-disk-concordance}

The goal of this subsection is to prove \cref{theorem:intro-realization-Sigma_M}, which asserts the following:
for a 4-manifold $M$ with nonempty boundary and a neat disk $D\subset M$ with a geometric dual in $\partial M$, there is an abelian subgroup $H$ in $\pi_0 \Homeopi(M)$ that surjects onto~$\Sigma_M$ under the homomorphism $\DeltaDh \colon \pi_0 \Homeoh(M) \to \Sigma_M$.
In addition, if $M$ is smooth, then we may assume that all elements of $H$ are represented by diffeomorphisms that commute up to smooth isotopy;
if $M$ is stably smoothable, then we may assume that all elements of $H$ are represented by stably smoothable homeomorphisms.

Our proof of \cref{theorem:intro-realization-Sigma_M} is built on the following result.
Recall that $\Sigma_M$ is the subgroup of $D_M = \Z[\pi_1(M)\sm 1]^\sigma/d(\pi_3(M))$ generated by $\{g+g^{-1} \mid g\in \pi_1(M),\; g\ne 1\}$.
Let $\Sigma_M^\partial \subset \Sigma_M$ be the subgroup generated by $\{j_*(g)+j_*(g)^{-1} \mid g\in \pi_1(\partial M),\; j_*(g)\ne 1\}$, where $j_*\colon \pi_1(\partial M) \to \pi_1(M)$ is the inclusion-induced homomorphism.

\begin{proposition}
  \label{proposition:concordance-disk-pseudoisotopy}
  Let $M$ be a 4-manifold with nonempty boundary.
  Suppose that $D_0$ and $D_1$ are neat disks embedded in $M$ with $\partial D_0 = \partial D_1$, which are homotopic rel~$\partial$ and have a geometric dual in~$\partial M$.
  Then the following are equivalent.

  \begin{enumerate}
    \item\label{item:disks-concordant}
    The disks $D_0$ and $D_1$ are concordant.
    \item\label{item:dax-symmetrized-value}
    The value of $\Dax(D_0,D_1)$ lies in the subgroup $\Sigma_M \subset D_M$.
    \item\label{item:disks-pseudoisotopy}
    For some homeomorphism $f_{D_0,D_1}$ of $M$ pseudoisotopic to $\id_M$, $D_1 = f_{D_0,D_1}(D_0)$.
    \item\label{item:disks-homotopy}
    For some homeomorphism $g_{D_0,D_1}$ of $M$ properly homotopic to $\id_M$ rel $\partial M$, $D_1 = g_{D_0,D_1}(D_0)$.
  \end{enumerate}
  When \cref{item:disks-concordant,,item:dax-symmetrized-value,,item:disks-pseudoisotopy,,item:disks-homotopy} are satisfied, we may assume that $f_{D_0,D_1}$ has compact support and that any two of these homeomorphisms $f_{D_0,D_1}$ and $f_{D_0',D_1'}$ commute up to isotopy.
  Furthermore, if $\Dax(D_0,D_1)\in \Sigma_M^\partial$, then $[f_{D_0,D_1}]$ is central in $\pi_0 \Homeo(M)$.

  If $M$ is stably smoothable, then we may assume that $f_{D_0,D_1}$ is stably smoothable.

  If $M$, $D_0$ and $D_1$ are smooth, then the conclusions hold in the smooth category:
  $f_{D_0,D_1}$ is a diffeomorphism smoothly pseudoisotopic to~$\id_M$, and $f_{D_0,D_1}$ and $f_{D_0',D_1'}$ commute up to smooth isotopy;
  furthermore, if $\Dax(D_0,D_1)\in \Sigma_M^\partial$, then $[f_{D_0,D_1}]$ is central in $\pi_0 \Diff(M)$.
\end{proposition}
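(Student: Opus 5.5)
The plan is to prove the cycle of implications (1)$\Rightarrow$(2)$\Rightarrow$(3)$\Rightarrow$(4)$\Rightarrow$(2), together with (2)$\Rightarrow$(1) or (3)$\Rightarrow$(1). The implication (1)$\Rightarrow$(2) is exactly \cref{proposition:dax-concordant-disks}. The implication (3)$\Rightarrow$(4) is standard: a pseudoisotopy $F$ from $\id_M$ to $f$ gives the homotopy $\pr_M\circ F|_{M\times t}$. This homotopy is rel $\partial M$ and is proper since $F$ is a homeomorphism. For (4)$\Rightarrow$(2), apply \cref{proposition:DeltaDh-on-homeoh} to $g=g_{D_0,D_1}$. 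This gives $\Dax(D_0,D_1)=\Dax(D_0,g(D_0))=\Delta_{D_0}(g)\in\Sigma_M$. Also, (3)$\Rightarrow$(1) is immediate, because $F(D_0\times I)$ (after reparametrizing) is a concordance from $D_0$ to $D_1$. So the real content is (2)$\Rightarrow$(3), where a pseudoisotopy-trivial homeomorphism carrying $D_0$ to $D_1$ must be constructed.

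For (2)$\Rightarrow$(3), I would use barbell diffeomorphisms, as announced after \cref{theorem:intro-realization-Sigma_M}. Write $\Dax(D_0,D_1)=\sum_i n_i(g_i+g_i^{-1})$ modulo $d(\pi_3(M))$. Let $G\subset\partial M$ be the geometric dual, pushed slightly into $M$. For each $g_i$, take a barbell embedded in a small neighborhood of $D_0\cup G$. Its two bells are a meridian sphere of $D_0$ and a parallel copy of $G$ (or two meridian spheres), and its bar is an arc representing $g_i$. This barbell lives in a compact region, so the resulting barbell map $\beta_i$ is compactly supported. By the Budney--Gabai construction and the Dax computations of \cite{Budney-Gabai:2019-1,Kosanovic-Teichner:2024-2}, $\beta_i$ changes the Dax invariant of $D_0$ by $\pm(g_i+g_i^{-1})$. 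The sign depends on orientation; this computation can be done in a smooth neighborhood via \cref{proposition:top-dax-properties}\cref{item:top-dax-equals-smooth-dax} and \cref{lemma:dax-topological-computation}. Since barbell maps with disjoint supports commute, the composite $\beta=\prod\beta_i^{n_i}$ yields a disk $\beta(D_0)$ with $\Dax(D_0,\beta(D_0))=\Dax(D_0,D_1)$, by \cref{proposition:DeltaD-homomorphism}. This uses the fact that $\beta_*=\id$ on $\pi_1$, because each $\beta_i$ is supported in a neighborhood of a $2$-complex whose inclusion is $\pi_1$-surjective onto its image in a controlled way.

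Pseudoisotopy triviality of each $\beta_i$ comes from the Budney--Gabai observation: a barbell map whose bell is dual to a sphere bounding in $M\times I$ becomes isotopic to the identity after crossing with $I$. Here the boundary dual $G$ provides the needed null-homotopy/embedded disk in $M\times I$, namely $G\times\{1\}$ coned off along $\partial M\times I$.

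Next I need $\beta(D_0)$ to be isotopic to $D_1$ rel $\partial$. This is the key input: the geometric-dual isotopy classification of \cite{Cha-Kim:2023-1}, namely the topological ``light bulb'' theorem, which says that disks with a common geometric dual in $\partial M$ that are homotopic rel $\partial$ are isotopic iff $\Dax$ vanishes. Applying it to $\beta(D_0)$ and $D_1$ gives an ambient isotopy $\phi$ rel $\partial M$ with $\phi\beta(D_0)=D_1$. Then $f_{D_0,D_1}=\phi\circ\beta$ is pseudoisotopic to $\id$. To make it compactly supported, note that $\phi$ is isotopic to $\id$, so instead set $f=\beta$ composed with the end-map of a compactly supported isotopy.

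It remains to handle the addenda. Commutativity comes from the same choices: all $f_{D,D'}$ are, up to isotopy, products of barbell maps in a fixed neighborhood of $D\cup G$, which commute in $\pi_0$. For centrality when $g_i$ comes from $\pi_1(\partial M)$, the bar can be pushed into a collar of $\partial M$, where it commutes with every homeomorphism fixing the collar. For the smooth and stably smoothable cases, barbell maps are diffeomorphisms and the light bulb isotopy can be taken smooth (smooth Dax theorem of \cite{Kosanovic-Teichner:2024-2}), after stabilization in the stably smoothable case. I expect the main obstacle to be the precise Dax computation for the barbell map, including its sign and the claim that it is $\pi_1$-trivial, together with ensuring commutation in $\pi_0$ rather than merely of individual representatives.
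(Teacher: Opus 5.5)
Your treatment of the easy implications is fine. Your plan for (2)$\Rightarrow$(3) has the same architecture as the paper's: realize the generators $g+g^{-1}$ of $\Sigma_M$ by barbell maps, upgrade equality of Dax values to an isotopy by the light bulb theorem, and get pseudoisotopy triviality, commutation and centrality from properties of barbell maps. Using the homomorphism $\Delta_{D_0}$ and powers $\beta_i^{n_i}$ to handle signs is also fine, since a barbell map is supported in the simply connected $N(B)$ and so acts trivially on $\pi_1(M)$.

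\textbf{Gap 1: the barbell is misidentified.} This is where all the content lies.
A neat disk in a $4$-manifold has meridian circles, not meridian $2$-spheres, so ``a meridian sphere of $D_0$'' does not define a cuff. Also, a bar representing $g_i\neq 1$ cannot lie in a small neighborhood of $D_0\cup G$, since that neighborhood is simply connected.

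What is needed is Gabai's self-referential barbell $B=S_1\cup b\cup\partial D^3$. Here $S_1$ is a push-off of $G$ meeting $D_0$ once, and $D^3$ is a small $3$-ball disjoint from $D_0\cup S_1$. The bar leaves $S_1$, passes transversely through an interior point of $D^3$, runs around a loop representing $g$, and ends on $\partial D^3$. Thus the second cuff is a meridian sphere of the bar itself.

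The piercing is essential. If $\inte D^3$ missed the bar, the tubed disk $\phi_B(D_0)$ could be isotoped back to $D_0$ by shrinking the cuff across $D^3$, so its Dax value would be zero. With the piercing, $\phi_B(D_0)$ is Gabai's self-referential disk, and it is \cite[Theorem~4.9]{Gabai:2021-1} that gives $\Dax(D_0,\phi_B(D_0))=\pm(g+g^{-1})$. The general references you cite do not supply this value for an unspecified barbell.

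\textbf{Gap 2: the pseudoisotopy argument is wrong as stated.}
Since $G$ meets $D_0$ transversely in one point, the intersection pairing of $[G]\in H_2(M)$ with $[D_0]\in H_2(M,\partial M)$ is $\pm1$. Hence $[G]\neq 0$ in $H_2(M)\cong H_2(M\times I)$, and $G\times\{1\}$ bounds nothing in $M\times I$.

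The correct input is \cite[Proposition~2.6]{Budney-Gabai:2025-1} applied to the \emph{other} cuff. The sphere $\partial D^3$ bounds the embedded $3$-ball $D^3$, so each self-referential barbell map is pseudoisotopic to $\id_M$.

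Once the barbell is fixed this way, the rest of your plan goes through:
\begin{itemize}
\item The barbells can be made pairwise disjoint, using push-offs of $G$ at different collar depths, tiny balls, and generic bars; this gives commutation.
\item They can be placed in a collar of $\partial M$ when $g$ comes from $\pi_1(\partial M)$; this gives centrality.
\item The final isotopy comes either from the topological light bulb theorem you cite, or, as in the paper, from \cite{Kosanovic-Teichner:2024-2} after smoothing $D_0,D_1$ in $(M\sm p_0)_\Gamma$.
\end{itemize}
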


\begin{remark}
  In \cref{proposition:concordance-disk-pseudoisotopy}, if $M$ is smooth, then $D_0$ and $D_1$ are topologically isotopic to smooth disks, by the light bulb smoothing theorem~\cite[Theorem~C]{Cha-Kim:2023-1}.
\end{remark}

\begin{proof}[Proof of~\cref{proposition:concordance-disk-pseudoisotopy}]
  By \cref{proposition:dax-concordant-disks}, \cref{item:disks-concordant}~$\Rightarrow$~\cref{item:dax-symmetrized-value}.
  Obviously \cref{item:disks-pseudoisotopy}~$\Rightarrow$~\cref{item:disks-homotopy}.
  By \cref{proposition:DeltaDh-on-homeoh}, \cref{item:disks-homotopy}~$\Rightarrow$~\cref{item:dax-symmetrized-value}.
  Also, the implication \cref{item:disks-pseudoisotopy}~$\Rightarrow$~\cref{item:disks-concordant} is a consequence of the definitions:
  if $F\colon M\times I \to M\times I$ is a pseudoisotopy from $\id_M$ to $f$, i.e., $F=\id$ on $M\times 0 \cup \partial M \times I$ and $F(x,1)=(f(x),1)$ for $x\in M$, and if $i\colon D_0\times I \hookrightarrow M\times I$ is the inclusion, then the composition $F\circ i\colon D_0 \times I \hookrightarrow M\times I$ is a concordance from $D_0$ to $D_1=f(D_0)$.

  Therefore, to show that \cref{item:disks-concordant,,item:dax-symmetrized-value,,item:disks-pseudoisotopy,,item:disks-homotopy} are equivalent, it suffices to show \cref{item:dax-symmetrized-value}~$\Rightarrow$~\cref{item:disks-pseudoisotopy}.

  We first treat the smooth case.
  Suppose that $M$ is smooth and $D_0$ and $D_1$ are smoothly embedded neat disks in~$M$, such that $v = \Dax(D_0,D_1)$ lies in~$\Sigma_M$.
  We will construct $f=f_{D_0,D_1}\in \Diffpi(M)$ satisfying $f(D_0)=D_1$, by using barbell maps defined in~\cite{Budney-Gabai:2019-1}.

  We begin with a brief review of barbell maps.
  A \emph{barbell} $\bB$ in a 4-manifold $M$ is an embedded image of $(S^2\times D^2) \bcsum (S^2 \times D^2)$ in~$M$.
  We view a barbell $\bB$ as a regular neighborhood of an embedded 2-complex $B = S_1 \cup b\cup S_2 \subset M$ where $S_1$ and $S_2$ are the images of the core 2-spheres of the two $S^2\times D^2$ factors and $b$ is an arc joining $S_1$ and~$S_2$.
  Abusing the terminology, we often refer to the 2-complex $B$ as a \emph{barbell}, and write $N(B)=\bB$.
  The spheres $S_1$ and $S_2$ are called the \emph{cuffs} and the arc $b$ is called the \emph{bar}.
  In~\cite[Section~5]{Budney-Gabai:2019-1}, Budney and Gabai defined a diffeomorphism $\phi_B \in \Diff(M)$ supported in $N(B)$, which is called a \emph{barbell map}.
  For a detailed description, see~\cite[Section~5]{Budney-Gabai:2019-1} and~\cite[Section~2]{Budney-Gabai:2025-1}.

  We will use the following fact about a barbell map.
  Let $R$ be a surface in $M$ which intersects the cuff $S_1$ transversely at exactly one point and does not meet $b \cup S_2$.
  Extend $b$ to an arc $\tau = \tau_0 * b$ by attaching an arc $\tau_0$ on $S_1$ from $R\cap S_1$ to $b\cap S_1$.
  Then the surface $\phi_B(R)$ is obtained by tubing $R$ to the cuff $S_2$ along the arc~$\tau$.
  For a proof, refer to \cite[Construction~5.3, Figure~30]{Budney-Gabai:2019-1}.

  We apply the above fact to our case, to prove the following claim:
  for all non-identity elements $g\in \pi_1(M)\sm \{1\}$ and for all $\epsilon \in \{1,-1\}$, there is a barbell map $\phi_B$ such that $\Dax(D_0, \phi_B(D_0)) = \epsilon\cdot (g+g^{-1})$.

  To show the claim, we proceed as follows.
  Let $S_1\subset \inte M$ be a parallel push-off of the geometric dual sphere $G\subset \partial M$ of the given neat disk~$D_0$, which intersects $D_0$ transversely at exactly one point.
  Choose a small embedded 3-disk $D^3 \subset \inte M$ disjoint from $D_0 \cup S_1$.
  Choose an arc $b_0$ from $S_1$ to a point $x_0 \in \inte D^3$ such that $(D_0 \cup S_1 \cup D^3) \cap \inte b_0 = \emptyset$.
  Extend $b_0$ to an arc $b=b_0 * b_1$ which is transverse to~$D^3$, by choosing an arc $b_1$ from $x_0$ to $\partial D^3$ such that $(D_0 \cup S_1 \cup b_0 \cup D^3) \cap \inte b_1 = \emptyset$ and $b_1$ represents a given non-identity element $g\in \pi_1(M)$;
  here we use the simply connected subset $D_0 \cup S_1 \cup b_0 \cup D^3 \subset M$ as a basepoint.
  Let $\phi_B\colon M \to M$ be the barbell map associated with the barbell $B=S_1 \cup b \cup \partial D^3$.
  By the above paragraph, $\phi_B(D_0)$ is obtained by tubing $D_0$ to $\partial D^3$ along an arc $\tau=\tau_0 * b$ that extends $b$, where $\tau_0 \subset S_1$.
  This tubing construction producing $\phi_B(D_0)$ is equivalent to Gabai's self-referential disk construction \cite[Definition~2.1]{Gabai:2021-1}.
  Gabai showed that $\Dax(D_0, \phi_B(D_0)) = \epsilon\cdot (g+g^{-1})$, where $\epsilon\in \{1,-1\}$ is determined by the orientation of $D^3$ \cite[Theorem~4.9]{Gabai:2021-1};
  the sign $\epsilon$ is negated if the orientation of~$D^3$ is reversed.
  (We remark that $\phi_B(D_0)$ is the disk $D_{\epsilon\cdot g}$ in~\cite[Notation~4.4]{Gabai:2021-1}.)
  This verifies the claim.

  Recall that $D_1$ is a given disk such that $v=\Dax(D_0,D_1)$ lies in~$\Sigma_M$.
  Since $\Sigma_M$ is generated by $\{g+g^{-1} \mid g\in \pi_1(M), \; g\ne 1\}$ and since $\Dax$ is additive (\cref{proposition:top-dax-properties}~\cref{item:top-dax-additivity}), there is a composition $f_{D_0,D_1} \in \Diff(M)$ of finitely many barbell maps $\phi_B$ such that $\Dax(D_0,f_{D_0,D_1}(D_0)) = v$.
  By \cref{proposition:top-dax-properties}~\cref{item:top-dax-additivity} again, we have
  \[
    \Dax(f_{D_0,D_1}(D_0),D_1) = -\Dax(D_0,f_{D_0,D_1}(D_0)) + \Dax(D_0,D_1) = 0.
  \]
  By~\cite{Kosanovic-Teichner:2024-2}, this implies that $f_{D_0,D_1}(D_0)$ is smoothly isotopic to $D_1$ rel~$\partial$ in~$M$, since $D_1$ has a geometric dual in~$\partial M$.
  Thus, we may assume that $f_{D_0,D_1}(D_0)=D_1$ by smooth isotopy of~$f_{D_0,D_1}$.

  By~\cite[Proposition~2.6]{Budney-Gabai:2025-1}, each barbell map $\phi_B$ used above is smoothly pseudoisotopic to~$\id_M$, since the cuff $\partial D^3$ of our barbell $B$ bounds the 3-disk $D^3$.
  Consequently, $f_{D_0,D_1}$ is smoothly pseudoisotopic to~$\id_M$.
  Since $\phi_B$ is supported in the compact set $N(B)$, $f_{D_0,D_1}$ has compact support.

  We claim that $f_{D_0,D_1}$ and $f_{D_0',D_1'}$ commute up to smooth isotopy.
  To show the claim, it suffices to show that any two barbell maps $\phi_B$ and $\phi_{B'}$ used above commute up to smooth isotopy.
  Since we may assume that $N(B)$ and $N(B')$ are disjoint by their construction and since $\phi_B$ and $\phi_{B'}$ are supported in $N(B)$ and $N(B')$, $\phi_B$ and $\phi_{B'}$ commute.

  Furthermore, if $v = \Dax(D_0,D_1)\in \Sigma_M^\partial$, then $f_{D_0,D_1}$ is smoothly isotopic to a composition of barbell maps $\phi_B$ associated with barbells $B=S_1 \cup b \cup \partial D^3$ which are contained in a collar neighborhood $U$ of~$\partial M$.
  Such a barbell map $\phi_B$ commutes up to smooth isotopy with every $f\in \Diff(M)$, since $f$ can be smoothly isotoped so that $f|_U=\id_U$.
  It follows that our $f_{D_0,D_1}$ commutes up to smooth isotopy with every $g\in \Diff(M)$.

  This completes the proof of the smooth case.

  When $M$ is topological, we proceed as follows.
  Let $M' = (M\sm p_0)_\Gamma$, where $\Gamma$ is our fixed almost smooth structure.
  We may assume $D_i\subset M'$, by topological isotopy.
  Furthermore, since $D_i$ has a geometric dual in~$\partial M'$, we may assume that $D_i$ is smoothly embedded in the smooth 4-manifold $M'$ by the light bulb smoothing theorem~\cite[Theorem~C]{Cha-Kim:2023-1}.
  By definition, the smooth Dax invariant $\Dax_{M'}(D_0,D_1)$ is equal to the topological Dax invariant $\Dax_M(D_0,D_1)$, and thus $\Dax_{M'}(D_0,D_1) \in \Sigma_{M'}=\Sigma_M$ by the hypothesis.
  By the smooth case, it follows that there is a diffeomorphism $f'\colon M' \to M'$ with compact support such that $f'(D_0)=D_1$.
  Since $f'$ has compact support, $f'$ extends to a self-homeomorphism $f_{D_0,D_1}$ on $M$ by defining $f_{D_0,D_1}$ to be the identity near~$p_0$.
  We have $f_{D_0,D_1}(D_0)=f'(D_0)=D_1$, as desired.

  The remaining promised properties of $f_{D_0,D_1}$ can be verified as in the smooth case.
  The homeomorphism $f_{D_0,D_1}$ is pseudoisotopic to $\id_M$ by \cite[Proposition~2.6]{Budney-Gabai:2025-1}.
  The homeomorphisms $f_{D_0,D_1}$ and $f_{D_0',D_1'}$ commute up to isotopy since they are isotopic to compositions of barbell maps with disjoint supports.
  If $\Dax(D_0,D_1)\in \Sigma_M^\partial$, then $f_{D_0,D_1}$ is isotopic to a homeomorphism supported in a collar neighborhood of $\partial M$, and thus commutes with any $f\in \Homeo(M)$ up to isotopy.

  Finally, suppose that $M$ is stably smoothable.
  Recall that, in this case, $M\sm p_0$ is equipped with a smooth structure $\Gamma$ that extends to a smooth structure $\Theta$ on $M\csum k(S^2\times S^2)$.
  The argument in the topological case above yields a homeomorphism $f_{D_0,D_1}$ of~$M$, which restricts to a diffeomorphism of $M'=(M\sm p_0)_\Gamma$, and it extends to the diffeomorphism $f\csum \id_{k(S^2\times S^2)}$ on $(M\csum k(S^2\times S^2))_\Theta$.
  This shows that $f_{D_0,D_1}$ is stably smoothable.
\end{proof}

\begin{proof}[Proof of \cref{theorem:intro-realization-Sigma_M}]
  Recall that $D$ is a fixed neat disk in~$M$.
  For an element $v\in \Sigma_M \subset D_M$, there is a neat disk $D_v$ in $M$ such that $\partial D_v = \partial D$, $D_v \simeq D$ rel $\partial$ and $\Dax(D,D_v)=v$, by \cite[Corollary~H]{Cha-Kim:2023-1}.
  By \cref{item:dax-symmetrized-value}~$\Rightarrow$~\cref{item:disks-pseudoisotopy} in \cref{proposition:concordance-disk-pseudoisotopy}, there is $f_v\in \Homeopi(M)$ such that $f_v(D)=D_v$.
  Let $H\subset \pi_0 \Homeopi(M)\subset \pi_0 \Homeoh(M)$ be the subgroup generated by $\{[f_v]\mid v\in \Sigma_M\}$.
  We have $\DeltaD(f_v) = \Dax(D,f_v(D)) = \Dax(D,D_v) = v$.
  In addition, the homeomorphisms $f_v$ commute up to isotopy by \cref{proposition:concordance-disk-pseudoisotopy}.
  It follows that $H$ is abelian and surjects onto~$\Sigma_M$ under~$\DeltaDh\colon \pi_0 \Homeoh(M) \to \Sigma_M$.

  If $M$ is smooth, then we may assume $D$ and $D_v$ are smooth by the light bulb smoothing theorem~\cite[Theorem~C]{Cha-Kim:2023-1}, and then we may assume that the maps $f_v$ are diffeomorphisms that commute up to smooth isotopy by \cref{proposition:concordance-disk-pseudoisotopy}.
  Similarly, if $M$ is stably smoothable, then we may assume that $f_v$ is stably smoothable by \cref{proposition:concordance-disk-pseudoisotopy}.
\end{proof}

\subsection{Realization of the values in~\texorpdfstring{$D_M$}{D\_M}}
\label{subsection:realization-D_M}

Recall that the homomorphism $\Delta_D$ is defined on $\pi_0 H_D(M) \subset \pi_0 \Homeo(M)$.
The following result shows it is onto the abelian group~$D_M$, in the presence of a geometric dual.
In particular, $\DeltaD(f)$ need not lie in the subgroup $\Sigma_M$ in general.

Recall that the Torelli group is defined by $\cI(M) = \pi_0 I(M)\subset \pi_0 \Homeo(M)$, where $I(M) = \{f\in \Homeo(M) \mid f_* = \id \text{~on~} H_*(M) \}$.
When $M$ is smooth, the smooth Torelli group is defined by $\cI^s(M) = \pi_0 I^s(M) \subset \pi_0 \Diff(M)$ where $I^s(M) = \{f\in \Diff(M) \mid f_* = \id \text{~on~} H_*(M) \}$.

\begin{theorem}
  \label{theorem:realization-DeltaD}
  Suppose that $M$ is a 4-manifold and $D$ is a neat disk in $M$ with a geometric dual in~$\partial M$.
  Then the restriction $\DeltaD \colon \cI(M) \cap \pi_0 H_D(M) \to D_M$ is onto.
  In addition, if $M$ is smooth, then the precomposition of $\DeltaD$ with $\cI^s(M) \cap \pi_0(\Diff(M) \cap H_D(M)) \to \cI(M) \cap \pi_0 H_D(M)$ is onto~$D_M$;
  if $M$ is stably smoothable, then for any $v\in D_M$, there is a stably smoothable $[f]\in \cI(M) \cap \pi_0 H_D(M)$ such that $\DeltaD(f)=v$.
\end{theorem}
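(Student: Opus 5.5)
The plan is to realize each generator of $D_M$ by an explicit homeomorphism and then use additivity. Since $\DeltaD\colon\pi_0 H_D(M)\to D_M$ is a homomorphism (\cref{proposition:DeltaD-homomorphism}), and $\cI(M)\cap\pi_0 H_D(M)$ is a subgroup, it suffices to realize a generating set of $D_M$. The group $\Z[\pi_1(M)\sm 1]^\sigma$ is generated by the elements $g+g^{-1}$ for $g\ne g^{-1}$ and by the single elements $g$ for $g\in T_M$. The first kind already lies in $\Sigma_M$ and is realized by \cref{theorem:intro-realization-Sigma_M}, because the realizing maps there are homotopic to the identity and hence lie in $I(M)\cap H_D(M)$. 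So the real work is to realize $\pm g$ for $g$ of order $2$.

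The main step is to construct, for every $g\in\pi_1(M)\sm 1$, a homeomorphism $f_g$ with $f_g(D)\simeq D$ rel $\partial$ and $\Dax(D,f_g(D))=g$ (or $\pm g$), acting trivially on $\pi_1$ and on $H_*$. I would use an arc pushing map. Write $\partial D\cap G=\{x\}$, with $G\subset\partial M$ the geometric dual. Choose an embedded arc, or a loop $\gamma$ based near $\partial D$ in the interior, representing $g$. Push a small ball, or a point of $D$, around $\gamma$; this is a point-pushing homeomorphism supported near $\gamma$ together with a boundary collar. Concretely, I would take the map obtained by dragging a neighborhood of a boundary point of $D$ around a loop in $M$ through a boundary collar, and undo it on the collar. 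This is the ``arc pushing'' alluded to in the introduction.

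The resulting disk $f_g(D)$ should be the disk obtained from $D$ by a finger move followed by a self-Whitney move along $g$, in the sense of \cite{Kosanovic-Teichner:2024-2}. Its Dax invariant is the single term $g$: the track of the homotopy has exactly one double point with group element $g$, and its sign is determined by the direction of the push. To confirm this, I would compute the Dax self-intersection of the isotopy track directly, using \cref{lemma:dax-topological-computation} to work with a topological generic immersion. Because $f_g$ is isotopic to the identity after forgetting that $\partial M$ is fixed, it acts trivially on $\pi_1$ and on homology, so $[f_g]\in\cI(M)$. Also $f_g(D)\simeq D$ rel $\partial$, since the push is null-homotopic rel $\partial$ on $D$ once $D$ is capped by the dual. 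I expect this step to be the main obstacle: identifying the exact Dax value of the push, including the check that no $g^{-1}$ term appears and that the element is $g$ rather than $g+g^{-1}$.

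For the topological case, I would reduce to the smooth case as in the proof of \cref{proposition:concordance-disk-pseudoisotopy}. Isotope $D$ into $M'=(M\sm p_0)_\Gamma$ and smooth it by \cite[Theorem~C]{Cha-Kim:2023-1}. Then construct the compactly supported diffeomorphism $f_g$ of $M'$ and extend it by the identity near $p_0$. The topological Dax invariant then agrees with the smooth one by \cref{proposition:top-dax-properties}. This construction also gives the refinements at once. When $M$ is smooth, the $f_g$ are diffeomorphisms in $I^s(M)\cap H_D(M)$. When $M$ is stably smoothable, $f_g$ extends by the identity over the $S^2\times S^2$ summands to a diffeomorphism of $(M\csum k(S^2\times S^2))_\Theta$, so it is stably smoothable. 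Compositions of such maps are again smooth, or stably smoothable, which gives every $v\in D_M$.
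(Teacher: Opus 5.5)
Your reduction to generators is fine. The treatment of the classes $g+g^{-1}$ is also fine: the maps from \cref{theorem:intro-realization-Sigma_M} are homotopic to $\id_M$ rel~$\partial M$, hence lie in $I(M)\cap H_D(M)$. The gap is the construction of $f_g$ for $g\in T_M$, and it goes beyond the Dax computation you flag. A map obtained by dragging something along an ambient isotopy of $M$ and then correcting on a collar does not have the properties you need.
\begin{itemize}
\item Dragging an interior ball, or a point of $D$, around $\gamma$ uses an isotopy supported away from $\partial M$. So the end map is isotopic to $\id_M$ rel~$\partial M$, and $\DeltaD(f_g)=0$.
\item Dragging a half-ball at a point of $\partial D$ keeps its center on $\partial M$, so $\gamma$ must lie in $\partial M$. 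After you undo the boundary motion on a collar, what remains up to isotopy rel~$\partial M$ is a collar twist by a loop in $\Homeo(\partial M)$; undoing along the dragging isotopy itself gives the trivial class. This gives no way to prescribe an arbitrary $g\in T_M$, since such $g$ need not come from $\pi_1(\partial M)$. Moreover, such a twist acts on $\pi_1(M,z_0)$ by conjugation by the class traced by $z_0$.
\item Your justification of $[f_g]\in\cI(M)$ fails for the same reason. Being isotopic to the identity when $\partial M$ may move only shows that $f_{g*}$ is an inner automorphism of $\pi_1(M)$, not the identity.
\item A disk obtained from $D$ by a finger move and a self-Whitney move is related to $D$ only by a regular homotopy. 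Identifying it with $f_g(D)$ presupposes the very homeomorphism you are trying to build.
\end{itemize}
Note that the dual $G$ plays no real role in your construction, whereas the argument cannot work without it.

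The missing idea is to realize the disk first and the homeomorphism second. For any $v\in D_M$, not just generators, \cite[Corollary~H]{Cha-Kim:2023-1} gives a neat disk $D_v$ with $\partial D_v=\partial D$, $D_v\simeq D$ rel~$\partial$, and $\Dax(D,D_v)=v$. Since $D$ and $D_v$ are homotopic rel~$\partial$ with common dual $G$, Gabai's arc pushing construction \cite[Lemma~5.3]{Gabai:2021-1} gives $f\in\Homeo(M)$ supported near $D\cup D_v\cup G$ with $f(D)=D_v$. Hence $\DeltaD(f)=\Dax(D,D_v)=v$ with no Dax computation at all.

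Membership in $I(M)\cap H_D(M)$ is \cref{lemma:arc-pushing-torelli}. After attaching a 3-handle $H^3$ along $G$, the map $f\cup\id_{H^3}$ is isotopic to the identity rel boundary. This gives $f_*=\id$ on $\pi_1(M)$ and on $H_i(M)$ for $i\ne 2$. On $H_2(M)$ it gives $f_*\alpha=\alpha+k[G]$, and $k=\lambda(f_*\alpha-\alpha,[D])=0$ because $\lambda([G],[D])=1$, $f_*[D]=[D]$, and $\lambda$ is $f$-invariant.

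With $f$ chosen this way, your smooth and stably smoothable refinements (light bulb smoothing of $D$ and $D_v$, support away from $p_0$, extension by the identity) go through exactly as in the paper.
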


To prove \cref{theorem:realization-DeltaD}, we use a known construction of a homeomorphism which is called an arc pushing map.
For neat disks $D_0$ and $D_1$ in a 4-manifold $M$ which are homotopic rel~$\partial$ and admit an embedded geometric dual $G$ in $\partial M$, the construction gives a homeomorphism $f\in \Homeo(M)$ supported in an arbitrarily small neighborhood of $D_0 \cup D_1\cup G$ such that $f(D_0)=D_1$.
We call $f$ an \emph{arc pushing map} sending $D_0$ to~$D_1$.
See~\cite[Lemma~5.3]{Gabai:2021-1} for a detailed description of the arc pushing map~$f$.
When $M$ is smooth and $D_0$, $D_1$ are smoothly embedded, $f$ can be chosen to be a diffeomorphism.

\begin{lemma}
  \label{lemma:arc-pushing-torelli}
  If $f$ is an arc pushing map sending $D_0$ to $D_1$, then $f_* = \id$ on $H_i(M)$ for all~$i$ and $f_*=\id$ on $\pi_1(M)$.
\end{lemma}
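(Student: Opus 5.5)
We use the support of $f$: by construction it lies in an arbitrarily small regular neighborhood $N$ of $D_0\cup D_1\cup G$. The key point is that $f$ is isotopic to a map supported near a simply connected 2-complex, or equivalently that $f$ is homotopic to the identity on a large part of $M$. We make this precise by recalling Gabai's construction in \cite[Lemma~5.3]{Gabai:2021-1}. The homotopy rel $\partial$ from $D_0$ to $D_1$ is realized, after tubing along parallel copies of the dual $G$, by finger and Whitney type moves. The arc pushing map is then the end of an isotopy of $M$ that drags a small neighborhood of an arc (a ``pushed arc'' through $\partial M$ and $D_0$) around the track of the homotopy. In practice, $f$ is obtained by pushing a 3-ball neighborhood of an arc in $\partial M$ along a loop in the space of arcs. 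So $f$ agrees with the identity outside a neighborhood $N$ of $D_0\cup D_1\cup G\cup(\text{arc})$.

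For $\pi_1$ we argue as follows. The set $N$ is a regular neighborhood of a 2-complex that deformation retracts onto $D_0\cup D_1\cup G$, possibly with arcs attached. Every loop in $M$ based at a point of $\partial M$ outside $N$ can be made disjoint from $N$ by general position in dimension $4$: a loop misses a 2-complex generically. Since $f=\id$ off $N$, $f$ fixes every such loop, hence $f_*=\id$ on $\pi_1(M)$, with the basepoint taken in $\partial M\sm N$, where $f$ is the identity.

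For homology we use the decomposition $M=(M\sm \inte N)\cup N$ and compare $f_*$ with $\id$ on the relative groups. A class in $H_i(M)$ for $i\le 1$ is handled as above. For $i=2$, a $2$-cycle generically meets $D_0\cup D_1\cup G$ in points, so the crux is to understand $f$ on $N$ itself. We verify that $f|_N$ is homotopic rel $\partial N\sm$(a small part) to the identity. This holds because $f$ is the end of an ambient motion of an arc whose trace is null-homotopic: it is the isotopy extension of a loop of arcs, and the loop is $\pi_1$-trivial as the disks $D_0,D_1$ are homotopic rel boundary. Alternatively, we compute directly. For any cycle $z$, the difference $f_*(z)-z$ is supported in $N$, hence lies in the image of $H_i(N)\to H_i(M)$ after a Mayer--Vietoris argument. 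Then $H_i(N)$ is generated by $[G]$ for $i=2$, since $D_0\cup D_1$ is a 2-sphere in $M$ that is null-homologous rel boundary considerations. Here $f$ preserves $G\subset\partial M$ pointwise, and it also fixes the homology class $[D_0\cup -D_1]$, which it maps to $[D_1\cup -f(D_1)]$. Hence the correction term is computed by algebraic intersection numbers with $D_0$ and $G$, both of which are preserved. So $f_*(z)-z$ pairs trivially with the relevant duals, and therefore vanishes. The classes with $i=3,4$ are treated by Poincaré--Lefschetz duality from $H^1,H^0$ rel boundary, using that $f$ is the identity on $\partial M$ and acts trivially on $\pi_1$, hence on $H^1$.

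The main obstacle is the degree-2 computation. There we must show that the correction term $f_*(z)-z\in\Im(H_2(N)\to H_2(M))$ actually vanishes rather than merely being a multiple of $[G]$ or of the sphere $D_0\cup -D_1$. We first try to control this via intersection numbers of $z$ with $D_0$ (rel boundary) and with $G$, using that $f(D_0)=D_1$ and that $f$ fixes $\partial M$. If that fails, we fall back on exhibiting $f$ explicitly as isotopic to a homeomorphism homotopic rel $\partial$ on $N$ to $\id_N$, which gives all the conclusions at once.
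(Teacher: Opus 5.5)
Your route (the support of $f$ and general position, then the intersection pairing) differs from the paper's and can be completed. Your general-position argument for $\pi_1$, and hence for $H_1$, is fine. The degree-$3$ step as written does not work, however.

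You deduce $f_*=\id$ on $H_3(M)\cong H^1(M,\partial M)$ from $f|_{\partial M}=\id$ and $f_*=\id$ on $\pi_1(M)$, hence on $H^1(M)$. The exact sequence $H^0(\partial M)\to H^1(M,\partial M)\to H^1(M)$ only shows that $f^*-\id$ maps $H^1(M,\partial M)$ into the image of the coboundary $\delta\colon H^0(\partial M)\to H^1(M,\partial M)$. That image need not be killed. For example, let $M$ be $S^1\times S^3$ with two open balls $B_1,B_2$ removed, and let $f$ push $B_1$ once around the $S^1$ factor, corrected near $\partial B_1$ to be the identity there. Then $f|_{\partial M}=\id$ and $f_*=\id$ on $\pi_1(M)\cong\Z$. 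But $f_*[\{\theta\}\times S^3]=[\{\theta\}\times S^3]\pm[\partial B_1]$ in $H_3(M)\cong\Z^2$.

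So degree $3$ needs an argument that uses what $f$ actually is. Your own support argument supplies it. You already have $(f_*-\id)(H_i(M))\subset\Im(H_i(N)\to H_i(M))$, and $N$ has the homotopy type of a $2$-complex, so $H_3(N)=H_4(N)=0$. Alternatively, push arcs with endpoints in $\partial M\sm N$ off $N$ to get $f_*=\id$ on $H_1(M,\partial M)$, and dualize that instead.

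Drop the claim that $f|_N$ is homotopic to $\id_N$ rel boundary because ``the trace is null-homotopic'', and drop the fallback built on it. Nothing in your sketch produces such a homotopy. Moreover, a homotopy rel~$\partial M$ cannot exist in general. By \cref{theorem:realization-DeltaD}, arc pushing maps realize every value of $D_M$, whereas by \cref{proposition:DeltaDh-on-homeoh}, homotopy-trivial homeomorphisms only realize values in~$\Sigma_M$.

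Your degree-$2$ idea is the right one, and once cleaned up it is exactly the paper's last step. The image of $H_2(N)\to H_2(M)$ is generated by $[G]$, because the other generator $[D_0\cup -D_1]$ vanishes in $H_2(M)$, as $D_0\simeq D_1$ rel~$\partial$. Hence $f_*\alpha=\alpha+k[G]$. Then $k\,\lambda([G],[D_0])=\lambda(f_*\alpha-\alpha,[D_0])=\lambda(f_*\alpha,f_*[D_0])-\lambda(\alpha,[D_0])=0$, since $f_*[D_0]=[D_1]=[D_0]$ in $H_2(M,\partial M)$. As $\lambda([G],[D_0])=\pm1$, this gives $k=0$. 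Pairing with $G$ plays no role.

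The paper reaches $f_*\alpha=\alpha+k[G]$, together with the statements for $\pi_1$ and for all $H_i$ with $i\ne 2$, in one stroke. It attaches a $3$-handle along $G$ and uses that $f\cup\id_{H^3}$ is isotopic to $\id_X$ rel~$\partial X$ by Gabai's construction. This needs no control on the homotopy type of the support of $f$. Your approach, by contrast, requires $N$ to be a regular neighborhood of a $2$-complex. In the topological category, that means first arranging that $D_0$ and $D_1$ meet transversely.
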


\begin{proof}
  Let $G\subset \partial M$ be an embedded geometric dual of the disks~$D_i$.
  We use the following fact:
  let $X=M\cup H^3$ be obtained by attaching a 3-handle $H^3$ along~$G$.
  Then the homeomorphism $f\cup \id_{H^3} \in \Homeo(X)$ is isotopic to $\id_X$ rel~$\partial X$ by the construction in~\cite[Lemma~5.3]{Gabai:2021-1}.

  From this, it follows that $f_*=\id$ on $\pi_1(M)\cong \pi_1(X)$.
  For $i\ne 2$, it also follows that $f_* = \id$ on $H_i(M)\cong H_i(X)$.
  Thus, it remains to show that $f_*=\id$ on~$H_2(M)$.

  Let $\alpha\in H_2(M)$.
  Since $f\cup \id_{H^3}$ induces the identity on $H_2(X)=H_2(M)/\langle [G] \rangle$, we have $f_*(\alpha) = \alpha + k \cdot [G]$ for some $k \in \Z$.
  Let $\lambda\colon H_2(M) \times H_2(M,\partial M) \to \Z$ be the intersection pairing of~$M$.
  Note that $\lambda([G],[D_i])=1$ and $f_*[D_0] = [D_1] = [D_0]$.
  Thus we have
  \[
    k=\lambda(f_*(\alpha)-\alpha,[D_0]) = \lambda(f_*(\alpha),f_*[D_0]) - \lambda(\alpha,[D_0]) = 0.
  \]
  This shows that $f_*=\id$ on $H_2(M)$.
\end{proof}

\begin{proof}[Proof of \cref{theorem:realization-DeltaD}]
  Let $v \in D_M$ be an arbitrary element.
  Then, by~\cite[Corollary~H]{Cha-Kim:2023-1}, there is a neat disk $D_v$ in $M$ such that $\partial D_v = \partial D$, $D_v \simeq D \rel \partial$ and $\Dax(D,D_v)=v$.
  (This is true for any choice of an almost smooth structure on $M$ used to define the topological Dax invariant.)
  By the arc pushing construction, choose $f\in \Homeo(M)$ such that $f(D)=D_v$ and $f_*=\id$ on $\pi_1(M)$.
  Note that $f\in I(M) \cap H_D(M)$ by \cref{lemma:arc-pushing-torelli}.
  By the definition of $\DeltaD$, we have $\DeltaD(f)=\Dax(D,D_v)=v$.
  This shows the conclusion for a general topological 4-manifold~$M$.

  If $M$ is smooth, then we may assume that $D$ and $D_v$ are smoothly embedded in $M$, by the light bulb smoothing theorem~\cite[Theorem~C]{Cha-Kim:2023-1}.
  Thus, the above $f$ can be chosen to be a diffeomorphism.

  Suppose that $M$ is stably smoothable, i.e., $M\csum k(S^2\times S^2)$ is equipped with a smooth structure~$\Theta$.
  Recall that, in this case, we view $M\sm p_0 \subset M\csum k(S^2\times S^2)$ as an open subset and use the restriction $\Gamma=\Theta|_{M \sm p_0}$ as an almost smooth structure on~$M$.
  We may assume that the above disks $D$ and $D_v$ are contained in $M\sm p_0$ by topological isotopy.
  By the light bulb smoothing theorem~\cite[Theorem~C]{Cha-Kim:2023-1}, we may assume that $D$ and $D_v$ are smooth in $(M\sm p_0)_\Gamma$.
  The arc pushing construction gives a diffeomorphism $g$ of $(M\sm p_0)_\Gamma$ such that $g(D)=D_v$.
  In addition, $g$ is supported in an arbitrary neighborhood of $D\cup D_v \cup \partial M$, and thus $g=\id$ near~$p_0$.
  It follows that $g$ extends to a homeomorphism $f\in \Homeo(M)$ by defining $f$ to be $\id$ near~$p_0$.
  Note that $f(D)=g(D)=D_v \simeq D \rel \partial$.
  By \cref{lemma:arc-pushing-torelli}, $g_*=\id$ on $\pi_1(M\sm p_0)$ and on $H_i(M\sm p_0)$.
  Therefore $f\in I(M)\cap H_D(M)$, since $\pi_1(M)\cong \pi_1(M\sm p_0)$, $H_i(M)= H_i(M\sm p_0)$ for $i \ne 3$ and $H_3(M)$ is a quotient of $H_3(M\sm p_0)$.
  Also, $f\csum \id_{k(S^2\times S^2)}$ is a diffeomorphism of $(M\csum k(S^2\times S^2))_\Theta$.
  This shows that $f$ is stably smoothable.
\end{proof}

\section{Detecting structures in the mapping class groups}
\label{section:detecting-structures}

\subsection{Structures in \texorpdfstring{$\pi_0 \Homeopi(M)$}{\pi₀ Homeo\textasciicircum pi(M)} and \texorpdfstring{$\pi_0 \Homeoh(M)$}{\pi₀ Homeo\textasciicircum h(M)}}
\label{subsection:structures-in-pi_0_Homeoh-pi_0_Homeopi}

In this subsection, we prove \cref{theorem:intro-retract-direct-factor}, together with an addendum given below.

Recall that $T_M$ denotes the set of nontrivial elements of order 2 in~$\pi_1(M)$.
Denote the universal cover of~$M$ by $\widetilde M$, and its Betti numbers by~$\beta_i(\widetilde M)$.
Let $\mu_2 \colon \pi_2(M) \to \Z[\pi_1(M)] / \langle 1, g-g^{-1} \rangle$ be Wall's self-intersection invariant \cite[Chapter~5]{Wall-Ranicki:1999-1}, \cite[Section~1.7]{Freedman-Quinn:1990-1}, where $\langle 1, g-g^{-1} \rangle$ is the subgroup of $\Z[\pi_1(M)]$ generated by $\{1\} \cup \{g-g^{-1} \mid g\in \pi_1(M)\}$.
Denote by $\rk\mu_2$ the rank of the subgroup generated by the image~$\mu_2(\pi_2(M))$.

\begin{theorem-named}[\cref{theorem:intro-retract-direct-factor}]
  Let $M$ be a 4-manifold containing a neat disk with a geometric dual in~$\partial M$.
  Suppose $\rk\mu_2 < \infty$ and $\beta_3(\widetilde M) < \infty$.
  Let $r=|\pi_1(M)|$ if $\pi_1(M)$ is infinite.
  Otherwise, let
  \[
    r=\frac{|\pi_1(M)|+|T_M|-1}{2} - \bigl(\rk \mu_2 + \beta_3(\widetilde M)\bigr)
  \]
  and suppose $r>0$.
  Then $\pi_0\Homeopi(M)$ and $\pi_0 \Homeoh(M)$ have a common free abelian retract $A\cong \Z^r$ of rank~$r$.
  Furthermore, $A$ is a common direct factor of $\pi_0\Homeopi(M)$ and $\pi_0 \Homeoh(M)$ if $\pi_1(\partial M)\to\pi_1(M)$ is onto.
\end{theorem-named}

\begin{theorem-named}[Addendum to \cref{theorem:intro-retract-direct-factor}]
  If $M$ is stably smoothable, then every element of $A$ is represented by a stably smoothable homeomorphism.
  If $M$ is smooth, then $\pi_0 \Diffpi(M)$ and $\pi_0 \Diffh(M)$ have a common free abelian retract $B$ of rank $r$, which maps isomorphically onto $A$ under the natural map $\pi_0 \Diffpi(M) \to \pi_0 \Homeopi(M)$.
  The retract $B$ is a common direct factor of $\pi_0 \Diffpi(M)$ and $\pi_0 \Diffh(M)$ if $\pi_1(\partial M) \to \pi_1(M)$ is onto.
\end{theorem-named}

The following lemma estimates the size of $D_M$ and~$\Sigma_M$.
Recall that the quotient $D_M/\Sigma_M = \Z_2[T_M] / \mu(\pi_3(M))$ is a vector space over~$\Z_2$.

\begin{lemma}
  \label{lemma:free-part-of-D_M-Sigma_M}
  Let $M$ be a 4-manifold with nonempty boundary.
  Suppose $\rk \mu_2 < \infty$ and $\beta_3(\widetilde M) < \infty$.
  Define $r$ as in \cref{theorem:intro-retract-direct-factor} and suppose $r>0$.
  Then each of $D_M$ and $\Sigma_M$ surjects onto~$\Z^r$.
\end{lemma}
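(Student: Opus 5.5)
Let me write $\pi=\pi_1(M)$. I would first describe $\Z[\pi\sm 1]^\sigma$ explicitly. It is free abelian with one basis element per non-identity inversion orbit: $g+g^{-1}$ for $g\ne g^{-1}$, and $g$ for $g\in T_M$. So its rank is $\iota_M$. The subgroup $\Sigma'\subset\Z[\pi\sm1]^\sigma$ spanned by all $g+g^{-1}$ has the same rank $\iota_M$. Indeed, it is spanned by the basis elements $g+g^{-1}$ with $g\notin T_M$, together with the elements $2g$ for $g\in T_M$, so it has finite index in each orbit coordinate. Since $D_M=\Z[\pi\sm1]^\sigma/d(\pi_3(M))$ and $\Sigma_M$ is the image of $\Sigma'$, it suffices to bound the rank of $d(\pi_3(M))$. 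The aim is to produce a quotient of $\Z[\pi\sm1]^\sigma$ that is free abelian of rank $r$ and kills $d(\pi_3(M))$, and on which $\Sigma'$ still surjects onto a copy of $\Z^r$.

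The key step is to bound $\rk d(\pi_3(M))\le \rk\mu_2+\beta_3(\widetilde M)$. I would use the exact sequence $\Gamma(\pi_2 M)\to\pi_3(M)\to H_3(\widetilde M)\to 0$ obtained from Whitehead's certain exact sequence applied to $\widetilde M$, using $\pi_3(M)=\pi_3(\widetilde M)$. This gives
\[
\rk d(\pi_3 M)\le \rk d(\operatorname{im}\Gamma(\pi_2M))+\beta_3(\widetilde M).
\]
The image of $\Gamma(\pi_2 M)$ is generated by elements $\eta\circ a$ and by Whitehead products $[a,b]$ with $a,b\in\pi_2$. For these I would quote the known computation of Kosanovi\'c--Teichner, where the Dax invariant of such an element is expressed through the intersection and self-intersection data of the spheres. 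The expected formula is $d([a,b])=\lambda(a,b)+\overline{\lambda(a,b)}$ with the coefficient of $1$ removed, and $d(\eta\circ a)$ a lift of $\mu_2(a)$. The Whitehead products should then be accounted for by the $\eta$-compositions of sums, through $\eta\circ(a+b)=\eta\circ a+\eta\circ b+[a,b]$. Hence $d(\operatorname{im}\Gamma)$ is generated by the values $d(\eta\circ a)$. Modulo $2$-torsion ambiguity, these are determined linearly by $\mu_2(a)$, via the map $\Z[\pi]/\langle1,g-g^{-1}\rangle\to\Z[\pi\sm1]^\sigma$ given by $g\mapsto g+g^{-1}$. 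That map has rank at most $\rk\mu_2$ on the relevant image, so $\rk d(\operatorname{im}\Gamma)\le\rk\mu_2$. I expect this step to be the main obstacle. If the precise quotations are unavailable, I would instead define the factorization $\Gamma(\pi_2)\otimes\Q\to\Q\otimes\mu_2(\pi_2)$ and check $d\otimes\Q$ factors through it, using quadraticity of $d\circ(\eta\circ-)$.

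With the rank bound in hand, the conclusion is routine. Let $V=\Q\otimes\Z[\pi\sm1]^\sigma$ and $W=\Q\otimes d(\pi_3M)$.

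In the finite case, $\dim W\le\rk\mu_2+\beta_3(\widetilde M)$, so $\dim V/W\ge r$. Take the image $L$ of $\Z[\pi\sm1]^\sigma$ in $V/W$. It is a finitely generated lattice of rank $\ge r$, hence free, and it receives a surjection from $D_M$. Then $L$ surjects onto $\Z^r$. The image of $\Sigma'$ in $L$ has finite index, hence is free of full rank, and so $\Sigma_M$ also surjects onto $\Z^r$.

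In the infinite case, $W$ is finite dimensional while $V$ has dimension $|\pi|$. I would choose finitely many orbit coordinates that detect $W$ and project away from them. The remaining coordinates give a surjection $D_M\to\bigoplus_{|\pi|}\Z=\Z^r$. Restricted to $\Sigma'$, this lands onto $\bigoplus\Z\cdot(g+g^{-1})$ and onto $2\Z$ in the $T_M$-coordinates, which is again free of rank $|\pi|$. Rescaling those coordinates gives a surjection $\Sigma_M\to\Z^r$.
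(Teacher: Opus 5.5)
Your proposal is correct, and its key input is the same as the paper's: Whitehead's sequence $\Gamma(\pi_2(M))\to\pi_3(M)\to H_3(\widetilde M)\to 0$, together with Kosanovi\'c--Teichner's Theorem~3.16. The left square of that diagram says $d(a\circ\eta)$ is exactly the image of $\mu_2(a)$ under $g\mapsto g+g^{-1}$, so you can drop the hedges ``lift'' and ``modulo $2$-torsion ambiguity''.

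The difference is in the bookkeeping. The paper runs the snake lemma on the diagram and uses general facts about torsion-free quotients to show that $D_M/tD_M$ is free abelian of rank $\ge r$. It then handles $\Sigma_M$ abstractly, using that $D_M/\Sigma_M$ is a $\Z_2$-vector space and that $\Sigma_M/t\Sigma_M\subset D_M/tD_M$. You instead bound $\rk d(\pi_3(M))\le\rk\mu_2+\beta_3(\widetilde M)$ directly and work in the explicit orbit basis of $\Z[\pi_1(M)\sm 1]^\sigma$. In the finite case your lattice $L$ is precisely $D_M/tD_M$, free because it is finitely generated. In the infinite case you simply discard the finitely many coordinates supporting $d(\pi_3(M))$. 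The $\Sigma_M$ statement is then read off from the explicit shape of the span of the elements $g+g^{-1}$. Your route is more elementary and makes the infinite case transparent; the paper's is more structural.

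Two small points:
\begin{enumerate}
  \item For non-smoothable $M$, the Kosanovi\'c--Teichner diagram has to be obtained from the smooth manifold $(M\sm p_0)_\Gamma$, as the paper does. This uses that $\pi_3(M\sm p_0)\to\pi_3(M)$ is onto and that $d$ on $M$ is induced from it.
  \item Your fallback based only on quadraticity of $a\mapsto d(a\circ\eta)$ would not by itself relate $d$ to $\mu_2$. You would still need the geometric value of $d$ on Whitehead products. Once you have that, $d([a,a])=\pm 2\,d(a\circ\eta)$ already gives the rank bound.
\end{enumerate}
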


\begin{proof}
  The following diagram is commutative and has exact rows, due to~\cite[Theorem~3.16]{Kosanovic-Teichner:2024-2}.
  \begin{equation}
    \begin{tikzcd}[column sep=normal,row sep=large]
      0 \ar[r]
      & \Gamma(\pi_2(M)) \ar[r] \ar[d,"\Gamma(\mu_2)"]
      & \pi_3(M) \ar[r] \ar[d,"d"] \ar[dr,"\mu"]
      & H_3(\widetilde M) \ar[r] \ar[d,"\overline\mu"]
      & 0
      \\
      0 \ar[r]
      & \frac{\Z[\pi_1(M)]}{\langle 1, g-g^{-1}\rangle} \ar[r,"g\mapsto g+g^{-1}"]
      & \Z[\pi_1(M) \sm 1]^\sigma \ar[r]
      & \Z_2[T_M] \ar[r]
      & 0
    \end{tikzcd}
    \label{equation:whitehead-dax-wall}
  \end{equation}
  In fact, $\mu$ on $\pi_3(M)=\pi_3(\widetilde M)$ factors through $H_3(\widetilde M)$ via the Hurewicz map by~\cite[Lemma~4.2]{Schneiderman-Teichner:2022-1}, and $\overline\mu$ in \cref{equation:whitehead-dax-wall} is the homomorphism induced by~$\mu$.
  The top row is an exact sequence of Whitehead;
  $\Gamma(\pi_2(M))$ is Whitehead's quadratic group and $\Gamma(\mu_2)$ is the homomorphism induced by the quadratic map~$\mu_2$~\cite{Whitehead:1950-1}.
  We will use the fact that the image of $\Gamma(\mu_2)$ is generated by the image of $\mu_2$.

  We remark that although \cite[Theorem~3.16]{Kosanovic-Teichner:2024-2} treats the case of a smooth 4-manifold, the diagram \cref{equation:whitehead-dax-wall} for our topological 4-manifold $M$ follows immediately from the case of the smooth 4-manifold $(M\sm p_0)_\Gamma$.
  Indeed, the top row is exact for any 4-manifold $M$ with nonempty boundary, and the homomorphisms $d$ and $\mu$ for $M$ are induced by those of~$(M\sm p_0)_\Gamma$.

  We claim that the torsion-free quotient $D_M/tD_M$ is free abelian and $\rk D_M/tD_M = \rk D_M \ge r$.
  From this, it follows that $D_M$ surjects onto $D_M/tD_M\cong \Z^{\rk D_M}$, a free abelian group of rank${}\ge r$.

  To prove the claim, we use the following facts.
  \begin{enumerate}[label=(\roman*)]
    \item\label{item:torsion-free-part-free-by-fg}
    If $G$ is a quotient of a free abelian group by a finitely generated\ subgroup, then $G/tG$ is free abelian and $tG$ is finite.
    \item\label{item:torsion-free-part-torsion-extension}
    If $0\to H\to G\to \Gamma\to 0$ is an exact sequence of abelian groups, $H/tH$ is free abelian, and $\Gamma$ is bounded torsion (i.e., $n\Gamma=0$ for some $n>0$), then $G/tG$ is free abelian.
  \end{enumerate}
  Indeed, \cref{item:torsion-free-part-free-by-fg}~follows from the classification of finitely generated abelian groups and from the fact that a finitely generated\ subgroup of a free abelian group is contained in a finitely generated summand;
  \cref{item:torsion-free-part-torsion-extension}~follows from the fact that $H/tH \subset G/tG \subset \frac1n (H/tH)$ in~$G\otimes\Q$.
  We leave the details to the reader.

  Returning to our case, apply the snake lemma to \cref{equation:whitehead-dax-wall}, to obtain the following diagram:
  \begin{equation}
    \begin{tikzcd}[column sep=tiny,row sep=4ex]
      & & C
      & & D_M
      \\[-5ex]
      & & \rotatebox[origin=c]{-90}{$\coloneqq$}
      & & \rotatebox[origin=c]{-90}{$=$}
      \\[-5ex]
      \Ker \overline\mu \ar[rr,"\delta"] \ar[rd,two heads,start anchor={[xshift=-1em]south east}]
      & & \Coker \Gamma(\mu_2) \ar[rr] \ar[rd,two heads,start anchor={[xshift=-1em]south east}]
      & & \Coker d \ar[rr]
      & & \Coker \overline\mu \ar[rr]
      & & 0
      \\
      & \hbox to .5em{\hss\hbox to 0mm{\hss$D \coloneqq{}$}$\Im \delta$\hss} \ar[ru,hook,end anchor={[xshift=1em]south west}]
      & & \hbox to .5em{\hss\hbox to 0mm{\hss$B \coloneqq{}$}$\Coker \delta$\hss} \ar[ru,hook,end anchor={[xshift=1em]south west}]
    \end{tikzcd}
    \label{equation:snake-diagram}
  \end{equation}
  Note that $\Z[\pi_1(M)]/\langle 1, g-g^{-1}\rangle \cong \Z^R$ is a free abelian group of rank $R=\iota_M$, the number of non-identity inversion orbits in~$\pi_1(M)$.
  In the rank calculations below, we use the convention that $x-y=x$ when $x$ is infinite and $y$ is finite.

  Since $\rk\mu_2 < \infty$, $C\coloneqq\Coker\Gamma(\mu_2)$ is a quotient of $\Z^R$ by a finitely generated\ subgroup.
  It follows that $C/tC$ is free abelian and $tC$ is finite by~\cref{item:torsion-free-part-free-by-fg}.
  Note that, for $D \coloneqq \Im\{\delta\colon \Ker\overline\mu\to C\}$, $D/(D\cap tC) \cong (D+tC)/tC$ is free abelian since it is a subgroup of~$C/tC$.
  Also we have
  \[
    \rk D/(D\cap tC) \le \rk D \le \rk \Ker\overline\mu \le \beta_3(\widetilde M) < \infty.
  \]
  Thus $D/(D\cap tC)$ is finitely generated.
  Since $|tC| < \infty$, it follows that $D$ is finitely generated.
  Therefore, $B \coloneqq \Coker\delta = C/D$ is a quotient of $\Z^R$ by a finitely generated\ subgroup, and thus $B/tB$ is free abelian by~\cref{item:torsion-free-part-free-by-fg}.
  Since $\Coker \overline\mu$ is a vector space over $\Z_2$, $D_M/tD_M$ is free abelian by~\cref{item:torsion-free-part-torsion-extension}.
  In addition, from \cref{equation:snake-diagram}, we obtain the first two equalities below:
  \[
    \rk D_M = \rk B = \rk C - \rk D \ge (R - \rk \mu_2) - \beta_3(\widetilde M) = r.
  \]
  The above inequality follows from the fact that $C=\Coker \Gamma(\mu_2)$ and that $\Ker \overline\mu \subset H_3(\widetilde M)$ surjects onto~$D$.
  This proves the claim that $\rk D_M \ge r$.

  To prove the conclusion for $\Sigma_M$, recall that $D_M/\Sigma_M$ is a $\Z_2$-vector space, and thus $\rk \Sigma_M = \rk D_M$.
  Moreover, $t\Sigma_M = \Sigma_M \cap tD_M$, and hence the torsion-free quotient $\Sigma_M/t\Sigma_M$ is a subgroup of $D_M/tD_M$.
  Since $D_M/tD_M$ is free abelian, $\Sigma_M/t\Sigma_M$ is free abelian.
  Thus $\Sigma_M$ surjects onto $\Sigma_M/t\Sigma_M$, a free abelian group of rank $\rk \Sigma_M = \rk D_M \ge r$.
\end{proof}

\begin{remark}
  A similar argument which uses \cref{equation:whitehead-dax-wall,equation:snake-diagram} yields the following:
  if $\mu_2=0$ on $\pi_2(M)$ and $H_3(\widetilde M) = 0$, then $d=0$ on $\pi_3(M)$.
\end{remark}

\begin{proof}[Proof of \cref{theorem:intro-retract-direct-factor}]
  By \Cref{lemma:free-part-of-D_M-Sigma_M}, there exists a surjection $\eta \colon \Sigma_M \to \Z^r$.
  By \cref{theorem:intro-realization-Sigma_M}, there is an abelian subgroup $H \subset \pi_0\Homeopi(M) \subset \pi_0\Homeoh(M)$ that surjects onto $\Sigma_M$ under $\DeltaDh\colon \pi_0\Homeoh(M) \to \Sigma_M$.
  Since $\Z^r$ is free abelian and $H$ is abelian, there is a monomorphism $i\colon \Z^r \hookrightarrow H$ satisfying $(\eta\circ\DeltaDh) \circ i = \id_{\Z^r}$.
  Then $A \coloneqq i(\Z^r)\subset \pi_0 \Homeopi(M)$ is a common retract of $\pi_0\Homeopi(M)$ and $\pi_0\Homeoh(M)$.

  Now, suppose that $\pi_1(\partial M) \to \pi_1(M)$ is surjective.
  To show that the common retract $A$ is a common direct factor, it suffices to prove that $A$ is a central subgroup of $\pi_0\Homeoh(M)$.
  Recall from the proof of \cref{theorem:intro-realization-Sigma_M} that the abelian subgroup $H\subset \pi_0 \Homeopi(M)$ is generated by homeomorphisms $f_v$ ($v \in \Sigma_M$) which are constructed using \cref{proposition:concordance-disk-pseudoisotopy}.
  If $\pi_1(\partial M) \to \pi_1(M)$ is onto, then we have $\Sigma_M = \Sigma_M^\partial$, and thus every $f_v$ is central in $\pi_0 \Homeo(M)$ by \cref{proposition:concordance-disk-pseudoisotopy}.
  Since $A\subset H$, it follows that $A$ is central in $\pi_0 \Homeoh(M) \subset \pi_0 \Homeo(M)$.

  If $M$ is smooth, then by \cref{theorem:intro-realization-Sigma_M}, $H\subset \pi_0\Homeopi(M)$ is the image of an abelian subgroup $K\subset \pi_0\Diffpi(M)$ under the natural map $\Phi \colon \pi_0 \Diff(M) \to \pi_0 \Homeo(M)$, and thus $i\colon \Z^r \to H$ lifts to $j\colon \Z^r \to K$ such that $i=\Phi\circ j$.
  Then $j$ is injective, and $B\coloneqq j(\Z^r) \subset \pi_0 \Diffpi(M)$ is a common retract of $\pi_0\Diffpi(M)$ and $\pi_0\Diffh(M)$ which maps isomorphically onto $A$ under~$\Phi$.
  Furthermore, if $\pi_1(\partial M)\to \pi_1(M)$ is surjective, then $B$ is a central subgroup of $\pi_0\Diff(M)$ by \cref{proposition:concordance-disk-pseudoisotopy}.
  This proves the addendum for the smooth case.

  If $M$ is stably smoothable, every element in $A\subset H\subset \pi_0 \Homeopi(M)$ is stably smoothable by \cref{theorem:intro-realization-Sigma_M}.
  This proves the addendum for the stably smoothable case.
\end{proof}

\subsection{Structures in the Torelli group}
\label{subsection:structures-in-torelli}

In this subsection, we apply our method to study structures in the Torelli group.

Our first observation is an immediate consequence of \cref{theorem:intro-retract-direct-factor}, using that the Torelli group $\cI(M)$ contains the homotopy-trivial mapping class group $\pi_0 \Homeoh(M)$ as a subgroup.
Recall that $\rk_{\Pr}(\Gamma)$ is the Pr\"ufer rank of a group~$\Gamma$.

\begin{corollary}
  Let $M$ and $r$ be as in \cref{theorem:intro-retract-direct-factor}.
  Then $\rk_{\Pr}(\cI(M)) \ge r$.
\end{corollary}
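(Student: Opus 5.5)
The plan is to deduce the bound directly from \cref{theorem:intro-retract-direct-factor}, using the inclusions $\pi_0\Homeopi(M)\subset\pi_0\Homeoh(M)\subset\cI(M)$ recorded in the introduction. That theorem gives a free abelian subgroup $A\cong\Z^r$ of $\pi_0\Homeoh(M)$; it is a retract there, but we only need it as a subgroup. Since $\Homeoh(M)\subset I(M)$ (a map homotopic to the identity acts trivially on homology), $A$ is also a subgroup of $\cI(M)$.

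It then suffices to show that a group containing $\Z^r$ has Pr\"ufer rank at least~$r$. Pr\"ufer rank is monotone under passing to subgroups, because every finitely generated subgroup of $A$ is a finitely generated subgroup of $\cI(M)$. So $\rk_{\Pr}(\cI(M))\ge\rk_{\Pr}(A)$.

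For finite $r$, $A$ itself is finitely generated, and its minimal number of generators is exactly $r$. Indeed, $A\otimes\Z_2\cong(\Z_2)^r$ needs $r$ generators, and any generating set of $A$ maps onto a spanning set of $A\otimes\Z_2$. Hence $\rk_{\Pr}(A)\ge r$.

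For infinite $r$, $A$ contains $\Z^n$ for every finite $n$. By the same $\otimes\Z_2$ argument, the minimal number of generators of $\Z^n$ is $n$, so the supremum is infinite and $\rk_{\Pr}(\cI(M))=\infty=r$.

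There is no real obstacle here; the only point needing care is the generator count for $\Z^n$, which the reduction mod~$2$ handles.
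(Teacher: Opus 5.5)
Your proposal is correct and follows the paper's proof: both take the free abelian subgroup $A\cong\Z^r$ from \cref{theorem:intro-retract-direct-factor}, view it inside $\pi_0\Homeoh(M)\subset\cI(M)$, and conclude $\rk_{\Pr}(\cI(M))\ge r$. Your reduction-mod-$2$ count of generators of $\Z^n$ just supplies the detail that the paper leaves implicit.
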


\begin{proof}
  By \cref{theorem:intro-retract-direct-factor}, there is a subgroup $H \subset \pi_0 \Homeoh(M) \subset \cI(M)$ which is free abelian of rank~$r$.
  It follows that $\rk_{\Pr}(\cI(M)) \ge r$.
\end{proof}

In what follows, we focus on the quotient $\cI(M) / \pi_0 \Homeoh(M) = I(M) / {\simeq}\rel\partial M$, where $I(M) = \{f\in \Homeo(M) \mid f_*=\id \text{~on~} H_*(M)\}$.
Note that this quotient measures the gap between homology triviality and homotopy triviality.

\begin{theorem}
  \label{theorem:subgroup-in-torelli}
  Let $M$ be a 4-manifold containing a neat disk with a geometric dual in~$\partial M$.
  Suppose $H_3(\widetilde M) \otimes \Z_2$ is finitely generated.
  Let $s=|T_M|$ if $T_M$ is infinite.
  Otherwise, let
  \[
    s = |T_M| - \dim_{\Z_2} H_3(\widetilde M) \otimes \Z_2
  \]
  and suppose $s>0$.
  Then there exists an epimorphism of a subgroup $H\subset \cI(M)/\pi_0 \Homeoh(M)$ onto $(\Z_2)^s$, the $\Z_2$-vector space of dimension~$s$.

  In addition, if $M$ is smooth, then every element of $H$ is represented by a diffeomorphism, i.e., $H$ is the image of a subgroup of $\cI^s(M)/\pi_0 \Diffh(M)$ under the natural map $\cI^s(M)/\pi_0 \Diffh(M) \to \cI(M)/\pi_0 \Homeoh(M)$;
  if $M$ is stably smoothable, then every element of $H$ is represented by a stably smoothable homeomorphism.
\end{theorem}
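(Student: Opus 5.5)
The plan is to take $H$ to be the image of $\cI(M)\cap\pi_0 H_D(M)$ in $\cI(M)/\pi_0\Homeoh(M)$, where $D$ is the given neat disk with a geometric dual in $\partial M$. Since $\pi_0\Homeoh(M)\subset\pi_0 H_D(M)$ and $\pi_0\Homeoh(M)\subset\cI(M)$, this image is the quotient group $(\cI(M)\cap\pi_0H_D(M))/\pi_0\Homeoh(M)$. By \cref{corollary:homomorphism-modulo-homotopy}, $\DeltaD$ descends to a homomorphism
\[
  \tDeltaD\colon H\to D_M/\Sigma_M .
\]
By \cref{theorem:realization-DeltaD}, $\DeltaD$ maps $\cI(M)\cap\pi_0H_D(M)$ onto $D_M$. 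Hence $\tDeltaD$ is onto $D_M/\Sigma_M$. It then suffices to show that the $\Z_2$-vector space $D_M/\Sigma_M$ surjects onto $(\Z_2)^s$, and to compose.

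By \cref{remark:D_M/Sigma_M-is-Z_2-vector-space}, $D_M/\Sigma_M\cong\Z_2[T_M]/\mu(\pi_3(M))$. From diagram \cref{equation:whitehead-dax-wall}, $\mu$ factors through the Hurewicz map as $\overline\mu\colon H_3(\widetilde M)\to\Z_2[T_M]$. Since the target is a $\Z_2$-vector space, $\overline\mu$ factors further through $H_3(\widetilde M)\otimes\Z_2$. Therefore
\[
  \dim_{\Z_2}\mu(\pi_3(M))\le\dim_{\Z_2}H_3(\widetilde M)\otimes\Z_2<\infty .
\]
If $T_M$ is finite, this gives $\dim D_M/\Sigma_M\ge|T_M|-\dim_{\Z_2}(H_3(\widetilde M)\otimes\Z_2)=s$. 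If $T_M$ is infinite, then $\Z_2[T_M]$ has dimension $|T_M|$, and a finite-dimensional quotient leaves dimension $|T_M|=s$. In either case, choosing a basis of the vector space gives an epimorphism $D_M/\Sigma_M\to(\Z_2)^s$. Composing with $\tDeltaD$ yields the required epimorphism from $H$.

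For the refinements, the same $H$ works. In the smooth case, \cref{theorem:realization-DeltaD} realizes every $v\in D_M$ by a diffeomorphism in $I^s(M)\cap H_D(M)$. So I would replace $H$ by the image of $(\cI^s(M)\cap\pi_0(\Diff(M)\cap H_D(M)))/\pi_0\Diffh(M)$ under the natural map to $\cI(M)/\pi_0\Homeoh(M)$; this is the image of a subgroup of $\cI^s(M)/\pi_0\Diffh(M)$. The composite to $D_M/\Sigma_M$ remains onto, and every element is represented by a diffeomorphism. In the stably smoothable case, I take the subgroup generated by classes of the stably smoothable realizers $f_v$ given by \cref{theorem:realization-DeltaD}. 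One needs such classes to form a subgroup, or else one simply takes the subgroup they generate and checks that its elements remain stably smoothable. The latter holds if compositions and inverses of stably smoothable homeomorphisms are stably smoothable, which follows after stabilizing enough.

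The main point to watch is the infinite-dimension bookkeeping and the correct identification of $\mu(\pi_3(M))$ with the image of $H_3(\widetilde M)\otimes\Z_2$. The smooth and stably smoothable clauses are the other place to be careful, specifically that the realizers generate the group $H$ as claimed.
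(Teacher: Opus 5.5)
Your proposal is correct and follows the paper's proof. You descend $\DeltaD$ to $\tDeltaD$ via \cref{corollary:homomorphism-modulo-homotopy}, get surjectivity onto $D_M/\Sigma_M$ from \cref{theorem:realization-DeltaD}, and bound $\dim_{\Z_2}\mu(\pi_3(M))$ by $\dim_{\Z_2}H_3(\widetilde M)\otimes\Z_2$ exactly as in \cref{lemma:dimension-of-D_M/Sigma_M}. Two small remarks on the refinements: drop the sentence ``the same $H$ works,'' since the full image of $\cI(M)\cap\pi_0H_D(M)$ need not consist of diffeomorphism classes and your subsequent replacement is what is needed; and your closure worry in the stably smoothable case is settled directly, because the realizers produced in the proof of \cref{theorem:realization-DeltaD} are the identity near $p_0$ and diffeomorphisms of $(M\sm p_0)_\Gamma$, a property preserved under composition and inversion.
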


The following is an immediate consequence of \cref{theorem:subgroup-in-torelli}.

\begin{corollary}
  \label{corollary:torelli-modulo-homotopy-prufer-rank}
  Let $M$ and $s$ be as in \cref{theorem:subgroup-in-torelli}.
  Then $\rk_{\Pr}\bigl(\cI(M)/\pi_0 \Homeoh(M)\bigr) \ge s$.
  In addition, if $M$ is smooth, then
  \[
    \rk_{\Pr}\biggl(\frac{\cI^s(M)}{\pi_0 \Diffh(M)}\biggr)
    \ge \rk_{\Pr}\biggl( \Im\biggl\{ \frac{\cI^s(M)}{\pi_0 \Diffh(M)} \rightarrow \frac{\cI(M)}{\pi_0 \Homeoh(M)} \biggr\} \biggr)
    \ge s.
  \]
\end{corollary}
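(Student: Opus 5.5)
The statement to prove is \cref{corollary:torelli-modulo-homotopy-prufer-rank}. The plan is to derive it directly from \cref{theorem:subgroup-in-torelli}, using only two elementary facts about Pr\"ufer rank. First, Pr\"ufer rank is monotone under passing to subgroups: a finitely generated subgroup of $H\subset\Gamma$ is also a finitely generated subgroup of $\Gamma$, so $\rk_{\Pr}(H)\le\rk_{\Pr}(\Gamma)$. Second, if a group $H$ admits an epimorphism onto $(\Z_2)^s$, then $\rk_{\Pr}(H)\ge s$ when $s$ is finite, and $\rk_{\Pr}(H)=\infty$ when $s$ is infinite.

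To verify the second fact, let $\phi\colon H\to(\Z_2)^s$ be the epimorphism and fix a finite $n\le s$. Choose $n$ linearly independent vectors $e_1,\dots,e_n$ in $(\Z_2)^s$ and pick $h_i\in H$ with $\phi(h_i)=e_i$. Let $K=\langle h_1,\dots,h_n\rangle$, a finitely generated subgroup of $H$. Then $\phi|_K$ maps $K$ onto $(\Z_2)^n$. Any generating set of $K$ maps to a generating set of $(\Z_2)^n$, and $(\Z_2)^n$ needs at least $n$ generators since it has dimension $n$. Hence the minimal number of generators of $K$ is at least $n$, so $\rk_{\Pr}(H)\ge n$. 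Taking $n=s$ handles finite $s$; letting $n\to\infty$ handles infinite $s$.

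Now apply these facts. If $s\le 0$ there is nothing to prove. If $s>0$, \cref{theorem:subgroup-in-torelli} supplies a subgroup $H\subset\cI(M)/\pi_0\Homeoh(M)$ with an epimorphism onto $(\Z_2)^s$. Combining the two facts gives
\[
  \rk_{\Pr}\bigl(\cI(M)/\pi_0\Homeoh(M)\bigr)\ge\rk_{\Pr}(H)\ge s.
\]

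For the smooth case, \cref{theorem:subgroup-in-torelli} says $H$ lies in the image $J$ of the natural map
\[
  \Phi\colon \cI^s(M)/\pi_0\Diffh(M)\to\cI(M)/\pi_0\Homeoh(M).
\]
Monotonicity under subgroups gives $\rk_{\Pr}(J)\ge\rk_{\Pr}(H)\ge s$. It remains to show $\rk_{\Pr}$ does not increase when passing to a quotient, i.e.\ $\rk_{\Pr}\bigl(\cI^s(M)/\pi_0\Diffh(M)\bigr)\ge\rk_{\Pr}(J)$.

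This step is the one point that needs care, because Pr\"ufer rank is not obviously monotone under quotients. The plan is to avoid the general question and lift the specific subgroups used above. Choose preimages $\tilde h_i$ of the $h_i$ under $\Phi$, so that $\phi\circ\Phi$ maps $\langle\tilde h_1,\dots,\tilde h_n\rangle$ onto $(\Z_2)^n$. The generator count argument then applies verbatim to this lifted subgroup. This yields $\rk_{\Pr}\bigl(\cI^s(M)/\pi_0\Diffh(M)\bigr)\ge n$ for every finite $n\le s$, completing the proof.
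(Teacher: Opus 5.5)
Your argument is correct in substance and follows the route the paper intends. The paper calls the corollary an immediate consequence of \cref{theorem:subgroup-in-torelli}. It relies on two facts: Pr\"ufer rank does not increase under passing to subgroups, and it does not increase under passing to quotients. The second is stated as $\rk_{\Pr}(\Gamma/N)\le\rk_{\Pr}(\Gamma)$ in the proof of \cref{theorem:intro-torelli-prufer-rank}.

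There is one omission. You say it remains to show $\rk_{\Pr}\bigl(\cI^s(M)/\pi_0\Diffh(M)\bigr)\ge\rk_{\Pr}(J)$. You then only prove $\rk_{\Pr}\bigl(\cI^s(M)/\pi_0\Diffh(M)\bigr)\ge s$, so the first inequality of the displayed chain is never established.

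Your caution there is unnecessary, and the fix uses your own lifting argument. Let $K\subset J$ be any finitely generated subgroup, generated by $k_1,\dots,k_m$. Lift each $k_i$ to some $\tilde k_i$ and set $\tilde K=\langle\tilde k_1,\dots,\tilde k_m\rangle$. Then $\Phi(\tilde K)=K$, so any generating set of $\tilde K$ maps to a generating set of $K$. Hence the minimal number of generators of $K$ is at most that of $\tilde K$.

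This shows Pr\"ufer rank never increases under homomorphic images, which gives the missing inequality in general. It also subsumes your second fact, since $(\Z_2)^s$ has Pr\"ufer rank $s$.
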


\begin{proof}[Proof of \cref{theorem:intro-torelli-prufer-rank}]
  The conclusions immediately follow from \cref{corollary:torelli-modulo-homotopy-prufer-rank}, using the fact that $\rk_{\Pr}(\Gamma/N) \le \rk_{\Pr}(\Gamma)$ for any normal subgroup $N \subset \Gamma$.
\end{proof}

To prove~\cref{theorem:subgroup-in-torelli}, we use the following estimate of the size of the quotient target~$D_M/\Sigma_M$.
Recall that $D_M/\Sigma_M = \Z_2[T_M]/\mu(\pi_3(M))$ where $\mu\colon \pi_3(M) \to \Z_2[T_M]$ is the self-intersection homomorphism in~\cref{equation:homomorphism-mu}.

\begin{lemma}
  \label{lemma:dimension-of-D_M/Sigma_M}
  Let $M$ be a 4-manifold.
  Suppose $H_3(\widetilde M) \otimes \Z_2$ is finitely generated.
  Let $s=|T_M|$ if $T_M$ is infinite;
  otherwise, let $s = |T_M| - \dim_{\Z_2} H_3(\widetilde M) \otimes \Z_2$.
  Then $\dim_{\Z_2} \Z_2[T_M]/\mu(\pi_3(M)) \ge s$.
\end{lemma}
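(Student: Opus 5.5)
The plan is to bound the image $\mu(\pi_3(M))\subset \Z_2[T_M]$ from above by $\dim_{\Z_2} H_3(\widetilde M)\otimes\Z_2$. The key input is the factorization recorded in the proof of \cref{lemma:free-part-of-D_M-Sigma_M}. By \cite[Lemma~4.2]{Schneiderman-Teichner:2022-1}, applied to the smooth manifold $(M\sm p_0)_\Gamma$ and passed to $M$ as in that proof, $\mu\colon\pi_3(M)\to\Z_2[T_M]$ factors through the Hurewicz map $\pi_3(M)=\pi_3(\widetilde M)\to H_3(\widetilde M)$ via a homomorphism $\overline\mu\colon H_3(\widetilde M)\to \Z_2[T_M]$. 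Hence $\mu(\pi_3(M))\subset \overline\mu(H_3(\widetilde M))$. In fact equality holds, since the Hurewicz map is onto by the top row of \cref{equation:whitehead-dax-wall}, but only the inclusion is needed.

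Next, since the target $\Z_2[T_M]$ is a $\Z_2$-vector space, $\overline\mu$ factors further through $H_3(\widetilde M)\otimes\Z_2 = H_3(\widetilde M)/2H_3(\widetilde M)$. The image is therefore a quotient of a $\Z_2$-vector space of finite dimension $k=\dim_{\Z_2}H_3(\widetilde M)\otimes\Z_2$, which is finite by hypothesis. So $\dim_{\Z_2}\mu(\pi_3(M))\le k$.

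The conclusion then follows by counting dimensions. $\Z_2[T_M]$ has basis $T_M$. If $T_M$ is finite, then
\[
\dim_{\Z_2} \Z_2[T_M]/\mu(\pi_3(M)) \ge |T_M|-k = s.
\]
If $T_M$ is infinite, the quotient of an infinite-dimensional space by a finite-dimensional subspace has dimension $|T_M|$, as a basis of the subspace extends to a basis of the whole space. This gives the bound $s=|T_M|$.

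The only step needing care is justifying the factorization of $\mu$ through $H_3(\widetilde M)$ in the topological setting. I would handle it exactly as the paper does in the proof of \cref{lemma:free-part-of-D_M-Sigma_M}. The map $\pi_3(M\sm p_0)\to\pi_3(M)$ is surjective, and $\mu_M$ is induced by $\mu_{M\sm p_0}$. The Hurewicz maps are compatible, with $H_3$ of the universal cover of $M\sm p_0$ mapping onto $H_3(\widetilde M)$. So the smooth factorization descends. Beyond this, everything is linear algebra.
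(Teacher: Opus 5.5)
Your proposal is correct and is essentially the paper's proof: factor $\mu$ through the Hurewicz map $\pi_3(M)\to H_3(\widetilde M)$ by \cite[Lemma~4.2]{Schneiderman-Teichner:2022-1}, then through $H_3(\widetilde M)\otimes\Z_2$ because the target is a $\Z_2$-vector space, and count dimensions, with your infinite case handled correctly. One caveat concerns your descent to the topological case: surjectivity of $\pi_3(M\sm p_0)\to\pi_3(M)$ and of $H_3(\widetilde{M\sm p_0})\to H_3(\widetilde M)$, together with commutativity, does not by itself make the factorization descend, because you also need the smooth factoring map to vanish on $\Ker\bigl(H_3(\widetilde{M\sm p_0})\to H_3(\widetilde M)\bigr)$ --- this holds since that kernel is generated by lifts of the linking $3$-sphere of $p_0$, which is null-homotopic in $M$ --- or you can argue as in the proof of \cref{lemma:free-part-of-D_M-Sigma_M}, using $\Ker\bigl(\pi_3(M)\to H_3(\widetilde M)\bigr)=\Im\Gamma(\pi_2(M))$ and the surjectivity of $\pi_2(M\sm p_0)\to\pi_2(M)$.
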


\begin{proof}
  By~\cite[Lemma~4.2]{Schneiderman-Teichner:2022-1}, $\mu$ factors through $H_3(\widetilde M)$ via the Hurewicz map.
  Since the target $\Z_2[T_M]$ is a vector space over $\Z_2$, $\mu$ factors through $H_3(\widetilde M)\otimes \Z_2$.
  From this, the conclusion follows.
\end{proof}

\begin{proof}[Proof of \cref{theorem:subgroup-in-torelli}]
  \Cref{corollary:homomorphism-modulo-homotopy} gives a homomorphism
  \[
    \tDeltaD\colon H_D(M)/{\simeq}\rel\partial M \to D_M/\Sigma_M.
  \]
  By \cref{theorem:realization-DeltaD}, the restriction of $\tDeltaD$ to a subgroup $H$ of $(I(M)\cap H_D(M))/{\simeq}\rel\partial M$ is onto~$D_M/\Sigma_M$.
  By \cref{lemma:dimension-of-D_M/Sigma_M}, $D_M/\Sigma_M$ surjects onto~$(\Z_2)^s$.
  It follows that the subgroup $H$ surjects onto $(\Z_2)^s$.

  The additional assertions for the smooth and stably smoothable cases follow from the corresponding cases of \cref{theorem:realization-DeltaD}.
\end{proof}

\section{Non-smoothable examples from freely slice links}
\label{section:non-smoothable-4-manifolds}

In this section, we construct from freely slice links an infinite family of 4-manifolds $M$ for which the vanishing Kirby--Siebenmann invariant case of \cref{theorem:intro-Z^infty-factor-nonsmoothable} holds;
that is, $M$ is non-smoothable, $\ks(M)=0$, $\pi_1(M)$ is a free group of arbitrary prescribed rank, and $\pi_0 \Homeopi(M)$ and $\pi_0 \Homeoh(M)$ have a common $\Z^\infty$ direct factor.

\subsection{Freely slice links and mapping class groups}

A link $L$ with $n$ components in $S^3$ is \emph{freely slice} in $D^4$ if there exist disjoint disks $D_1,\ldots,D_n$ topologically embedded in $D^4$ such that $\bigsqcup \partial D_i = L$ and $\pi_1(D^4 \sm \bigsqcup D_i)$ is free of rank $n$, generated by meridians.
In this paper, as a convention, a freely slice link $L$ is always equipped with free slice disks $\bigsqcup D_i$.
Let $X(L) = D^4 \sm \nu(\bigsqcup D_i)$ be the slice disk exterior.
The boundary $\partial X(L)$ is the zero-surgery manifold of~$L\subset S^3$.

Let
\begin{equation}
  M_k(L) = X(L) \bcsum (\bcsum^k S^2\times D^2), \quad k\ge 1.
  \label{equation:M_k(L)}
\end{equation}
We remark that for any collection of $k$ pairwise disjoint boundary-parallel neat arcs $\alpha_i \subset X(L)$, the exterior $X(L)\sm \nu(\bigsqcup \alpha_i)$ is homeomorphic to $M_k(L)$.

\begin{theorem}
  \label{theorem:examples-from-freely-slice-links}
  If $L$ is a freely slice link with $n$ components and $k\ge 1$, then $\ks(M_k(L))=0$, $\pi_1(M_k(L))$ is a free group of rank $n$, and $\pi_0 \Homeopi(M_k(L))$ and $\pi_0 \Homeoh(M_k(L))$ have a common direct factor isomorphic to $\Z^\infty$ generated by stably smoothable classes.
  Furthermore, if $L$ is not smoothly slice in any smooth homotopy 4-ball, then the 4-manifold $M_k(L)$ is not smoothable.
\end{theorem}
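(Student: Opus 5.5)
We verify the hypotheses of \cref{theorem:intro-retract-direct-factor} for $M=M_k(L)$, using the bullet-point consequence (b), and add the direct factor clause and the addendum.

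\emph{Basic invariants.} By van Kampen, $\pi_1(M)\cong\pi_1(X(L))$, which is free of rank $n$ by the free sliceness convention. Hence $\pi_1(M)$ is infinite and $\iota_M=|\pi_1(M)|$ is infinite. The exterior $X(L)$ is aspherical: it is a compact 4-manifold with free fundamental group and $H_2(X(L))=0=H_3(X(L))$, so its universal cover is acyclic in degrees $\geq 2$. I expect $\widetilde X(L)$ to be contractible, either by equivariant chain-level arguments over $\Z[F_n]$ (the chain complex of $\widetilde X(L)$ is a free resolution whose $H_2$, $H_3$ vanish because $\Z[F_n]$ has global dimension $1$ and the relevant $\Ext$ computations vanish), or by quoting the standard fact that free slice disk exteriors are aspherical.

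Given asphericity, $\widetilde M$ is $\widetilde X(L)$ with copies of $S^2\times D^2$ boundary-summed equivariantly. Thus $\pi_2(M)=H_2(\widetilde M)\cong\Z[F_n]^k$, generated by the spheres $S^2\times 0$, and $H_3(\widetilde M)=0$. The spheres $S^2\times 0$ are disjointly embedded, so $\mu_2$ vanishes on these generators. By the quadratic formula $\mu_2(a+b)=\mu_2(a)+\mu_2(b)+[\lambda(a,b)]$ with $\lambda=0$ (the spheres can be pushed into the boundary, so all pairings vanish), we get $\mu_2\equiv 0$. Hence $\rk\mu_2=0$ and $\beta_3(\widetilde M)=0$.

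\emph{Geometric dual and boundary surjectivity.} Take the neat disk $D=0\times D^2\subset S^2\times D^2$ in one summand. Then $S^2\times\{pt\}\subset S^2\times S^1\subset\partial M$ meets $\partial D$ in exactly one point, giving a geometric dual in $\partial M$. Also $\pi_1(\partial X(L))\to\pi_1(X(L))$ is onto, since the meridians of $L$ generate; the boundary connected sum preserves this, so $\pi_1(\partial M)\to\pi_1(M)$ is onto. \Cref{theorem:intro-retract-direct-factor} then gives a common $\Z^\infty$ direct factor, and the addendum gives stably smoothable representatives, once we know $\ks(M)=0$.

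For $\ks$, note that $\ks(X(L))=0$: the slice disk exterior sits in $D^4$, and $\ks$ is additive and vanishes on open subsets of smoothable manifolds. Concretely, $X(L)\subset D^4$ is a codimension-0 submanifold, so it inherits a smooth structure on its interior; alternatively its stable tangent bundle lifts to $\mathrm{PL}$. Since $\ks(S^2\times D^2)=0$, we get $\ks(M)=0$.

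\emph{Non-smoothability.} This is the main obstacle. Suppose $M$ has a smooth structure. Then $\partial M=\partial X(L)\csum k(S^2\times S^1)$, the zero surgery on $L$ plus $S^2\times S^1$ summands. Attaching $k$ 3-handles along the spheres $S^2\times\{pt\}$ (smoothly, after isotoping them to be smooth in the smooth boundary) recovers a smooth structure on $X(L)$, since the 3-handles cancel the $S^2\times D^2$ summands topologically. Next, glue $n$ 2-handles $D^2\times D^2$ to $X(L)$ along the meridian circles $\times D^2$, reversing the zero surgery. The result $V$ is a smooth manifold homeomorphic to $D^4$, with $\partial V=S^3$ containing $L$ as the belt circles, which bound the cocore disks. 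Thus $V$ is a smooth homotopy 4-ball in which $L$ is smoothly slice, a contradiction.

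The delicate points are that the smooth structure on $\partial M$ may differ from the standard one (in dimension 3 smooth structures are unique up to isotopy, so this is fine), and that the 3-handle attachments are along smoothly embedded spheres representing the right classes.
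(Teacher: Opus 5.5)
Your overall route is the paper's. The paper also verifies the hypotheses of \cref{theorem:intro-retract-direct-factor} and its addendum, packaged as \cref{lemma:aspherical-bcsum-S^2-times-D^2}: infinite free $\pi_1$, $\mu_2=0$, $H_3(\widetilde M)=0$, the dual $S^2\times\mathrm{pt}$, and surjectivity of $\pi_1(\partial M)\to\pi_1(M)$ via meridians. Your 3-handle/2-handle argument for non-smoothability is also the same as the paper's. There is, however, a genuine gap in the Kirby--Siebenmann step.

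Neither of your reasons for $\ks(X(L))=0$ works. For a compact 4-manifold $X$ with nonempty boundary, $\ks(X)\in H^4(X,\partial X;\Z_2)$ is measured relative to the (unique) structure on $\partial X$, and neither of your two reasons detects it:
\begin{itemize}
\item Smoothness of $\inte X$ carries no information, because every noncompact 4-manifold is smoothable \cite[Section~8.2]{Freedman-Quinn:1990-1}.
\item An absolute PL lift of the stable tangent bundle always exists, because $H^4(X;\Z_2)=0$.
\end{itemize}
In fact the principle ``a compact codimension-zero submanifold of a smooth 4-manifold has $\ks=0$'' is false. The contractible $W_1$ bounded by a homology sphere with Rokhlin invariant~$1$ has $\ks(W_1)=1$. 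Its double is a homotopy 4-sphere, hence $S^4$, so $W_1\subset S^4$ with bicollared boundary. In your situation, nothing guarantees that the smooth structure inherited from $D^4$ is a product near the part $\bigsqcup_i D_i\times S^1$ of $\partial X(L)$, since the $D_i$ are only topologically embedded.

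The fix is the additivity you mention, applied to the specific splitting $D^4=X(L)\cup\bigsqcup_i \nu D_i$ glued along $\bigsqcup_i D_i\times S^1$. Each $\nu D_i$ is homeomorphic to the smooth manifold $D^2\times D^2$, so $0=\ks(D^4)=\ks(X(L))+0$. This is exactly the paper's argument. It is needed both for the asserted $\ks(M_k(L))=0$ and, through the addendum, for the stably smoothable representatives.

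A smaller point concerns asphericity. Your first argument is a non sequitur: free $\pi_1$ together with $H_2(X;\Z)=H_3(X;\Z)=0$ does not force the universal cover to be acyclic. For example, $S^1\vee S^2$ with a 3-cell attached along $2-t\in\Z[t^{\pm1}]=\pi_2$ has the integral homology of $S^1$, yet the universal cover has $H_2\cong\Z[t^{\pm1}]/(2-t)\neq 0$. Also, $\Z[F_n]$ has global dimension $2$, not $1$. Your fallback is the right one and is what the paper does: \cite[Proposition~11.6C(1)]{Freedman-Quinn:1990-1} gives $X(L)\simeq\bigvee^n S^1$.
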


The following observation will be used in the proof of \cref{theorem:examples-from-freely-slice-links}.
Since we will also use it in the next section, we state it as a lemma.

\begin{lemma}
  \label{lemma:aspherical-bcsum-S^2-times-D^2}
  Suppose that $M=X\bcsum (\bcsum^k S^2\times D^2)$ with $k\ge 1$, where $X$ is a non-contractible aspherical 4-manifold with nonempty boundary.
  Then $\pi_0 \Homeopi(M)$ and $\pi_0 \Homeoh(M)$ have a common retract $A\cong \Z^\infty$, which is a common direct factor when $\pi_1(\partial X) \to \pi_1(X)$ is onto.
  In addition, if $\ks(X)=0$, then all elements of $A$ are stably smoothable.
  If $X$ is smooth, then $\pi_0 \Diffpi(M)$ and $\pi_0 \Diffh(M)$ have a common retract $B\cong \Z^\infty$ that maps isomorphically onto~$A$;
  $B$ is a common direct factor when $\pi_1(\partial X) \to \pi_1(X)$ is onto.
\end{lemma}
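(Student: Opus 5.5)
The plan is to apply \cref{theorem:intro-retract-direct-factor} and its addendum to $M=X\bcsum(\bcsum^k S^2\times D^2)$, in the infinite-$\pi_1$ case, which gives $r=\iota_M=\infty$. The work is to verify the hypotheses: a neat disk with a geometric dual in $\partial M$, $\rk\mu_2<\infty$, and $\beta_3(\widetilde M)<\infty$.

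\emph{Step 1 (fundamental group).} Boundary connected sum with simply connected pieces gives $\pi_1(M)\cong\pi_1(X)$. Since $X$ is aspherical and non-contractible, $\pi_1(X)$ is nontrivial. Moreover $X$ is a finite-dimensional $K(\pi,1)$, so $\pi_1(X)$ is torsion-free. A nontrivial torsion-free group is infinite, hence $\iota_M=|\pi_1(M)|$ is infinite and $r=\infty$. Torsion-freeness also gives $T_M=\emptyset$, though we do not need that.

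\emph{Step 2 (dual disk).} In one $S^2\times D^2$ summand, take $D=\{\mathrm{pt}\}\times D^2$. It is a neat disk in $M$, and $G=S^2\times\{\mathrm{pt}\}\subset S^2\times S^1\subset\partial M$ meets $\partial D$ transversely in exactly one point. So $D$ has a geometric dual in $\partial M$.

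\emph{Step 3 (finiteness; the main point).} The universal cover $\widetilde M$ is $\widetilde X$ with infinitely many copies of $\bcsum^k S^2\times D^2$ attached, one for each lift of the gluing ball. Since $\widetilde X$ is contractible, $\widetilde M$ is homotopy equivalent to a wedge of $2$-spheres. Hence $H_3(\widetilde M)=0$ and $\beta_3(\widetilde M)=0$.

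For $\mu_2$, note that $\pi_2(M)=H_2(\widetilde M)$ is the free $\Z[\pi_1]$-module generated by the cores $S^2\times 0$. These cores are embedded, have trivial normal bundle, and can be pushed into $\partial M$, where they are disjoint from their translates. So every pairwise intersection $\lambda$ among them vanishes, and each $\mu_2$ of a generator vanishes. By the quadratic property $\mu_2(x+y)=\mu_2(x)+\mu_2(y)+[\lambda(x,y)]$ and $\mu_2(ax)=a\mu_2(x)\bar a$, it follows that $\mu_2\equiv 0$, so $\rk\mu_2=0$. This is the step I expect to need the most care: I must make sure the push-off into the boundary really shows that all equivariant intersections vanish. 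Alternatively, I would compute directly in $\widetilde M$, where the spheres lie in disjoint $S^2\times D^2$ pieces.

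\emph{Step 4 (conclusion).} \Cref{theorem:intro-retract-direct-factor} now gives a common retract $A\cong\Z^\infty$. For the direct factor claim, suppose $\pi_1(\partial X)\to\pi_1(X)$ is onto. Then $\pi_1(\partial M)$, which contains $\pi_1(\partial X)$ as a free factor, also surjects onto $\pi_1(M)$, so the theorem gives a direct factor.

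For the smoothing statements, suppose first that $\ks(X)=0$. Then $\ks(M)=0$, so $M$ is stably smoothable by \cite[10.2B]{Freedman-Quinn:1990-1}, and the addendum shows that every element of $A$ is stably smoothable. If instead $X$ is smooth, then $M$ is smooth, and the smooth part of the addendum yields $B\cong\Z^\infty$ with the stated properties.
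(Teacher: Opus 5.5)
Your proposal is correct and follows essentially the same route as the paper. Both arguments show that $\pi_1(M)\cong\pi_1(X)$ is infinite and torsion-free, that $\widetilde M\simeq\bigvee^\infty S^2$ so $H_3(\widetilde M)=0$, and that $\mu_2=0$ because the $S^2$ factors are represented by disjointly embedded framed spheres, and then invoke \cref{theorem:intro-retract-direct-factor} and its addendum. You also spell out details the paper leaves implicit: the dual disk $\{\mathrm{pt}\}\times D^2$, the translation of the surjectivity hypothesis from $\partial X$ to $\partial M$, and additivity of $\ks$.
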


\begin{proof}
  Since $X$ is aspherical, $\pi_1(M) \cong \pi_1(X)$ is a torsion-free infinite group.
  Since $M\simeq X \vee (\bigvee^k S^2)$, the universal cover $\widetilde M$ has the homotopy type of $\bigvee^\infty S^2$, the wedge sum of infinitely many 2-spheres.
  Thus, $H_3(\widetilde M)=0$.
  Also, since the 2-sphere factors of $\widetilde M$ are represented by disjointly embedded framed 2-spheres in $M$, we have $\mu_2=0$ on~$\pi_2(M)$.
  Thus, by \cref{theorem:intro-retract-direct-factor} and its addendum, the conclusions follow.
\end{proof}

\begin{proof}[Proof of \cref{theorem:examples-from-freely-slice-links}]
  By \cite[Proposition 11.6C(1)]{Freedman-Quinn:1990-1}, $X(L)$ is homotopy equivalent to $\bigvee^n S^1$.
  Thus, $X(L)$ is aspherical and $\pi_1(M_k(L))\cong \pi_1(X(L))$ is free of rank~$n$.
  Also, $\pi_1(\partial M_k(L)) \to \pi_1(M_k(L)) \cong \pi_1(X(L))$ is onto, since $\pi_1(X(L))$ is generated by the meridians of~$L$.

  It is known that the Kirby--Siebenmann invariant is additive in the following strong sense:
  if $M = M_0 \cup M_1$ and $M_0 \cap M_1$ is a codimension-zero submanifold of each of $\partial M_0$ and $\partial M_1$, then $\ks(M) = \ks(M_0) + \ks(M_1)$~\cite[Section~10.2B]{Freedman-Quinn:1990-1}.
  By applying this to $D^4 = X(L) \cup (\bigsqcup^n D^2\times D^2)$, it follows that $\ks(X(L))=0$ since $D^2\times D^2$ and $D^4$ are smoothable.
  Applying additivity to $M_k(L) = X(L) \bcsum (\bcsum^k S^2\times D^2)$, we obtain $\ks(M_k(L))=0$.

  By \cref{lemma:aspherical-bcsum-S^2-times-D^2}, $\pi_0 \Homeopi(M_k(L))$ and  $\pi_0 \Homeoh(M_k(L))$ have a common $\Z^\infty$ direct factor, whose elements are stably smoothable.

  Suppose that $L$ is not smoothly slice in any smooth homotopy 4-ball.
  We will show that $M_k(L)$ is not smoothable.
  Assume, to the contrary, that $M_k(L)$ admits a smooth structure.
  The smooth structure restricts to the standard smooth structure on $\partial M_k(L) = \partial X(L) \csum (\csum^k S^2\times S^1)$, since every 3-manifold has a unique smooth structure.
  Attach $k$ smooth 3-handles to $M_k(L)$ along the 2-spheres $S^2 \times \{\text{pt}\}$ in the $S^2\times S^1$ summands of $\partial M_k(L)$, to obtain a smooth 4-manifold~$W$.
  Note that $W$ is homeomorphic to~$X(L)$ and $\partial W$ is diffeomorphic to $\partial X(L)$, the surgery manifold for~$L$.
  Attach $n$ smooth 2-handles to $W$ along the meridians of $L$, to obtain a smooth 4-manifold~$V$.
  Then $V$ is a smooth homotopy 4-ball in which $L$ is smoothly slice:
  the cocore disks of the 2-handles are slicing disks for $L$ in~$V$, and the slice disk exterior is~$W$.
  This contradicts the hypothesis.
\end{proof}

In what follows, we discuss some examples of links to which \cref{theorem:examples-from-freely-slice-links} applies.

For the one-component case ($n=1$), a knot is freely slice if and only if the Alexander polynomial is trivial~\cite{Freedman:1984-1,Freedman-Quinn:1990-1}.
(A freely slice knot is also called a \emph{$\Z$-slice knot}.)
For instance, let $\Wh^+(J)$ be the (untwisted) positive Whitehead double of a knot~$J$.
The knot $\Wh^+(J)$ is freely slice for all $J$, but for many $J$, it is known that $\Wh^+(J)$ is not smoothly slice in any smooth homotopy 4-ball.
For instance, see \cite{Hedden:2007-1,Manolescu-Owens:2007-1,Cha-Livingston-Ruberman:2006-1,Hedden-Kirk:2010-1,Hedden-Pinzon-Caicedo:2021-1,Nozaki-Sato-Taniguchi:2024-1,Dai-Hedden-Mallick-Stoffregen:2024-1,Daemi-Imori-Sato-Scaduto-Taniguchi:2025-1}.
Note that, in the knot case, a free slice disk exterior has fundamental group $\Z$, which is a good group.

The multi-component case ($n \ge 2$) is closely connected to the link-theoretic formulation of the topological surgery conjecture in dimension 4, a major open problem.
Recall that an $n$-component link $L$ is a \emph{boundary link} if it admits an epimorphism $\phi_L$ of $\pi_1(S^3\sm L)$ onto the free group of rank $n$ sending meridians to free generators.
All knots are boundary links.
The multi-component analogue of the trivial Alexander polynomial condition for knots is the notion of a good boundary link.
We say that a boundary link $L$ is \emph{good} if the free cover of $S^3\sm L$ associated with the above $\phi_L$ has trivial first homology.
For a Seifert matrix characterization, see~\cite[Corollary 2.2]{Cha-Kim-Powell:2020-1}.
It is known that topological surgery in dimension 4 works if and only if all good boundary links are freely slice~\cite[p.~243]{Freedman-Quinn:1990-1}.

This has motivated substantial efforts to prove that good boundary links are freely slice.
Specifically, results in~\cite{Freedman:1985-1,Freedman:1988-2,Freedman:1993-1,Freedman-Teichner:1995-2,Cha-Kim-Powell:2020-1} yield large families of freely slice links.
See~\cite[Theorem~A]{Cha-Kim-Powell:2020-1} for a statement which subsumes earlier results.
We remark that these known results still fall short of settling the topological surgery conjecture in dimension~4.

\subsection{Non-smoothability of \texorpdfstring{$M_k(L)$}{M\_k(L)}}

Many known freely slice links $L$ can be shown to be smoothly nonslice in any smooth homotopy 4-ball, and consequently, we can use these to produce non-smoothable 4-manifolds $M_k(L)$ by \cref{theorem:examples-from-freely-slice-links}.
In addition, many of these links are non-split and have unknotted components.
We describe explicit examples below.

\begin{example}
  \label{example:Wh^+(L_n(J))}
  For a knot $J$ and $n\ge 2$, we consider the $n$-component link $\Wh^+(L_n(J))$ shown at the top of \cref{figure:L_n(J)}.
  The diagram shows a link $L_n(J)$, whose positive Whitehead double is the link $\Wh^+(L_n(J))$.
  The link $L_n(J)$ is a satellite link with companion~$J$;
  the box {\setlength{\fboxrule}{1pt}\fbox{\footnotesize$J$}} indicates that
  the knot $J$ is tied in without introducing twists on the parallel strands.

  \begin{figure}[ht]
    \includestandalone{fig-covering-links}
    \caption{The link $\Wh^+(L_n(J))$ and its $p$-covering link.}
    \label{figure:L_n(J)}
  \end{figure}

  For a link $L$, the Whitehead double of $L$ (with arbitrary clasp signs) is a good boundary link if any two components of $L$ have vanishing linking number.
  In particular, our $\Wh^+(L_n(J))$ is a good boundary link.

  The link $\Wh^+(L_n(J))$ is freely slice, by a result of Freedman and Teichner~\cite{Freedman-Teichner:1995-2}.
  An $n$-component link is \emph{homotopically trivial} if there is a homotopy of the link to the trivial link through immersions of $n$ circles with disjoint images.
  (Crossing changes are allowed only between strands of the same component.)
  An $n$-component link $L$ is \emph{homotopically trivial+} if, for each component of $L$, the $(n+1)$-component link obtained by adding to $L$ a zero-linking parallel copy of the chosen component is homotopically trivial.
  By~\cite{Freedman-Teichner:1995-2}, a Whitehead double of a homotopically trivial+ link is freely slice.
  In our case, from the first and second link diagrams in \cref{figure:L_n(J)}, it is readily seen that the link $L_n(J)$ is homotopically trivial+, and consequently, $\Wh^+(L_n(J))$ is freely slice.
  When a parallel copy of the component of $L_n(J)$ marked by $*$ is added, one can verify the homotopy triviality by applying crossing changes to the second diagram in \cref{figure:L_n(J)} to eliminate the tying by~$J$.
  To see the homotopy triviality in the other cases, apply crossing changes to the first diagram to eliminate the tying by~$J$.

  Now let $J$ be any knot with $\tau(J) > 0$, where $\tau(J)$ is the Ozsv\'ath--Szab\'o $\tau$-invariant~\cite{Ozsvath-Szabo:2003-1}.
  Then, by \cref{proposition:Wh(L_n(J))-non-smoothly-slice} below, $\Wh^+(L_n(J))$ is not smoothly slice in any smooth homotopy 4-ball.
  Thus, by \cref{theorem:examples-from-freely-slice-links}, for any $k\ge 1$, the associated 4-manifold $M = M_k(\Wh^+(L_n(J)))$ is non-smoothable, $\ks(M)=0$, $\pi_1(M)$ is free of rank $n$, and $\pi_0 \Homeopi(M)$ and $\pi_0 \Homeoh(M)$ have a common $\Z^\infty$ direct factor generated by stably smoothable classes.
\end{example}

\begin{proof}[Proof of \cref{theorem:intro-Z^infty-factor-nonsmoothable}, $\epsilon=0$ case]
  In \cref{example:Wh^+(L_n(J))}, $\partial M$ is the connected sum of the zero-surgery manifold of $\Wh^+(L_n(J))$ and $k$ copies of $S^1\times S^2$.
  Thus, by varying $k$, we obtain infinitely many pairwise non-homeomorphic non-smoothable 4-manifolds $M=M_k(\Wh^+(L_n(J)))$.
  The 4-manifold $M$ has $\ks(M)=0$ and satisfies the conclusions of \cref{theorem:intro-Z^infty-factor-nonsmoothable} for $\epsilon=0$.
  For $n\ge 2$, this proves the assertion.
  For $n=1$, apply the same argument to $M=M_k(\Wh^+(J))$ where $J$ is a knot with $\tau(J)>0$.
\end{proof}

\begin{remark}
  Alternatively, one can vary $J$, with $k\ge 1$ fixed, to obtain infinitely many pairwise non-homeomorphic non-smoothable 4-manifolds $M = M_k(\Wh^+(L_n(J)))$.
  In this case, all of these $M$ have the same homotopy type:
  $M \simeq (\bigvee^n S^1) \vee (\bigvee^k S^2)$.
  We omit the proof.
\end{remark}

\begin{proposition}
  \label{proposition:Wh(L_n(J))-non-smoothly-slice}
  The link $\Wh^+(L_n(J))$ is not smoothly slice in any smooth rational homology 4-ball if $\tau(J) > 0$.
\end{proposition}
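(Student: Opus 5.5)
We reduce to a knot and use the Ozsv\'ath--Szab\'o obstruction. The link $\Wh^+(L_n(J))$ is the positive Whitehead double of $L_n(J)$. Its component marked by $*$ in \cref{figure:L_n(J)} is the one along which $J$ is tied in, and it links the remaining components. The strategy is to pass from the link to a knot by a covering or band-sum operation that respects smooth sliceness in rational homology balls. We then obtain a knot of the form $\Wh^+(K)$, where $K$ contains a connected sum of copies of $J$ (or is concordant to such). Finally we use that $\tau(\Wh^+(K))=1$ whenever $\tau(K)>0$, by Hedden's formula~\cite{Hedden:2007-1}. Since $\tau$ vanishes for knots smoothly slice in a rational homology 4-ball, this gives the desired contradiction.

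\textbf{Covering link step.} The second picture in \cref{figure:L_n(J)} indicates the $p$-covering link. We take the $p$-fold cyclic branched cover of $S^3$ along an unknotted component of $\Wh^+(L_n(J))$, with $p$ a prime power. The branched cover is again $S^3$, because that component is unknotted. Suppose the link bounds disjoint smooth disks in a rational homology ball $V$. Then the $p$-fold branched cover of $V$ along the corresponding disk is again a $\Z_{(p)}$-homology ball, by a standard Smith-theory/transfer argument for prime-power covers. Consequently, the preimage of the remaining components, which is the covering link, is smoothly slice in a rational homology ball. We arrange the choice of branching component so that the preimage contains a component isotopic to $\Wh^+(J')$, where $J'$ is built from lifts of the $J$-tied strands. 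This component should be the lift of the $*$-component, whose companion winds through the branching locus.

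\textbf{Extracting a knot.} A sublink of a smoothly slice link (in a rational ball) is slice, so the single component $\Wh^+(J')$ is smoothly slice in a rational homology ball. We identify $J'$ from the figure: tracing the lift shows $J'$ is $J\#J\#\cdots$, or at least a knot with $\tau(J')\ge\tau(J)>0$. Here we use additivity of $\tau$ and the fact that the lift is a satellite with winding number one around tied-in copies of $J$. Then $\tau(\Wh^+(J'))=1\neq0$, while sliceness in a rational homology 4-ball forces $\tau=0$. This contradiction proves the proposition.

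\textbf{Main obstacle.} The crux is the covering link step: identifying exactly which component to branch over and verifying from the diagram that the lifted component is a positive, untwisted Whitehead double of a knot with $\tau>0$. In particular, one must check that the Whitehead doubling clasps lift with the correct sign and zero framing, since lifting could introduce twisting in the double. If direct branching fails, we would instead first delete components (which preserves sliceness) to get a two-component sublink, and branch over the unknotted one there. The relevant framing check is the linking number with the branching component, which vanishes for Whitehead doubles and should keep the lifted double untwisted.
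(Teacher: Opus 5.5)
Your overall framework (covering links, Hedden's computation of $\tau$ for Whitehead doubles, and the vanishing of $\tau$ for knots slice in rational homology balls) is the one the paper uses. But the step that carries the proof is left as a guess, and the guess does not match what actually happens.

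\textbf{One cover is not enough.} With the paper's choice of branch component, a single branched cover does not produce a knotted component once $n\ge 3$. In the paper, one branches along the $*$ component of $\Wh^+(L_n(J))$, which in the second diagram is the Whitehead double of an unknotted circle. This yields the covering link $\Wh^+(L_{n-1}(P(J)))$ for a specific satellite pattern $P$. That is an $(n-1)$-component link whose components are again Whitehead doubles of unknotted circles, hence unknots. By deck symmetry, every other lift in the full preimage is an unknot as well. So there is no component $\Wh^+(J')$ to extract after one step. The reduction to a knot requires iterating the construction $n-1$ times, removing one component each time, and ends at $\Wh^+(P^{n-1}(J))$; only for $n=2$ does a single step suffice.

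Your fallback of deleting components first does not fill this gap. Under a single branched cover, the knot type of a lift of one component depends only on the two-component sublink it forms with the branch component. So the fallback presupposes that some two-component sublink is already obstructed, which you have not shown; the paper's reduction uses all $n$ components.

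\textbf{The companion and its $\tau$.} The companion that appears is $P^{n-1}(J)$, not a connected sum of copies of $J$. Its $\tau$ cannot be controlled by additivity or by a winding-number-one heuristic. That heuristic is false in general: connected sum with a knot of negative $\tau$ is a winding-number-one satellite that lowers $\tau$. In any case $P$ has winding number zero, since it is one crossing change away from the Whitehead-double pattern.

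The paper's argument is as follows. $\Wh^+(K)$ is obtained from $P(K)$ by changing one positive crossing. The Ozsv\'ath--Szab\'o crossing change inequality and Hedden's theorem then give $\tau(P(K))\ge\tau(\Wh^+(K))=1$ whenever $\tau(K)>0$. Induction along the iteration gives $\tau(\Wh^+(P^{n-1}(J)))>0$.

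\textbf{Choice of prime.} Your transfer step needs $V$ to be a $\Z_{(p)}$-homology ball. A rational homology ball is one only for primes $p$ not dividing the order of the torsion in $H_*(V;\Z)$. So $p$ must be chosen after $V$, and the covering construction must be valid for every prime $p$, as the paper arranges.
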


To prove \cref{proposition:Wh(L_n(J))-non-smoothly-slice}, we use the covering link calculus technique in the smooth category, following the formulation in~\cite[Section~2]{Cha-Kim:2008-1} and \cite[Section~3]{Cha-Powell:2014-1}.
Our proof is also similar to the proofs in~\cite{Cha-Livingston-Ruberman:2006-1,LevineA:2009-1,Livingston-VanCott:2011-1}.
Although \cite{Cha-Kim:2008-1,Cha-Powell:2014-1} deal with the more general case of links in $\Z_{(p)}$-homology spheres, we will use links in $S^3$ only.

Fix a prime~$p$.
Consider the following two operations:
(i)~take a sublink of~$L$;
(ii)~take the preimage of $L$ in the $p^k$-fold cyclic branched cover of $S^3$ taken along an unknotted component of~$L$.
(Note that the result is a link in~$S^3$.)
A link obtained from a given link $L$ by applying a finite sequence of these two operations is called a \emph{$p$-covering link} of~$L$.

\begin{theorem}[{\cite[Theorem~2.2]{Cha-Kim:2008-1}}]
  \label{theorem:covering-link-calculus}
  If $L$ is slice in a $\Z_{(p)}$-homology 4-ball, then every $p$-covering link of $L$ is slice in a $\Z_{(p)}$-homology 4-ball.
\end{theorem}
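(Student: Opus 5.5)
The statement is Theorem~2.2 of Cha--Kim, the covering link calculus. Since it is a single-step operation iterated, I would argue by induction on the length of the sequence of operations producing the $p$-covering link. It then suffices to check that each of the two elementary operations preserves the property ``slice in a $\Z_{(p)}$-homology 4-ball.'' To make the induction close, I would slightly generalize the statement to links in $\Z_{(p)}$-homology 3-spheres $Y$ bounding $\Z_{(p)}$-homology 4-balls $V$. This generality is needed because branched covers of $S^3$ along unknots are again $S^3$, but the intermediate 4-dimensional objects are only $\Z_{(p)}$-balls.

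Operation (i) is immediate. If $L=\bigsqcup L_i$ bounds disjoint disks $\bigsqcup \Delta_i$ in $V$, then any sublink bounds the corresponding subcollection of disks.

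Operation (ii) is the substance. Let $L_1$ be the unknotted component along which we branch, bounding the slice disk $\Delta_1\subset V$. First, $H_1(V\sm\Delta_1;\Z_{(p)})\cong\Z_{(p)}$ is generated by a meridian, by Alexander duality in the $\Z_{(p)}$-homology ball. Hence there is a well-defined $p^k$-fold cyclic branched cover $\widetilde V$ of $V$ along $\Delta_1$, restricting on the boundary to the $p^k$-fold branched cover of $S^3$ along $L_1$, which is $S^3$. Second, I must show that $\widetilde V$ is a $\Z_{(p)}$-homology ball. For this I would use the standard transfer/Smith-theory argument for $\Z_{p^k}$-actions. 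It is easiest to iterate $p$-fold covers, so it suffices to treat a $\Z_p$-action with fixed set $\Delta_1$. Smith theory gives that if $V$ is $\Z_p$-acyclic and the fixed set $\Delta_1$ is $\Z_p$-acyclic, then the branched cover is $\Z_p$-acyclic. Equivalently, one can use the Milnor-type exact sequence for the infinite cyclic cover of $V\sm\Delta_1$: multiplication by $t^{p}-1$ is an isomorphism on its $\Z_p$-homology, since $t-1$ is, and $(t-1)^p=t^p-1$ mod $p$. Then pass to $\Z_{(p)}$ by the universal coefficient theorem, noting that rational acyclicity holds too, because the $\Z_p$-acyclicity bounds the ranks. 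Third, the preimages of the other disks $\Delta_i$ ($i\ge2$) are disjoint locally flat (indeed smooth, in the smooth category) disks in $\widetilde V$. This is because each $\Delta_i$ is disjoint from the branch locus, so its preimage is a disjoint union of lifts, each a disk since disks are simply connected. Together with the lifted branch disk $\widetilde\Delta_1$, these bound the preimage link. Thus the $p$-covering link is slice in the $\Z_{(p)}$-homology ball $\widetilde V$.

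The main obstacle is the second step, the acyclicity of the branched cover. This requires careful bookkeeping of the homology of $V\sm\nu\Delta_1$ with $\Z_p[t^{\pm1}]$ coefficients, showing that $t-1$ acts invertibly mod $p$. It also requires the usual Mayer--Vietoris gluing of $\nu\widetilde\Delta_1$ back in.
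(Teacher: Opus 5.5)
Your proposal is correct and follows the standard argument of the cited source; the paper gives no proof of its own, only the reference to Cha--Kim. That argument is: sublinks are immediate, and for a branching step one takes the $p^k$-fold cyclic branched cover of the $\Z_{(p)}$-homology ball along the slice disk, checks by the Casson--Gordon/Milnor-sequence argument that it is again a $\Z_{(p)}$-homology ball, and lifts the remaining slice disks.

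Two small remarks. The classical Smith theorem runs the other way (acyclic total space implies acyclic fixed set). The implication you quote really comes from the Smith sequences together with $H_*(V,\Delta_1;\Z_p)=0$, so your $t-1$ argument is the cleaner justification. Also, no generalization to $\Z_{(p)}$-homology 3-spheres is needed: the boundaries stay $S^3$, and the hypothesis already has the same form as the conclusion.
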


\begin{figure}[ht]
  \includestandalone{fig-wh-covering-lemma}
  \caption{
    A covering link construction along a Whitehead-doubled unknotted component.
    The box {\setlength{\fboxrule}{1pt}\fbox{\footnotesize$-2$}} indicates $-2$ full twists.
  }
  \label{figure:covering-link-along-whitehead}
\end{figure}

We will use a covering link construction shown in \cref{figure:covering-link-along-whitehead};
the leftmost diagram depicts a link, one of whose components is the positive Whitehead double of an unknotted circle.
The box {\setlength{\fboxrule}{1pt}\fbox{\footnotesize$R$}} indicates an arbitrary tangle, and the number of parallel strands connected to {\setlength{\fboxrule}{1pt}\fbox{\footnotesize$R$}} is also arbitrary.

\begin{lemma}
  \label{lemma:covering-link-along-whitehead}
  In \cref{figure:covering-link-along-whitehead}, the rightmost link is a $p$-covering link of the leftmost link for every prime~$p$.
\end{lemma}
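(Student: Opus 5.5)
The plan is to exhibit the rightmost link of \cref{figure:covering-link-along-whitehead} as the outcome of one branched covering step followed by one sublink step. Let $W$ be the component of the leftmost link that is the positive Whitehead double of an unknotted circle. The unknotted circle is $W$ itself. Concretely, $W$ is a Whitehead double of an unknot, so it is unknotted precisely when drawn with zero twisting. I would first isotope the diagram so that $W$ is visibly a trivial circle, by undoing the clasp. The strands that previously passed through the Whitehead-double companion solid torus then wind around $W$ in a specific pattern. The essential point is that the companion is the unknot, so the positive clasp can be ``unhooked'': one of the two clasp strands is pulled around. This converts the picture into an unknotted $W$ bounding an obvious disk, which the parallel strands feeding into \fbox{\footnotesize$R$} pierce twice each, once with each sign. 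The unhooking should introduce the $-2$ full twists recorded in the figure.

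Next, I would take the $p^k$-fold cyclic branched cover of $S^3$ along $W$. Since $W$ bounds an embedded disk, the cover is again $S^3$. It is obtained by cutting along the disk and gluing $p^k$ copies cyclically. Each arc of the other components that crosses the disk lifts to $p^k$ arcs. The tangle \fbox{\footnotesize$R$} lies in the complement of a ball away from the disk, so it lifts to $p^k$ disjoint copies. I would then pass to a sublink, keeping the preimage of $W$ together with a single lift of the rest. Equivalently, I would keep one copy of \fbox{\footnotesize$R$} and the strands adjacent to it, discarding the remaining lifted components. The aim is to show that this retained sublink matches the rightmost diagram for every $p$ and $k$.

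The delicate step, which I expect to be the main obstacle, is checking that the parity of the two piercings does not depend on $p$. Each strand meets the disk twice with opposite signs, so the linking number with $W$ is $0$. As a result the lifts of a single strand close up within one sheet rather than connecting across sheets. It follows that the local picture after discarding the other sheets does not depend on the covering degree. I would verify this by tracking a single band of parallel strands through one fundamental domain. I would also track the framing and twisting: the $-2$ full twists arise from the clasp and survive unchanged in one lift. Since both operations (i) and (ii) are allowed in the definition of a $p$-covering link, composing them gives the lemma for every prime $p$.
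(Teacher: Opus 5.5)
Your outline (make $W$ a round unknot, take a cyclic branched cover along it, keep a single lift of the other components) is the paper's proof, which uses the $p$-fold cover. However, the argument you give for the step you call delicate is wrong, and it misses the actual mechanism. Making $W$ round does not ``unhook'' the clasp; it transfers it to the band of parallel strands. Cut $S^3$ along the disk $\Delta$ bounded by $W$ to get a ball $B$ with $\partial B=\Delta^+\cup\Delta^-$. Since the band meets $\Delta$ twice with opposite signs, the remaining components become two pieces. One is a half $\alpha$ containing $R$, with all its endpoints on $\Delta^+$. The other is a half $\beta$ with all its endpoints on $\Delta^-$. These two halves are hooked together in $B$. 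This is forced: if $\beta$ could be isotoped into $\Delta^-$ in the complement of $\alpha$, pushing it across $\Delta$ would split $W$ off the core of the band. But $W$ and that core form a Whitehead link, which is non-split. In the symmetric picture of the Whitehead link, $\alpha$ and $\beta$ are two U-turns of the band forming a clasp.

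Now build the $m$-fold cover, $m=p^k$, from copies $B_1,\dots,B_m$ of $B$ with $\Delta^+_i$ glued to $\Delta^-_{i+1}$ (indices mod $m$). The lifts of the remaining link are $\alpha_i\cup\beta_{i+1}$. So your claim that vanishing linking number makes the lifts ``close up within one sheet'' is false. Vanishing linking number only guarantees that each component has $m$ closed lifts. Each lift still passes twice through a lifted disk and occupies two consecutive sheets. The lift $\alpha_i\cup\beta_{i+1}$ is hooked to its neighbours $\alpha_{i-1}\cup\beta_i$ (in $B_i$) and $\alpha_{i+1}\cup\beta_{i+2}$ (in $B_{i+1}$). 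Closing a band up inside one fundamental domain, as you propose to check, would reproduce $\alpha\cup\beta$ with its clasp, not the rightmost diagram.

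The correct reason is that every clasp on $\alpha_1\cup\beta_2$ involves another lift. Once those lifts are discarded, $\beta_2$ is unobstructed and can be pushed back across the lifted disk into $B_1$. This exhibits $\alpha_1\cup\beta_2$ as $R$ closed up by an unclasped band carrying the $-2$ full twists, visibly independent of $m$. The parity of the piercings plays no role.

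Finally, do not keep the preimage of $W$. The rightmost link lies in the preimage of the remaining components alone; in the proof of \cref{proposition:Wh(L_n(J))-non-smoothly-slice}, each application lowers the number of components by one. The same pushing argument shows that $\widetilde W$ would only contribute an extra split unknotted component.
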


\begin{proof}
  Isotope the leftmost diagram to the middle diagram in \cref{figure:covering-link-along-whitehead}, by transforming the Whitehead-doubled component to an unknotted circle.
  Along that component, take the $p$-fold cyclic branched cover.
  The preimage of the remaining components is a link in the branched cover.
  This link contains the link depicted in the rightmost diagram in \cref{figure:covering-link-along-whitehead} as a sublink.
\end{proof}

\begin{proof}[Proof of \cref{proposition:Wh(L_n(J))-non-smoothly-slice}]
  For a knot $K$, let $P(K)$ be the satellite knot shown in \cref{figure:iterated-covering-pattern}.
  Viewing the pattern $P$ as a satellite operator, define the iterated operator by $P^{k+1}(K) = P(P^k(K))$.

  \begin{figure}[ht]
    \includestandalone{fig-iterated-covering-pattern}
    \caption{A satellite knot $P(K)$.}
    \label{figure:iterated-covering-pattern}
  \end{figure}

  We claim that the knot $\Wh^+(P^{n-1}(J))$ is a $p$-covering link of $\Wh^+(L_n(J))$ for every prime~$p$.
  To show this, start with the second diagram of $\Wh^+(L_n(J))$ in \cref{figure:L_n(J)}.
  Apply \cref{lemma:covering-link-along-whitehead}, taking the cover branched along (the Whitehead double of) the component labeled by~$*$, to obtain a $p$-covering link depicted in the last diagram in~\cref{figure:L_n(J)}.
  Observe that it is the link $\Wh^+(L_{n-1}(P(J)))$.
  Repeated application of this construction shows that the 2-component link $\Wh^+(L_2(P^{n-2}(J)))$ is a $p$-covering link of $\Wh^+(L_n(J))$.
  Note that $\Wh^+(L_2(P^{n-2}(J)))$ is the 2-component case of the first diagram in \cref{figure:L_n(J)} with $J$ replaced by $P^{n-2}(J)$.
  Apply \cref{lemma:covering-link-along-whitehead} to this link to obtain the knot~$\Wh^+(P^{n-1}(J))$ as a $p$-covering link.
  This verifies the claim.

  We claim that $\Wh^+(P^{n-1}(J))$ is not smoothly slice in any rational homology 4-ball.
  To verify this, we use the fact that if a knot $K$ satisfies $\tau(K)>0$, then $\tau(P(K)) \ge \tau(\Wh^+(K)) > 0$.
  The first inequality holds by \cite[Corollary~1.5]{Ozsvath-Szabo:2003-1}, since $\Wh^+(K)$ is obtained by changing one positive crossing of~$P(K)$.
  The second inequality holds by \cite[Theorem~1.5]{Hedden:2007-1}.
  Apply these inequalities inductively, with our hypothesis $\tau(J) > 0$ as the initial condition, to conclude that $\tau(\Wh^+(P^{n-1}(J))) > 0$ for all~$n$.
  The claim follows from this, by~\cite[Theorem~1.1]{Ozsvath-Szabo:2003-1}.

  If the link $\Wh^+(L_n(J))$ were smoothly slice in a rational homology 4-ball $W$, then since $W$ is a $\Z_{(p)}$-homology ball for some prime $p$, the $p$-covering knot $\Wh^+(P^{n-1}(J))$ of $\Wh^+(L_n(J))$ would be smoothly slice in a $\Z_{(p)}$-homology ball by \cref{theorem:covering-link-calculus}.
  This contradicts the claim in the previous paragraph.
\end{proof}

\section{More examples}
\label{section:more-examples}

\subsection{Smooth examples from aspherical 4-manifolds}
\label{subsubsection:examples-from-aspherical}

There are many examples of smooth aspherical 4-manifolds.
The simplest examples come from surfaces:
the product of two compact orientable surfaces of genus${}\ge 1$ is a smooth aspherical 4-manifold.
Similarly, $X = Y\times S^1$ is an aspherical 4-manifold if $Y$ is a compact aspherical 3-manifold, in particular, when $Y$ is a Haken 3-manifold with incompressible boundary.
More generally, surface bundles over surfaces are aspherical 4-manifolds when the surfaces have genus${}\ge 1$;
similarly for Haken 3-manifold bundles over the circle.
Another example is the family of Haken 4-manifolds in the sense of Foozwell and Rubinstein, which are higher-dimensional analogues of Haken 3-manifolds~\cite{Foozwell-Rubinstein:2011-1}.
These are aspherical 4-manifolds.
Compact negatively curved 4-manifolds with locally convex boundary are also examples.
This includes, in particular, compact hyperbolic 4-manifolds with totally geodesic boundary.

Many of the smooth aspherical 4-manifolds $X$ described above are non-contractible and have nonempty boundary.
For each such $X$ and each $k\ge 1$, the smooth 4-manifold $M=X\bcsum(\bcsum^k S^2\times D^2)$ has the following property by \cref{lemma:aspherical-bcsum-S^2-times-D^2}:
$\pi_0\Diffpi(M)$ and $\pi_0\Diffh(M)$ have a common $\Z^\infty$ retract which maps isomorphically onto a common $\Z^\infty$ retract of $\pi_0\Homeopi(M)$ and $\pi_0\Homeoh(M)$.

\subsection{Smooth examples with nontrivial Torelli quotients}

View $D^2$ as the unit disk in $\C$, and let
\[
  D^2 \tildetimes \RP^2 = (D^2 \times S^2) / (z,x) \sim (\bar z, -x).
\]
This is a twisted $D^2$-bundle over $\RP^2$ with orientable total space.

\begin{example}
  \label{example:nonaspherical-with-2-torsion-in-pi_1}
  Let $M = (\bcsum^n D^2 \tildetimes \RP^2) \bcsum (\bcsum^k S^2\times D^2)$ where $n\ge 2$, $k\ge 1$.
  We will show the following assertions:
  \begin{enumerate}[label=(\roman*)]
    \item \label{item:nonaspherical-Z^infty-factors}
    The groups $\pi_0 \Diffpi(M)$ and $\pi_0 \Diffh(M)$ have a common $\Z^\infty$ direct factor, which maps isomorphically to a common $\Z^\infty$ direct factor of $\pi_0 \Homeopi(M)$ and $\pi_0 \Homeoh(M)$.
    Consequently, none of $\pi_0 \Diffpi(M)$, $\pi_0 \Diffh(M)$, $\pi_0 \Homeopi(M)$ and $\pi_0 \Homeoh(M)$ is finitely generated.
    \item \label{item:nonaspherical-torelli}
    The image of $\cI^s(M)/\pi_0 \Diffh(M)$ in $\cI(M)/\pi_0 \Homeoh(M)$ has infinite Pr\"ufer rank.
    Consequently, both $\cI^s(M)/\pi_0 \Diffh(M)$ and $\cI(M)/\pi_0 \Homeoh(M)$ have infinite Pr\"ufer rank.
  \end{enumerate}

  Note that $(\bcsum^n D^2 \tildetimes \RP^2)$ is not aspherical, and thus \cref{lemma:aspherical-bcsum-S^2-times-D^2} does not yield~\cref{item:nonaspherical-Z^infty-factors}.

  To prove the assertions, we first compute $H_3(\tilde M)$ and Wall's self-intersection~$\mu_2$.
  Since $D^2 \tildetimes \RP^2 \simeq \RP^2$ and $S^2\times D^2 \simeq S^2$, the universal cover $\widetilde M$ has the homotopy type of a wedge of 2-spheres.
  Thus $H_3(\widetilde M)=0$.
  Note that the generator of $\pi_2(S^2\times D^2)=\Z$ is represented by an embedded 2-sphere with trivial normal bundle.
  In addition, the generator of $\pi_2(D^2 \tildetimes \RP^2) = \Z$ is represented by the embedding $S^2 \hookrightarrow D^2 \tildetimes \RP^2$ defined by $x \mapsto [\sqrt{-1}/2,x]$, which has a trivial normal bundle.
  (It lifts to $S^2 \hookrightarrow D^2\times S^2$, $x \mapsto (\sqrt{-1}/2,x)$.)
  Also, these 2-sphere representatives are disjoint.
  From this, it follows that $\mu_2$ vanishes on $\pi_2(M)$.

  Since $\pi_1(M)$ is the free product of $n$ copies of $\Z_2$ with $n\ge 2$, we have $|T_M|=|\pi_1(M)|=\infty$.
  Note that the neat disk $*\times D^2$ in an $S^2\times D^2$ summand of $M$ has a geometric dual in~$\partial M$.
  Therefore, the assertions \cref{item:nonaspherical-Z^infty-factors} and \cref{item:nonaspherical-torelli} follow from \cref{theorem:intro-retract-direct-factor} ($r=\infty$) and \cref{corollary:torelli-modulo-homotopy-prufer-rank} ($s=\infty$), respectively.
\end{example}

The following two examples are variations on \cref{example:nonaspherical-with-2-torsion-in-pi_1}.
We omit their proofs, since the argument of \cref{example:nonaspherical-with-2-torsion-in-pi_1} applies.

\begin{example}
  \label{example:nonaspherical-one-D2-times-RP2-summand}
  When $n=1$ in \cref{example:nonaspherical-with-2-torsion-in-pi_1}, we have $\pi_1(M) = \Z_2$, $r=s=1$, and thus the above argument shows the following:
  \begin{enumerate}[label=(\roman*)]
    \item The groups $\pi_0 \Diffpi(M)$, $\pi_0 \Diffh(M)$, $\pi_0 \Homeopi(M)$ and $\pi_0 \Homeoh(M)$ have a central element of infinite order.
    \item The groups $\cI^s(M)/\pi_0 \Diffh(M)$ and $\cI(M)/\pi_0 \Homeoh(M)$ have nonzero Pr\"ufer rank.
  \end{enumerate}
\end{example}

\begin{example}
  \label{example:nonaspherical-X-D2-times-RP2-summands}
  Let $M = X \bcsum (D^2 \tildetimes \RP^2) \bcsum (\bcsum^k S^2\times D^2)$ where $X$ is an arbitrary non-contractible smooth aspherical 4-manifold with nonempty boundary.
  Then \cref{example:nonaspherical-with-2-torsion-in-pi_1}~\cref{item:nonaspherical-torelli} holds.
  \Cref{example:nonaspherical-with-2-torsion-in-pi_1}~\cref{item:nonaspherical-Z^infty-factors} holds with ``direct factor'' replaced by ``retract.''
\end{example}

\subsection{Non-smoothable examples with a given homotopy type}
\label{subsection:non-smoothable-given-homotopy-type}

In this subsection, we discuss more examples of non-smoothable 4-manifolds $M$ for which our method detects interesting structures in the mapping class groups.
Recall that our first non-smoothable examples $M_k(L)$ were obtained from freely slice links $L$ in \cref{section:non-smoothable-4-manifolds}.
These 4-manifolds have $\ks(M_k(L))=0$, as stated in \cref{theorem:examples-from-freely-slice-links}.

\subsubsection{Examples with nonvanishing Kirby--Siebenmann invariant}

In the examples below, we use a construction to convert a 4-manifold $M$ with nonempty boundary into a 4-manifold $M'$ having the homotopy type of $M$ and satisfying $\ks(M')=\ks(M)+1$.

\begin{example}
  \label{example:M_k(L)-nontrivial-ks}
  For every freely slice link $L$, we construct a non-smoothable 4-manifold $M'$ homotopy equivalent to the 4-manifold $M_k(L)$ defined in~\cref{equation:M_k(L)} at the beginning of \cref{section:non-smoothable-4-manifolds}, such that $\ks(M')=1$ and $\pi_0 \Homeopi(M')$ and $\pi_0\Homeoh(M')$ have a common $\Z^\infty$ direct factor.

  Choose a homology 3-sphere $\Sigma$ with nonvanishing Rokhlin invariant $\mu(\Sigma) \in \Z_2$.
  By Freedman's work~\cite{Freedman:1982-1}, $\Sigma$ bounds a contractible 4-manifold~$W_1$.
  It is known that $\ks(W_1) = \mu(\Sigma) \in \Z_2$~\cite[10.2B]{Freedman-Quinn:1990-1}.
  Thus we have $\ks(W_1)=1$.

  Now, for a freely slice link $L$, let $M' = W_1 \bcsum M_k(L)$, $k\ge 1$.
  By the additivity of $\ks$, we have $\ks(M') = \ks(W_1) + \ks(M_k(L)) = 1$.
  Consequently, the 4-manifold $M'$ is non-smoothable.
  Since $W_1$ is contractible, $M'$ is homotopy equivalent to~$M_k(L)$.
  Since $M' = (W_1 \bcsum X(L)) \bcsum (\bcsum^k S^2\times D^2)$ and $W_1 \bcsum X(L)$ is aspherical, \cref{lemma:aspherical-bcsum-S^2-times-D^2} applies to $M'$ to yield the promised properties.
\end{example}

\begin{proof}[Proof of \cref{theorem:intro-Z^infty-factor-nonsmoothable}, $\epsilon=1$ case]
  By varying $k$ (or the freely slice link $L$) and by varying the homology 3-sphere $\Sigma$ in \cref{example:M_k(L)-nontrivial-ks}, we obtain infinitely many 4-manifolds $M'$ with $\ks(M')=1$, which satisfy the conclusion of \cref{theorem:intro-Z^infty-factor-nonsmoothable} for $\epsilon=1$.
\end{proof}

The above trick also applies to other smooth examples described above, to produce corresponding non-smoothable examples $M'$ with $\ks(M')=1$.
We describe specific cases below.

\begin{example}
  \label{example:nonsmoothable-with-2-torsion-in-pi_1}
  As in \cref{example:nonaspherical-with-2-torsion-in-pi_1,,example:nonaspherical-one-D2-times-RP2-summand,example:nonaspherical-X-D2-times-RP2-summands}, consider the 4-manifolds $M = (\bcsum^n D^2 \tildetimes \RP^2) \bcsum (\bcsum^k S^2\times D^2)$ and $M = X \bcsum (D^2 \tildetimes \RP^2) \bcsum (\bcsum^k S^2\times D^2)$, with $X$ smooth and aspherical and $n$,~$k\ge 1$.
  As in \cref{example:M_k(L)-nontrivial-ks}, choose a contractible 4-manifold $W_1$ with $\ks(W_1)=1$.
  Let $M'=W_1 \bcsum M$.
  Then $M'\simeq M$, $\ks(M')=1$ and $M'$ is non-smoothable, by the arguments in \cref{example:M_k(L)-nontrivial-ks}.
  Furthermore, by the arguments in \cref{example:nonaspherical-with-2-torsion-in-pi_1}, \cref{theorem:intro-retract-direct-factor} and \cref{corollary:torelli-modulo-homotopy-prufer-rank}  yield the following:
  \begin{enumerate}[label=(\roman*)]
    \item For $M' = W_1\bcsum (\bcsum^n D^2 \tildetimes \RP^2) \bcsum (\bcsum^k S^2\times D^2)$ with $n\ge 2$ and $k\ge 1$, $\pi_0\Homeopi(M')$ and $\pi_0\Homeoh(M')$ have a common $\Z^\infty$ direct factor.
    For $M' = W_1 \bcsum X \bcsum (D^2 \tildetimes \RP^2) \bcsum (\bcsum^k S^2\times D^2)$ with $X$ non-contractible and aspherical, $\pi_0\Homeopi(M')$ and $\pi_0\Homeoh(M')$ have a common $\Z^\infty$ retract.
    \item For $M' = W_1\bcsum (D^2 \tildetimes \RP^2) \bcsum (\bcsum^k S^2\times D^2)$, $\cI(M')/\pi_0\Homeoh(M')$ has Pr\"ufer rank${}\ge 1$, and $\pi_0\Homeopi(M')$ and $\pi_0 \Homeoh(M')$ have a central element of infinite order.
  \end{enumerate}
\end{example}

\subsubsection{Examples with vanishing Kirby--Siebenmann invariant}
\label{subsubsection:examples-with-ks-zero}

The above construction can be modified to convert the previously described smooth 4-manifold examples into non-smoothable 4-manifolds $M'$ with $\ks(M')=0$ that admit analogous structures in mapping class groups detected by our invariant.
We describe it below.

\begin{example}
  \label{example:nonsmoothable-with-ks-zero}
  We first claim that there exist infinitely many non-smoothable contractible 4-manifolds $W_0$ with $\ks(W_0)=0$.
  To verify this, take a homology 3-sphere $\Sigma$ with nonvanishing Rokhlin invariant $\mu(\Sigma)$ and a contractible 4-manifold $W_1$ bounded by~$\Sigma$, as in \cref{example:M_k(L)-nontrivial-ks}.
  Let $W_0=W_1\bcsum W_1$.
  We have $\partial W_0 = \Sigma \csum \Sigma$.
  By Manolescu's work \cite[Corollary 1.2]{Manolescu:2016-1}, $\mu(\Sigma)\ne 0$ implies that $\Sigma \csum \Sigma$ does not bound any smooth integral homology 4-ball.
  It follows that $W_0$ is non-smoothable.
  We also have $\ks(W_0)= 2\ks(W_1) = 0$ by additivity.
  By varying $\Sigma$, we obtain infinitely many pairwise non-homeomorphic~$W_0$.

  Recall that $M = (\bcsum^n D^2 \tildetimes \RP^2) \bcsum (\bcsum^k S^2\times D^2)$ is the smooth 4-manifold in \cref{example:nonaspherical-with-2-torsion-in-pi_1,example:nonaspherical-one-D2-times-RP2-summand}.
  For brevity, let $N = (\bcsum^n D^2 \tildetimes \RP^2)$.
  Let $M' = W_0 \bcsum M = W_0 \bcsum N \bcsum (\bcsum^k S^2\times D^2)$.

  We claim that $M'$ is non-smoothable.

  To prove this, suppose $M'$ is smoothable.
  Then $W_0\bcsum N$ is smoothable, since it is obtained by attaching handles to~$M'$.
  View $W_0\bcsum N$ as $(W_0\sqcup N)\cup H^1$ where $H^1$ is a 1-handle that forms the boundary connected sum, and attach a 3-handle $H^3$ along the belt sphere of $H^1$, to obtain a smoothable 4-manifold~$V$.
  Note that $V = (W_0 \bigsqcup N)\cup H^1\cup H^3$ is homeomorphic to $W_0 \csum N$.
  Let $X = V \mathbin{\cup_{\partial N}} (-N)$, a smoothable 4-manifold with $\partial X = \partial W_0 = \Sigma \csum \Sigma$.
  Note that $X$ is homeomorphic to $W_0 \csum D(N)$ where $D(N)$ is the double of~$N$.

  Since the zero section $\RP^2 = 0 \times \RP^2$ is a deformation retract of $D^2 \tildetimes \RP^2$, $\tilde H_*(D^2 \tildetimes \RP^2;\Q)=0$ and thus $\tilde H_*(N;\Q)=0$.
  In addition, $N$ is spin.
  To see this, observe that $(S^2\times\R)\times\R \cong (\R^3\sm 0)\times \R \subset \R^4$;
  using the $\R^4$ coordinates, we have $\R^2 \tildetimes \RP^2 \cong ((\R^3\sm 0)\times \R)/{x\sim -x} \cong (\RP^3 \sm \{\mathrm{pt}\}) \times (0,\infty)$.
  It follows that $\R^2 \tildetimes \RP^2$ is parallelizable, since $\RP^3$ is parallelizable.
  Therefore $D^2 \tildetimes \RP^2$ is spin, and thus $N$ is spin.

  From the above, it follows that the smoothable 4-manifold $X=W_0 \csum D(N)$ is a spin rational homology 4-ball bounded by $\Sigma \csum \Sigma$.
  Attach a 3-handle along the connected sum sphere in $\partial X=\Sigma\csum\Sigma$, to obtain a smooth spin cobordism from $-\Sigma$ to $\Sigma$ with $b_2=0$.
  By~\cite[Theorem~1.1 and Corollary~3.10]{Manolescu:2016-1}, Manolescu's $\beta$-invariant satisfies $\beta(\Sigma)=\beta(-\Sigma)=-\beta(\Sigma)$, and thus $\beta(\Sigma)=0$.
  However, $\beta(\Sigma)\equiv\mu(\Sigma)=1 \pmod 2$.
  This proves that $M'$ is non-smoothable.

  We also have $M'\simeq M$ and $\ks(M')=0$.
  \Cref{theorem:intro-retract-direct-factor} and \cref{corollary:torelli-modulo-homotopy-prufer-rank} give the following:

  \begin{enumerate}[label=(\roman*)]
    \item For $n\ge 2$, $\rk_{\Pr}(\cI(M')/\pi_0\Homeoh(M'))=\infty$, and
    the groups $\pi_0 \Homeopi(M')$ and $\pi_0 \Homeoh(M')$ have a common $\Z^\infty$ direct factor whose elements are stably smoothable.
    \item For $n=1$, $\rk_{\Pr}(\cI(M')/\pi_0\Homeoh(M')) \ge 1$, and the groups $\pi_0 \Homeopi(M')$ and $\pi_0 \Homeoh(M')$ have a central, stably smoothable element of infinite order.
  \end{enumerate}
\end{example}

\section{Structures in mapping class groups of closed 4-manifolds}
\label{section:closed-4-manifolds}

In this section, we study the mapping class groups of closed 4-manifolds.
In \cref{subsection:invariant-on-closed-4-manifold}, we define an invariant of homeomorphisms using the topological Freedman--Quinn invariant of spheres, show that it induces a homomorphism (\cref{theorem:homeomorphisms-tDeltaGR}), and prove a realization result (\cref{theorem:realization-tDeltaGR}).
The method we use is similar to that used to develop the invariant $\Delta_D$ which we defined using the Dax invariant of neat disks.
In \cref{subsection:structures-in-torelli-closed}, we show that our invariant detects structures in the Torelli group of closed 4-manifolds.
In \cref{subsection:applications-closed}, we discuss various applications.
In \cref{subsection:torelli-complex-surfaces}, we apply our results to compact complex surfaces.

In this section, homotopy of homeomorphisms of $M$ means proper homotopy rel~$\partial M$.

\subsection{Invariants of homeomorphisms of closed 4-manifolds}
\label{subsection:invariant-on-closed-4-manifold}

Let $M$ be a 4-manifold.
We allow $M$ to have either empty or nonempty boundary.
In particular, $M$ may be closed.
Suppose that $G$ is a framed 2-sphere embedded in the interior of~$M$.
Fix a basepoint $z_0 \in G \subset M$.
Fix an open tubular neighborhood $\nu G$ of~$G$.
Suppose that there is a 2-sphere $R$ embedded in $M$ such that $R\cap \nu G$ is a normal fiber disk at~$z_0$.
We say that such a 2-sphere $R$ is \emph{geometrically dual to $G$ at~$z_0$}.

Let
\[
  H_{G,R}(M) = \{f\in\Homeo(M) \mid f|_{\nu G} = \id_{\nu G},\; f(R) \simeq R,\; f_* = \id \text{~on~} \pi_1(M)\}.
\]
Note that the homotopy $f(R)\simeq R$ in the above definition is not required to be rel~$\nu G$.
It is readily seen that $H_{G,R}(M) \subset \Homeo(M)$ is a subgroup, and the set of (free) homotopy classes
\[
  H_{G,R}(M)/{\simeq} = H_{G,R}(M)/(H_{G,R}(M)\cap \Homeoh(M))
\]
is a subgroup of the quotient $\Homeo(M)/{\simeq} = \pi_0 \Homeo(M)/\pi_0 \Homeoh(M)$.

For homotopic embedded spheres $R_0$ and $R_1$ in $M$ which are geometrically dual to~$G$, the topological Freedman--Quinn invariant $\FQ(R_0,R_1) \in \Z_2[T_M]/\mu(\pi_3(M))$ is defined, following \cite[Section~4]{Schneiderman-Teichner:2022-1} and \cite[Appendix]{Cha-Kim:2023-1}.
See also~\cite{Klug-Miller:2021-1,Klug-Miller:2022-1}.
Here, $\mu\colon \pi_3(M) \to \Z_2[T_M]$ is the self-intersection homomorphism described in \cref{subsubsection:freedman-quinn-top-disks}.
We briefly review the Freedman--Quinn invariant $\FQ(R_0,R_1)$ in \cref{subsubsection:fq-spheres} below.

Abusing notation, in view of \cref{remark:D_M/Sigma_M-is-Z_2-vector-space}, we denote $\Z_2[T_M]/\mu(\pi_3(M))$ by $D_M/\Sigma_M$ even when $M$ has empty boundary, although $D_M$ in~\cref{equation:D_M} is only defined when $M$ has nonempty boundary.

\begin{definition}
  \label{definition:DeltaGR}
  For $f\in H_{G,R}(M)$, define $\DeltaGR(f) = \FQ(R,f(R)) \in D_M/\Sigma_M$.
\end{definition}

The goal of this subsection is to prove the following results.

\begin{theorem}
  \label{theorem:homeomorphisms-tDeltaGR}
  The invariant $\DeltaGR$ induces a homomorphism $\tDeltaGR\colon H_{G,R}(M)/{\simeq} \to D_M/\Sigma_M$.
\end{theorem}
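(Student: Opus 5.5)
The proof will mirror the disk case of \cref{section:invariant-of-homeo}, with the Freedman--Quinn invariant of spheres in place of the Dax invariant. Three facts are needed: (a) $\DeltaGR$ is additive on $H_{G,R}(M)$; (b) $\DeltaGR$ vanishes on $H_{G,R}(M)\cap\Homeoh(M)$; (c) $\DeltaGR(f)$ depends only on the homotopy class of $f$. Since $H_{G,R}(M)$ is a group, (c) follows from (a) and (b): if $f\simeq f'$, then $f'^{-1}f\in H_{G,R}(M)\cap \Homeoh(M)$, so
\[
\DeltaGR(f)=\DeltaGR(f')+\DeltaGR(f'^{-1}f)=\DeltaGR(f').
\]
This yields a well-defined homomorphism $\tDeltaGR$ on $H_{G,R}(M)/{\simeq}$.

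For (a), I will use the standard properties of the sphere $\FQ$ invariant for spheres sharing the common geometric dual $G$: additivity $\FQ(R_0,R_1)+\FQ(R_1,R_2)=\FQ(R_0,R_2)$, which holds by concatenating homotopies, and homotopy/isotopy invariance. All spheres $f(R)$ with $f\in H_{G,R}(M)$ are dual to $G$ at $z_0$, because $f=\id$ on $\nu G$. Then
\[
\FQ(R,fg(R))=\FQ(R,f(R))+\FQ(f(R),fg(R)),
\]
and it remains to show $\FQ(f(R_0),f(R_1))=\FQ(R_0,R_1)$ whenever $f_*=\id$ on $\pi_1(M)$. Here the argument is easier than in \cref{lemma:top-dax-pi1-trivial-homeomorphism}, since $\FQ$ needs no almost smooth structure. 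Choose a homotopy $H\colon R_0\simeq R_1$ and perturb its track $\widehat H$ in $M\times I^2$ to a topological generic immersion. Then $(f\times\id)\circ\widehat H$ is again a generic immersion, with double points in bijection with those of $\widehat H$. The signs are preserved because $f$ is orientation preserving, and the group elements are preserved because $f_*=\id$ on $\pi_1$. The remaining point is that the basing (whiskers through $z_0\in G$) is respected, which holds since $f$ fixes $\nu G$. Finally, $f$ preserves $\mu(\pi_3(M))$, so the indeterminacy is unchanged.

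For (b), I will adapt \cref{lemma:fq-homotopy-trivial-homeomorphism}. Take a homotopy $H\colon M\times I\to M$ from $\id$ to $f^{-1}$, make it transverse to a normal bundle of $R$, and let $W$ be the component of $H^{-1}(R)$ bounded by $R\times 0\cup f(R)\times 1$. Here $W$ is a compact 3-manifold whose $\pi_1$ maps trivially to $M\times I$, since $H(W)\subset R$ is simply connected. Surger $W$ ambiently in $M\times I^2$ along null-homotopic framed circles, using the $V^{\TOP}_{4,2}$ framing argument, to obtain an embedded $S^2\times I$. This embedding is homotopic to the track of a homotopy $R\simeq f(R)$, so its self-intersection $\mu$ vanishes and $\FQ(R,f(R))=0$.

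The main obstacle is a subtlety in (b). The homotopy $H$ is free rather than rel $\nu G$, so the resulting homotopy $R\simeq f(R)$ may move the basepoint $z_0$ around a loop. In that case the group labels of the double points could be conjugated, or the definition of $\FQ$ for spheres might require homotopies fixing the dual. I expect to handle this in one of two ways. The first is to check that $\FQ$ of spheres with a common dual is insensitive to free homotopy: its value in $\Z_2[T_M]/\mu(\pi_3)$ is conjugation-invariant once one uses the dual $G$ to correct the basing, following Schneiderman--Teichner. The second is to first modify $H$ near $\nu G$ so that it is stationary there, which the dual sphere permits up to the $\pi_1$ action. I would try the conjugation-invariance approach first.
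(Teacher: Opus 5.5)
Your proposal is correct and follows the paper's proof. The paper proves additivity by concatenating homotopies and using invariance of $\FQ$ under a homeomorphism that fixes $z_0$ and acts trivially on $\pi_1$, proves vanishing on homotopy-trivial elements by rerunning the argument of \cref{lemma:fq-homotopy-trivial-homeomorphism}, and then passes to the quotient. The basing issue you flag is already handled in two ways: the paper cites \cite[Lemma~2.1]{Schneiderman-Teichner:2022-1} for based homotopies between spheres with the common dual $G$ (the dual forces the basepoint track to be null-homotopic, which is also what makes your concatenation step in (a) additive), and it defines $\FQ(R_0,R_1)$ as $\mu$ of any map $S^2\times I\to M\times I^2$ with the prescribed ends, so your embedded $S^2\times I$ gives $\FQ(R,f(R))=0$ directly.
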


\begin{theorem}
  \label{theorem:realization-tDeltaGR}
  The restriction of $\tDeltaGR\colon H_{G,R}(M)/{\simeq} \to D_M/\Sigma_M$ to the subgroup $(I(M)\cap H_{G,R}(M))/{\simeq}$ is onto $D_M/\Sigma_M$.
  In addition, if $M$ is smooth and $G$ is smoothly embedded, then the composition
  \[
    (I^s(M) \cap H_{G,R}(M))/{\simeq} \hookrightarrow (I(M)\cap H_{G,R}(M))/{\simeq} \xrightarrow{\smash{\tDeltaGR}} D_M/\Sigma_M
  \]
  is surjective;
  if $M$ is smooth but $G$ is allowed to be non-smooth, then for every $v\in D_M/\Sigma_M$, there is a stably smoothable homeomorphism $f\in I(M)\cap H_{G,R}(M)$ such that $\tDeltaGR([f])=v$.
\end{theorem}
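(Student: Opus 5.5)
The plan is to reduce to the disk case by cutting out $\nu G$. Let $M_0 = M\sm \nu G$, a 4-manifold with nonempty boundary $\partial M_0 \supset \partial\overline{\nu G}\cong S^2\times S^1$ (using the framing of $G$). The sphere $R$ meets $\nu G$ in a normal fiber disk at $z_0$, so $D\coloneqq R\cap M_0$ is a neat disk in $M_0$. Its boundary circle $z_0$-fiber boundary meets a sphere $G'=S^2\times\{\mathrm{pt}\}\subset\partial\overline{\nu G}$ exactly once, so $D$ has a geometric dual in $\partial M_0$. Since $G$ has a geometric dual $R$, the inclusion $M_0\to M$ induces an isomorphism on $\pi_1$; also $D_{M_0}/\Sigma_{M_0}\cong \Z_2[T_{M_0}]/\mu(\pi_3(M_0))$ surjects onto $\Z_2[T_M]/\mu(\pi_3(M))=D_M/\Sigma_M$, as $T_{M_0}=T_M$ and $\mu$ is natural.

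Given $v\in D_M/\Sigma_M$, I would lift it to $w\in D_{M_0}$ and apply \cref{theorem:realization-DeltaD} to $(M_0,D)$. This gives an arc pushing map $f_0\in I(M_0)\cap H_D(M_0)$ with $\Delta_D(f_0)=w$, fixing $\partial M_0$ pointwise. Extend $f_0$ by the identity on $\nu G$ to get $f\in\Homeo(M)$. Then $f|_{\nu G}=\id$ and $f_*=\id$ on $\pi_1(M)\cong\pi_1(M_0)$. Moreover $f(R)=f_0(D)\cup(R\cap\nu G)$ is homotopic to $R$, because $f_0(D)\simeq D$ rel $\partial$. Hence $f\in H_{G,R}(M)$.

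For homology triviality, I would argue directly as in \cref{lemma:arc-pushing-torelli}. Capping $M_0$ with the 3-handle along $G'$ yields $M_0\cup H^3$, which is $M$ minus a 4-ball (or a homotopy-equivalent piece). On it $f_0\cup\id$ is isotopic to the identity rel boundary by the arc-pushing construction, so $f$ is isotopic to $\id$ in $M$. This gives $f\in I(M)$, and I will recheck this against the possibility that the 3-handle picture only recovers $M\sm B^4$.

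To compute the invariant, observe that by definition $\FQ_M(R,f(R))$ can be computed from a homotopy rel $\nu G$ between the disks $D$ and $f_0(D)$ in $M_0\subset M$. Its self-intersection count $\mu$ in $M\times I^2$ equals the image of $\mu$ computed in $M_0$. So \cref{equation:dax-reduces-to-fq} gives $\tDeltaGR([f])=\overline q(w)\mapsto v$.

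The main obstacle is the compatibility of the sphere $\FQ$ (using free homotopies) with the disk $\FQ$ (using homotopies rel boundary). The differences are accounted for by $\mu(\pi_3(M))$ and by the $\pi_2$ or $G$-sweeping ambiguity, which the quotient absorbs. I would handle this by quoting the well-definedness of $\FQ(R_0,R_1)$ from \cref{subsubsection:fq-spheres}.

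For the smooth case with $G$ smooth, $M_0$ is smooth, and the smooth part of \cref{theorem:realization-DeltaD} gives a diffeomorphism $f_0$, hence a diffeomorphism $f$. If $G$ is not smooth, $M_0$ is still stably smoothable with $\ks=0$, using the stable smooth structure from $M$ restricted near $p_0$ chosen away from $\nu G$. The stably smoothable clause of \cref{theorem:realization-DeltaD} then makes $f_0\csum\id$ isotopic to a diffeomorphism, and extending by the identity over $\nu G$ shows $f$ is stably smoothable.
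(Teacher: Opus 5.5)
Your reduction is the same as the paper's: cut out $\nu G$, realize a lift $w$ by an arc pushing map $f_0$ on $M_0$ via \cref{theorem:realization-DeltaD}, extend by the identity, and compare the disk and sphere self-intersection counts. However, your homology-triviality step is wrong, and wrong in a way that would defeat the theorem.

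Attaching a 3-handle to $M_0$ along $G'=S^2\times\mathrm{pt}$ surgers the boundary component $S^2\times S^1$ to $S^3$, and $H^3\cup B^4$ is $S^1\times D^3$. So $M_0\cup H^3$ is the result of surgery on $M$ along $G$, minus a ball. It is not $M$ minus a ball, and not even homotopy equivalent to it: for $M=M''\csum(S^2\times S^2)$ with the standard dual pair you get $M''\sm B^4$. The isotopy of $f_0\cup\id_{H^3}$ to the identity therefore lives in that other manifold and says nothing about $f$ on $M$.

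In fact your conclusion ``$f$ is isotopic to $\id_M$'' must be false whenever $v\neq 0$. Otherwise \cref{lemma:DeltaGR-homotopy-trivial-vanishing} would give $\tDeltaGR([f])=0$, and no nonzero value could be realized. The repair uses what you already have, namely $f_0\in I(M_0)$. Apply Mayer--Vietoris to $M=M_0\cup\nu G$:
\begin{itemize}
\item the maps $H_i(M_0)\to H_i(M)$ are onto for $i=0,1,3$;
\item $f_*=\id$ on $H_4(M)$ because $f$ preserves orientation;
\item $H_2(M)\cong H_2(M_0)\oplus\Z[R]$, since the meridian of $G$ bounds $D$ in $M_0$;
\item $f_*[R]=[R]$ because $f(R)\simeq R$.
\end{itemize}

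The last case (smooth $M$, non-smooth $G$) is also not justified as written. Since $\partial M_0=\partial\overline{\nu G}$ is only topologically embedded in $M$, the smooth structure of $M$ does not restrict to a stable smooth structure on $M_0$. From $\ks(M_0)=0$ you get some stable smoothing $\Theta_0$ of $M_0$, and \cref{theorem:realization-DeltaD} makes $f_0$ stably smoothable with respect to $\Theta_0$. But regluing $\nu G$ yields a diffeomorphism only for the glued structure $\Theta_0\cup(\text{structure on }\nu G)$ on the stabilization of $M$. Stable smoothability of $f$ refers to the \emph{given} smooth structure of $M$, and for an arbitrary $\Theta_0$ these need not agree. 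You need a compatible choice, and producing one is exactly the missing step.

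The paper supplies it via the stable smoothing theorem~\cite[Theorem~D]{Cha-Kim:2023-1}, packaged as \cref{proposition:stably-smoothable-arc-pushing}. After stabilizing away from the support of $f$, an ambient isotopy $\phi_t$ makes $G$ smooth. Conjugating $f\csum\id$ by $\phi_1$ turns it into an arc pushing map for a configuration with smooth $G$, which light bulb smoothing shows is isotopic to a diffeomorphism. Equivalently, this tells you to take $\Theta_0$ to be the restriction of $\phi_1^{*}$ of the given structure.
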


\subsubsection{Freedman--Quinn invariant of spheres}
\label{subsubsection:fq-spheres}

We begin with a brief review of the Freedman--Quinn invariant of spheres, focusing on the definitions and properties we use.
We follow \cite[Section~4]{Schneiderman-Teichner:2022-1} and \cite[Appendix]{Cha-Kim:2023-1}.
It is analogous to the disk case treated in \cref{subsubsection:freedman-quinn-top-disks}.

Fix basepoints $(z,0)\in S^2\times I$ and $(z_0,0,\frac12)\in M\times I^2$.
For a based map $f\colon S^2\times I \to M\times I^2$ such that $f|_{S^2\times i}$ is an embedding $S^2\times i \hookrightarrow M\times i \times \frac12$ for $i=0,1$, the self-intersection invariant $\mu(f) \in \Z_2[T_M]$ is defined similarly to~\cref{equation:self-intersection-mu} in \cref{subsubsection:freedman-quinn-top-disks}: homotope $f$ rel $S^2\times \partial I$ to a generic immersion into $M\times I^2$, and count the signed self-intersections labeled by the associated $\pi_1(M)$ elements, to obtain an element in $\Z[\pi_1(M)]/\langle 1,g+g^{-1}\rangle$.
By~\cite[Lemma~4.1]{Schneiderman-Teichner:2022-1}, the value lies in the subgroup $\Z_2[T_M]$ generated by the 2-torsion elements.
The value of $\mu(f)$ is well-defined and invariant under homotopy of $f$ rel $S^2\times \partial I$ in both the smooth and topological categories~\cite{Schneiderman-Teichner:2022-1,Cha-Kim:2023-1}.

Suppose that $R_0$ and $R_1$ are embedded spheres in $M$ which are geometrically dual to $G$ at~$z_0$ and (freely) homotopic to each other.
The Freedman--Quinn invariant of $(R_0,R_1)$ is defined by
\[
  \FQ(R_0,R_1) = \mu(f) \in \Z_2[T_M] / \mu(\pi_3(M)) = D_M/\Sigma_M
\]
where $f\colon S^2\times I \to M\times I^2$ is a map such that $f(x,i) = (R_i(x),i,\frac12)$ for $i=0,1$.
For instance, if $H\colon S^2\times I \to M$ is a homotopy $H\colon R_0 \simeq R_1$, then the track $\widehat H \colon S^2\times I \to M\times I^2$ defined by $\widehat H(x,t) = (H(x,t),t,\frac12)$ can be used as~$f$.
The value of $\FQ(R_0,R_1)$ is well-defined, independent of the choice of $f$, by the arguments in~\cite[Lemma 4.4]{Schneiderman-Teichner:2022-1}, \cite[Proposition 4.9]{Klug-Miller:2021-1} and \cite[Proposition 3.3]{Klug-Miller:2022-1}.

\subsubsection{Properties of \texorpdfstring{$\DeltaGR$}{Δ\_\{G,R\}}}

\begin{lemma}
  \label{lemma:DeltaGR-additivity}
  For $f$, $g\in H_{G,R}(M)$, $\DeltaGR(f\circ g) = \DeltaGR(f) + \DeltaGR(g)$.
\end{lemma}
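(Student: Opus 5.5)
The plan mirrors the proof of \cref{proposition:DeltaD-homomorphism}, with the Freedman--Quinn invariant of spheres in place of the Dax invariant of disks. We need two inputs: additivity of $\FQ$ under concatenation, and invariance of $\FQ$ under homeomorphisms inducing the identity on $\pi_1(M)$.

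First note that $f(R)$, $g(R)$ and $fg(R)$ are all geometrically dual to $G$ at $z_0$. Indeed, $f|_{\nu G}=\id$, so $f(R)\cap\nu G=f(R\cap\nu G)$ is still the normal fiber disk at $z_0$, and similarly for $g$ and $fg$. They are also all homotopic to $R$: since $g(R)\simeq R$, we get $fg(R)\simeq f(R)\simeq R$. Hence $\FQ(R,f(R))$, $\FQ(f(R),fg(R))$ and $\FQ(R,fg(R))$ are all defined.

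Additivity: choose homotopies $H_1\colon R\simeq f(R)$ and $H_2\colon f(R)\simeq fg(R)$. The concatenated track $\widehat{H_1 * H_2}\colon S^2\times I\to M\times I^2$ is, after reparametrization in the $I$-coordinate, the union of the two tracks along the embedded sphere $f(R)\times\{\frac12\}\times\{\frac12\}$. Perturb each piece separately rel the embedded ends to a generic immersion. Since the pieces lie in $M\times[0,\frac12]\times I$ and $M\times[\frac12,1]\times I$ respectively, no double points are created between them. The self-intersection counts therefore add. One point needs care: $\pi_1$ labels use the basepoint $z_0$ and whiskers. Because all spheres pass through $z_0$ with the common fiber disk, a common whisker can be chosen, so the labels are compatible. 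This gives $\FQ(R,fg(R))=\FQ(R,f(R))+\FQ(f(R),fg(R))$.

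Naturality: we need $\FQ(f(R),f(g(R)))=\FQ(R,g(R))$. Take a homotopy $H\colon R\simeq g(R)$ and form $f\circ H$. Its track is $(f\times\id_{I^2})\circ\widehat H$. A homeomorphism of $M\times I^2$ maps a topological generic immersion to a topological generic immersion, so we perturb $\widehat H$ to a generic immersion $\phi$ rel ends and use $(f\times\id)\circ\phi$. Double points correspond bijectively. Their labels are related by $f_*$, which is the identity on $\pi_1(M)$; here we use that $f$ fixes $z_0\in G$, so the basepoint is preserved and no conjugation ambiguity arises. Signs agree because $f$ preserves orientation. Thus $\mu((f\times\id)\circ\phi)=\mu(\phi)$.

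This step is easier than \cref{lemma:top-dax-pi1-trivial-homeomorphism}. The Freedman--Quinn invariant is defined purely topologically, with no almost smooth structure and only up to the $g+g^{-1}$ ambiguity, so \cref{lemma:dax-topological-computation} and stabilization are not needed. The remaining care is that the topological self-intersection $\mu$ can be computed from any topologically transverse generic immersion, which holds by \cite[Appendix~A]{Cha-Kim:2023-1}. Combining the two steps,
\[
  \DeltaGR(fg)=\FQ(R,f(R))+\FQ(f(R),fg(R))=\DeltaGR(f)+\DeltaGR(g).
\]

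The main obstacle I expect is bookkeeping in the additivity step. The homotopies are free, not rel $\nu G$, so basing must be handled by fixing the image of the basepoint $(z,0)$ under the tracks. I would first normalize each homotopy so that the basepoint of $S^2$ traces a fixed loop, using the whiskers along $R\cap\nu G$ near $z_0$. The independence of the choice of $f$ in the definition of $\FQ$, cited from \cite{Schneiderman-Teichner:2022-1,Klug-Miller:2021-1,Klug-Miller:2022-1}, then absorbs the residual ambiguity.
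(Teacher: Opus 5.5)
Your two-step structure (additivity of $\FQ$ under concatenation, plus invariance under a homeomorphism that fixes $z_0$ and has $f_*=\id$ on $\pi_1(M)$) is the paper's, and your naturality step is fine. The gap is in the additivity step, at exactly the point you flag as the main obstacle.

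Let $H_1\colon R\simeq f(R)$ be a free homotopy. The basepoint $z\in S^2$ then traces a loop $\gamma=H_1(z,\cdot)$ at $z_0$. Labels of double points of $\widehat{H_1*H_2}$ are computed along paths in $S^2\times I$ from $(z,0)$. For a double point in the second half, every such path crosses $S^2\times[0,\frac12]$, and since that region is simply connected, it maps to a loop homotopic to $\gamma$. So what the concatenation actually gives is
\[
  \mu(\widehat{H_1*H_2})=\mu(\widehat{H_1})+\gamma\,\mu(\widehat{H_2})\,\gamma^{-1},
\]
and conjugation by $\gamma$ need not act trivially on $\Z_2[T_M]/\mu(\pi_3(M))$. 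None of your proposed remedies removes $\gamma$:
\begin{enumerate}
  \item Choosing a common whisker at $z_0$ for the three spheres does not change $\gamma$.
  \item A local normalization near $R\cap\nu G$ cannot change the class of $\gamma$ in $\pi_1(M)$.
  \item The independence of $\FQ(R_0,R_1)$ from the choice of track, for a fixed pair, says nothing about a conjugation acting on one summand.
\end{enumerate}
So the sentence ``the labels are compatible'' is unjustified as written.

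The missing idea is that the geometric dual $G$ forces $\gamma=1$, so $H_1$ can be taken to be a based homotopy. The paper gets this from \cite[Lemma~2.1]{Schneiderman-Teichner:2022-1} and remarks that additivity uses that the first homotopy is based.

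Here is a direct argument. A free homotopy with trace $\gamma$ gives $[f(R)]=\gamma^{\pm1}\cdot[R]$ in $\pi_2(M,z_0)$. On the other hand, $f(R)\cap\nu G=R\cap\nu G$, and both spheres meet $G$ only at the basepoint $z_0$. Hence $\lambda(R,G)=\lambda(f(R),G)=\epsilon$ for the same $\epsilon\in\{\pm1\}$. Sesquilinearity of the equivariant intersection form gives $\lambda(f(R),G)=\gamma^{\pm1}\lambda(R,G)$, that is, $\epsilon\gamma^{\pm1}=\epsilon$ in $\Z[\pi_1(M)]$. Therefore $\gamma=1$. Homotopy extension then deforms $H_1$ rel $S^2\times\partial I$ into a based homotopy without changing $\mu(\widehat{H_1})$.

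With $H_1$ based (the second homotopy need not be), your concatenation count is correct. Combined with your naturality step, it proves the lemma as in the paper.
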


\begin{proof}
  It is known that $\FQ(R_0,R_2)=\FQ(R_0,R_1)+\FQ(R_1,R_2)$ if $R_0 \simeq R_1 \simeq R_2$.
  Indeed, by~\cite[Lemma~2.1]{Schneiderman-Teichner:2022-1} (see also \cite[Proposition 4.9]{Klug-Miller:2021-1}), there are based homotopies $F\colon R_0 \simeq R_1$ and $G\colon R_1 \simeq R_2$, and then we have
  $\FQ(R_0,R_2)=\mu(\widehat{F*G}) = \mu(\widehat F) + \mu(\widehat G) = \FQ(R_0,R_1)+\FQ(R_1,R_2)$.
  The second equality uses that $F$ is a based homotopy.

  Also, if $h\in \Homeo(M)$ satisfies $h(z_0)=z_0$ and $h_*=\id$ on $\pi_1(M)$, then $\FQ(h(R_0), h(R_1)) = \mu(\widehat{hF}) = \mu(\widehat F) = \FQ(R_0,R_1)$ where $F\colon R_0 \simeq R_1$.

  Applying these to our case, we obtain $\FQ(R,fg(R)) = \FQ(R,f(R)) + \FQ(f(R),fg(R))= \FQ(R,f(R))+\FQ(R,g(R))$.
\end{proof}

\begin{lemma}
  \label{lemma:DeltaGR-homotopy-trivial-vanishing}
  If $f\in H_{G,R}(M)$ is properly homotopic to~$\id_M$, then $\DeltaGR(f)=0$.
\end{lemma}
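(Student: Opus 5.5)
The plan is to imitate the proof of \cref{lemma:fq-homotopy-trivial-homeomorphism}, replacing the neat disk $D$ by the sphere $R$. The goal is to produce an embedded $S^2\times I$ in $M\times I^2$ that extends $R\times 0\cup f(R)\times 1$. Since $\FQ(R,f(R))$ may be computed from any map $S^2\times I\to M\times I^2$ with the prescribed ends, such an embedding has vanishing self-intersection $\mu$. This gives $\DeltaGR(f)=\FQ(R,f(R))=0$.

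\textbf{Step 1: the preimage 3-manifold.} I choose a proper homotopy $H\colon M\times I\to M$ from $\id_M$ to $f^{-1}$, stationary near $M\times\partial I$. Here I use the geometric dual more essentially than in the disk case: instead of $R$ I take a small embedded disk, namely the normal fiber disk $\Delta=R\cap\nu G$ at $z_0$, or equivalently a transverse point of $G$. First I make $H$ transverse to a normal bundle of $R$ \cite[Section~9.6]{Freedman-Quinn:1990-1}. Then $W=H^{-1}(R)$ is a compact orientable 3-manifold with trivial normal bundle in $M\times I$, and it has boundary $R\times 0\sqcup f(R)\times 1$ together with possibly closed components, which I discard. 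Let $W_0$ be the component containing $R\times0$. A homological intersection count with $G\times I$ should show that $W_0$ also contains $f(R)\times 1$. Indeed, $H^{-1}(R)\cap (G\times I)$ detects the dual point, and $f|_{\nu G}=\id$, so after arranging $H=\id$ on $\nu G\times I$ rel the fiber, the arc $z_0\times I$ lies in $W$ and joins the two ends.

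\textbf{Step 2: $\pi_1$ control and surgery.} The map $H|_{W_0}$ lands in $R\simeq S^2$, so $\pi_1(W_0)\to\pi_1(M\times I)\cong\pi_1(M)$ is trivial, exactly as in \cref{lemma:fq-homotopy-trivial-homeomorphism}. I surger $W_0$ down to $S^2\times I$ rather than to a ball. By Lickorish--Wallace applied relative to the boundary, there is a framed link $L\subset\inte W_0$ whose surgery yields $S^2\times I$; for instance, glue $B^3$'s along the boundary spheres and surger the resulting closed manifold to $S^3$. Each component of $L$ is null-homotopic in $M\times I^2$, so it bounds embedded disks $\Delta_i$ in $M\times I^2$ whose interiors are disjoint from $W_0$, with product collars. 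The framing extension uses $\pi_1 V^{\TOP}_{4,2}=0$ exactly as before, and this yields ambient surgery and an embedded $C\cong S^2\times I$ in $M\times I^2$ with the correct boundary.

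\textbf{Step 3: identification.} Write $c=(K,u,v)\colon S^2\times I\to C$, a homeomorphism extending the given parametrization on the ends; this is possible since self-homeomorphisms of $S^2$ extend over $S^2\times I$ up to isotopy, and I correct the end parametrization by a collar if needed. Then $K$ is a homotopy $R\simeq f(R)$, and the straight-line homotopy in the $I^2$ coordinates shows $\widehat K\simeq c$ rel ends. Therefore $\FQ(R,f(R))=\mu(c)=0$.

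The main obstacle is Step 1's connectivity and the relative surgery to $S^2\times I$ rather than a ball. The former is where I use the dual $G$ and the condition $f|_{\nu G}=\id$. The latter is just surgery on the closed-up manifold, but one must check that surgery circles avoid the boundary.
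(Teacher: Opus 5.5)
Your argument is the one the paper intends. The paper's proof of this lemma only says that the argument of \cref{lemma:fq-homotopy-trivial-homeomorphism} carries over. The two adaptations you make are the right ones: the preimage now has two boundary spheres, and you surger to $S^2\times I$ by capping off and applying Lickorish--Wallace away from the caps. Steps~2 and~3 then go through as in the disk case.

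The one step to fix is the justification of connectivity in Step~1. There is no reason you can arrange $H=\id$ on $\nu G\times I$. The restriction $H|_{\nu G\times I}$ is a loop of maps $\nu G\to M$ based at the inclusion, and nothing forces it to be null-homotopic rel ends; re-choosing $H$ only changes it by restrictions of self-homotopies of $\id_M$. Nor is $f|_{\nu G}=\id$ what makes connectivity work.

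Use instead the intersection count you mention first. $W_0$ is compact and oriented, since its normal bundle is pulled back from the oriented normal bundle of $R$. So if $\partial W_0=R\times 0$, then $[R]=0$ in $H_2(M\times I)\cong H_2(M)$, contradicting $\lambda(R,G)=\pm1$. Equivalently, a transverse intersection $W_0\cap(G\times I)$ would be a compact $1$-manifold with a single boundary point. Alternatively, you could tube the two components together along an arc in $M\times I^2$, which keeps $\pi_1(W_0)\to\pi_1(M\times I^2)$ trivial.

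The same fact, that $[R]=[f(R)]$ has infinite order, also shows the two end parametrizations induce compatible orientations on $\partial C$. Your Step~3 needs this to extend them to a homeomorphism $S^2\times I\to C$. Finally, drop the stray remark about replacing $R$ by the fiber disk $R\cap\nu G$: you never use it, and its preimage is not the $3$-manifold you want.
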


\begin{proof}
  The same argument as in the proof of \cref{lemma:fq-homotopy-trivial-homeomorphism} applies.
  We omit details.
\end{proof}

\begin{proof}[Proof of \cref{theorem:homeomorphisms-tDeltaGR}]
  The map $H_{G,R}(M) \to D_M/\Sigma_M$ defined by $f\mapsto \DeltaGR(f)$ is a homomorphism by \cref{lemma:DeltaGR-additivity}.
  Its kernel contains the subgroup $H_{G,R}(M)\cap \Homeoh(M)$ by \cref{lemma:DeltaGR-homotopy-trivial-vanishing}.
  Thus, $\DeltaGR$ induces a homomorphism $\tDeltaGR$ of the quotient $H_{G,R}(M)/ (H_{G,R}(M)\cap \Homeoh(M)) = H_{G,R}(M)/\simeq$ to~$D_M/\Sigma_M$.
\end{proof}

\subsubsection{Realization of values of \texorpdfstring{$\DeltaGR$}{\Delta\_GR}}
\label{subsubsection:realization-DeltaGR}

Let $N = M \sm \nu G$ be the exterior of the framed 2-sphere $G\subset M$.
Let $D = N\cap R$, a neat 2-disk in~$N$.
There is a natural inclusion $\Homeo(N) \hookrightarrow \Homeo(M)$ defined by $g \mapsto g \cup \id_{\nu G}$.
Note that $\pi_1(M)\cong \pi_1(N)$ since the meridian circle of $G$ bounds the disk~$D\subset N$.
Thus, the above inclusion restricts to $H_D(N) \hookrightarrow H_{G,R}(M)$;
recall that $H_D(N)\subset \Homeo(M)$ is the subgroup of homeomorphisms $g$ of $N$ such that $g|_{\partial N}=\id_{\partial N}$, $g_*=\id$ on $\pi_1(N)$, and $D \simeq g(D) \rel \partial$.

\Cref{corollary:homomorphism-modulo-homotopy} gives the homomorphism $\tDeltaD\colon H_D(N)/{\simeq}\rel\partial N \to D_N/\Sigma_N$, which sends $[g]$ to the reduction of $\Dax_N(D,g(D))\in D_N$ modulo~$\Sigma_N$.
By~\cref{equation:dax-reduces-to-fq}, $\tDeltaD([g]) = \FQ_N(D,g(D))$.

The diagram
\[
  \begin{tikzcd}
    \pi_3(N) \ar[d] \ar[r,"\mu"]
    & \Z_2[T_N] \ar[d,"\cong"]
    \\
    \pi_3(M) \ar[r,"\mu"']
    & \Z_2[T_M]
  \end{tikzcd}
\]
is commutative, by the definition of~$\mu$ in~\cref{equation:homomorphism-mu}.
Consequently, the bijection $T_N \approx T_M$ induces an epimorphism $q\colon D_N/\Sigma_N = \Z_2[T_N]/\mu(\pi_3(N)) \to D_M/\Sigma_M = \Z_2[T_M]/\mu(\pi_3(M))$.

Furthermore, from the definition of the Freedman--Quinn invariant, it is readily seen that $q(\FQ_N(D,g(D))) = \FQ_M(R,(g\cup \id_{\nu G})(R))$.
This shows that the following diagram is commutative.
\begin{equation}
  \begin{tikzcd}[]
    H_D(N)/{\simeq}\rel\partial N \ar[r] \ar[d,"\tDeltaD"']
    & H_{G,R}(M)/\simeq \ar[d,"\tDeltaGR"]
    \\
    D_N/\Sigma_N \ar[r,two heads,"q"']
    & D_M/\Sigma_M
  \end{tikzcd}
  \label{equation:tDeltaGR-tDeltaD-compatibility}
\end{equation}

\begin{proof}[Proof of \cref{theorem:realization-tDeltaGR}]
  Recall that the group of Torelli homeomorphisms is defined by $I(M)=\{f\in \Homeo(M) \mid f_*=\id \text{~on~} H_*(M)\}$.

  We claim that if $g\in I(N)\cap H_D(N)$, then $f=g\cup\id_{\nu G}$ lies in $I(M)\cap H_{G,R}(M)$.
  Indeed, we have observed that $f\in H_{G,R}(M)$.
  For $i=0,1,3$, since $g_*=\id$ on $H_i(N)$ and since $H_i(M)$ is a quotient of $H_i(N)$, $f_*=\id$ on $H_i(M)$.
  Since $f$ is orientation preserving, $f_*=\id$ on $H_4(M)$.
  Also, $H_2(M)=H_2(N)\oplus \Z$, where the $\Z$ summand is generated by the class $[R]$.
  Since $g\in I(N)$, $f_*=\id$ on $H_2(N)\subset H_2(M)$.
  Since $f\in H_{G,R}(M)$, $f_*([R])=[R]$.
  Thus $f_*=\id$ on $H_2(M)$.
  This verifies the claim.

  By the claim, the diagram~\cref{equation:tDeltaGR-tDeltaD-compatibility} restricts to the following commutative diagram.
  \begin{equation}
    \begin{tikzcd}[column sep=small]
      \bigl(I(N)\cap H_D(N)\bigr)/{\simeq}\rel\partial N \ar[r] \ar[d,"\tDeltaD"']
      & \bigl(I(M)\cap H_{G,R}(M)\bigr)/\simeq \ar[d,"\tDeltaGR"]
      \\
      D_N/\Sigma_N \ar[r,two heads]
      & D_M/\Sigma_M
    \end{tikzcd}
    \label{equation:tDeltaGR-tDeltaD-torelli-compatibility}
  \end{equation}
  By~\cref{theorem:realization-DeltaD}, $\tDeltaD$ is onto.
  It follows that $\tDeltaGR$ is onto, by the commutativity of~\cref{equation:tDeltaGR-tDeltaD-torelli-compatibility}.

  If $M$ is smooth and $G\subset M$ is smoothly embedded, then $N$ is also smooth.
  Thus, the additional assertion for the smooth case follows from the smooth case of~\cref{theorem:realization-DeltaD}.

  Suppose that $M$ is smooth but $G\subset M$ is not necessarily smooth.
  For $v\in D_M/\Sigma_M$, choose a preimage $w\in D_N/\Sigma_N$ under the epimorphism~$q$.
  By the proof of \cref{theorem:realization-DeltaD}, there is an arc pushing map $g\in I(N)\cap H_D(N) \subset \Homeo(N)$ such that $\tDeltaD(g)=w$.
  By \cref{equation:tDeltaGR-tDeltaD-torelli-compatibility}, the homeomorphism $f=g\cup \id_{\nu G}\in I(M) \cap H_{G,R}(M)$ satisfies $\tDeltaGR(f)=v$.
  By \cref{proposition:stably-smoothable-arc-pushing} below, $f$ is stably smoothable.
\end{proof}

\subsubsection{Stable smoothability of arc pushing maps}
\label{subsubsection:stably-smoothable-arc-pushing}

Let $R'$ be another 2-sphere in $M$ such that $R'\cap \nu G = R\cap \nu G$, and let $D' = R' \cap N$.
Suppose $D \simeq D'$ rel~$\partial$ in $N$.
Then an arc pushing map $g\in \Homeo(N)$ sending $D$ to $D'$ is defined, following~\cite[Lemma~5.3]{Gabai:2021-1}, as we discussed in \cref{subsection:realization-D_M}.
We call the homeomorphism $f=g \cup \id_{\nu(G)} \in \Homeo(M)$ an \emph{arc pushing map for $(R,R',G)$}.

When $M$ is smooth and $G$ is smoothly embedded in $M$, an arc pushing map $h$ for $(R,R',G)$ is topologically isotopic to a diffeomorphism, since each of $D=R\cap N$ and $D'=R'\cap N$ is topologically isotopic rel $\partial$ to a smooth disk by the light bulb smoothing theorem~\cite[Theorem~C]{Cha-Kim:2023-1}.
The following result shows that $f$ is always stably smoothable, even when $G$ is not necessarily smoothable.
The key ingredient of the proof is the stable smoothing theorem~\cite[Theorem~D]{Cha-Kim:2023-1} for topological surfaces in a smooth 4-manifold.

\begin{proposition}
  \label{proposition:stably-smoothable-arc-pushing}
  Let $M$ be a smooth 4-manifold and $G$ be a topologically embedded 2-sphere in $M$ with trivial normal bundle.
  Suppose that $R$ and $R'$ are topologically embedded 2-spheres in~$M$, which are (freely) homotopic and geometrically dual to $G$ at $z_0 \in G$.
  Then the arc pushing map for $(R,R',G)$ is stably smoothable.
\end{proposition}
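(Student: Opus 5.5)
The plan is to reduce to the smooth arc pushing construction in a stabilized manifold, using the stable smoothing theorem~\cite[Theorem~D]{Cha-Kim:2023-1}. Since $M$ is smooth, we use its smooth structure as the stable smooth structure. First, isotope $G$, $R$, $R'$ so that they avoid a small smooth ball $B\subset M$, disjoint from a neighborhood of $G\cup R\cup R'$. Then apply \cite[Theorem~D]{Cha-Kim:2023-1} to the topological surfaces $G$, $R$, $R'$ in the smooth manifold $M\sm B$. This gives, for some $k$, a topological ambient isotopy in $M' = M\csum k(S^2\times S^2)$ (sum formed at $B$) that carries $G$ to a smooth sphere $G'$. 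It carries $R$ and $R'$ to spheres that are smooth away from a neighborhood of $G'$.

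Next, arrange that $\nu G'$ is a smooth tubular neighborhood, that $R$ and $R'$ meet it in a common smooth normal fiber disk, and that $D=R\cap N'$ and $D'=R'\cap N'$ are neat disks in $N' = M'\sm\nu G'$. The geometric dual to $D$ in $\partial N'$ is the smooth sphere $\partial$-parallel to $G'$, a fiber-sphere of $\partial\nu G'\cong S^2\times S^1$. By the light bulb smoothing theorem~\cite[Theorem~C]{Cha-Kim:2023-1}, $D$ and $D'$ are topologically isotopic rel $\partial$ to smooth disks in $N'$. The arc pushing construction~\cite[Lemma~5.3]{Gabai:2021-1} then gives a diffeomorphism $g'$ of $N'$ sending one to the other. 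Its support is a small neighborhood of $D\cup D'\cup(\text{dual})$, which avoids the $k(S^2\times S^2)$ summand.

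It remains to compare $g'\cup\id_{\nu G'}$ with $f\csum\id_{k(S^2\times S^2)}$, where $f$ is the given arc pushing map for $(R,R',G)$. The arc pushing map for a fixed triple $(D_0,D_1,\text{dual})$ is well defined up to isotopy rel boundary, since it is determined by the construction in~\cite{Gabai:2021-1}: push along the arcs of a track of the homotopy, composed with the isotopy to the identity after attaching the 3-handle. Moreover, conjugating by an ambient topological isotopy that moves $(G,R,R')$ to the smoothed triple transports one arc pushing map to the other up to isotopy. Hence $f\csum\id$ is topologically isotopic to the diffeomorphism $g'\cup\id_{\nu G'}$, rel $\partial$ and away from the stabilization point.

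The main obstacle is this last naturality and uniqueness claim. The arc pushing map depends on a choice of homotopy $D\simeq D'$ and on the disk isotopies used for smoothing, so one must check that the choices made in $M$ and in $M'$ can be made compatible. The first thing to try is to do all choices in $M'$ from the start: take the topological arc pushing map built from the image data under the ambient isotopy. Then observe that the light bulb smoothing isotopies of $D$ and $D'$ are rel $\partial$ and can be absorbed by precomposing and postcomposing with ambient isotopies supported near the disks. This yields the isotopy to the smooth map.
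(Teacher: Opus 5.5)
\paragraph{The gap.} Your third paragraph does not work as written. It asserts that the arc pushing map for a fixed triple is well defined up to isotopy rel~$\partial$, and this is not justified. The construction of \cite[Lemma~5.3]{Gabai:2021-1} depends on choices: the homotopy, or equivalently the isotopy to the identity in $M\cup H^3$. If two arc pushing maps $f_1,f_2$ both send $D_0$ to $D_1$, then $f_2^{-1}f_1$ preserves $D_0$ and becomes isotopic to the identity after attaching the 3-handle. There is no reason to expect such a mapping class to be trivial. So you cannot conclude that $f\csum\id$ is isotopic to a smooth arc pushing map $g'\cup\id_{\nu G'}$ built independently in $M'$.

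Two smaller points in the same paragraph should also go:
\begin{itemize}
\item Theorem~D of \cite{Cha-Kim:2023-1} is a statement about an embedded surface. It should be applied to $G$ alone. It gives nothing about $R$ and $R'$ being smooth away from $\nu G'$, and $G\cup R\cup R'$ is not an embedded surface.
\item Your claim that the support of $g'$ avoids the $k(S^2\times S^2)$ summands is unjustified, since the smoothing isotopy generally drags $G$ through them. It is also unnecessary: stable smoothability only asks that $f\csum\id$ be isotopic rel~$\partial$ to \emph{some} diffeomorphism of $M'$.
\end{itemize}

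\paragraph{The fix.} Your fallback in the last paragraph is exactly the paper's proof, so discard $g'$ and use it. The steps are:
\begin{itemize}
\item Isotope $f$ to be the identity near a smooth ball $B$ and stabilize at $B$, writing $f'=f\csum\id$.
\item Use Theorem~D to get an ambient isotopy $\phi_t$ of $M'$ rel $\partial M'$ with $\phi_1(G)$ smooth. Arrange that $\phi_1(\nu G)$ is a smooth normal bundle preserved as a bundle.
\item Observe that $\phi_t f'\phi_t^{-1}$ is an isotopy from $f'$ to $h=\phi_1 f'\phi_1^{-1}$. The map $h$ is itself an arc pushing map for $(\phi_1(R),\phi_1(R'),\phi_1(G))$.
\item Apply light bulb smoothing to $\phi_1(R)$ and $\phi_1(R')$ rel $\phi_1(\nu G)$ to conclude that $h$ is isotopic to a diffeomorphism.
\end{itemize}
This uses only naturality of the arc pushing construction under conjugation, never uniqueness. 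Your final step of absorbing the light bulb isotopies is stated at the same level of detail as the paper's corresponding sentence.
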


\begin{proof}
  Let $f\in\Homeo(M)$ be an arc pushing map for $(R,R',G)$.
  Since $f$ is supported in an arbitrary neighborhood of $R\cup R'\cup G$, we may assume that $f=\id$ near a smooth closed 4-ball $B\subset\inte M\sm(R\cup R'\cup G)$.
  Write $M'$ for $M\csum k(S^2\times S^2)$, where the connected sum is taken at~$B$. Let $f'=f\csum\id_{k(S^2\times S^2)} \in \Homeo(M')$.
  By the stable smoothing theorem~\cite[Theorem~D]{Cha-Kim:2023-1}, for some $k\ge 0$, there is a topological ambient isotopy $\phi_t\colon M'\to M'$ rel~$\partial M'$ such that $\phi_0=\id_{M'}$ and $\phi_1(G)$ is smoothly embedded.
  We may assume that $\phi_1(\nu G)$ is a smooth normal bundle of $\phi_1(G)$ and $\phi_1$ preserves the $\R^2$-bundle structure, by uniqueness of topological normal bundles~\cite[Section~9.3]{Freedman-Quinn:1990-1}.

  Note that $\phi_t^{\vphantom1} f' \phi_t^{-1}$ is an isotopy from $f'$ to $h=\phi_1^{\vphantom1} f' \phi_1^{-1}$, which is an arc pushing map for $(\phi_1(R),\phi_1(R'),\phi_1(G))$.
  Since $\phi_1(G)$ is smoothly embedded, each of $\phi_1(R)$ and $\phi_1(R')$ is ambiently isotopic to a smoothly embedded sphere in $M'$ rel~$\phi_1(\nu G)$ by the light bulb smoothing theorem~\cite[Theorem~C]{Cha-Kim:2023-1}.
  Thus $h$ is topologically isotopic rel~$\partial M'$ to a diffeomorphism.
  From this, it follows that $f'$ is topologically isotopic rel~$\partial M'$ to a diffeomorphism, and consequently $f$ is stably smoothable.
\end{proof}

\subsection{Torelli groups of closed 4-manifolds}
\label{subsection:structures-in-torelli-closed}

In this subsection, we record results on Torelli groups of closed 4-manifolds obtained by using the homomorphism $\tDeltaGR$ in \cref{subsection:invariant-on-closed-4-manifold}.
These may be viewed as analogous to the results for the nonempty boundary case in \cref{subsection:structures-in-torelli}.

\begin{corollary}
  \label{corollary:subgroup-in-torelli-closed}
  Let $M$ be a 4-manifold containing embedded 2-spheres $R$ and $G$ such that $G$ has trivial normal bundle and $R$ is geometrically dual to~$G$.
  Suppose $H_3(\widetilde M) \otimes \Z_2$ is finitely generated.
  Let $s=|T_M|$ if $T_M$ is infinite.
  Otherwise, let
  \[
    s = |T_M| - \dim_{\Z_2} H_3(\widetilde M) \otimes \Z_2
  \]
  and suppose $s>0$.
  Then there exists an epimorphism of a subgroup $H\subset \cI(M)/\pi_0 \Homeoh(M)$ onto $(\Z_2)^s$, the $\Z_2$-vector space of dimension~$s$.

  In addition, if $M$ is smooth and $G\subset M$ is smoothly embedded, then $H$ is the image of a subgroup of $\cI^s(M)/\pi_0 \Diffh(M)$ under the natural map;
  if $M$ is smooth but $G$ is allowed to be non-smooth, then every element of $H$ is represented by a stably smoothable homeomorphism.
\end{corollary}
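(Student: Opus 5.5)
The plan is to mirror the proof of \cref{theorem:subgroup-in-torelli}, with $\tDeltaGR$ from \cref{theorem:homeomorphisms-tDeltaGR} and \cref{theorem:realization-tDeltaGR} in place of $\tDeltaD$ and \cref{theorem:realization-DeltaD}.

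First I fix the setup required by \cref{subsection:invariant-on-closed-4-manifold}. Choose a framing of $G$, which exists since the normal bundle is trivial, and a tubular neighborhood $\nu G$. Then isotope $R$ so that $R\cap\nu G$ is a single normal fiber disk at the basepoint $z_0=R\cap G$. This is possible because $R$ meets $G$ transversely in one point, and topological normal bundles are unique up to isotopy \cite[Section~9.3]{Freedman-Quinn:1990-1}. With these choices, $H_{G,R}(M)$ and the homomorphism
\[
  \tDeltaGR\colon H_{G,R}(M)/{\simeq}\to D_M/\Sigma_M=\Z_2[T_M]/\mu(\pi_3(M))
\]
are defined.

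Next, let $H\subset \cI(M)/\pi_0\Homeoh(M)$ be the image of the subgroup $(I(M)\cap H_{G,R}(M))/{\simeq}$. Since $H_{G,R}(M)/{\simeq}$ is a subgroup of $\pi_0\Homeo(M)/\pi_0\Homeoh(M)$, this $H$ is a genuine subgroup of the Torelli quotient. By \cref{theorem:realization-tDeltaGR}, $\tDeltaGR$ restricted to $H$ maps onto $D_M/\Sigma_M$. \Cref{lemma:dimension-of-D_M/Sigma_M} holds for any $M$: its proof only uses that $\mu$ factors through $H_3(\widetilde M)\otimes\Z_2$ by \cite[Lemma~4.2]{Schneiderman-Teichner:2022-1}. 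It therefore gives $\dim_{\Z_2} D_M/\Sigma_M\ge s$. Composing with a projection of this $\Z_2$-vector space onto an $s$-dimensional quotient yields the epimorphism $H\to(\Z_2)^s$.

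The smooth refinements come straight from the corresponding clauses of \cref{theorem:realization-tDeltaGR}.
\begin{enumerate}
  \item If $M$ and $G$ are smooth, the surjectivity already holds on $(I^s(M)\cap H_{G,R}(M))/{\simeq}$. I then replace $H$ by the image of this group, which is the image of a subgroup of $\cI^s(M)/\pi_0\Diffh(M)$; the epimorphism onto $(\Z_2)^s$ persists.
  \item If $M$ is smooth but $G$ is not, every value of $\tDeltaGR$ is realized by a stably smoothable homeomorphism. I take $H$ to be the subgroup generated by the classes of such realizers, one for each element of $D_M/\Sigma_M$; it still surjects.
\end{enumerate}
In case (2), the statement that every element of $H$ is stably smoothable needs a short check. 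Products of stably smoothable arc pushing maps are stably smoothable: I would arrange the realizers to be the identity near a common ball, where the stabilization is performed, and then compose the diffeomorphisms obtained after stabilization.

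The step needing most care is not the algebra. It is making sure the choices in the first step are legitimate and that the resulting $H$ is honestly a subgroup in the right quotient. Here I rely on the remark that $H_{G,R}(M)/{\simeq}$ is a subgroup of $\pi_0\Homeo(M)/\pi_0\Homeoh(M)$, together with the fact that Torelli elements form the subgroup $I(M)$.
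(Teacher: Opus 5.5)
Your proposal is correct and follows the paper's proof. You take the subgroup of $\cI(M)/\pi_0\Homeoh(M)$ supplied by \cref{theorem:realization-tDeltaGR}, in the appropriate variant, which surjects onto $D_M/\Sigma_M$, and compose with the epimorphism onto $(\Z_2)^s$ from \cref{lemma:dimension-of-D_M/Sigma_M}; your extra checks (arranging $R\cap\nu G$ to be a normal fiber, and closure of stably smoothable realizers under composition) are details the paper leaves implicit, and they are fine.
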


\begin{proof}
  By \cref{theorem:realization-tDeltaGR}, there is a subgroup $H \subset {I(M)/\simeq} = \cI(M)/\pi_0 \Homeoh(M)$ satisfying the following:
  $H$ admits an epimorphism onto~$D_M/\Sigma_M$.
  If $M$ is smooth, then all elements of $H$ are represented by stably smoothable homeomorphisms.
  If $M$ is smooth and $G\subset M$ is smoothly embedded, then all elements of $H$ are represented by diffeomorphisms.
  Since $D_M/\Sigma_M$ admits an epimorphism onto $(\Z_2)^s$ by \cref{lemma:dimension-of-D_M/Sigma_M}, the conclusion follows.
\end{proof}

The following is a consequence of \cref{corollary:subgroup-in-torelli-closed}.
\begin{corollary}
  \label{corollary:torelli-prufer-rank-closed}
  Let $M$ and $s$ be as in \cref{corollary:subgroup-in-torelli-closed}.
  Then $\rk_{\Pr}\bigl(\cI(M)/\pi_0 \Homeoh(M)\bigr) \ge s$.
  In addition, if $M$ is smooth and $G\subset M$ is smoothly embedded, then
  \[
    \rk_{\Pr}\biggl(\frac{\cI^s(M)}{\pi_0 \Diffh(M)}\biggr)
    \ge \rk_{\Pr}\biggl( \Im\biggl\{ \frac{\cI^s(M)}{\pi_0 \Diffh(M)} \rightarrow \frac{\cI(M)}{\pi_0 \Homeoh(M)} \biggr\} \biggr)
    \ge s.
  \]
  If $M$ is smooth but $G$ is allowed to be non-smooth, then
  \[
    \rk_{\Pr}(I^{ss}(M)/\simeq)
    \ge \rk_{\Pr}\biggl( \Im\biggl\{ {I^{ss}(M)/\simeq} \rightarrow \frac{\cI(M)}{\pi_0 \Homeoh(M)} \biggr\} \biggr)
    \ge s
  \]
  where $I^{ss}(M)=\{f\in I(M) \subset \Homeo(M) \mid f \text{~is stably smoothable}\}$.
\end{corollary}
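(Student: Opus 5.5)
The plan is to deduce \cref{corollary:torelli-prufer-rank-closed} from \cref{corollary:subgroup-in-torelli-closed}, in the same way that \cref{corollary:torelli-modulo-homotopy-prufer-rank} was deduced from \cref{theorem:subgroup-in-torelli}. The only group-theoretic input is a basic fact about Pr\"ufer rank: if a group $\Gamma$ contains a subgroup $H$ admitting an epimorphism onto $(\Z_2)^m$ for a finite $m$, then $\rk_{\Pr}(\Gamma)\ge m$.

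\textbf{Step 1 (the group-theoretic fact).} Let $\phi\colon H\to(\Z_2)^m$ be onto. Choose $h_1,\dots,h_m\in H$ with $\phi(h_i)$ a basis, and set $K=\langle h_1,\dots,h_m\rangle$. Then $K$ is a finitely generated subgroup of $\Gamma$ with $\phi(K)=(\Z_2)^m$. Any generating set of $K$ maps to a spanning set of $(\Z_2)^m$, so it has at least $m$ elements. Hence the minimal number of generators of $K$ is $m$, and $\rk_{\Pr}(\Gamma)\ge m$. When $s=\infty$, $T_M$ is infinite and \cref{corollary:subgroup-in-torelli-closed} gives an epimorphism onto an infinite-dimensional $\Z_2$-vector space. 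Composing with projections onto $(\Z_2)^m$ for every finite $m$ gives $\rk_{\Pr}=\infty$. (If $s\le 0$ there is nothing to prove.)

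\textbf{Step 2 (topological bound).} Apply Step 1 with $\Gamma=\cI(M)/\pi_0\Homeoh(M)$ and the subgroup $H$ from \cref{corollary:subgroup-in-torelli-closed}. This gives the first inequality.

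\textbf{Step 3 (smooth and stably smoothable refinements).} Let $\Phi$ be the natural map into $\cI(M)/\pi_0\Homeoh(M)$, from $\cI^s(M)/\pi_0\Diffh(M)$ in the smooth case or from $I^{ss}(M)/{\simeq}$ in the stably smoothable case.

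\begin{enumerate}
\item By \cref{corollary:subgroup-in-torelli-closed}, $H$ lies in $\Im\Phi$ in each case. In the stably smoothable case this holds because every element of $H$ is represented by a stably smoothable Torelli homeomorphism, so $H\subset\Im\Phi$.
\item Step 1 applied to $\Gamma=\Im\Phi$ then gives the second inequality in each display.
\item For the first inequality, observe that $\Im\Phi$ is a quotient of the source group. Pr\"ufer rank does not increase under quotients, since a finitely generated subgroup of the image lifts to a finitely generated subgroup with no more generators. This is the same fact $\rk_{\Pr}(\Gamma/N)\le\rk_{\Pr}(\Gamma)$ used in the proof of \cref{theorem:intro-torelli-prufer-rank}.
\end{enumerate}

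For the quotient argument in item 3, $I^{ss}(M)/{\simeq}$ must be a group. It is closed under composition, since stabilizations of stably smoothable maps compose to maps isotopic to composites of diffeomorphisms. Even without this, the image of $I^{ss}(M)/{\simeq}$ generates a subgroup containing $H$, so the argument goes through.

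The only mildly delicate point is the infinite-$s$ case and the justification that the stably smoothable classes form a group. Everything else is immediate bookkeeping.
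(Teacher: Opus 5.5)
Your proposal is correct and matches the paper, which records this corollary without further argument as an immediate consequence of \cref{corollary:subgroup-in-torelli-closed}, exactly as \cref{corollary:torelli-modulo-homotopy-prufer-rank} follows from \cref{theorem:subgroup-in-torelli}: a subgroup surjecting onto $(\Z_2)^s$ (or onto $(\Z_2)^m$ for every finite $m$ when $s$ is infinite) gives the bound, and the first inequality in each display is the monotonicity $\rk_{\Pr}(\Gamma/N)\le\rk_{\Pr}(\Gamma)$. One small wording fix: that monotonicity holds because the lifted subgroup surjects onto the given finitely generated subgroup of the quotient and therefore needs at least as many generators; the fact that it has no more generators is not the direction that matters.
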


\subsection{Applications}
\label{subsection:applications-closed}

Let
\[
  S^2 \tildetimes \RP^2 = (S^2 \times S^2) / (z,x) \sim (\bar z, -x)
\]
where $\bar z$ denotes $(x_1,x_2,-x_3)$ for $z=(x_1,x_2,x_3) \in S^2 \subset \R^3$.
This is a twisted $S^2$-bundle over $\RP^2$ with orientable total space.
Also, $S^2 \tildetimes \RP^2$ is the double of $D^2\tildetimes \RP^2$.

\begin{example}
  \label{example:torelli-closed-M-explicit}
  Let
  \[
    M = (S^2 \tildetimes \RP^2) \csum (S^2 \tildetimes \RP^2) \csum (\csum^k S^2\times S^2)
  \]
  where $k\ge 1$.

  The framed 2-spheres $R=*\times S^2$ and $G=S^2 \times *$ in an $S^2\times S^2$ summand of $M$ are geometrically dual to each other.
  If we write $A = (\csum^k S^2\times S^2) \sm (\text{two open $4$-balls})$ and $B = (S^2\times S^2) \sm (\text{two open $4$-balls})$, then the universal cover $\widetilde M$ is obtained by forming an infinite linear chain
  \[
    \widetilde M = \cdots \cup A \cup B \cup A \cup B \cup  \cdots
  \]
  where each copy of $A$ or $B$ is glued to the adjacent copies along the boundary 3-spheres.
  It follows that $H_3(\widetilde M)=\Z$.
  In addition, we have $|T_M|=\infty$.
  Thus, by \cref{corollary:torelli-prufer-rank-closed}, both $\cI(M)/\pi_0\Homeoh(M)$ and $\cI^s(M)/\pi_0\Diffh(M)$ have infinite Pr\"ufer rank.
\end{example}

In \cref{example:torelli-closed-M-explicit}, we used the lower bound $|T_M| - \dim_{\Z_2} \bigl( H_3(\widetilde M)\otimes \Z_2 \bigr)$ in \cref{corollary:torelli-prufer-rank-closed}.
The following example illustrates that our method can also be useful even when this lower bound is not effective.
For this purpose, the following observation is useful.

\begin{lemma}
  \label{lemma:vanishing-mu-for-connected-sum}
  Suppose that $M=M_1\csum \cdots \csum M_k$ where each $M_i$ is a 4-manifold with $H_3(\widetilde M_i)=0$.
  Then, $\mu(\pi_3(M))=0$.
\end{lemma}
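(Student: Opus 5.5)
The plan is to use the factorization of $\mu$ through the Hurewicz map, $\mu\colon \pi_3(M)=\pi_3(\widetilde M)\to H_3(\widetilde M)\xrightarrow{\overline\mu}\Z_2[T_M]$, from \cite[Lemma~4.2]{Schneiderman-Teichner:2022-1}, which was already invoked in the proofs of \cref{lemma:free-part-of-D_M-Sigma_M,lemma:dimension-of-D_M/Sigma_M}. Then we need only describe $H_3(\widetilde M)$ and show that every generator is killed by $\overline\mu$. For the connected sum $M=M_1\csum\cdots\csum M_k$, the universal cover $\widetilde M$ decomposes as a tree-like union of copies of $\widetilde{M_i}\sm(\text{open balls})$, one removed ball for each lift of a connected-sum point. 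These pieces are glued along 3-spheres, indexed by the Bass--Serre tree of the free product $\pi_1(M)=\ast_i\pi_1(M_i)$, exactly as in the chain description of \cref{example:torelli-closed-M-explicit}.

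The first step is to compute $H_3(\widetilde M)$ by Mayer--Vietoris over this tree decomposition. Removing balls from $\widetilde{M_i}$ creates $H_3$ classes carried by the boundary spheres, and $H_3(\widetilde{M_i})=0$ contributes nothing else. The output should be that $H_3(\widetilde M)$ is generated by the classes of the separating 3-spheres, i.e.\ the lifts of the connected-sum spheres (each ``ball-removed'' piece with $H_3(\widetilde M_i)=0$ has $H_3$ spanned by its boundary spheres). This is routine bookkeeping, but the tree structure must be used so that no cycles arise from the gluing graph. Here I would argue with the long exact sequences of the pairs, using $H_4$ of the noncompact pieces being zero.

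The second step is to show that $\mu$ vanishes on these generators. Each such class is the Hurewicz image of an element of $\pi_3(M)$ represented by an embedded 3-sphere in $M$: the connected-sum sphere $S^3\subset M$, which lifts to $\widetilde M$ since $S^3$ is simply connected. Pushing this embedded $S^3\subset M$ into $M\times I^2$ gives an embedding, so its self-intersection $\mu$ is zero. A translate $g\cdot\widetilde S$ of a lifted sphere corresponds to the same sphere with a different basing path, and $\mu$ is a homomorphism compatible with this (the labels in $\Z_2[T_M]$ transform, but the count is zero anyway). Hence $\overline\mu$ kills a generating set of $H_3(\widetilde M)$, so $\mu(\pi_3(M))=\overline\mu(H_3(\widetilde M))=0$.

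I expect the main obstacle to be the first step: justifying carefully that $H_3(\widetilde M)$ is generated by the lifted connected-sum spheres, including the case where some $M_i$ is simply connected or has infinite fundamental group, and handling possibly closed versus bounded summands. If Mayer--Vietoris becomes awkward, I would instead induct on $k$, doing one connected sum $M=M'\csum M_k$ at a time and using the two-piece decomposition of $\widetilde M$ along the preimage of a single 3-sphere.
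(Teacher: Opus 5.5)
Your proposal is correct and follows the paper's proof: factor $\mu$ through the Hurewicz map to $H_3(\widetilde M)$, use the long exact sequences of the pairs together with Mayer--Vietoris to show that $H_3(\widetilde M)$ is generated by lifts of the connected-sum 3-spheres, and observe that these spheres are embedded, so $\mu$ vanishes on them. The differences are only in bookkeeping. The paper works with $\Z[\pi]$-coefficients on the finite decomposition $M=V_1\cup\cdots\cup V_k$ (where each $V_1\cap V_i$ is a single 3-sphere), which avoids handling an infinite tree-shaped union. Also, surjectivity of $\bigoplus_i H_3(V_i;\Z[\pi])\to H_3(M;\Z[\pi])$ needs only $H_2(S^3)=0$, not the tree structure.
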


\begin{proof}
  Let $\pi=\pi_1(M)$.
  Let $V_1 = M_1$ with $k-1$ open 4-balls removed, and let $V_i = M_i$ with an open 4-ball removed for $i\ge 2$.
  Then $M=V_1\cup\cdots\cup V_k$, where $V_1 \cap V_i$ is a boundary 3-sphere for $i\ge 2$.
  From the long exact sequence for $(M_i,V_i)$ with $\Z[\pi]$-coefficients, it follows that $H_3(V_i;\Z[\pi])$ is generated over $\Z[\pi]$ by the classes of the boundary 3-spheres of~$V_i$.
  From the Mayer--Vietoris sequence for $M=V_1\cup\cdots\cup V_k$ with $\Z[\pi]$-coefficients, it follows that $\bigoplus H_3(V_i;\Z[\pi])$ surjects onto $H_3(M;\Z[\pi])$.
  Consequently, $H_3(\widetilde M) = H_3(M;\Z[\pi])$ is generated by the classes of the connected sum 3-spheres.
  Since these 3-spheres are embedded and $\mu\colon \pi_3(M) \to \Z_2[T_M]$ factors through $H_3(\widetilde M)$, it follows that $\mu(\pi_3(M))=0$.
\end{proof}

\begin{example}
  \label{example:torelli-closed-M-more}
  Generalizing \cref{example:torelli-closed-M-explicit}, let $M = (\csum^n S^2 \tildetimes \RP^2) \csum (\csum^k S^2\times S^2)$ where $n\ge 3$, $k\ge 1$.
  We will show that both $\cI(M)/\pi_0\Homeoh(M)$ and $\cI^s(M)/\pi_0\Diffh(M)$ have infinite Pr\"ufer rank.

  We have $|T_M|=\infty$.
  Note that the universal cover $\widetilde M$ is obtained by an infinite construction similar to the one in \cref{example:torelli-closed-M-explicit}, but for $n\ge 3$, it is modeled on an infinite $n$-valent tree instead of a linear chain.
  Thus, $H_3(\widetilde M) \cong \Z^\infty$, and consequently the lower bound $|T_M| - \dim_{\Z_2} \bigl( H_3(\widetilde M)\otimes \Z_2 \bigr)$ is not well-defined.
  However, by \cref{lemma:vanishing-mu-for-connected-sum}, $D_M/\Sigma_M = \Z_2[T_M]/\mu(\pi_3(M)) \cong (\Z_2)^\infty$, and consequently by \cref{theorem:realization-tDeltaGR}, both $\cI(M)/\pi_0\Homeoh(M)$ and $\cI^s(M)/\pi_0\Diffh(M)$ have a subgroup which maps onto $(\Z_2)^\infty$.
  From this, the assertion follows immediately.
\end{example}

\begin{example}
  \label{example:torelli-closed-from-aspherical}
  Let $M = (S^2 \tildetimes \RP^2) \csum X \csum (S^2 \times S^2)$ where $X$ is an arbitrary closed, non-simply connected, aspherical 4-manifold (not necessarily smooth).
  Then, the arguments in \cref{example:torelli-closed-M-more} show that $\mu(\pi_3(M)) = 0$ and $D_M/\Sigma_M \cong (\Z_2)^\infty$.
  Therefore, by \cref{theorem:realization-tDeltaGR}, $\cI(M)/\pi_0\Homeoh(M)$ has a subgroup which maps onto $(\Z_2)^\infty$, and thus has infinite Pr\"ufer rank.
  In addition, if $X$ is smooth, then $\cI^s(M)/\pi_0\Diffh(M)$ also has infinite Pr\"ufer rank.
\end{example}

\subsection{Torelli groups of complex surfaces}
\label{subsection:torelli-complex-surfaces}

In this subsection, we apply our results to complex surfaces, to prove \cref{theorem:intro-torelli-complex-surfaces,corollary:intro-torelli-dolgachev-enriques}.

\begin{proof}[Proof of \cref{theorem:intro-torelli-complex-surfaces}]
  Suppose that $M$ is a compact complex surface such that $\pi_1(M)$ is a nontrivial finite cyclic group of even order and $e(M) \ne 4$.
  Our goal is to prove that there exists a stably smoothable homeomorphism of $M$ that represents a nontrivial element in $\cI(M)/\pi_0\Homeoh(M)$.

  Since $\pi_1(M)$ is cyclic of even order, $|T_M|=1$.
  Since the universal cover $\widetilde M$ is a closed, simply connected 4-manifold, $H_3(\widetilde M)=H^1(\widetilde M) = 0$ by Poincar\'e duality.
  By work of Hambleton and Kreck~\cite{Hambleton-Kreck:1988-1}, $M$ is homeomorphic (but not necessarily diffeomorphic) to $X\csum (S^2\times S^2)$ for some (topological) 4-manifold~$X$.
  Consequently, $M$ has a pair of topological spheres with trivial normal bundle which are geometrically dual to each other.
  Thus, by \cref{corollary:torelli-prufer-rank-closed} (or \cref{corollary:subgroup-in-torelli-closed}), $\cI(M)/\pi_0\Homeoh(M)$ has a nontrivial element represented by a stably smoothable homeomorphism.
  This completes the proof.

  For completeness, we describe below how one verifies the claim that $M$ has an $S^2\times S^2$ summand topologically, following the proof of Corollaries~4 and~5 in \cite[p.~58]{Hambleton-Kreck:1988-1}.

  When $w_2(\widetilde M)\ne 0$, Hambleton and Kreck proved that $M$ is homeomorphic to $\Sigma \csum r \C P^2 \csum s \overline{\C P}{}^2$ for some rational homology 4-sphere~$\Sigma$ and for some $r$, $s \ge 0$.
  In addition, they observed that $M$ has an indefinite intersection form, and thus $r$, $s \ge 1$.
  Since $e(M) = r+s+2\ne 4$, we have $r+s \ge 3$.
  It follows that $S^2\times S^2$ is a summand of $M$, since $2 \C P^2 \csum \overline{\C P}{}^2 \cong \C P^2 \csum (S^2\times S^2)$ and $\C P^2 \csum 2\overline{\C P}{}^2 \cong \overline{\C P}{}^2 \csum (S^2\times S^2)$.

  Now suppose $w_2(\widetilde M) = 0$.
  Hambleton and Kreck showed that $e(M)>2$ and $M$ is homeomorphic to either $\Sigma \csum r (\pm K) \csum s(S^2\times S^2)$ or $(\pm D_{k,k}) \csum r(\pm K) \csum s(S^2\times S^2)$, where $\Sigma$ is a rational homology 4-sphere, $K$ is a Kummer surface, $D_{k,k}$ is a Dolgachev surface of type $(k,k)$, and $r$, $s\ge 0$.
  In the former case, $e(M)$ would be 2 if $r=s=0$, and thus either $r$ or $s$ is positive.
  It follows that $M$ has an $S^2\times S^2$ summand, since $K\cong 2(-|E_8|) \csum 3(S^2\times S^2)$, where $|E_8|$ is Freedman's $E_8$ manifold.
  In the latter case, $M$ has an $S^2\times S^2$ summand since $D_{k,k} \cong \Sigma' \csum (-|E_8|) \csum (S^2\times S^2)$ for some rational homology 4-sphere $\Sigma'$ by~\cite[Proof of Theorem 3]{Hambleton-Kreck:1988-1}.
\end{proof}

\begin{proof}[Proof of \cref{corollary:intro-torelli-dolgachev-enriques}]
  For an Enriques surface $E$, we have $e(E)=12$ and $\pi_1(E)=\Z_2$.
  For a Dolgachev surface $D_{p,q}$ with $p$, $q$ even, we have $e(D_{p,q})=12$ and $\pi_1(D_{p,q})=\Z_n$ where $n=\gcd(p,q)$ is even.
  Thus, \cref{theorem:intro-torelli-complex-surfaces} applies to $E$ and $D_{p,q}$, to give the desired conclusion.
\end{proof}

\paragraph{Star-partner of the Enriques surface}

For a 4-manifold $M$, the \emph{star-partner} of $M$ is a 4-manifold $*M$ admitting a homeomorphism $M\csum (*\CP^2) \cong (*M)\csum \CP^2$ which preserves the connected sum decomposition of~$\pi_2$.
Here $*\CP^2$ is Freedman's simply connected 4-manifold with intersection form $\langle 1\rangle$, the simple homotopy type of $\CP^2$, and $\ks(*\CP^2)=1$.

By work of Teichner~\cite{Teichner:1997-1}, every closed, connected, oriented, non-spin 4-manifold $M$ with $\pi_1(M)\cong \Z_2$ has a star-partner~$*M$.
In particular, the Enriques surface $E$ admits a star-partner~$*E$.
It is a non-smoothable 4-manifold with $\ks(*E)=1$ and the simple homotopy type of~$E$.

\begin{theorem}
  \label{theorem:torelli-star-enriques}
  The quotient $\cI(*E)/\pi_0\Homeoh(*E)$ is nontrivial.
\end{theorem}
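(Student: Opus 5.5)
The plan is to apply \cref{corollary:subgroup-in-torelli-closed} to $M={*E}$ with $s=1$. We need three inputs: $|T_{*E}|=1$, the vanishing $H_3(\widetilde{*E})\otimes\Z_2=0$, and a pair of embedded framed spheres $R$, $G$ in $*E$ that are geometrically dual.

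The first two inputs come from homotopy theory. Since $*E$ has the homotopy type of $E$, we have $\pi_1(*E)\cong\Z_2$, so $|T_{*E}|=1$. The universal cover $\widetilde{*E}$ is a closed simply connected 4-manifold, so Poincar\'e duality gives $H_3(\widetilde{*E})\cong H^1(\widetilde{*E})=0$. Hence $s=|T_{*E}|-0=1>0$. The corollary then produces a subgroup of $\cI(*E)/\pi_0\Homeoh(*E)$ surjecting onto $\Z_2$, which is exactly the claimed nontriviality.

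The real work is finding the $S^2\times S^2$ summand, that is, a geometrically dual pair of framed spheres. I would first get one in $E$ itself. By the proof of \cref{theorem:intro-torelli-complex-surfaces} (via Hambleton--Kreck), $E$ is homeomorphic to $X\csum(S^2\times S^2)$ for some $X$. For example, since $E$ is non-spin with $e(E)=12$, it has the form $\Sigma\csum r\CP^2\csum s\overline{\CP}{}^2$ with $r+s=10$, and since $b^+=1$ this is $\Sigma\csum\CP^2\csum 9\overline{\CP}{}^2\cong \Sigma\csum 8\overline{\CP}{}^2\csum(S^2\times S^2)$.

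Next I would transfer this summand to $*E$. I would try to realise ${*E}$ explicitly as $X'\csum(S^2\times S^2)$, taking $X'$ to be the star-partner of $X$ (for instance $X'=\Sigma\csum{*\CP^2}\csum 8\overline{\CP}{}^2$-type constructions). To check that this works, I would compare $X'\csum(S^2\times S^2)$ with $*E$ using Teichner's classification. One route is to observe that $*E\csum\CP^2\cong E\csum{*\CP^2}$ has the summand. The second route is to use that Hambleton--Kreck's classification for $\pi_1=\Z_2$ is topological and covers $\ks=1$: closed oriented 4-manifolds with $\pi_1=\Z_2$ are determined by the intersection form on $\pi_2$, $w_2$-type and $\ks$. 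Under that classification, the manifold $(\Sigma\csum{*\CP^2}\csum 8\overline{\CP}{}^2)\csum(S^2\times S^2)$, with the appropriate $w_2$-type (non-spin universal cover), has the same invariants as $*E$, and so is homeomorphic to it.

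Identifying this splitting is the main obstacle. It requires matching the $w_2$-type, the equivariant intersection form and the Kirby--Siebenmann invariant in the classification, rather than just counting ranks. Once the splitting holds, $R=*\times S^2$ and $G=S^2\times *$ in the summand are framed and geometrically dual. \cref{corollary:subgroup-in-torelli-closed} then applies and finishes the proof.
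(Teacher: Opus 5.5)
Your homotopy-theoretic inputs ($|T_{*E}|=1$, $H_3(\widetilde{*E})=0$, hence $s=1$) and the final appeal to \cref{corollary:subgroup-in-torelli-closed} are fine and match the paper. The splitting step, however, which you rightly identify as the heart of the matter, rests on a false decomposition of $E$.

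The Hambleton--Kreck case distinction is on $w_2(\widetilde M)$, not on $w_2(M)$. Although $E$ is non-spin, its universal cover is the K3 surface, which is spin, and the intersection form of $E$ on $H_2(E)/\mathrm{Tors}$ is the even form $-E_8\oplus H$. So $E\not\cong \Sigma\csum\CP^2\csum 9\overline{\CP}{}^2$, since the latter has the odd form $\langle 1\rangle\oplus 9\langle -1\rangle$. For the same reason, your candidate $(\Sigma\csum{*\CP^2}\csum 8\overline{\CP}{}^2)\csum(S^2\times S^2)$ for $*E$ has the wrong form and the wrong $w_2$-type. Since $*E\simeq E$, the cover $\widetilde{*E}\simeq K3$ is spin and the form of $*E$ is again $-E_8\oplus H$. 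Thus your second route compares manifolds whose invariants do not agree. Your first route only produces an $S^2\times S^2$ summand in $*E\csum\CP^2\cong E\csum{*\CP^2}$, and nothing in your argument removes the extra $\CP^2$ to give a dual pair of framed spheres in $*E$ itself.

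The missing ingredient is the correct splitting of $E$. By Hambleton--Kreck, $E\cong(-|E_8|)\csum X\csum(S^2\times S^2)$, where $X$ is a rational homology 4-sphere with $\pi_1(X)=\Z_2$. Since $E$ is non-spin while $-|E_8|$ and $S^2\times S^2$ are spin, $X$ is non-spin. Teichner's theorem then provides a star-partner $*X$. The manifold $N=(-|E_8|)\csum(*X)\csum(S^2\times S^2)$ satisfies $N\csum\CP^2\cong E\csum{*\CP^2}$ compatibly with the $\pi_2$ splittings. This is how the paper obtains $*E\cong N$, and hence the geometrically dual framed spheres. Your classification route can also be repaired, but only with a candidate having the right invariants: $\pi_1=\Z_2$, form $-E_8\oplus H$, non-spin with spin universal cover, and $\ks=1$. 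This $N$ is such a candidate.
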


\begin{proof}
  By work of Hambleton and Kreck~\cite{Hambleton-Kreck:1988-1}, $E$ is homeomorphic to $(-|E_8|) \csum X \csum (S^2\times S^2)$, where $|E_8|$ is Freedman's $E_8$ manifold and $X$ is a rational homology sphere with $\pi_1(X)=\Z_2$~\cite{Nakamura:2010-1}.
  Since $E$ is non-spin, $X$ is non-spin.
  Thus, $X$ has a star-partner $*X$ by \cite{Teichner:1997-1}, and $*E \cong (-|E_8|) \csum (*X) \csum (S^2\times S^2)$.
  It follows that $*E$ contains two framed spheres which are geometrically dual to each other.
  Also, since $*E \simeq E$, we have $H_3(\widetilde{*E}) = H_3(\widetilde{E}) = 0$ and $\pi_1(*E) = \pi_1(E) = \Z_2$.
  Therefore, by \cref{corollary:torelli-prufer-rank-closed}, $\cI(*E)/\pi_0\Homeoh(*E)$ is nontrivial.
\end{proof}

\bibliographystyle{amsalpha}
\def\MR#1{}
\bibliography{research}

\newcommand{\etalchar}[1]{$^{#1}$}
\def\cprime{$'$}
\providecommand{\bysame}{\leavevmode\hbox to3em{\hrulefill}\thinspace}
\providecommand{\MR}{\relax\ifhmode\unskip\space\fi MR }
\providecommand{\MRhref}[2]{%
  \href{http://www.ams.org/mathscinet-getitem?mr=#1}{#2}
}
\providecommand{\href}[2]{#2}
\begin{thebibliography}{GGH{\etalchar{+}}26}

\bibitem[Bar26]{Baraglia:2026-1}
David Baraglia, \emph{On the mapping class groups of simply connected smooth 4-manifolds}, 2026, pp.~1635--1653. \MR{5079109}

\bibitem[BG19]{Budney-Gabai:2019-1}
Ryan Budney and David Gabai, \emph{Knotted 3-balls in {$S^4$}}, arXiv:1912.09029v3, 2019.

\bibitem[BG25]{Budney-Gabai:2025-1}
\bysame, \emph{On the automorphism groups of hyperbolic manifolds}, Int. Math. Res. Not. IMRN (2025), no.~7, Paper No. rnaf083, 28. \MR{4889279}

\bibitem[BT25]{Baraglia-Tomlin:2025-1}
David Baraglia and Joshua Tomlin, \emph{Exotic diffeomorphisms of 4-manifolds with $b_+ = 2$}, arXiv:2511.13976, 2025.

\bibitem[Cha26]{Cha:2026-2}
Jae~Choon Cha, \emph{A homological approach to {D}ax invariants}, preprint, 2026.

\bibitem[CK08]{Cha-Kim:2008-1}
Jae~Choon Cha and Taehee Kim, \emph{Covering link calculus and iterated {B}ing doubles}, Geom. Topol. \textbf{12} (2008), no.~4, 2173--2201. \MR{MR2431018 (2009f:57007)}

\bibitem[CK25]{Cha-Kim:2023-1}
Jae~Choon Cha and Byeorhi Kim, \emph{Light bulb smoothing for topological surfaces in 4-manifolds}, J. Eur. Math. Soc. (JEMS) (2025), published online first.

\bibitem[CKP20]{Cha-Kim-Powell:2020-1}
Jae~Choon Cha, Min~Hoon Kim, and Mark Powell, \emph{A family of freely slice good boundary links}, Math. Ann. \textbf{376} (2020), no.~3-4, 1009--1030. \MR{4081108}

\bibitem[CLR08]{Cha-Livingston-Ruberman:2006-1}
Jae~Choon Cha, Charles Livingston, and Daniel Ruberman, \emph{Algebraic and {H}eegaard-{F}loer invariants of knots with slice {B}ing doubles}, Math. Proc. Cambridge Philos. Soc. \textbf{144} (2008), no.~2, 403--410. \MR{MR2405897}

\bibitem[CP14]{Cha-Powell:2014-1}
Jae~Choon Cha and Mark Powell, \emph{Covering link calculus and the bipolar filtration of topologically slice links}, Geom. Topol. \textbf{18} (2014), no.~3, 1539--1579. \MR{3228458}

\bibitem[Dax72]{Dax:1972-1}
Jean-Pierre Dax, \emph{\'{E}tude homotopique des espaces de plongements}, Ann. Sci. \'{E}cole Norm. Sup. (4) \textbf{5} (1972), 303--377. \MR{321110}

\bibitem[DHMS24]{Dai-Hedden-Mallick-Stoffregen:2024-1}
Irving Dai, Matthew Hedden, Abhishek Mallick, and Matthew Stoffregen, \emph{Rank-expanding satellites, {W}hitehead doubles, and {H}eegaard {F}loer homology}, J. Topol. \textbf{17} (2024), no.~4, Paper No. e70008, 40. \MR{4832710}

\bibitem[DIS{\etalchar{+}}25]{Daemi-Imori-Sato-Scaduto-Taniguchi:2025-1}
Aliakbar Daemi, Hayato Imori, Kouki Sato, Christopher Scaduto, and Masaki Taniguchi, \emph{Instantons, special cycles and knot concordance}, Geom. Topol. \textbf{29} (2025), no.~8, 4189--4298. \MR{5000788}

\bibitem[FQ90]{Freedman-Quinn:1990-1}
Michael~H. Freedman and Frank Quinn, \emph{Topology of 4-manifolds}, Princeton Mathematical Series, vol.~39, Princeton University Press, Princeton, NJ, 1990. \MR{MR1201584 (94b:57021)}

\bibitem[FR11]{Foozwell-Rubinstein:2011-1}
Bell Foozwell and Hyam Rubinstein, \emph{Introduction to the theory of {H}aken {$n$}-manifolds}, Topology and geometry in dimension three, Contemp. Math., vol. 560, Amer. Math. Soc., Providence, RI, 2011, pp.~71--84. \MR{2866924}

\bibitem[Fre82]{Freedman:1982-1}
Michael~H. Freedman, \emph{The topology of four-dimensional manifolds}, J. Differential Geom. \textbf{17} (1982), no.~3, 357--453. \MR{MR679066 (84b:57006)}

\bibitem[Fre84]{Freedman:1984-1}
\bysame, \emph{The disk theorem for four-dimensional manifolds}, Proceedings of the {I}nternational {C}ongress of {M}athematicians, {V}ol.\ 1, 2 ({W}arsaw, 1983) (Warsaw), PWN, 1984, pp.~647--663. \MR{MR804721 (86m:57016)}

\bibitem[Fre85]{Freedman:1985-1}
\bysame, \emph{A new technique for the link slice problem}, Invent. Math. \textbf{80} (1985), no.~3, 453--465. \MR{MR791669 (86m:57017)}

\bibitem[Fre88]{Freedman:1988-2}
\bysame, \emph{{${\rm Whitehead}\sb 3$} is a ``slice'' link}, Invent. Math. \textbf{94} (1988), no.~1, 175--182. \MR{MR958596 (89h:57004)}

\bibitem[Fre93]{Freedman:1993-1}
\bysame, \emph{Link compositions and the topological slice problem}, Topology \textbf{32} (1993), no.~1, 145--156. \MR{MR1204412 (94m:57011)}

\bibitem[FT95]{Freedman-Teichner:1995-2}
Michael~H. Freedman and Peter Teichner, \emph{{$4$}-manifold topology. {II}. {D}wyer's filtration and surgery kernels}, Invent. Math. \textbf{122} (1995), no.~3, 531--557. \MR{MR1359603 (96k:57016)}

\bibitem[Gab21]{Gabai:2021-1}
David Gabai, \emph{Self-referential discs and the light bulb lemma}, Comment. Math. Helv. \textbf{96} (2021), no.~3, 483--513. \MR{4344778}

\bibitem[Gal24]{Galvin:2024-1}
Daniel A.~P. Galvin, \emph{The {C}asson-{S}ullivan invariant for homeomorphisms of 4-manifolds}, arXiv:2405.07928, to appear in Compos. Math., 2024.

\bibitem[GGH{\etalchar{+}}26]{Gabai-Gay-Hartman-Krushkal-Powell:2023-1}
David Gabai, David Gay, Daniel Hartman, Vyacheslav Krushkal, and Mark Powell, \emph{Pseudo-isotopies of simply connected 4-manifolds}, Forum Math. Pi \textbf{14} (2026), Paper No. e9, 25. \MR{5038934}

\bibitem[GN25]{Galvin-Nonino:2025-1}
Daniel Galvin and Isacco Nonino, \emph{Pseudo-isotopy versus isotopy for homeomorphisms of 4-manifolds}, arXiv:2506.11905, 2025.

\bibitem[Hed07]{Hedden:2007-1}
Matthew Hedden, \emph{Knot {F}loer homology of {W}hitehead doubles}, Geom. Topol. \textbf{11} (2007), 2277--2338. \MR{2372849 (2008m:57030)}

\bibitem[HK88]{Hambleton-Kreck:1988-1}
Ian Hambleton and Matthias Kreck, \emph{Smooth structures on algebraic surfaces with cyclic fundamental group}, Invent. Math. \textbf{91} (1988), no.~1, 53--59. \MR{918236}

\bibitem[HK12]{Hedden-Kirk:2010-1}
Matthew Hedden and Paul Kirk, \emph{Instantons, concordance, and {W}hitehead doubling}, J. Differential Geom. \textbf{91} (2012), no.~2, 281--319. \MR{2971290}

\bibitem[HPC21]{Hedden-Pinzon-Caicedo:2021-1}
Matthew Hedden and Juanita Pinz\'{o}n-Caicedo, \emph{Satellites of infinite rank in the smooth concordance group}, Invent. Math. \textbf{225} (2021), no.~1, 131--157. \MR{4270665}

\bibitem[KK24]{Krannich-Kupers:2024-1}
Manuel Krannich and Alexander Kupers, \emph{A note on homotopy and pseudoisotopy of diffeomorphisms of 4-manifolds}, C. R. Math. Acad. Sci. Paris \textbf{362} (2024), 1515--1520. \MR{4824946}

\bibitem[KM21]{Klug-Miller:2021-1}
Michael~R. Klug and Maggie Miller, \emph{Concordance of surfaces in 4-manifolds and the {F}reedman-{Q}uinn invariant}, J. Topol. \textbf{14} (2021), no.~2, 560--586. \MR{4286049}

\bibitem[KM22]{Klug-Miller:2022-1}
Michael Klug and Maggie Miller, \emph{Concordance of spheres in 4-manifolds with an immersed dual sphere}, arXiv:2211.07177, 2022.

\bibitem[KT24]{Kosanovic-Teichner:2024-2}
Danica Kosanovi\'c and Peter Teichner, \emph{A new approach to light bulb tricks: disks in 4-manifolds}, Duke Math. J. \textbf{173} (2024), no.~4, 673--721. \MR{4734552}

\bibitem[Lev12]{LevineA:2009-1}
Adam~Simon Levine, \emph{Slicing mixed {B}ing-{W}hitehead doubles}, J. Topol. \textbf{5} (2012), no.~3, 713--726. \MR{2971611}

\bibitem[Lic62]{Lickorish:1962-1}
W.~B.~R. Lickorish, \emph{A representation of orientable combinatorial {$3$}-manifolds}, Ann. of Math. (2) \textbf{76} (1962), 531--540. \MR{0151948 (27 \#1929)}

\bibitem[Loo21]{Looijenga:2021-1}
Eduard Looijenga, \emph{Teichm\"uller spaces and {T}orelli theorems for hyperk\"ahler manifolds}, Math. Z. \textbf{298} (2021), no.~1-2, 261--279. \MR{4257085}

\bibitem[LVC11]{Livingston-VanCott:2011-1}
Charles Livingston and Cornelia~A. Van~Cott, \emph{Concordance of {B}ing doubles and boundary genus}, Math. Proc. Cambridge Philos. Soc. \textbf{151} (2011), no.~3, 459--470. \MR{2838347}

\bibitem[LWXZ26]{Lin-Wu-Xie-Zhang:2026-1}
Jianfeng Lin, Weiwei Wu, Yi~Xie, and Boyu Zhang, \emph{Dax invariants, light bulbs, and isotopies of symplectic structures}, arXiv:2501.16083v5, 2026.

\bibitem[LXZ26a]{Lin-Xie-Zhang:2026-2}
Jianfeng Lin, Yi~Xie, and Boyu Zhang, \emph{The dax isomorphism for topological $4$-manifolds}, arXiv:2609.15792, 2026.

\bibitem[LXZ26b]{Lin-Xie-Zhang:2026-1}
\bysame, \emph{Pseudo-isotopies of 3-manifolds with infinite fundamental groups}, arXiv:2602.09454, 2026.

\bibitem[Man16]{Manolescu:2016-1}
Ciprian Manolescu, \emph{Pin(2)-equivariant {S}eiberg-{W}itten {F}loer homology and the triangulation conjecture}, J. Amer. Math. Soc. \textbf{29} (2016), no.~1, 147--176. \MR{3402697}

\bibitem[MO07]{Manolescu-Owens:2007-1}
Ciprian Manolescu and Brendan Owens, \emph{A concordance invariant from the {F}loer homology of double branched covers}, Int. Math. Res. Not. IMRN (2007), no.~20, Art. ID rnm077, 21. \MR{2363303}

\bibitem[Nak10]{Nakamura:2010-1}
Nobuhiro Nakamura, \emph{Smoothability of {$\Bbb Z\times\Bbb Z$}-actions on 4-manifolds}, Proc. Amer. Math. Soc. \textbf{138} (2010), no.~8, 2973--2978. \MR{2644908}

\bibitem[NST24]{Nozaki-Sato-Taniguchi:2024-1}
Yuta Nozaki, Kouki Sato, and Masaki Taniguchi, \emph{Filtered instanton {F}loer homology and the homology cobordism group}, J. Eur. Math. Soc. (JEMS) \textbf{26} (2024), no.~12, 4699--4761. \MR{4780492}

\bibitem[OP25]{Orson-Powell:2022-1}
Patrick Orson and Mark Powell, \emph{Mapping class groups of simply connected 4-manifolds with boundary}, J. Differential Geom. \textbf{131} (2025), no.~1, 199--275. \MR{4947553}

\bibitem[OS03]{Ozsvath-Szabo:2003-1}
Peter Ozsv{\'a}th and Zolt{\'a}n Szab{\'o}, \emph{Knot {F}loer homology and the four-ball genus}, Geom. Topol. \textbf{7} (2003), 615--639 (electronic). \MR{MR2026543 (2004i:57036)}

\bibitem[Per86]{Perron:1986-1}
B.~Perron, \emph{Pseudo-isotopies et isotopies en dimension quatre dans la cat\'egorie topologique}, Topology \textbf{25} (1986), no.~4, 381--397. \MR{862426}

\bibitem[Qui82]{Quinn:1982-1}
Frank Quinn, \emph{Ends of maps. {III}. {D}imensions {$4$} and {$5$}}, J. Differential Geom. \textbf{17} (1982), no.~3, 503--521. \MR{679069 (84j:57012)}

\bibitem[Qui86]{Quinn:1986-1}
\bysame, \emph{Isotopy of {$4$}-manifolds}, J. Differential Geom. \textbf{24} (1986), no.~3, 343--372. \MR{868975 (88f:57020)}

\bibitem[Ram25]{Raman:2025-1}
Sidhanth Raman, \emph{A smooth {B}irman-{H}ilden theory for hyper-{K}\"ahler manifolds}, J. Reine Angew. Math. \textbf{822} (2025), 147--178. \MR{4899235}

\bibitem[Rub98]{Ruberman:1998-1}
Daniel Ruberman, \emph{An obstruction to smooth isotopy in dimension {$4$}}, Math. Res. Lett. \textbf{5} (1998), no.~6, 743--758. \MR{1671187}

\bibitem[Rub99]{Ruberman:1999-1}
\bysame, \emph{A polynomial invariant of diffeomorphisms of 4-manifolds}, Proceedings of the {K}irbyfest ({B}erkeley, {CA}, 1998), Geom. Topol. Monogr., vol.~2, Geom. Topol. Publ., Coventry, 1999, pp.~473--488. \MR{1734421}

\bibitem[ST22]{Schneiderman-Teichner:2022-1}
Rob Schneiderman and Peter Teichner, \emph{Homotopy versus isotopy: spheres with duals in 4-manifolds}, Duke Math. J. \textbf{171} (2022), no.~2, 273--325. \MR{4375617}

\bibitem[ST26]{Salvatore-Turchin:2026-1}
Paolo Salvatore and Victor Turchin, \emph{On the smoothing theory delooping of disc diffeomorphism and embedding spaces}, Math. Z. \textbf{313} (2026), no.~2, Paper No. 30, 56. \MR{5076882}

\bibitem[Tei97]{Teichner:1997-1}
Peter Teichner, \emph{On the star-construction for topological {$4$}-manifolds}, Geometric topology ({A}thens, {GA}, 1993), AMS/IP Stud. Adv. Math., vol. 2.1, Amer. Math. Soc., Providence, RI, 1997, pp.~300--312. \MR{1470734}

\bibitem[Wal99]{Wall-Ranicki:1999-1}
C.~Terence~C. Wall, \emph{Surgery on compact manifolds}, second ed., Mathematical Surveys and Monographs, vol.~69, American Mathematical Society, Providence, RI, 1999, Edited and with a foreword by A. A. Ranicki. \MR{2000a:57089}

\bibitem[Whi50]{Whitehead:1950-1}
J.~H.~C. Whitehead, \emph{A certain exact sequence}, Ann. of Math. (2) \textbf{52} (1950), 51--110. \MR{35997}

\end{thebibliography}

\end{document}